\documentclass[english]{article}

\usepackage{babel}           
\usepackage[utf8]{inputenc}  
\usepackage[T1]{fontenc}     
\usepackage{amsmath, amsthm} 
\usepackage{amsfonts,amssymb}
\usepackage{mathtools}
\usepackage{bm}
\usepackage{hyperref}
\usepackage{verbatim}    
\usepackage{mathtools}
\usepackage[shortlabels]{enumitem}   
\usepackage{tikz}
\usetikzlibrary{positioning}
\usepackage{adjustbox}
\usepackage{array}
\usepackage{float}
\usepackage{mathrsfs}
\usepackage[title]{appendix}

\usepackage{graphicx}
\usepackage{subcaption}
\usepackage{cleveref}	      

\newtheorem{Theorem}{Theorem}[section]
\newtheorem{Definition}[Theorem]{Definition}
\newtheorem{Proposition}[Theorem]{Proposition}
\newtheorem*{Proposition*}{Proposition}
\newtheorem{Corollary}[Theorem]{Corollary}
\newtheorem{Lemma}[Theorem]{Lemma}
\newtheorem{Remark}[Theorem]{Remark}

\Crefname{Lemma}{Lemma}{Lemmas}
\Crefname{Theorem}{Theorem}{Theorems}

\newcommand{\cC}{\mathcal{C}}
\newcommand{\cO}{\mathcal{O}}

\newcommand{\cF}{\mathcal{F}}

\newcommand{\cS}{\mathcal{S}}

\newcommand{\bk}{\bm{k}}
\newcommand{\bx}{\bm{x}}
\newcommand{\by}{\bm{y}}
\newcommand{\bu}{\bm{u}}

\newcommand{\hb}{h_{\rm b}}
\newcommand{\pb}{p_{\rm b}}

\def\Ma{{\rm Ma}}
\def\Ro{{\rm Ro}}
\def\Fr{{\rm Fr}}
\def\Rey{{\rm Re}}

\renewcommand{\L}{\mathscr{L}}
\renewcommand{\P}{\mathscr{P}}
\newcommand{\T}{\mathscr{T}}

\newcommand{\sPi}{{\sf \Pi}}

\renewcommand{\S}{{\sf S}}
\newcommand{\A}{{\sf A}}
\newcommand{\G}{{\sf G}}
\newcommand{\sL}{{\sf L}}
\newcommand{\M}{{\sf M}}
\newcommand{\sT}{{\sf T}}

\newcommand{\RR}{\mathbb{R}}

\newcommand{\TT}{\mathbb{T}}
\newcommand{\NN}{\mathbb{N}}
\renewcommand{\i}{{\rm i}}

\DeclareMathOperator{\Id}{Id}
\DeclareMathOperator{\dd}{{\rm d}\!}
\DeclareMathOperator{\Ran}{Ran}
\DeclareMathOperator{\Ker}{Ker}
\DeclareMathOperator{\Spec}{Spec}
\DeclareMathOperator{\esssup}{ess\,sup}

\newcommand{\eps}{\eps}

\let\eps=\varepsilon
\newcommand{\eqdef}{\coloneqq}

\DeclarePairedDelimiter\norm{\big\lvert}{\big\rvert}
\DeclarePairedDelimiter\Norm{\big\lVert}{\big\rVert}
\DeclarePairedDelimiter\bra{\big\langle}{\big\rangle}

\title{Three-scale singular limits with applications to rapidly rotating fluids and the hyperbolization of dispersive systems}
\author{Vincent Duchêne\thanks{Univ Rennes, CNRS, IRMAR - UMR 6625, F-35000 Rennes, France. \texttt{vincent.duchene@univ-rennes.fr}}
\and Arnaud Duran\thanks{Université Lyon 1, Centrale Lyon, INSA Lyon, Université Jean Monnet, CNRS, ICJ UMR5208, 69622 Villeurbanne, France. \texttt{arnaud.duran@univ-lyon1.fr}} \thanks{Institut Universitaire de France (IUF).}
\and Khawla Msheik\thanks{CentraleSupelec, CNRS - UPR 288, Laboratoire Énergetique Moleculaire et Macroscopique, Combustion (EM2C), 91190 Gif-sur-Yvette, France.}
}

\date{\today}
\begin{document}
\thispagestyle{empty}
\maketitle

\begin{abstract}
	We consider singular problems for a general class of quasilinear hyperbolic systems that involve two {\em a priori} independent stiff parameters. We argue that such situations may lead to the rapid development of small-amplitude spatial oscillations of small wavelength starting from arbitrarily smooth initial data. Despite this phenomenon we provide sufficient conditions on initial data that secure the uniform control of solutions and show strong convergence in the singular limit of small stiff parameters. We apply our general theory to the rapidly rotating shallow-water system with bottom topography, and to hyperbolic systems stemming from a constraint-relaxation strategy applied to dispersive models for the propagation of water waves ---specifically the Benjamin--Bona--Mahony, Boussinesq--Peregrine and Serre--Green--Naghdi equations.
\end{abstract}


\section{Introduction}

\subsection{Presentation}\label{S.presentation}
In this work we are interested in the behavior of solutions to general three-scale systems of hyperbolic quasilinear equations, of the form
\[
	\S_0(U)\partial_t U + \sum_{l=1}^d\S_l(U)\partial_{x_l}U +\G(U)U = \frac1\eps \L_\delta U .
\]
Here, $U:(t,\bx)\in \RR\times\RR^d\to\RR^n$ (with $d,n\in\NN^\star$) is the unknown and $\S_l$ ($l\in\{0,\dots,d\})$ (respectively $\G$) are given smooth functions into $n\times n$ real-valued symmetric matrices (respectively real-valued matrices), while $\L_\delta$ is a constant-coefficient differential or pseudo-differential operator, skew-adjoint for the $L^2(\RR^d)^n$ inner-product. Precise assumptions of our framework are listed in  \Cref{S.main-results} below. A prototypical example for the considered stiff operators is 
\[ \L_\delta = \sL_0+\delta\sum_{l=1}^d\sL_l \partial_{x_l}\]
where $\sL_0$ is a skew-symmetric matrix and for all $l\in\{1,\dots,d\}$, $\sL_l$ is a symmetric matrix. Importantly, $\delta$ and $\eps$ are positive parameters that are in principle independent, and we are specifically interested in the asymptotic regime 
\[0<\eps\ll\delta\ll 1.\]

Systems of such type arise naturally for instance in geophysical models in the presence of rapid rotation and rapidly propagating waves (we specifically discuss the rotating shallow-water system in this work). In such situation the dimensionless parameter $\eps$ is the so-called Rossby number defined as the ratio of inertial force to Coriolis force, while $\eps/\delta$ is the square of the so-called Mach number defined as the ratio of the flow velocity to the wave celerity. Hence we are interested in a regime of low Rossby ($\eps\ll 1$) and low Mach ($\eps/\delta\ll 1$) numbers, where the effects of fast rotation are predominant ($\delta\ll1$). We are also motivated by applications to the ``hyperbolization'' of weakly dispersive equations, such as the Benjamin--Bona--Mahony equation, Boussinesq--Peregrine or the Serre--Green--Naghdi systems. In such situation the dimensionless parameter $\delta$ is (the square root of) the so-called shallowness parameter measuring the strength of dispersive effects while $\eps^{-1}$ is an artificial relaxation parameter which formally determines the accuracy of solutions to the system of quasilinear equations as approximations to the corresponding  dispersive equations.

Of paramount importance to our framework with respect to the existing literature is the presence of the symmetrizer $\S_0(U)$ depending non-trivially on the unknown. We argue that, in conjunction with the presence of the two independent parameters in our asymptotic regime, non-trivial symmetrizers allow for a mechanism of {\em development of a new spatial scale} which, to the best of our knowledge, has not yet been unveiled in the literature. Specifically, we shall argue that solutions to systems belonging to the class studied in this work emerging from smooth initial data 
 may gradually develop spatial oscillations with wavelength of size $\delta$, on a timescale of size $\eps/\delta$.

Of course such rapid development small spatial scales is a strong obstacle to our aspiration for uniform (with respect to $0<\eps\ll\delta\ll 1$) strong control of solutions and eventually strong convergence as $\eps\searrow0$ and, possibly, $\delta\searrow 0$. In order to obtain such results, we will ensure that the rapid spatial oscillations are of small magnitude, securing in particular a uniform bound on the Lipschitz norm of solutions at each time. This will be enforced through an assumption of well-prepared initial data. However our aim is to provide the least possible restrictions on the initial data that secure the propagation in time on a relevant timescale of strong controls of solutions, uniformly with respect to $0<\eps\ll\delta\ll 1$.

\subsection{Outline}\label{S.outline}

\begin{figure}
	\begin{center}
		\makebox[\textwidth][c]{
			\begin{adjustbox}{max width=1.4\textwidth, max height=0.90\textheight}
				\begin{tikzpicture}[
					refbox/.style={draw,rounded corners,align=left,text width=6.2cm,inner sep=4pt},
					explbox/.style={draw,rounded corners,align=left,text width=10.7cm,inner sep=4pt},
					arrow/.style={->, thick},
					node distance=9mm and 5mm
					]
					
					\node[refbox] (Aref) {
						\textbf{Framework}\\
						System~\eqref{eq.general} \\
						Hypothesis \ref{H1}--\ref{H4}, \Cref{R.weights}
					};
					
					\node[explbox, right=of Aref] (Aexp) {
						We fix the class of three-scale systems (with three characteristic scales $1$, $\varepsilon$ and $\delta$). Hypotheses are mainly hyperbolicity, skew-adjointness of the stiff operator, and a key spectral property of separation between regular and singular components.
					};
					
					\node[refbox, below=of Aref] (Bref) {
						\textbf{Weighted energies}\\
						Remark \ref{R.weights}, Formulas \eqref{eq.def-F}, \eqref{eq.def-alpha}\\
						See also \Cref{S.Stability}, Formulas \eqref{eq.def-F-stability},  \eqref{eq.def-F0-stability}
					};
					
					\node[explbox, right=of Bref] (Bexp) {
						The weighted energies $\cF_{k,\bm\alpha}(U)$ measure the size of time and space derivatives through different weights and track the possible emergence of spatial scales of size \(\delta\).
					};
					
					\node[refbox, below=of Bref] (Cref) {
						\textbf{Tools - Control of nonlinear terms}\\
						Lemmas \ref{L.product}-- \ref{L.composition}, 
						\Cref{L.Product}
					};
					
					\node[explbox, right=of Cref] (Cexp) {
						These estimates control products, commutators and compositions, as well as anisotropic products (that is involving different amounts of time and space regularity).
					};
					
					\node[refbox, below=of Cref] (C2ref) {
						\textbf{Tools - Admissible weights}\\
						\Cref{D.alpha-admissible}, \Cref{L.weights}
					};
					
					\node[explbox, right=of C2ref] (C2exp) {
						Weights are eventually calibrated to ensure the time propagation of weighted energy functionals, with a minimal constraint on the initial data.
					};
					
					\node[refbox, below=of C2ref] (Dref) {
						\textbf{Core estimate}\\
						\Cref{S.Stability}, \Cref{P.Stability}.
					};
					
					\node[explbox, right=of Dref] (Dexp) {
						This is the main technical step. This key stability estimate allow for the weighted energy $\cF_{k,\bm\alpha}(U)$ to remain uniformly controlled on a time interval $[0,T]$, with constants that do not blow up as \(\varepsilon,\delta \nearrow 0\).
					};

					\node[refbox, below=of Dref] (Eref) {
						\textbf{Large-time well-posedness}\\
						\Cref{S.completion}, \Cref{P.Well-posedness}
					};
					
					\node[explbox, right=of Eref] (Eexp) {
						We complete the proof that the initial bound on $\cF_{k,\bm\alpha}(U)$ and the stability estimate secure the control of the weighted energies on a time interval $[0,T]$, with $T>0$ independent of the small parameters $\varepsilon$, $\delta$.
					};

					\node[refbox, below=of Eref] (Fref) {
						\textbf{Initial preparation}\\
						\Cref{S.completion}, \Cref{P.preparation}\\
						Assumption \eqref{eq.well-prepared}
					};
					
					\node[explbox, right=of Fref] (Fexp) {
						\Cref{P.preparation} ensures that the well-preparedness condition \eqref{eq.well-prepared} is sufficient to control the weighted energy $\cF_{k,\bm\alpha}(U)$ at time \(t=0\).
					};
					
					\node[refbox, below=of Fref] (Gref) {
						\textbf{Conclusion}\\
						\Cref{T.Well-posedness}\\
						Estimate \eqref{eq.estimate}
					};
					
					\node[explbox, right=of Gref] (Gexp) {
						\textbf{Main result.}
						Under the well-prepared condition \eqref{eq.well-prepared}, one obtains uniform local well-posedness and uniform bounds on the relevant time and space derivatives.
					};
					
					\node[refbox, below=of Gref] (Href) {
						\textbf{Convergence}\\
						\Cref{C.convergence}
					};
					
					\node[explbox, right=of Href] (Hexp) {
						\textbf{Corollary.}
						The uniform control allows one to pass to the singular limit and obtain convergence towards the reduced limit system.
					};
					
					\draw[arrow] (Aref.south) -- (Bref.north);
					\draw[arrow] (Bref.south) -- (Cref.north);
					\draw[arrow] (Cref.south) -- (C2ref.north);
					\draw[arrow] (C2ref.south) -- (Dref.north);
					\draw[arrow] (Dref.south) -- (Eref.north);
					\draw[arrow] (Eref.south) -- (Fref.north);
					\draw[arrow] (Fref.south) -- (Gref.north);
					\draw[arrow] (Gref.south) -- (Href.north);
					
					\draw[arrow] (Aexp.south) -- (Bexp.north);
					\draw[arrow] (Bexp.south) -- (Cexp.north);
					\draw[arrow] (Cexp.south) -- (C2exp.north);					
					\draw[arrow] (C2exp.south) -- (Dexp.north);
					\draw[arrow] (Dexp.south) -- (Eexp.north);
					\draw[arrow] (Eexp.south) -- (Fexp.north);
					\draw[arrow] (Fexp.south) -- (Gexp.north);
					\draw[arrow] (Gexp.south) -- (Hexp.north);
					
				\end{tikzpicture}
			\end{adjustbox}
		}
		\medskip
		\small
		\caption{Roadmap of the proof of \Cref{T.Well-posedness} and \Cref{C.convergence}. Left column: references to formulas and intermediate results. Right column: role of each step.} \label{F.roadline}
	\end{center}
\end{figure}

	The remainder of the paper is organized as follows. In \Cref{S.main-results} we introduce the general framework, state the precise hypotheses imposed on the class of systems under consideration, and present the main results. \Cref{S.discussion} places these results in the context of the existing literature on singular limits, while \Cref{S.scale} illustrates the small-scale development mechanism that may occur within the present class of systems.
	The technical part of the paper begins in \Cref{S.Admissible}, where we introduce
	 admissible weights which are key ingredients of our weighted energy method. Stability estimates adapted to our class of systems are proved in \Cref{S.Stability}; they are then used in \Cref{S.completion} to establish uniform well-posedness results. 
	The analysis is further refined in \Cref{S.H5H6} under stronger assumptions on the symmetrizer.
	Finally, \Cref{S.applications} is devoted to applications. 
	We first consider the rapidly rotating shallow-water system in \Cref{S.rotating-SV}, and then the hyperbolized Benjamin--Bona--Mahony, Boussinesq--Peregrine and Serre--Green--Naghdi systems in \Cref{S.hBBM}, \Cref{S.hBP} and \Cref{S.LCT}, respectively. 
	For the reader's convenience, we collect functional analysis tools in \Cref{S.technical} and notations in \Cref{S.notations}.
	
	We now indicate how these sections may be read depending on the reader's goals. The paper is organized so as to be read at two complementary levels. 
	
	Readers primarily interested in the consequences and applications of the main results may focus first on \Cref{S.main-results}, where the general framework and the central well-posedness result, \Cref{T.Well-posedness}, are stated, together with the convergence result of \Cref{C.convergence} and the refined statements under additional structural assumptions. The discussion in \Cref{S.discussion} then places these results within the existing literature on two-scale and multi-scale singular limits, while \Cref{S.scale} exhibits through elementary models and numerical experiments the mechanism of rapid development of small spatial oscillations 
	within the class of systems considered in this paper. One may then proceed directly to \Cref{S.applications}, where the hypotheses and conclusions of the proposed developments are translated into concrete geophysical and water-wave models, as indicated just above. This route highlights the practical meaning of the well-preparedness assumptions, the role of the two independent stiff parameters, and the improvement over standard arguments in situations where nonlinearity or spatially varying coefficients (induced by bathymetry) may trigger the small-scale development.
	
	Readers interested in the proof of \Cref{T.Well-posedness} and tools employed to this aim should instead regard \Cref{S.Admissible,S.Stability,S.completion} together with Annex~\ref{S.technical} as the core of the paper. Annex~\ref{S.technical} provides the functional-analytic tools needed in the proofs, and in particular the anisotropic space-time product estimate. The notion of admissible weights is introduced in \Cref{S.Admissible}. These weights are the main device allowing one to measure, and propagate, different amounts of spatial and temporal regularity according to the scales $\delta$, $\varepsilon$ implied in the dynamics. \Cref{S.Stability} contains the central stability estimate: it is the technical heart of the argument and shows how the skew-adjoint stiff operator, the spectral separation assumption, and the admissibility properties of the weights ---that we discuss in the following section--- combine to control weighted energies uniformly with respect to the parameters. \Cref{S.completion} then converts this stability estimate into the uniform well-posedness result of \Cref{T.Well-posedness}. A key ingredient is the proof that the preparation assumption on a limited number of time derivatives of the initial data implies the initial boundedness of the full weighted energy. Finally, \Cref{S.H5H6} explains how the argument simplifies or improves when the symmetrizer satisfies stronger structural assumptions, leading to \Cref{T.H5,T.H6}.
	
	To make the logical structure of the proof more accessible, especially to non-specialist readers, \Cref{F.roadline} summarizes the main ingredients leading to \Cref{T.Well-posedness} and indicates the role of each intermediate result.

\subsection{Main results}\label{S.main-results}
We are interested in solutions to three-scale systems of the form
\begin{equation}\label{eq.general}
	\S_0(U)\partial_t U + \sum_{l=1}^d\S_l(U)\partial_{x_l}U +\G(U)U = \frac1\eps\L_\delta U ,
\end{equation}
where $U:(t,\bx)\in \RR\times\RR^d\to\RR^n$ (with $d,n\in\NN^\star$) is the unknown, $\delta$ and $\eps$ are positive parameters which restricted to some set\footnote{We do not specifically impose additional restrictions on the parameters (such as the prescribed limit $\delta\approx \eps^\theta$ with $\theta\in[0,1]$) but in practical applications, restrictions may be induced by Hypotheses~\ref{H1}--\ref{H4} (and possibly~\ref{H5} or~\ref{H6}), or to fulfill other assumptions of our main results.}
\[	\mathcal S\subset \{ (\eps,\delta)\in\RR^2 \ : \ 0<\eps\leq\delta\leq 1\}\]
and we assume the system satisfies the following hypotheses.
\begin{enumerate}[({H}1),series=H]
	\item \label{H1} $\S_l$ ($l\in\{0,\dots,d\})$ (respectively $\G$) are smooth functions into $n\times n$ real-valued symmetric matrices (respectively $n\times n$ real-valued matrices), and
	$\G(\bm0)$ is negative-semidefinite. 
	\item \label{H2} There exists $\Omega\subset \RR^n$ (the domain of hyperbolicity) open neighborhood of $\{\bm{0}\}\in\RR^n$ such that for any $U\in \Omega$, $\S_0(U)$ is definite positive. Consequently, for any $K\subset \Omega$ compact, there exists a constant $c_K>0$ such that for any $V\in\RR^n$ and any $U\in K$ and any $(\eps,\delta)\in\cS$
	\[ \bra{\S_0(U) V,V}\geq c_K \norm{V}^2\]
	where $\bra{\cdot,\cdot}$ and $\norm{\cdot}$ denote the Euclidean inner product and norm in $\RR^n$.
	\item \label{H3} $\L_\delta$ is a constant-coefficient differential or pseudo-differential operator (see \Cref{D.Fourier-multipliers}) $\L_\delta=\sL_\delta(\frac1{\i}\nabla_{\bx})$ whose symbol $\sL_\delta(\cdot) $ is skew-Hermitian and satisfies for some constant $N_\L>0$ independent of $(\eps,\delta)\in\cS$
		\[ \forall \bk \in \RR^d, \quad \Vert \sL_\delta(\bk)\Vert \leq N_\L\, (1+\delta|\bk|).\]
	\item \label{H4} There exist constants $C_\L\geq0$ and $c_\L>0$ such that for all $\bk \in \RR^d$, there exists $\sPi_\delta(\bk)$ an orthogonal projection matrix commuting with $\sL_\delta(\bk)$ such that for all $(\eps,\delta)\in\cS$ and $U\in\RR^n$
	\begin{align*}  
	\norm{ \big(\sPi_\delta \sL_\delta \sPi_\delta\big)(\bk)U} &\leq \eps\, C_\L\, (1+|\bk|) \norm{ U},\\
	\norm{ \big((\Id-\sPi_\delta) \sL_\delta (\Id-\sPi_\delta)\big)(\bk) U} &\geq  \phantom{\eps} \, c_\L\, (1+\delta|\bk|) \norm{(\Id-\sPi_\delta)(\bk) U}.
	\end{align*}
\end{enumerate}
We also consider situations with an extra hypothesis on the matrix-valued function $\S_0$:
	\begin{enumerate}[resume*=H]
		\item \label{H5} For any $K\subset \Omega$, there exists $C_{k,K}'>0$ such that for all $\ell\in\{1,\dots,k\}$ and any $(\eps,\delta)\in\cS$
		\[ \Norm{ \S_0^{(\ell)}}_{(\RR^n)^\ell\to M_n(\RR)}\leq \frac\eps\delta\, C_{k,K}' .\]
		\item \label{H6}  For any $K\subset \Omega$, there exists $C_{k,K}''>0$ such that for all $\ell\in\{1,\dots,k\}$ and any $(\eps,\delta)\in\cS$
		\[ \Norm{ \S_0^{(\ell)}}_{(\RR^n)^\ell\to M_n(\RR)}\leq \big(\frac\eps\delta\big)^\ell C_{k,K}'' .\]
	\end{enumerate}
\begin{Remark}\label{R.uniform}
	We do allow the matrix-valued functions $\S_0$, $\S_l$ ($l\in\{0,\dots,d\})$ and $\G$ to depend on the parameters $\eps$ and $\delta$, but this dependency should not be singular, in the sense that the control on derivatives, the open set $\Omega$ and the lower bound on $c_K>0$ can be chosen uniformly with respect to parameters $(\eps,\delta)\in\cS$.

	The smoothness assumption on $\S_0$, $\S_l$ ($l\in\{0,\dots,d\})$ and $\G$ can be relaxed to being differentiable of order $k$ with bounded derivatives (in the considered compact subdomain $K\subset \Omega\subset\RR^n$), where $k\in\NN$ is the index of regularity at stake in the statement. Specifically we assume that for any $K\subset \Omega$, there exists $C_{k,K}>0$ such that for all $\ell\in\{0,\dots,k\}$ and any $(\eps,\delta)\in\cS$
	\[ \Norm{ \S_0^{(\ell)}}_{(\RR^n)^\ell\to M_n(\RR)} +\sum_{l=1}^d\Norm{ \S_l^{(\ell)}}_{(\RR^n)^\ell\to M_n(\RR)}+\Norm{ \G^{(\ell)}}_{(\RR^n)^\ell\to M_n(\RR)} \leq C_{k,K}\]
	where the index $(\ell)$ denotes the $\ell$-th order total derivative.
	For the sake of readability, the dependency of our estimates with respect to the upper bound on derivatives up to the order $k$, namely $C_{k,K}$, is omitted in our statements.
	
	The assumption that $\G(\bm0)$ is negative-semidefinite is not crucial to our analysis. Our results still hold withdrawing this assumption, yet on a timescale which is only uniformly bounded from below instead of being inversely proportional to the size of the initial data.
\end{Remark}
\begin{Remark}
	All our results adapt trivially to the periodic framework $\bx\in (L\TT)^d$ in which case the hypotheses on the symbol $\sL_\delta$ should hold for any wavevectors on the grid $\bk\in(\frac{2\pi}{L}\NN)^d$.
\end{Remark}
\begin{Remark}\label{R.H4}
	Hypothesis~\ref{H4} is an assumption of scale-separation between groups of eigenvalues of the symbol $\sL_\delta$ since $\sPi_\delta$ may be defined as the orthogonal projection onto the direct sum of the eigenspaces associated with eigenvalues of size $\cO(\eps\,  (1+|\bk|))$, while we assume that all other eigenvalues are of size $\gtrsim (1+\delta|\bk|) $.  Such property may stem either from structural properties of the operator, restrictions on the considered set of parameters $(\eps,\delta)\in\cS$, or a combination of both.
	
	Indeed, since $\sL_\delta$ is skew-symmetric, its eigenvalues lie on the imaginary axis. When $\sL_\delta$ depends holomorphically on the parameter $\delta$ and in particular in the prototypical example
	\[ \sL_\delta(\bk) = \sL_0+\delta \sL_1(\bk)\]
	where $\sL_0$ and $\sL_1(\bk)$ are $n\times n$ skew-symmetric matrices, then the eigenvalues of $\sL_\delta(\bk)$ depend holomorphically on the parameter $\delta$: we have \cite[Theorem 6.1 in Chap. II]{Kato95} for all $\lambda_\delta\in \Spec(\sL_\delta(\bk))$,
	\[ \lambda_\delta=\lambda^{(0)} + \delta \lambda^{(1)} + \delta^2 \lambda^{(2)} +\dots\]
	with $\lambda^{(0)}\in  \Spec(\sL_0)$. As $\eps$ may take arbitrarily small values, eigenvalues $\lambda_\delta\in \Spec(\sL_\delta(\bk))$ of size $|\lambda_\delta|\lesssim\eps\,  (1+|\bk|)$ necessarily correspond to $\lambda^{(0)}=0$. Hence for Hypothesis~\ref{H4} to hold we need that such eigenvalues originating from $\lambda^{(0)}=0\in \Spec(\sL_0)$ (when $\delta=0$) satisfy $|\lambda_\delta|\lesssim\eps\,  (1+|\bk|)$ on the considered set of parameters $(\eps,\delta)\in\cS$. This may follow either from structural properties of the operator inducing $\lambda^{(\ell)}=0$ for $\ell=1,2,\dots$ (in which case $\lambda_\delta=0$), restrictions on the set of parameters $(\eps,\delta)\in\cS$ (namely $\delta\lesssim \eps$), or a combination of both (for instance $\lambda^{(1)}=0$ and $\delta^2\lesssim\eps$). 
\end{Remark}

Our main result builds upon the standard local-in-time well-posedness of the initial-value problem for system~\eqref{eq.general} and provides the propagation in time of the control of some suitable energy functionals uniformly with respect to parameters $(\eps,\delta)\in\cS$. 

\begin{Theorem}[Well-posedness]\label{T.Well-posedness}
	Let $k\in\NN,\ k>d/2+1$. For any $(\eps,\delta)\in\cS$ and any $U_0\in H^k(\RR^d)$ satisfying the hyperbolicity condition $U_0(\RR^d)\subset K\subset\Omega$, there exists a unique $U\in \cC(I_{\eps,\delta};H^k(\RR^d))$ maximal-in-time classical solution to system~\eqref{eq.general} under hypotheses~\ref{H1}--\ref{H3} emerging from the initial data $U\big\vert_{t=0}=U_0$, and one has $U\in \cap_{j=0}^k\cC^j(I_{\eps,\delta};H^{k-j}(\RR^d))$. 
	
	Under the additional hypothesis~\ref{H4}, for any $C_0>0$, $j_0\in\NN^\star$, $i_0\geq 0$ such that $i_0+j_0>d/2+1$ and $ j_\sharp\in\NN $ such that $j_\sharp\geq j_0$, there exists $T>0$, $C>0$ and $\lambda\geq 1$ depending uniquely on $k,j_0,i_0,c_K,N_\L,C_\L,c_\L$ and $C_0$ such that the following holds. 
	
	Assuming that $(\eps,\delta)\in\cS$ satisfies $\delta\leq1/\lambda$, $ \eps \lambda\leq \delta$ and $(\eps\lambda)^{j_\sharp-j_0}\leq \delta^{i_0}$ and
		\begin{equation}\label{eq.well-prepared}
			M_{j_\sharp,0}\eqdef \left.\left( \sum_{j=0}^{j_\sharp-1} \norm{\partial_t^j U}_{H^{k-j}}+\frac{\eps^{ j_\sharp-j_0}}{\delta^{i_0}} \norm{\partial_t^{j_\sharp}U}_{H^{k-j_\sharp}}\right)\right\vert_{t=0} \leq C_0 
		\end{equation}
	then $I_{\eps,\delta}\supset [-T/M_{j_\sharp,0},T/M_{j_\sharp,0}]$ and for all $t\in [-T/M_{j_\sharp,0},T/M_{j_\sharp,0}]$ holds ${\{U(t,\bx)  :  \bx\in\RR^d\}\subset \Omega}$~and
			\begin{equation}\label{eq.estimate}
				\forall j\in\{0,\dots,j_0\}, \ \forall i\in [0,k-j], \quad \norm{\partial_t ^j U(t,\cdot)}_{H^i} \leq C \, \, M_{j_\sharp,0} \,\times\, \max(\{1,\delta^{i_0+j_0-i-j}\}) .
			\end{equation}
\end{Theorem}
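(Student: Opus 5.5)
The first part of the statement (existence, uniqueness and time regularity of the maximal solution for fixed $(\eps,\delta)$) is standard symmetric-hyperbolic theory. The stiff term $\frac1\eps\L_\delta U$ is skew-adjoint with constant coefficients by~\ref{H3}, so it commutes with $\nabla_\bx$. After multiplying by $\S_0(U)^{-1}$ it is only a bounded perturbation at fixed parameters (order one if $\delta>0$, absorbed in the principal part). Kato's theory gives $U\in\cC(I_{\eps,\delta};H^k)$. Time regularity $U\in\cC^j(I_{\eps,\delta};H^{k-j})$ follows by induction: solve the equation for $\partial_t U=\S_0(U)^{-1}(\cdots)$, differentiate in time, and use the product and composition estimates (Lemmas~\ref{L.product}--\ref{L.composition}) with $k>d/2+1$. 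The rest is a continuation argument on a weighted energy.

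\textbf{Weighted energy and stability estimate.} I will introduce $\cF_{k,\bm\alpha}(U)=\sum_{j}\alpha_j^2\sum_{|\beta|\le k-j}\bra{\S_0(U)\partial_t^j\partial_\bx^\beta U,\partial_t^j\partial_\bx^\beta U}_{L^2}$. The weights $\alpha_j$ are $1$ for $j<j_\sharp$ (or $j\le j_0$) and decay like $\eps^{j-j_0}\delta^{-i_0}$ up to the top order. A spatial derivative in excess of $i_0+j_0$ is paid by a factor $\delta$, consistent with~\eqref{eq.estimate}. I will then apply $\partial_t^j\partial_\bx^\beta$ to~\eqref{eq.general} and take the $L^2$ product with the differentiated unknown; the stiff term cancels by skew-adjointness since $\L_\delta$ has constant coefficients.

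The dangerous terms are the commutators $[\partial_t^j\partial^\beta,\S_0(U)]\partial_tU$. Each time derivative of $\S_0(U)$ produces a factor $\partial_tU$, which is not uniformly bounded a priori because $\partial_tU\sim\eps^{-1}\L_\delta U$. The key step, and the main obstacle, is to bound these terms with the anisotropic product estimate (\Cref{L.Product}) in a form compatible with admissible weights (\Cref{D.alpha-admissible}, \Cref{L.weights}). The weights must be chosen so that each product $\partial_t^{j_1}\partial^{\beta_1}U\cdot\partial_t^{j_2}\partial^{\beta_2}U$ is dominated by $\alpha_{j}^{-1}\alpha_{j_1}\alpha_{j_2}\times$(Lipschitz-type norms). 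The Lipschitz-type norms are controlled by $\cF$ itself, via Sobolev embedding on the indices $i_0+j_0>d/2+1$.

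To trade time derivatives for space derivatives I will use~\ref{H4}. On $\Ran(\Id-\sPi_\delta)$, the equation gives $|(\Id-\sPi_\delta)\partial_t^{j}U|\lesssim \eps(1+\delta|\bk|)^{-1}|\widehat{\partial_t^{j+1}U\cdots}|$, so one extra time derivative costs only $\eps$ and gains $\delta^{-1}$ spatial regularity. On $\Ran\sPi_\delta$, the stiff term is $\cO((1+|\bk|))$ and harmless. This yields the expected key estimate of \Cref{P.Stability}: $\frac{d}{dt}\cF\le C(\cF^{1/2})\,\cF^{3/2}$, with $C$ uniform provided $\delta\le1/\lambda$, $\eps\lambda\le\delta$ and $(\eps\lambda)^{j_\sharp-j_0}\le\delta^{i_0}$. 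These conditions make the geometric sums of weight ratios converge.

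\textbf{Conclusion.} \Cref{P.preparation} shows that~\eqref{eq.well-prepared} controls $\cF_{k,\bm\alpha}(U)|_{t=0}$ by $CM_{j_\sharp,0}^2$. The time derivatives beyond $j_\sharp$ are recovered from the equation: iterate the identity $\partial_t^{j+1}U=\S_0^{-1}(\eps^{-1}\L_\delta\partial_t^jU+\dots)$ and use~\ref{H3}--\ref{H4}, losing $\eps^{-1}(1+\delta|\bk|)$ per step, which the weights absorb. A bootstrap then gives $\cF(t)\le 4CM_{j_\sharp,0}^2$ and $U(t)\in K'\Subset\Omega$ for $|t|\le T/M_{j_\sharp,0}$; the $1/M$ timescale uses that $\G(\bm0)\le0$. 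Since $\cF$ controls $H^k$, the blow-up criterion gives $I_{\eps,\delta}\supset[-T/M_{j_\sharp,0},T/M_{j_\sharp,0}]$. Estimate~\eqref{eq.estimate} follows by reading off the weights, interpolating in $i$ between the unweighted index and $k-j$.
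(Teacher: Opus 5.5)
The gap is in the core step. You cannot obtain a closed inequality $\frac{\dd}{\dd t}\cF\le C\,\cF^{3/2}$, uniform in $(\eps,\delta)$, for an energy that contains \emph{all} mixed derivatives $\partial_t^j\partial_\bx^\beta U$. In the energy estimate for $\partial_t^j\Lambda^iU$, the spatial commutator $[\Lambda^i,\S_0(U)]\partial_t^{j+1}U$ produces $\norm{U}_{H^{s_0+1}}\norm{\partial_t^{j+1}U}_{H^{i-1}}$, i.e.\ one more time derivative and one less space derivative. After weighting, this costs the ratio $\alpha_{j+1,i-1}/\alpha_{j,i}$. For the weights \eqref{eq.def-alpha} this ratio equals $\delta/\eps$ as soon as $j\geq j_0$ and $i+j\geq i_0+j_0$; for your $j$-only weights it is a negative power of $\eps$. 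Grönwall then only gives control on times $\cO(\eps/\delta)$, which is exactly the time scale of the small-scale development in \Cref{S.scale}.

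Invoking \ref{H4} inside this energy estimate does not rescue it. The forward estimate bounds $\P_\delta\partial_t^{j+1}U$ in $H^{i-1}$ by $\partial_t^jU$ in $H^i$ (weight $\alpha_{j,i}$) plus $(\Id-\P_\delta)\partial_t^{j+1}U$. For that singular part nothing better than $\alpha_{j+1,i-1}$ is available: the backward estimate \eqref{eq.est-backward} relates it to $\frac\eps\delta\partial_t^{j+2}U$ in $H^{i-2}$, whose weight $\frac\eps\delta\alpha_{j+2,i-2}$ is again $\alpha_{j+1,i-1}$. Your diagnosis of the danger is also off. Since $j_0\geq1$ and \ref{ii} gives $\alpha_{1,s_0}\leq1$, $\norm{\partial_tU}_{H^{s_0}}$ is uniformly controlled. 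The real obstruction is the top-order factor $\partial_t^{j+1}U$ created when space derivatives fall on $\S_0(U)$.

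The missing idea is the two-functional structure of \Cref{P.Stability}. The energy method is applied only on the index set $\omega^{(0)}_{k,\bm\alpha}$:
\begin{itemize}
\item for $j\leq j_0-1$ with arbitrary $i$, where \ref{iv} bounds the bad ratio by $\lambda$; this is the source of the factor $1+\lambda$ in \eqref{eq.control-F0};
\item for pure time derivatives $i=0$, where the spatial commutator is absent, see \eqref{eq.est-energy-0}.
\end{itemize}
All remaining norms are recovered \emph{statically}, at each fixed time, from the equation itself. The forward estimate \eqref{eq.est-forward} (apply $\P_\delta$, use coercivity of $\S_0$ and the first bound in \ref{H4}) trades $\partial_t^jU$ in $H^i$ for $\partial_t^{j-1}U$ in $H^{i+1}$ plus the singular part. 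The backward estimate trades the singular part for $\frac\eps\delta\,\partial_t^{j+1}U$ in $H^{i-1}$. A downward induction on $j\in\{k-1,\dots,j_0\}$ then closes, because \ref{l} gives $\alpha_{j-1,i+1}\leq\lambda^{-1}\alpha_{j,i}$ for $j>j_0$. All cross terms therefore carry a factor $\frac1\lambda+\frac\eps\delta$ and are absorbed for $\lambda$ large.

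This is also why the weights must depend jointly on $(j,i)$ and grow at least like $\lambda^{j-j_0}$ for $j>j_0$. Weights equal to $1$ up to $j_\sharp-1$ violate \ref{l} and prevent that absorption. They also cannot produce the factor $\delta^{i_0+j_0-i-j}$ in \eqref{eq.estimate}. Finally, they would amount to propagating $\cO(1)$ bounds on $\partial_t^jU$ for $j_0<j<j_\sharp$, which the theorem does not claim.

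Your fixed-parameter theory and your initial preparation step (as in \Cref{P.preparation}) are fine.
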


\begin{Remark}[Well-prepared initial data]\label{R.well-prepared}
	The upper bound~\eqref{eq.well-prepared} is an assumption of well-prepared initial data which is in general stronger than the standard assumption $\frac1\eps \L_\delta U_0=\cO(1)$ for small values of $\delta$---even in the situation where $\delta\in(0,1]$ is fixed if one desires controls that are uniform with respect to $\delta\in(0,1]$. The main motivation of this work is to provide the weakest possible constraints on the data, and from this viewpoint to set $j_\sharp$ as small as possible. Observe that it may impose some additional
	restrictions on the parameters $(\eps,\delta)\in\cS$ through the assumption ${(\eps\lambda)^{j_\sharp-j_0}\leq \delta^{i_0}}$: setting $j_\sharp=j_0$ imposes $\delta\geq 1$, while setting $j_\sharp=j_0+1$  yields  $0<\eps\lesssim \delta^{\max(\{1,i_0\})}$ and setting $j_\sharp=j_0+i_0$ with $i_0> 1$ yields $\eps\lesssim\delta$.
	From another viewpoint, if one considers the prescribed limit $\delta\approx \eps^\theta$ with $\theta\in[0,1]$, we find that it imposes $j_\sharp\geq j_0+\lceil \theta i_0\rceil$. Endpoint cases $\theta=0$ ($\delta\approx 1$) and $\theta=1$ ($\delta\approx \eps$)
	are respectively the most and least favorable situations in terms of restrictions on initial data.
\end{Remark}
\begin{Remark}[Admissible weights and development of a small spatial scale]\label{R.weights}
	
		\Cref{T.Well-posedness} follows from a more precise result whose statement is postponed to \Cref{P.Well-posedness} and which provides the propagation of the control of a class of energy functionals involving time and space derivatives of its variable with different weights:
\begin{equation}\label{eq.def-F}
	\cF_{k,{\bm\alpha}}(U)\eqdef\sup\big(\big\{ \alpha_{j,i}^{-1} \norm{\partial_t^j U}_{H^{i}} \ : \ (i,j)\in\RR_+\times\NN,\ i+j\leq k\big\}\big)
\end{equation}
with suitable choices of $\bm\alpha\eqdef(\alpha_{j,i}>0\ : \  (i,j)\in\RR_+\times\NN,\ i+j\leq k)$
describing the typical sizes of the corresponding time and space derivatives of the solutions. 
The explicit set of weights we use to prove \Cref{T.Well-posedness} is 
\begin{equation}\label{eq.def-alpha}
	\alpha_{j,i}\eqdef\max(\{1,\lambda^{j-j_0}, \lambda^{j-j_0}\delta^{i_0+j_0-i-j},\eps^{j_0-j}\delta^{i_0-i}\}).
\end{equation}
	We represent these weights (setting $\lambda=1$ as its value does not play an essential role) in \Cref{F.diagram}.
	
	Notice the dependency of the weights with respect to $\delta$ (from which stems the non-uniform upper bounds $\norm{\partial_t^j U(t,\cdot)}_{H^i}=\cO(\delta^{i_0+j_0-i-j})$ for $i+j>i_0+j_0$) that can be viewed as the signature of the aforementioned development of a spatial scale of size $\delta$ ---a situation that does not occur in the two-scale problem $\delta\approx 1$--- that we discuss in more details later on.	
	
	Incidentally, a close inspection at the proof of \Cref{T.Well-posedness} shows that the result holds for {\em any} $\lambda$ sufficiently large and such that $\delta\leq1/\lambda$, $ \eps \lambda\leq \delta$ and $(\eps\lambda)^{j_\sharp-j_0}\leq \delta^{i_0}$; and that in that case the estimate can be sharpened to
		\[ \forall j\in\{0,\dots,j_0\}, \ \forall i\in [0,k-j], \quad \norm{\partial_t ^j U(t,\cdot)}_{H^i} \leq C \, \, M_{j_\sharp,0} \,\times\, \max(\{1,\lambda^{j-j_0}\delta^{i_0+j_0-i-j}\}) \]
		provided one restricts the time interval to  $t\in[-T/(M_{j_\sharp,0}(1+\lambda)),T/(M_{j_\sharp,0}(1+\lambda))]$. Such result carries additional information on the progressive development of the small spatial scale. 
\end{Remark}
\begin{figure}
	\begin{centering}
		\subcaptionbox{$\delta=1$\label{F.diagram-c}}{\includegraphics[width=.33\textwidth]{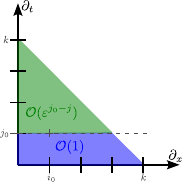}}%
		\subcaptionbox{$\eps<\delta<1$\label{F.diagram-b}}{\includegraphics[width=.33\textwidth]{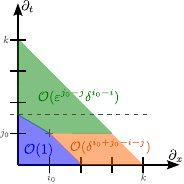}}%
		\subcaptionbox{$\delta=\eps$\label{F.diagram-a}}{\includegraphics[width=.33\textwidth]{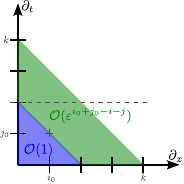}}%
	\end{centering}
	
	\caption{Representation of the admissible set of weights $\bm\alpha$ set in~\eqref{eq.def-alpha} with $j_0=i_0=1$, and $\lambda=1$. The abscissa depicts spatial regularity indices while the ordinate depicts time regularity indices. The horizontal dashed line represent the minimal value that $j_\sharp\in\NN$ may take in \Cref{T.Well-posedness}. 
	}
	\label{F.diagram}
\end{figure}

Because the \Cref{T.Well-posedness} provides among other things a uniform control of the Lipschitz norm in {\em time and space} of solutions (with well-prepared initial data and suitable choice of parameters $(\eps,\delta)\in\cS$), one may infer the strong convergence of solutions as $\eps\searrow 0$. The analysis was provided in \cite[Section~4]{ChengJuSchochet18} and we only state the corresponding result. 
\begin{Corollary}[Convergence]\label{C.convergence}
	Consider $\cS'\subset \cS$ such that entries of $\S_{0},\S_{i},\G$ depend continuously on the parameters $(\eps,\delta)\in\cS'$, and for all $\bk\in\RR^d$, $\sPi_\delta(\bk)\to \sPi(\bk)$ and $(\frac1\eps\sPi_\delta \sL_\delta)(\bk) \to \sT(\bk)$ as $(\eps,\delta)\to (0,\bar\delta)$ in $\cS'$. Denote $\P$ (respectively $\T$) the pseudo-differential operator with symbol $\sPi$ (respectively $\sT$). 
	
	For all $U_0^{\eps,\delta}$ converging to $U_0$ in $H^k(\RR^d)$ as $(\eps,\delta)\to (0,\bar\delta)$ in $\cS'$ and such that the assumptions of \Cref{T.Well-posedness} hold uniformly, ${U^{\eps,\delta}\in \cC(I;H^k(\RR^d))}$ the solutions to ~\eqref{eq.general} emerging from the initial data $U^{\eps,\delta}\big\vert_{t=0}=U_0^{\eps,\delta}$ converge weak-$*$ in $L^\infty(I;H^k(\RR^d))$ and strongly in $\cC(I;H^{k_-}(\RR^d))$ for any $k_-<k$ towards $U\in L^\infty(I;H^k(\RR^d))\cap {\rm Lip}(I;L^2(\RR^d))$ the unique solution in that space to
	\begin{equation}\label{eq.limit}
		\begin{cases}\P\left(\S_{0}(U)\partial_t U + \sum_{l=1}^d\S_{i}(U)\partial_{x_l}U +\G(U)U + \T U\right) = 0 ,\\[1ex]
		(\Id-\P)U=0, \\[1ex]
		 U\vert_{t=0}= U_{0},
	\end{cases}
	\end{equation}
	where (misusing notations) entries of $\S_{0},\S_{i},\G$ are evaluated at $\eps=0$ and $\delta=\bar{\delta}$.
\end{Corollary}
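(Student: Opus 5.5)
We will follow the compactness strategy of \cite[Section~4]{ChengJuSchochet18}, taking as given the uniform control provided by \Cref{T.Well-posedness}. Take $M_{j_\sharp,0}$ uniformly bounded and $I=[-T/C_0,T/C_0]$. Then \eqref{eq.estimate} with $j\in\{0,1\}$ (since $j_0\geq1$) gives two bounds that are uniform in $(\eps,\delta)\in\cS'$ near $(0,\bar\delta)$: $U^{\eps,\delta}$ is bounded in $L^\infty(I;H^k)$, and $\partial_tU^{\eps,\delta}$ is bounded in $L^\infty(I;L^2)$. For the latter we use $i=0$, $j=1$, and the factor $\max(1,\delta^{i_0+j_0-1})\le 1$ because $\delta\leq1$ and $i_0+j_0>1$. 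Moreover, the solutions take values in a fixed compact subset of $\Omega$ by Sobolev embedding.

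Step 1 is compactness. The family is bounded in $L^\infty(I;H^k)\cap{\rm Lip}(I;L^2)$. By Banach--Alaoglu and Aubin--Lions (or Arzelà--Ascoli in $\cC(I;L^2)$, followed by interpolation with $H^k$), along any sequence we extract a subsequence that converges weak-$*$ in $L^\infty(I;H^k)$. The extraction is done in $H^k(B_R)$ locally and then upgraded to global $H^{k_-}$ using equicontinuity in $L^2$ together with interpolation. Since the Lipschitz-in-time bound is in $L^2(\RR^d)$ globally, strong convergence in $\cC(I;L^2)$ follows from the $L^2$-equicontinuity together with the convergence at $t=0$. The subtle point is that Arzelà--Ascoli in $L^2(\RR^d)$ needs tightness, which we do not have uniformly in space. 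So we argue instead via a Cauchy estimate: we take the difference of two solutions in $L^2$, or use local compactness plus weak-lower-semicontinuity. We expect to pass through local strong convergence, which suffices for the nonlinear terms; the global strong statement then follows a posteriori from uniqueness and an $L^2$ energy estimate against the limit. The limit $U$ lies in $L^\infty(I;H^k)\cap{\rm Lip}(I;L^2)$.

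Step 2 identifies the limit constraint and equation. Multiply \eqref{eq.general} by $\eps$. All terms except $\L_\delta U$ are bounded in $L^\infty(I;H^{k-1})$, so $\L_\delta U^{\eps,\delta}=\cO(\eps)$. By \ref{H4}, with $\sPi_\delta$ and $\Id-\sPi_\delta$ commuting with $\sL_\delta$, the lower bound gives $(\Id-\P_\delta)U^{\eps,\delta}=\cO(\eps)$ in $H^{k-2}$, hence $(\Id-\P)U=0$. This uses the pointwise convergence $\sPi_\delta\to\sPi$, dominated by the uniform bound $\|\sPi_\delta\|\le1$. Next apply $\P_\delta$ to \eqref{eq.general}. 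We get $\P_\delta\frac1\eps\L_\delta U=\frac1\eps\P_\delta\L_\delta\P_\delta U$, which is bounded by \ref{H4} with symbol converging to $\sT$. All nonlinear terms converge in the sense of distributions, thanks to the strong local convergence, the continuity of $\S_0,\S_l,\G$ in the parameters, and the weak convergence of $\partial_tU^{\eps,\delta}$, $\nabla U^{\eps,\delta}$. This yields \eqref{eq.limit}.

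Step 3, which we expect to be the main obstacle, is uniqueness of the solution to \eqref{eq.limit} in $L^\infty(I;H^k)\cap{\rm Lip}(I;L^2)$. We plan an $L^2$ energy estimate on the difference $W=U_1-U_2$ of two solutions. Both satisfy $(\Id-\P)U_i=0$, so $W=\P W$ and we may test the $\P$-projected equations against $W$. Self-adjointness of $\P$ then gives $\bra{\P F,W}=\bra{F,W}$. The term $\T W$ contributes nothing, since $\sT$ is skew-Hermitian as a limit of $\frac1\eps\sPi_\delta\sL_\delta\sPi_\delta$. The remaining terms are handled as in the classical symmetric hyperbolic case, using $k>d/2+1$ for the Lipschitz control, and Gronwall gives $W=0$. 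Uniqueness turns subsequential convergence into convergence of the full family, completing the proof.
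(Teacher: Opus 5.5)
Your overall plan (uniform bounds, compactness, identification of the limit, uniqueness) is the standard one of \cite[Section~4]{ChengJuSchochet18}, to which the paper simply defers. Step~2 and the structure of Step~3 are sound. There are, however, two genuine gaps.

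\textbf{The uniform $H^k$ bound.} You assert that \eqref{eq.estimate} bounds $U^{\eps,\delta}$ in $L^\infty(I;H^k)$ uniformly, but you only checked the factor for $(j,i)=(1,0)$.

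For $(j,i)=(0,k)$ the factor is $\max(\{1,\delta^{i_0+j_0-k}\})$, which blows up as $\delta\to0$ when $k>i_0+j_0$. This is exactly the small-scale development of \Cref{S.scale}. So the bound is uniform only if $\bar\delta>0$ (or $k\le i_0+j_0$); this restriction is also implicit in the statement.

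When $\bar\delta=0$, you only have $U^{\eps,\delta}$ bounded in $L^\infty(I;H^{i_0+j_0})$ and $\partial_tU^{\eps,\delta}$ bounded in $L^\infty(I;H^{i_0+j_0-1})$. Since $i_0+j_0>d/2+1$, this still gives the uniform $L^\infty$ control of $U,\nabla U,\partial_tU$ that everything below needs. But all conclusions (weak-$*$ and strong convergence, uniqueness class) must then be stated with $i_0+j_0$ in place of $k$. The same remark applies to your claim in Step~2 that all terms are bounded in $H^{k-1}$; there the $L^2$ level suffices.

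\textbf{Global strong convergence on $\RR^d$.} Step~1 does not prove convergence in $\cC(I;H^{k_-}(\RR^d))$.
\begin{itemize}
\item $L^2$-equicontinuity plus convergence at $t=0$ gives no compactness at $t>0$.
\item A Cauchy estimate between two members of the family produces the cross term $\bra{U_1,(\tfrac1{\eps_1}\L_{\delta_1}-\tfrac1{\eps_2}\L_{\delta_2})U_2}$. This term is $\cO(1)$ but not small at fixed parameters.
\end{itemize}

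The missing idea is a relative energy estimate for $W\eqdef U^{\eps,\delta}-U$, taken along a subsequence converging in $\cC(I;H^{k_-}_{\rm loc})$, in which the non-small cross terms vanish only in the limit.

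Set $N(U)\eqdef\S_0(U)\partial_tU+\sum_l\S_l(U)\partial_{x_l}U+\G(U)U$ and $Q\eqdef(\Id-\P)N(U)$. Since $\T=\P\T\P$, the limit system reads $N(U)+\T U=Q$. Test the equation of $W$ against $W$, using $\bra{\L_\delta U^{\eps,\delta},U^{\eps,\delta}}=\bra{\T U,U}=\bra{Q,U}=0$. This gives
\[\frac{\dd}{\dd t}\bra{\S_0(U^{\eps,\delta})W,W}\leq C\norm{W}_{L^2}^2+o(1)+2r^{\eps,\delta},\qquad r^{\eps,\delta}\eqdef-\bra{\tfrac1\eps\L_\delta U^{\eps,\delta},U}+\bra{\T U-Q,U^{\eps,\delta}}.\]
Here $C$ comes from the uniform $L^\infty$ bounds on $\partial_tU^{\eps,\delta},\nabla U^{\eps,\delta},\partial_tU,\nabla U$, and $o(1)$ from the continuity of the coefficients in $(\eps,\delta)$.

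The term $r^{\eps,\delta}$ is only $\cO(1)$, but its time integrals tend to zero. Indeed, $\frac1\eps\L_\delta U^{\eps,\delta}=N^{\eps,\delta}(U^{\eps,\delta})\rightharpoonup N(U)=Q-\T U$ by your Step~2, and $U^{\eps,\delta}\rightharpoonup U$, so the limit is $2\bra{\T U-Q,U}=0$. Gr\"onwall and $W\vert_{t=0}\to0$ then give $W\to0$ in $\cC(I;L^2)$, and interpolation gives the $H^{k_-}$ statement.

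\textbf{A smaller point in Step~3.} The $L^2$ difference estimate needs $\partial_tU_i\in L^\infty$. The class ${\rm Lip}(I;L^2)$ does not provide this, and it cannot be read off the projected equation as in the symmetric hyperbolic case. You can either recover it by inverting $\P\S_0(U)\P$ on $\Ran\P$ (coercive by \ref{H2}) with commutator estimates, or restrict uniqueness to limit points, which inherit $\partial_tU\in L^\infty(I;H^{i_0+j_0-1})$. The latter is enough for convergence of the whole family.
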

\begin{Remark}\label{R.convergence}
	The assumption that $\sPi_\delta(\bk)\to \sPi(\bk)$ and $(\frac1\eps\sPi_\delta \sL_\delta)(\bk) \to \sT(\bk)$ as $(\eps,\delta)\in\cS'\to(0,\bar\delta)$ is not very restrictive. As detailed in \cite[Lemma 4.1]{ChengJuSchochet18} and based on the perturbation theory of self-adjoint matrices \cite[Chap. II.6]{Kato95}, if $\sL_\delta$ depends holomorphically (or sufficiently smoothly) on $\delta$ and $\cS'$ enforces to the prescribed limit $\eps \delta^n \to \sigma>0$ with $n\in\NN$, then one has $\lim\sPi_\delta=\sPi_{\bar\delta}$ and one may obtain $\sT=\lim \frac1\eps\sPi_\delta \sL_\delta$ from a reduction process (in which case $\sT$ may depend on $\sigma$). Moreover one has $\sT=0$ in situations where $\eps \delta^n \to \infty$ and $\eps \delta^{n+1} \to 0$.   
\end{Remark}

To conclude with our main results we state that if Hypothesis~\ref{H5} or~\ref{H6} additionally holds, our results can be improved either to provide a stronger control on the solution or assuming less stringent assumptions on the initial data. The weights in~\eqref{eq.def-F} that provide the following results are displayed in \Cref{S.H5H6} and illustrated in \Cref{F.diagram-2}.
	\begin{Theorem}	\label{T.H5}
	If Hypothesis~\ref{H5} holds in addition to~\ref{H1}--\ref{H4}, \Cref{T.Well-posedness} and \Cref{C.convergence} hold and one has additionally
		\[ \forall j\in\{0,\dots,j_0\}, \ \forall i\in [0,k-j], \quad \norm{\partial_t ^j U(t,\cdot)}_{H^i} \leq C \, \, M_{j_\sharp,0} \,\times\, \max(\{1,\eps^{j_0-j}\delta^{i_0-i}\}) .\]
\end{Theorem}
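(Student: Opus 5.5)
The approach is to rerun the weighted-energy argument behind \Cref{T.Well-posedness} (that is, \Cref{P.Stability} followed by \Cref{P.Well-posedness} and \Cref{P.preparation}), replacing the weights \eqref{eq.def-alpha} with the smaller family
\[ \tilde\alpha_{j,i}\eqdef\max(\{1,\eps^{j_0-j}\delta^{i_0-i}\}) .\]
This family contains no entry of the form $\lambda^{j-j_0}\delta^{i_0+j_0-i-j}$, which is the term that encoded the growth of the $\delta$-scale oscillations. Since $\tilde\alpha_{j,i}\le\alpha_{j,i}$, controlling $\cF_{k,\tilde{\bm\alpha}}(U)$ gives the improved estimate directly. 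It also implies everything used to derive \Cref{T.Well-posedness} and \Cref{C.convergence}, most importantly the uniform Lipschitz bound in time and space. The work therefore reduces to three checks: that $\tilde{\bm\alpha}$ is admissible in the sense of \Cref{D.alpha-admissible}; that the stability estimate closes with these weights under \ref{H5}; and that \eqref{eq.well-prepared} bounds $\cF_{k,\tilde{\bm\alpha}}(U)\vert_{t=0}$.

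\textbf{Where \ref{H5} enters.} In the proof of \Cref{P.Stability} we apply $\partial_t^j\partial_x^\beta$ to \eqref{eq.general} and take the $\S_0(U)$-weighted energy estimate. The stiff term $\frac1\eps\L_\delta$ is skew-adjoint, so it disappears from the estimate, except through the commutator $[\partial_t^j\partial_x^\beta,\S_0(U)]\partial_tU$. This commutator is what forces the $\delta$-dependent weights in general. Its worst pieces have the form $\S_0^{(\ell)}(U)[\partial^{\gamma_1}U,\dots]\,\partial_t^{j'+1}\partial_x^{\beta'}U$, where the top-order factor $\partial_t U$ is of size $1/\eps$ in the stiff directions once it is converted through the equation, via \ref{H4}, into spatial derivatives at scale $\delta$. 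Under \ref{H5} every such term carries an extra factor $\eps/\delta$, which exactly compensates the loss of one spatial derivative weighted by $\delta^{-1}$ relative to one time derivative weighted by $\eps^{-1}$. I will redo the bookkeeping of \Cref{L.weights} with the anisotropic product estimate \Cref{L.Product}. The aim is to show that each commutator term is bounded by $C\,\tilde\alpha_{j,i}\,\cF_{k,\tilde{\bm\alpha}}(U)^2$, that is, that
\[ \tfrac\eps\delta\,\tilde\alpha_{j_1,i_1+1}\tilde\alpha_{j_2+1,i_2}\lesssim \tilde\alpha_{j,i} \]
holds for the relevant index splittings, while the remaining terms are treated exactly as before.

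\textbf{Main obstacle.} The hardest step is the verification just described, in particular in the regime $i+j>i_0+j_0$, where the old weights needed the factor $\delta^{i_0+j_0-i-j}$. My plan is first to check it on the pure-space terms ($j=0$), using the elliptic control of $(\Id-\P)U$ from \ref{H4}: there one should get $\delta|\bk|$ derivatives from $\eps\partial_t$, and the factor $\eps/\delta$ from \ref{H5} turns $\delta^{-1}$ into $\eps^{-1}$. I will then induct on $j$.

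\textbf{Preparation and conclusion.} For \Cref{P.preparation} I will repeat the argument that recovers the intermediate time derivatives from $\partial_t^{j}U$, $j\le j_\sharp$, by solving the equation for $\frac1\eps\L_\delta\partial_t^{j}U$ and inverting on $\Ran(\Id-\P)$. The same factor $\eps/\delta$ keeps the weights $\tilde\alpha$ at $t=0$. The continuation argument of \Cref{P.Well-posedness} then goes through verbatim.
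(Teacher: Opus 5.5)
Your overall route matches the paper's: exploit the $\eps/\delta$ gain on commutators with $\S_0$ in the energy estimate, then run \Cref{P.Well-posedness} and \Cref{P.preparation} with new weights. The gap is in the weights you choose, which break the bootstrap.

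Passing from \eqref{eq.def-alpha} to $\tilde\alpha_{j,i}=\max(\{1,\eps^{j_0-j}\delta^{i_0-i}\})$, you remove not only the $\delta$-growth term $\lambda^{j-j_0}\delta^{i_0+j_0-i-j}$ but also $\lambda^{j-j_0}$. The latter is what secures (A5), and \ref{H5} has no bearing on (A5). In \eqref{eq.induction}, for $j\geq j_0+1$, two kinds of terms must be absorbed: $\frac{\alpha_{j-1,i+1}}{\alpha_{j,i}}m_{j-1,i+1}$ and $C\frac{\alpha_{j-1,i}}{\alpha_{j,i}}M_U$. Here $C$ depends on $M_V$, and these terms stem at least in part from $\S_l(V)\partial_{x_l}U$ and $\G(V)U$ in the forward estimate, so they gain nothing from \ref{H5}. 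They can only be absorbed because (A5) makes these ratios at most $1/\lambda$.

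Within the range of \Cref{T.Well-posedness}, this fails for $\tilde\alpha$ as soon as $j_\sharp\geq j_0+2$. Take $i_0=2$, $j_\sharp=j_0+2$ and $\delta=\eps^{3/4}$ with $\eps$ small; this satisfies $\delta\leq 1/\lambda$, $\eps\lambda\leq\delta$ and $(\eps\lambda)^{2}\leq\delta^{2}$. Then for $0<i\leq 2/3$ one has $\tilde\alpha_{j_0+1,i}=\tilde\alpha_{j_0,i+1}=\tilde\alpha_{j_0,i}=1$. The coefficients above are then of order one, and the absorption $C(\frac\eps\delta+\frac1\lambda)M_U\leq\frac12 M_U$ behind \eqref{eq.control-F} is lost. ($\tilde\alpha$ does satisfy (A5) when $\eps\lambda\leq\delta^{i_0}$, e.g. for $j_\sharp=j_0+1$, but the theorem covers more.)

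The fix is the paper's choice $\alpha'_{j,i}=\max(\{1,\lambda^{j-j_0},\eps^{j_0-j}\delta^{i_0-i}\})$. It is of the form \eqref{eq.def-alpha-general} with $P=3$, satisfies (A5) as in \Cref{C.weights}, and coincides with $\tilde\alpha_{j,i}$ for $j\leq j_0$, so the improved bound is unaffected. Since $\lambda$ is a fixed constant, $\alpha'\leq\lambda^k\tilde\alpha$, so your bound on $\cF_{k,\tilde{\bm\alpha}}(U)$ then follows a posteriori.

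Two smaller corrections. First, neither $\tilde\alpha$ nor $\alpha'$ is admissible in the sense of \Cref{D.alpha-admissible}. For $j=j_0-1$ and $i$ large, $\alpha_{j_0,i-1}=\delta^{i_0+1-i}$ can exceed $\lambda\alpha_{j_0-1,i}=\lambda$, violating (A4). What is actually proved is \Cref{P.Stability-H5}, in which (A4) is replaced by (A4)$'$: $\frac\eps\delta\alpha_{j+1,0}\leq\lambda\alpha_{j,0}$ for $j<j_0$. There, \ref{H5} enters only in deriving \eqref{eq.control-F0} from \eqref{eq.est-energy}, and purely algebraically. Writing $\S_0(V)=\S_0(\bm 0)+\frac\eps\delta\widetilde{\S}_0(V)$, the commutators with $\S_0(V)$ and its derivatives that produce the terms in $\partial_t^{j+1}U$ carry a factor $\frac\eps\delta$. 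Hence (A3), $\frac\eps\delta\alpha_{j+1,i-1}\leq\alpha_{j,i}$, replaces (A4) for $i\geq1$. For $i\in(0,1)$ the extra term $\frac\eps\delta\norm{V}_{H^{s_0+1}}\norm{\partial_t^{j+1}U}_{L^2}$ needs (A4)$'$, because (A3) only gives $\alpha_{j+1,0}\leq\frac{\delta^i}{\eps}\alpha_{j,i}$. Nothing passes through \ref{H4} or elliptic control at this point, no new induction starting from pure spatial derivatives is needed, and the forward/backward part of the proof is unchanged.

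Second, \Cref{P.preparation} involves no inversion on $\Ran(\Id-\P_\delta)$. It climbs from $\partial_t^{j_\sharp}U$ to higher time derivatives using the equation, \ref{H3} and (A6). It only needs $\alpha_{j,i}=\eps^{j_0-j}\delta^{i_0-i}$ for $j\geq j_\sharp$, which $\alpha'$ satisfies under $(\eps\lambda)^{j_\sharp-j_0}\leq\delta^{i_0}$.
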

\begin{Theorem}\label{T.H6}
	If Hypothesis~\ref{H6} holds in addition to~\ref{H1}--\ref{H4}, \Cref{T.Well-posedness} also holds with $j_0=0$ and $i_0>d/2+1$. In that case we do not have necessarily a uniform bound on $\norm{\partial_t U}_{L^2}$, and \Cref{C.convergence} may not hold.
\end{Theorem}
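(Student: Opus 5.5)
We follow the same route as for \Cref{T.Well-posedness}: run the weighted energy method of \Cref{S.Admissible,S.Stability,S.completion} with a new set of weights $\bm\alpha$, chosen to exploit the smallness of $\S_0^{(\ell)}$ in \ref{H6}. The main change is that no time derivative needs a uniform bound. We take $j_0=0$ and $i_0>d/2+1$, and consider weights of the form
\[
\alpha_{j,i}\eqdef\max\big(\{1,\ \lambda^{j}\delta^{i_0-i-j},\ \eps^{-j}\delta^{i_0-i}\}\big),
\]
possibly adjusted by powers of $\lambda$. These are obtained from \eqref{eq.def-alpha} by setting $j_0=0$; in particular $\partial_t^jU$ with $j\geq1$ is allowed to be of size $\eps^{-j}$. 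The first step is to check that these weights are admissible in the sense of \Cref{D.alpha-admissible}. This works except at one point: the condition coming from products with $\partial_t\S_0(U)$, where the uniform Lipschitz-in-time control (the $j_0\geq1$ requirement) was used before.

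\textbf{Key step: the commutator with $\S_0$.} In the energy estimate for $\partial_t^j\partial_x^\beta U$, the commutator terms involve $\S_0^{(\ell)}(U)[\partial^{\gamma_1}U,\dots,\partial^{\gamma_\ell}U]\,\partial_t\partial^{\gamma_0}U$ together with the term $\tfrac12\bra{(\partial_t\S_0(U))V,V}$. Under \ref{H6}, each factor $\S_0^{(\ell)}$ contributes $(\eps/\delta)^\ell$. This exactly compensates the loss $\eps^{-1}$ of each extra time derivative landing on a coefficient, up to a factor $\delta$ per factor, which the $\delta^{-i}$ spatial weights absorb. Concretely, $\partial_t\S_0(U)=\S_0'(U)\partial_tU$ has $L^\infty$ norm $\lesssim(\eps/\delta)\,\norm{\partial_tU}_{H^{i_0-1}}\lesssim(\eps/\delta)\,\eps^{-1}\delta=\cO(1)$, using $i_0-1>d/2$. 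Here Sobolev embedding requires $i_0>d/2+1$ in place of $i_0+j_0>d/2+1$. I will redo the anisotropic product estimates (\Cref{L.Product}), tracking one $(\eps/\delta)$ per derivative of $\S_0$, and check the inequality $\alpha_{j_1,i_1}\cdots\alpha_{j_\ell,i_\ell}(\eps/\delta)^\ell\lesssim\alpha_{j,i}$ for each distribution of derivatives. I expect this bookkeeping to be the main obstacle. Terms with $\S_l$ and $\G$ involve only spatial derivatives of the coefficients and remain controlled as before. The stiff term is handled as in \Cref{P.Stability}: skew-adjointness kills it in the energy estimate, and \ref{H4} provides the conversion between time and space derivatives of $(\Id-\P)U$.

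The remaining steps then follow as before. The stability estimate of \Cref{P.Stability} holds for $\cF_{k,\bm\alpha}$ with the new weights. The continuation argument of \Cref{P.Well-posedness} yields the time $T/M_{j_\sharp,0}$. The preparation step \Cref{P.preparation} still bounds the initial weighted energy, now under \eqref{eq.well-prepared} with $j_0=0$, which is compatible with $(\eps\lambda)^{j_\sharp}\leq\delta^{i_0}$. For $j=j_0=0$, estimate \eqref{eq.estimate} then follows from the definition of the weights.

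For the negative statement, the weights give only $\norm{\partial_tU}_{L^2}\lesssim\eps^{-1}\delta^{i_0}$, which is not uniform. Hence the compactness-in-time argument behind \Cref{C.convergence}, taken from \cite[Section~4]{ChengJuSchochet18}, is unavailable. A linear example with $\S_0$ constant and ill-prepared data, which still satisfies the hypotheses, shows that the time oscillations do persist, so convergence may indeed fail.
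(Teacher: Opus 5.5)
Your outline follows the paper's route, and you isolate the right mechanism. Under \ref{H6} every contribution carrying $\S_0^{(\ell)}$ gains $(\eps/\delta)^\ell$, which offsets $\alpha_{1,s_0}=\delta/\eps$ through $\frac\eps\delta\alpha_{1,s_0}\le\alpha_{0,s_0+1}$ (from \ref{v}). So the requirement $\alpha_{1,s_0}\le1$ can be dropped, and $s_0=i_0-1>d/2$ is all that is needed.

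The gap is the claim that \Cref{P.Stability} then holds ``as before''. You never say which entries of $\cF_{k,\bm\alpha}$ come from the energy method and which come from \ref{H4}, and for $j_0=0$ this is exactly what must change.

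In \Cref{P.Stability} the energy method is run only on $\omega^{(0)}_{k,\bm\alpha}$. For $j_0=0$ this set contains only the pure time derivatives $(0,j)$. Every other entry comes from the induction built on the forward estimate \eqref{eq.est-forward}, which is obtained by applying $\P_\delta\partial_t^{j-1}\Lambda^i$ and so exists only for $j\ge1$. Hence nothing controls $\norm{U}_{H^i}$ for $i>0$, and the induction has no base row.

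The paper fixes this by adding all $(i,0)$, $i\in[0,k]$, to $\omega^{(0)}_{k,\bm\alpha}$ and running energy estimates on $\Lambda^iU$.
\begin{itemize}
\item The critical term there is $[\Lambda^i,\S_0(V)]\partial_tU$, bounded by $\alpha_{1,i-1}M_VM_U$ through $\norm{V}_{H^{s_0+1}}\norm{\partial_tU}_{H^{i-1}}$.
\item $\alpha_{1,i-1}$ exceeds $\alpha_{0,i}$ by the factor $\delta/\eps$ when $i\ge i_0$. The single factor $\eps/\delta$ from \ref{H6}, combined with \ref{v}, gives $\frac\eps\delta\alpha_{1,i-1}\le\alpha_{0,i}$ for $i\ge1$.
\item For $i\in(0,1)$ the commutator can only be bounded through $\norm{\partial_tU}_{L^2}$, and \ref{v} only gives $\frac\eps\delta\alpha_{1,0}\le\delta^{i-1}\alpha_{0,i}$.
\item One therefore needs the extra condition \ref{iv}'', namely $\frac\eps\delta\alpha_{1,0}\le\lambda\alpha_{0,0}$. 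It replaces \ref{iv}, which is vacuous when $j_0=0$, and it holds for these weights because $i_0\ge1$.
\end{itemize}
Once row $j=0$ comes from energy estimates, the forward/backward induction on rows $j\ge1$ is unchanged. Your key-step bookkeeping is the right ingredient, but it must sit inside this restructured functional, and your admissibility check must include \ref{iv}''.

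Two smaller remarks.

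First, keep the term $\lambda^j$ in the weights, as in the paper's choice $\alpha''_{j,i}=\max(\{\lambda^j,\eps^{-j}\delta^{i_0-i}\})$. Your term $\lambda^j\delta^{i_0-i-j}$ is redundant, since it is dominated by $\eps^{-j}\delta^{i_0-i}$ once $\eps\lambda\le\delta$. Without $\lambda^j$, condition \ref{l} can fail at low-order entries in regimes allowed by $(\eps\lambda)^{j_\sharp}\le\delta^{i_0}$; for example, $i_0=j_\sharp=3$ at $(j,i)=(1,1)$. That condition, $\alpha_{j+1,i-1}\ge\lambda\alpha_{j,i}$, is what absorbs the terms $m_{j-1,i+1}$ in the induction.

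Second, for the negative statement the paper only notes that the weights give no uniform bound on $\norm{\partial_tU}_{L^2}$. If you want an explicit counterexample to convergence, a zeroth-order $\L_\delta$ is excluded by \ref{H4}, because the lower bound $c_\L(1+\delta|\bk|)$ fails at large $|\bk|$. You can take instead $\S_0=\Id$ with the $(u,v)$ block of $\widetilde\L_\delta$ from \eqref{eq.toy2}. There $\sPi_\delta=0$, and $e^{t\L_\delta/\eps}U_0$ does not converge even though the preparation assumption with $j_\sharp=1$ holds uniformly at fixed $\delta$.
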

\begin{figure}
	\begin{centering}
		\subcaptionbox{\Cref{T.Well-posedness}, $j_0=i_0=1$\label{F.diagram-2-a}}{\includegraphics[width=.33\textwidth]{diagram.pdf}}%
		\subcaptionbox{\Cref{T.H5}, $j_0=i_0=1$\label{F.diagram-2-b}}{\includegraphics[width=.33\textwidth]{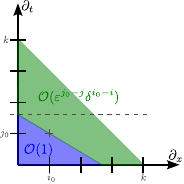}}%
		\subcaptionbox{\Cref{T.H6}, $j_0=0,i_0=2$\label{F.diagram-2-c}}{\includegraphics[width=.33\textwidth]{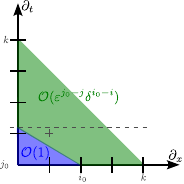}}%
	\end{centering}
	
	\caption{Representation of the admissible weight $\bm\alpha$ used in the proof of \Cref{T.Well-posedness}, \Cref{T.H5} and \Cref{T.H6}. One has $\eps<\delta<1$, and $\lambda=1$. The abscissa depicts spatial regularity indices while the ordinate depicts time regularity indices.
		The horizontal dashed line represent the minimal value that $j_\sharp\in\NN$ may take in \Cref{T.Well-posedness}, \Cref{T.H5} and \Cref{T.H6} respectively.
		}
	\label{F.diagram-2}
\end{figure}

\subsection{Discussion}\label{S.discussion}

\paragraph{Standard two-scale singular limits}
The class of systems we study, namely~\eqref{eq.general} with Hypotheses~\ref{H1}--\ref{H4}, embeds well-studied singular limits, in particular considering operators $\L_\delta$ of the form
\[ \L_\delta = \sL_0+\delta\sum_{l=1}^d\sL_l \partial_{x_l}\]
where $\sL_0$ is a skew-symmetric matrix and for all $l\in\{1,\dots,d\}$, $\sL_l$ is a symmetric matrix.
 First and foremost, when $\sL_0=0$, our problem belongs to {\em low Mach number} or {\em weakly compressible} limits whose study have been initiated by Browning and Kreiss~\cite{BrowningKreiss82}, Klainerman and Majda~\cite{KlainermanMajda81,KlainermanMajda82} among others; see a detailed account in~\cite{Gallagher05,Schochet05,Alazard08}. In our framework of general systems with well-prepared initial data, the decisive result has been obtained by Schochet~\cite{Schochet86a}. In this work the author proves the analogue of \Cref{T.Well-posedness} and \Cref{C.convergence}, that is uniform local-in-time well-posedness and convergence of solutions for well-prepared initial data ---which in this case take a simpler form, namely $\norm{U_0}_{H^k}=\cO(1)$ and $\norm{\partial_t U\vert_{t=0}}_{H^{k-1}}=\cO(1)$. Unsurprisingly since our strategy follows closely the approach in~\cite{Schochet86a}, our result is entirely consistent: when $\delta\approx 1$, that is when order-zero stiff terms are not dominant, we find that the aforementioned notion of well-prepared initial data secures uniform local-in-time well-posedness and convergence results; see \Cref{R.well-prepared} and \Cref{F.diagram-c}. We additionally prove that the control of energy functionals associated with stronger notions of well-prepared data (up to the strongest one considered in~\cite{BrowningKreiss82}) propagate in time. We argue in the following section that, due to a mechanism of development of a small spatial scale, such result cannot hold (in the absence of additional structural assumptions on the system at stake) when order-zero stiff terms are dominant, that is when $\delta\ll 1$.

It is tempting to compare the case when $\delta\lesssim \eps$ (or, equivalently, $\sL_l=0$ for all $l\in\{1,\dots,d\}$) with another widely studied two-scale singular limit, that is {\em relaxation limits}. For such problems, a seminal work is~\cite{Yong99} by Yong, which investigates uniform well-posedness, convergence of solutions, and describes the boundary layer (in time) for ill-prepared initial data. It should be noticed however that the stability condition therein (see also~\cite{KawashimaYong04}) exhibit the (partially) dissipative nature of stiff terms while, due to our assumption of skew-symmetry for the matrix $\sL_0$, our framework is fully conservative. As for general balance laws with stiff source terms in the conservative framework, we may first refer to the work of Gallagher~\cite{Gallagher98} which considers system of the form~\eqref{eq.general} assuming $\S_0=\Id$. This restriction is crucial as can be seen from the small-scale development mechanism described in the following section which prevents suitable stability estimates in the absence of additional structural assumptions on the system such as considered by Schochet in~\cite{Schochet87}, or a very strong notion of well-prepared initial data. Indeed, explicit solutions to the toy model~\eqref{eq.toy} with $\delta=0$ show that one cannot expect in general a uniform local-in-time well-posedness and convergence result outside of strongly prepared initial data, namely $\norm{U_0}_{H^k}=\cO(1)$ and $\norm{\partial_t^j U\vert_{t=0}}_{H^{k-j}}=\cO(1)$ for all $j\in\{1,\dots,k\}$ (a result which can easily be obtained by the approach of~\cite{BrowningKreiss82}); see \Cref{R.well-prepared} and \Cref{F.diagram-a}.

Our work provides a connection between the framework of low Mach number limits ($\delta\approx 1$) and the framework of conservative stiff source terms ($\delta\lesssim \eps$) by considering the situation $\eps\ll\delta\ll 1$. Our results express a notion of preparation for initial data which ``interpolates'' between the standard notion of the former framework, and the stronger notion associated with the latter; see \Cref{R.well-prepared} and \Cref{F.diagram-b}. Let us once again emphasize that additional structural hypotheses on the considered system may allow to weaken the necessary assumptions of initial preparation; we discuss such hypotheses in~\ref{H5} and~\ref{H6}, and the outcome is illustrated in \Cref{F.diagram-2}. We do not explicitly discuss the strongest structural assumption $\Norm{ \S_0'}_{(\RR^n)^\ell\to M_n(\RR)}\lesssim \eps $ since, as already mentioned and exemplified in \Cref{S.rotating-SV}, satisfactory results can be obtained through standard arguments.
\medskip

\paragraph{Three-scale singular limits} Let us now review some works in the literature that specifically consider three-scale singular limits. As mentioned previously, the analysis of fluid motion at planetary scale provides natural examples where different effects arise at distinct scales, determined by the size of independent dimensionless numbers, such as Froude ($\Fr$), Rossby ($\Ro$) and Mach ($\Ma$) numbers (see {\em e.g.~\cite{Vallis17}}). There has been many mathematical works on such systems, and it is outside of the scope of this discussion to review all of them. Let us mention that most of the works in the literature focus on a {\em prescribed limit}, that is relate all dimensionless parameters to a single vanishing parameter $\eps\searrow 0$. Yet in the past decade some effort has been made to extend singular limits so as to consider multi-scale limits. For instance, concerning the Boussinesq system (that is assuming incompressibility, $\Ma=0$), we can mention the works of Mu and Schochet~\cite{MuSchochet22} and Mu and Wei~\cite{MuWei23}, and Jo, Kim and Lee~\cite{JoKimLee24} that investigate the simultaneous limit $\Fr\searrow 0$) and Rossby $\Ro\searrow 0$ with dominant stratification ($\Fr/\Ro\searrow 0$) for the former, dominant rotation ($\Ro/\Fr\searrow 0$) for the second, and balanced stratification and rotation for the latter; see also the earlier works of Ju and Mu~\cite{JuMu19,MuJu21}. In the situation of compressible fluids ($\Ma> 0$) but neglecting stratification effects ($\Fr=\infty$), Mu~\cite{Mu24} and Ko, Pausader, Takada \& Widmayer~\cite{KoPausaderTakadaEtAl25} considered recently the rotating isentropic 3-dimensional Euler equations in the two-scale singular limit $\Ro\searrow 0$, $\Ma\searrow 0$ and either $\Ro/\Ma= \nu$ with $\nu>0$ fixed, or $\Ro/\Ma\searrow 0$, or (in the latter work) $\Ma/\Ro\searrow 0$. The 2-dimensional system (with a special focus on the rotational shallow-water system which is a specific case) was already considered by Cheng and Tadmor in~\cite{ChengTadmor08,Cheng09}. Considering now the general system with the effect of stratification, compressibility, rotation as well as viscosity, Feireisl, Gallagher, Gerard-Varet \& Novotný~\cite{FeireislGallagherGerard-VaretEtAl12} as well as Feireisl and Novotný~\cite{FeireislNovotny14} studied the limit of small Rossby, Mach, Froude {\em and} Reynolds numbers. In these works the limit is restricted to a single parameter, but they consider different scalings $\Ro=\eps$, $\Ma=\eps^m$, $\Fr=\eps^n$, $\Rey=\eps^{-\alpha}$, and the situation is considered as multi-scale when the limit system describing the oscillatory part of the solution cannot be rescaled to be parameter-independent. Del Santo, Fanelli, Sbaiz and Wróblewska-Kamińska~\cite{DelSantoFanelliSbaizEtAl23} considered a similar setting (with $\alpha=0$) but with low stratification ($m>2 n$). We let the reader refer to this latter work for an interesting overview of previous results of multiscale analyses.

In all the works mentioned in the previous paragraph, the authors use a combination of energy methods and dispersive estimates (the last mentioned work relying additionally on a compensated compactness argument), the latter being key to consider either ill-prepared initial data, or extend the lifespan of solutions. In essence the authors consider a specific system of equation and use as much as possible the structure of the system to obtain a fine description of the asymptotic behavior of solutions. Our perspective is different, as we wish to obtain robust results that apply to a general class of systems. As such, let us clarify that our analysis brings essentially no novel information with respect to previous works\footnote{This comment is not entirely true: our analysis provides the propagation in time of the high-regularity control of solutions that allow small-amplitude rapid spatial oscillations for sufficiently well-prepared data, a setting which is not considered in the literature.}, since the uniform control of solutions on the timescale we consider (and in particular for well-prepared initial data) for such systems follows from standard energy methods, due to the fact that $\S_0=\Id$ in their framework (again, we emphasize this point in one of our applications in \Cref{S.rotating-SV}). 

Of course we cannot expect in general that physically realistic systems satisfy the restrictive assumption $\S_0=\Id$. Let us mention for instance the works~\cite{KleinAchatzBreschEtAl10,BreschKleinLiu22} on atmospheric flow models where non-constant Brunt–Väisälä frequency yields space-dependency in the symmetrizer $\S_0$. Even more delicate is the situation where $\S_0$ depends on the solution which we investigate in this work.
\medskip

\paragraph{Studies for general classes of systems} The works whose framework is the closest to our analysis are the ones of Cheng, Ju and Schochet~\cite{ChengJuSchochet18} and Schochet and Xu~\cite{SchochetXu20} dealing with general systems of the form~\eqref{eq.general} satisfying~\ref{H1}-\ref{H2} but not (necessarily)~\ref{H3}-\ref{H4}, and assuming additionally~\ref{H6}. Specifically, the authors consider singular operators $\L_\delta=\L_0+\delta\L_1$ where $\L_0,\L_1$ are constant-coefficient, skew-symmetric (for the $L^2$ inner-product) differential operators of order at most 1, and assume that the symmetrizer is of the form $\S_0(U)=\widetilde\S_0(\tfrac\eps\delta U)$, which leads to Hypothesis~\ref{H6}.
In the same line of works, we would also like to mention~\cite{ChengJuSchochet21} which studies the three-scale singular limit of a specific system of the compressible ideal magnetohydrodynamics equations where $\L_0,\L_1$ are order-one differential operators, and~\cite{SchochetXu22} which investigates a class of systems with a special structure that allows to consider ill-prepared initial data. Returning to~\cite{ChengJuSchochet18,SchochetXu20}, the authors prove uniform estimates on solutions provided that the initial data are such that $\norm{U\vert_{t=0}}_{H^{k}}+\eps^\alpha\norm{\partial_t U\vert_{t=0}}_{H^{k-1}}=\cO(1)$ and the parameters $\eps,\delta>0$ satisfy $\eps\lesssim \delta^{\frac{k}{1-\alpha}}$ for some parameter $\alpha\in [0,1)$ ($\alpha=0$ in~\cite{ChengJuSchochet18} and $\alpha\in(0,1)$ in~\cite{SchochetXu20}).
This should be compared with \Cref{T.H6} (that is \Cref{T.Well-posedness} with $j_0=0$, $j_\sharp=1$ and $i_0=k$) relying on the condition $\norm{U\vert_{t=0}}_{H^{k}}+\eps\delta^{-k}\norm{\partial_t U\vert_{t=0}}_{H^{k-1}}=\cO(1)$, which is less stringent when $\eps\lesssim \delta^{\frac{k}{1-\alpha}}$. Moreover, our parameter restriction $\eps\leq \delta^{k}/\lambda$ is less stringent than $\eps\lesssim \delta^{\frac{k}{1-\alpha}}$ unless $\alpha=0$. Hence our result at least partially improves~\cite{ChengJuSchochet18,SchochetXu20}, although it should be recalled that our framework imposes additional assumptions on the singular operator $\L_\delta$ (Hypotheses~\ref{H3} and~\ref{H4}).
 This improvement stems from the fact that we use spatially weighted energy functionals (as advocated in \cite[eq.~(3.1)]{ChengJuSchochet18}) as well as the strategy introduced by Schochet in~\cite{Schochet86a} to infer the control of our energy functionals from the control of $\norm{U}_{H^{k}}$ and all time derivatives $\norm{\partial_t^j U}_{L^2}$, $j=1,\dots,k$.

Finally, let us mention a preceding work of the first author~\cite{Duchene19} which considered a system of the form~\eqref{eq.general} satisfying (almost; see discussion in \Cref{S.LCT}) Hypotheses~\ref{H1}--\ref{H4}. While the strategy in this work is similar to the one employed here ---and in particular follows the ideas of~\cite{Schochet86a}--- the energy functional considered in~\cite{Duchene19} required a much stronger restriction on initial data:
\[  \sum_{j=0}^{j_0}\norm{\partial_t^j U\vert_{t=0}}_{H^{k-j}}+\sum_{j=j_0+1}^{k} (\eps/\delta)^{j-j_0} \norm{\partial_t^j U}_{H^{k-j}}=\cO(1)\]
where $j_0\in\{1,\dots,k\}$. The main incentive of this work was to relax the assumption of preparation of initial data, and our main novel ingredient is the use of suitable spatially weighted energy functionals, which are adapted to the small-scale development mechanism we discuss in the following section.

\subsection{Small-scale development mechanism}\label{S.scale}

Let us now describe the mechanism of small-scale development that can be triggered by systems of the form~\eqref{eq.general}. To this aim we consider the following toy model
\begin{equation}\label{eq.toy}
	\left\{\begin{aligned}
	e^w\partial_tu&=\frac\delta\eps\partial_x u-\frac1\eps v,\\[1ex]
	e^w\partial_tv&=\frac\delta\eps\partial_x v+\frac1\eps u,\\[1ex]
	\partial_tw&=0.
	\end{aligned}\right.
\end{equation}
It is obvious that system~\eqref{eq.toy} is of the form~\eqref{eq.general} and satisfies~\ref{H1}--\ref{H3}. Notice however that it does not satisfy~\ref{H4} since $\sL_\delta$ the symbol associated with the operator 
\[\L_\delta\eqdef \begin{pmatrix}
	\delta\partial_x&1&0\\
	-1&\delta\partial_x&0\\
	0&0&0
\end{pmatrix}\]
has eigenvalues $\lambda_0(k)=0$ and $\lambda_\pm(k)=\i(\pm1+\delta k) $ which continuously evolves from size $\gtrsim 1$ at $k=0$ to size $\cO(\eps)$ for Fourier wavenumbers at distance $\epsilon$ from the critical value $k_\star\eqdef \mp\delta^{-1}$.
\medskip

Denoting $w_0\eqdef w(t=0,\cdot)$ and $z=u+\i v$, we find that~\eqref{eq.toy} reads equivalently
\[ e^{w_0}\partial_t z=\frac\delta\eps\partial_x z+\frac\i\eps z.\]
Let us first notice that if $\delta=0$, then the explicit solutions
\[z(t,\cdot)=z_0(\cdot)\exp(t\, \frac\i\eps e^{-w_0(\cdot)}), \quad z_0(\cdot)=z(t=0,\cdot),\]
exhibit fast spatial oscillations of shrinking-with-time typical wavelength $\ell$ that emerge on the timescale of order $\cO(\eps/\ell)$, as soon as $w_0$ has spatial variations. This naturally leads to investigating the situation $0<\delta \ll 1$ which is the framework of our study.

Consider now the case $\delta>0$.
We introduce the flow map $\Phi(t,x)$ defined by
\[\partial_t\Phi(t,\cdot)=-\frac\delta\eps e^{-w_0(\Phi(t,\cdot))}, \qquad \Phi(t=0,x)=x,\]
and one quickly checks that $Z(t,\cdot)=z(t,\Phi(t,\cdot))$ satisfies the identity
\[ Z(t,x)=Z(0,x)e^{-\frac\i\delta(\Phi(t,x)-x)}.\]

It is now apparent that a new scale is triggered (when $\delta\ll1$) by variations of  $\Phi(t,x)-x$ that emerge on the timescale of size $\eps/\delta$. Indeed, we have
\[ \norm{(\partial_x u,\partial_x v)}_{L^\infty}\approx \norm{\partial_x z}_{L^\infty}=\norm{ \tfrac1{\partial_x\Phi}(\partial_x Z)\circ\Phi^{-1}}_{L^\infty}\]
and
\[|\partial_x Z|=|(\partial_x z_0)\circ\Phi-\frac{\i}\delta (\partial_x\Phi-1)(z_0\circ\Phi)|.\]
Yet one has
\[\partial_x\Phi(t,\cdot)=e^{w_0(\cdot)-w_0(\Phi(t,\cdot))},\]
which shows that 
\[ \forall t,x\in\RR^2,\quad \exp(\inf(w_0)-\sup(w_0)) \leq \partial_x\Phi\leq \exp(\sup(w_0)-\inf(w_0)),\]
and we have $\Phi(t,x)\leq x-\frac\delta\eps t e^{-\sup(w_0)}$, hence we see by a continuity argument that, given $x_1,x_2\in\RR$ such that $w_0(x_1)\neq  w_0(x_2)$, there exists $0\leq t\leq \frac\eps\delta e^{\sup(w_0)}|x_1-x_2|$ such that
\[ \inf|\partial_x\Phi(t,\cdot)-1|\geq |e^{w_0(x_1)-w_0(x_2)} -1|.\]
As a conclusion, we find that unless one imposes decay or smallness assumptions on the initial data, then one has
\[\sup_{t\in [0,\frac\eps\delta]}\norm{(\partial_x u,\partial_x v)}_{L^\infty}\approx \delta^{-1}.\]
The same reasoning for higher-order derivatives (provided initial data are sufficiently regular) yields
\[ \forall k\in\NN, \quad \sup_{t\in [0,\frac\eps\delta]}\norm{(\partial_x^k u,\partial_x^k v)}_{L^\infty}\approx \delta^{-k}.\]
Such estimates are the signature of the emergence of oscillations with wavelength $\delta$ on the timescale of size $\cO(\eps/\delta)$.
\bigskip

One could reasonably oppose that this behavior may be a consequence of the failure of Hypothesis~\ref{H4} about wavenumber $k_\star=\mp\delta^{-1}$ which was exhibited previously. We claim that it is not the case and that such behavior also occurs for systems satisfying the additional hypothesis~\ref{H4}, and in particular to the toy model
\begin{equation}\label{eq.toy2}
	\left\{\begin{aligned}
		e^w\partial_tu&=\frac\delta\eps\partial_x u-\frac1\eps v,\\[1ex]
		e^w\partial_tv&=-\frac\delta\eps\partial_x v+\frac1\eps u,\\[1ex]
		\partial_tw&=0.
	\end{aligned}\right.
\end{equation}
System~\eqref{eq.toy2} is of the form~\eqref{eq.general} and $\widetilde\sL_\delta$ the symbol associated with the operator 
\[\widetilde \L_\delta\eqdef \begin{pmatrix}
	\delta\partial_x&1&0\\
	-1&-\delta\partial_x&0\\
	0&0&0
\end{pmatrix},\]
has eigenvalues $\widetilde\lambda_0(k)=0$ and $\widetilde\lambda_\pm(k)=\pm\i\sqrt{1+(\delta k)^2} $ and hence
satisfies the required estimate in~\ref{H4} with $\sPi_\delta(\bk)$ the orthogonal projection matrix onto $\Ker(\widetilde\sL_\delta)$.
\medskip

We cannot carry out similar computations for~\eqref{eq.toy} as we did for~\eqref{eq.toy2}, which is why we resort to numerical experiments to exhibit the development of spatial oscillations with small wavelength. These computations were generated using the Fourier pseudo-spectral method for spatial discretization, and the standard fourth-order Runge-Kutta method (RK4) for time integration. Specifically, we used the Julia library \texttt{WaterWaves1D.jl}~\cite{DucheneNavaro}.

We set the parameter $\eps=10^{-3}$ and numerically solve~\eqref{eq.toy2} for $\delta\in\{2.\, 10^{-2}, 10^{-2}, 5.\,10^{-3}\}$. We set as initial data
\[u\vert_{t=0}=\tfrac12 e^{-x^2}, \quad v\vert_{t=0}=0, \quad w\vert_{t=0}=\tfrac12 e^{-x^2}.\]
We used $N=2^{12}$ regularly spaced collocation points on the space interval $[-\pi,\pi)$, and a time-step $\Delta t=10^{-4}$. The fact that Fourier coefficients decrease up to machine precision for large wavenumbers gives us confidence that our numerical solutions are fully resolved, while the small timestep and fourth-order of the time-integration solver provides largely enough precision for the needs of our illustration (setting $\Delta t=10^{-3}$ does not lead to any noticeable difference).

 We plot in \Cref{F.plts} the outcome at different values of time, for $\delta\in\{2.\, 10^{-2}, 10^{-2}, 5.\,10^{-3}\}$. One clearly sees that oscillations with small characteristic wavelength emerge progressively. By looking at the amplitude of Fourier coefficients we see that as $\delta$ decreases, it takes a longer time for oscillations to fully develop because their characteristic wavelength decreases. Shortly put we observe for the model~\eqref{eq.toy2} a behavior which is fully consistent with the one exhibited for model~\eqref{eq.toy}.

\begin{figure}[htbp]
	\centering
	\subfloat[$\delta=2.\, 10^{-2}$]{\label{fig:a}\includegraphics[width=0.6\linewidth]{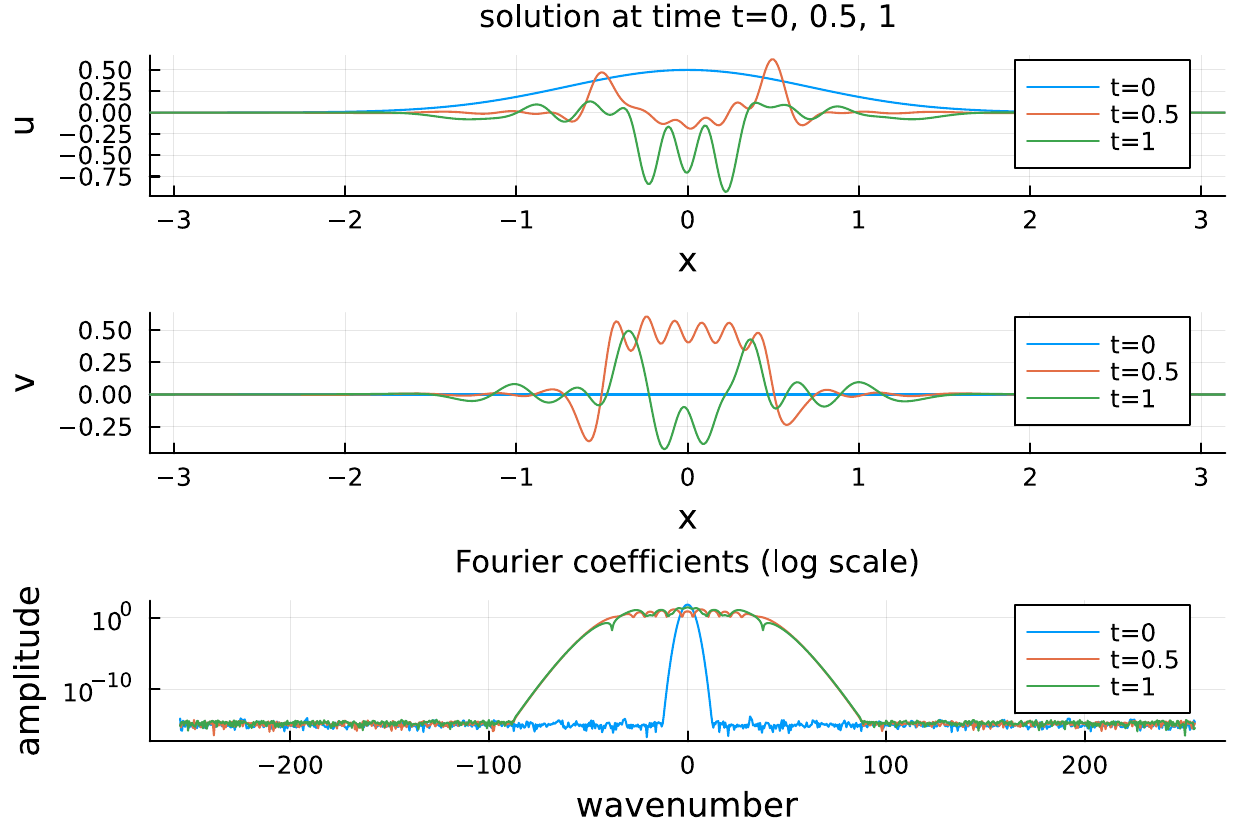}}\qquad
	\subfloat[$\delta= 10^{-2}$]{\label{fig:b}\includegraphics[width=0.6\linewidth]{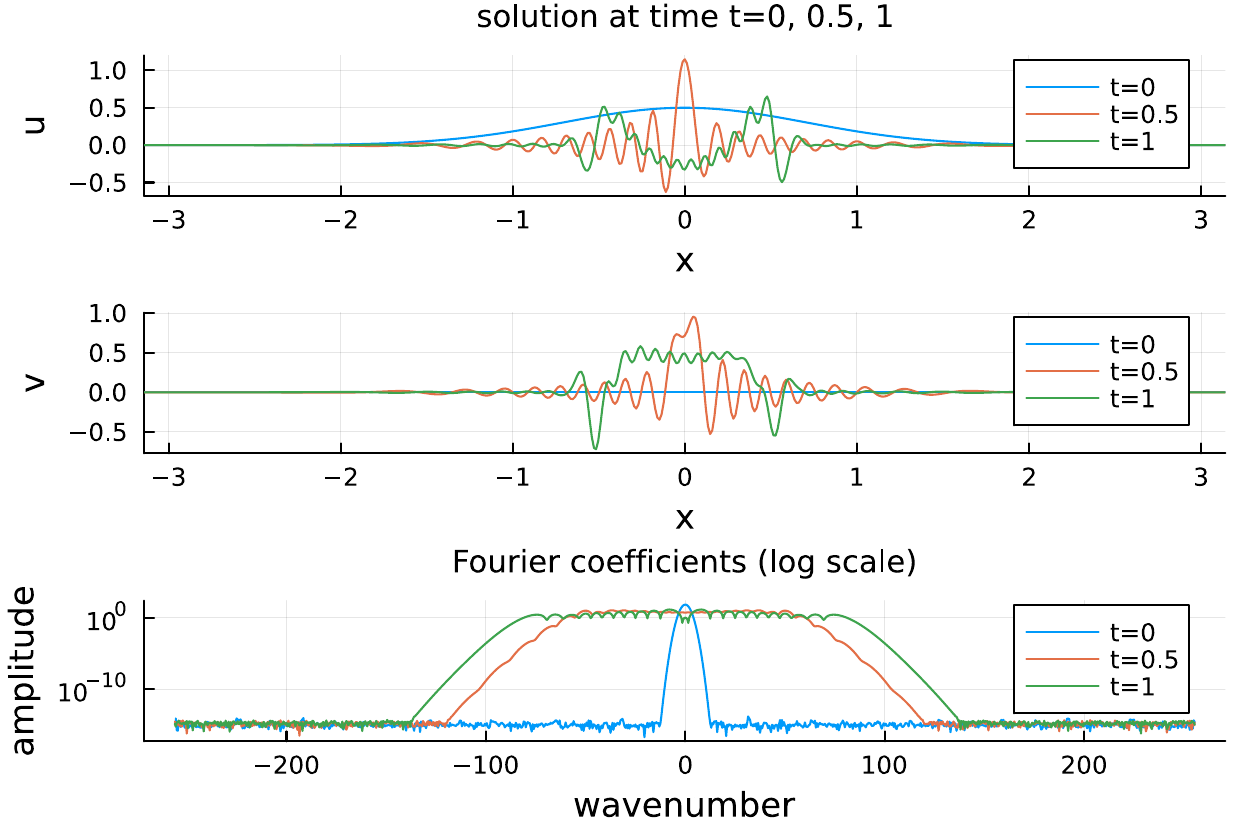}}\qquad
	\subfloat[$\delta=5.\, 10^{-3}$]{\label{fig:c}\includegraphics[width=0.6\linewidth]{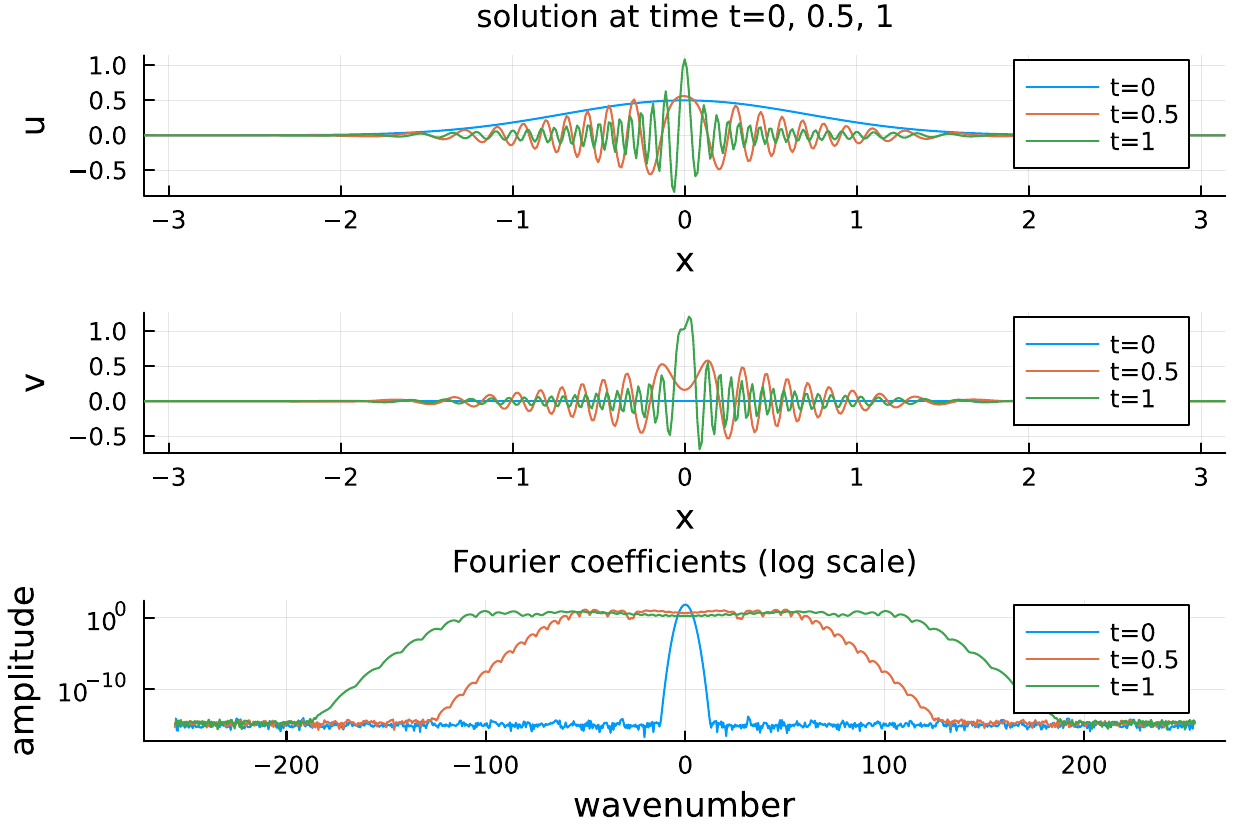}}
	\caption{Numerical solutions to the toy model~\eqref{eq.toy2}. The top two panels represent $u$ and $v$, the bottom panel represents the modulus of Fourier coefficients of $u$ (in semi-log scale).}
	\label{F.plts}
\end{figure}

\section{Admissible weights}\label{S.Admissible}

We define our class of admissible weights as follows.
\begin{Definition}[Admissible weights]\label{D.alpha-admissible}
	We say the set $\bm\alpha=\big(\alpha_{j,i}>0 \ : \  (i,j)\in\RR\times\NN,\ i+j\leq k\big)$ is $\lambda$-{\em admissible} if there exist $j_0\in\NN^\star$ and $s_0>d/2$ (clear from the context) such that the following relations hold:
	\begin{enumerate}[({A}1),series=A]
		\item \label{ii} $ \alpha_{0,s_0+1}\leq 1$ and $\alpha_{1,s_0}\leq 1$;
		\item \label{iii} $\ell\in\NN\mapsto \alpha_{j+\ell,i-\ell}$ and $i\in\RR\mapsto \alpha_{j,i}$ (and hence $j\in\NN\mapsto \alpha_{j,i}$) are non-decreasing;
		\item \label{v} $ \alpha_{j+1,i-1}\leq \frac\delta\eps\alpha_{j,i}$ for all $(i,j)\in\RR\times\NN$ and $\alpha_{j+1,0}\leq \frac{\delta^i}\eps\alpha_{j,i}$ if moreover $i\in [0,1]$;
		\item \label{iv} For all $j\in\{0,\dots,j_0-1\}$, $\alpha_{j+1,i-1}\leq \lambda\alpha_{j,i}$ if $i\geq 1$, and $\alpha_{j+1,0}\leq \lambda \alpha_{j,0}$;
		\item \label{l} For all $j\geq j_0$, $ \alpha_{j+1,i-1}\geq \lambda\alpha_{j,i}$  and if moreover $i>s_0+1$, $ \alpha_{j,i}\geq \lambda\alpha_{j,i-1}$;
		\item \label{vi} For all $(j^\star,i^\star)$ and $(j_\star,i_\star)$ with $j_\star\leq j^\star$ and $(j_n,i_n+j_n)_{n=0,\dots,\ell}$ satisfying
					\[\sum_{n=0}^\ell j_n = j^\star+\ell j_\star, \quad \sum_{n=0}^\ell i_n+j_n = i^\star+j^\star+\ell (i_\star+j_\star)\]
					and for all $ n\in\{0,\dots,\ell\}$,
					\[
				\begin{cases}
						(j_n,i_n+j_n)\in[j_\star,j^\star]\times[i_\star+j_\star,i^\star+j^\star]  & \text{ if } i_\star+j_\star \leq i^\star+j^\star,\\
						(j_n,i_n+j_n)=\theta_n (j_\star,i_\star+j_\star)+(1-\theta_n)(j^\star,i^\star+j^\star) ,\ \theta_n=\frac{(i_n+j_n)-(i^\star+j^\star)}{(i_\star+j_\star)-(i^\star+j^\star)} & \text{ if } i_\star+j_\star > i^\star+j^\star,
					\end{cases}
				\]
		one has
		\begin{equation}\label{eq.ineq-alpha}
		\prod_{n=0}^\ell \alpha_{j_n,i_n} \leq  \alpha_{j_\star,i_\star}^\ell \times \alpha_{j^\star,i^\star}.
		\end{equation}
	\end{enumerate}
\end{Definition}

We now provide explicit families of weights that are admissible in the sense of \Cref{D.alpha-admissible}.
\begin{Lemma}
\label{L.weights}
Let $j_0\in\NN^\star$ and $i_0\geq 0$ such that $s_0\eqdef i_0+j_0-1>d/2$. Set for $0<\eps\leq\delta\leq1$
	\begin{equation} \label{eq.def-alpha-general}
		\alpha_{j,i} = \max\big(\big\{\alpha_{j,i}^{(1)}, \dots, \alpha_{j,i}^{(P)}\big\}\big) \quad \text{ where }\quad 
		\alpha_{j,i}^{(p)}\eqdef \frac{\beta_p^j\gamma_p^{i+j} }{\beta_p^{j_0} \gamma_p^{i_0+j_0}} 
		\end{equation}
	with $P\in\NN^\star$ and $\beta_p,\gamma_p$ (for $p\in\{1,\dots,P\}$) are such that
	\[1=\beta_1\leq  \dots\leq\beta_P\leq \frac\delta\eps\quad\text{ and }\quad 1=\gamma_1\leq\dots\leq\gamma_P\leq \frac1\delta.\]
	Then $\bm\alpha=\big(\alpha_{j,i} \ : \  (i,j)\in\RR_+\times\NN,\ i+j\leq k\big)$ satisfies~\ref{ii}--\ref{v} and~\ref{vi} in \Cref{D.alpha-admissible}.
\end{Lemma}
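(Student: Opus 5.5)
The plan is to pass to logarithms and use the variables $(j,m)$ with $m\eqdef i+j$. Writing $a_p\eqdef\log\beta_p\geq 0$ and $b_p\eqdef\log\gamma_p\geq0$, each weight becomes
\[\log\alpha^{(p)}_{j,i}=L_p(j,m)\eqdef a_p(j-j_0)+b_p(m-i_0-j_0),\]
which is affine in $(j,m)$. Hence $f(j,m)\eqdef\log\alpha_{j,i}=\max_p L_p(j,m)$ is convex and piecewise affine. Moreover, the coefficients $(a_p,b_p)$ are nonnegative and componentwise nondecreasing in $p$, with $a_p\leq\log(\delta/\eps)$ and $b_p\leq\log(1/\delta)$. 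Every property except~\ref{vi} is checked for each $p$ separately and then passes to the maximum, because each inequality involved is of the form ``$\alpha^{(p)}_{\cdot}\leq c\,\alpha^{(p)}_{\cdot}$ for all $p$'' or is a monotonicity statement.

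For~\ref{ii}, both indices $(0,s_0+1)$ and $(1,s_0)$ have $m=i_0+j_0$. This gives $\alpha^{(p)}=\beta_p^{-j_0}\leq1$ and $\alpha^{(p)}=\beta_p^{1-j_0}\leq 1$ respectively, using $j_0\geq1$. For~\ref{iii}, moving along $\ell\mapsto(j+\ell,i-\ell)$ keeps $m$ fixed and increases $j$, and $\beta_p\geq1$ gives monotonicity. Increasing $i$ increases $m$, and $\gamma_p\geq1$ gives monotonicity. Composing the two yields monotonicity in $j$. For~\ref{v}, one has $\alpha^{(p)}_{j+1,i-1}=\beta_p\alpha^{(p)}_{j,i}\leq\frac\delta\eps\alpha^{(p)}_{j,i}$. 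Similarly $\alpha^{(p)}_{j+1,0}=\beta_p\gamma_p^{1-i}\alpha^{(p)}_{j,i}$, and for $i\in[0,1]$ this is at most $\frac\delta\eps\,\delta^{i-1}\alpha^{(p)}_{j,i}=\frac{\delta^i}{\eps}\alpha^{(p)}_{j,i}$. Taking the maximum over $p$ concludes all three.

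The main step is~\ref{vi}, because a product of maxima is not a maximum of products. Set $z_n=(j_n,m_n)$, $z_\star=(j_\star,m_\star)$ and $z^\star=(j^\star,m^\star)$; the constraint reads $\sum_{n=0}^\ell z_n=z^\star+\ell z_\star$.

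\emph{Segment case} ($m_\star>m^\star$). The points satisfy $z_n=\theta_nz_\star+(1-\theta_n)z^\star$ with $\theta_n\in[0,1]$, since they lie on the segment. Summing and comparing with the constraint gives $\sum\theta_n=\ell$. Convexity of $f$ then gives $\sum_n f(z_n)\leq\sum_n\big(\theta_nf(z_\star)+(1-\theta_n)f(z^\star)\big)=\ell f(z_\star)+f(z^\star)$, which is~\eqref{eq.ineq-alpha}.

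\emph{Box case} ($z_\star\leq z_n\leq z^\star$ componentwise). Convexity does not suffice here, and I would use the ordering of the coefficients instead. For each $n$ pick $p_n$ achieving the maximum at $z_n$, and let $q\eqdef\max_n p_n$, attained at some index $n_0$. Write $z_n=z_\star+w_n$ with $w_n\geq0$ componentwise and $\sum_n w_n=z^\star-z_\star$, and let $\ell_p$ denote the linear part of $L_p$. Since the coefficients of $\ell_p$ are nonnegative and nondecreasing in $p$,
\[\sum_n f(z_n)=\sum_n\big(L_{p_n}(z_\star)+\ell_{p_n}(w_n)\big)\leq\sum_n L_{p_n}(z_\star)+\ell_q(z^\star-z_\star).\]
The term $n=n_0$ contributes $L_q(z_\star)$, and $L_q(z_\star)+\ell_q(z^\star-z_\star)=L_q(z^\star)\leq f(z^\star)$. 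Each of the remaining $\ell$ terms is at most $f(z_\star)$. This yields $\sum_n f(z_n)\leq\ell f(z_\star)+f(z^\star)$, i.e.~\eqref{eq.ineq-alpha}.

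The box case is where I expect the real difficulty. The key choice is to absorb the whole displacement $z^\star-z_\star$ into the single index attaining the largest $p$, which relies on the monotone ordering $\beta_1\leq\dots\leq\beta_P$ and $\gamma_1\leq\dots\leq\gamma_P$.
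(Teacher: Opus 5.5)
Your proof is correct and follows essentially the same route as the paper. In the segment case you use convexity of $\max_p L_p$ (the paper's log-convexity of the max). In the box case you take maximizing indices $p_n$ and load the whole displacement $z^\star-z_\star$ onto the factor with the largest $p$, using the monotone ordering of $\beta_p,\gamma_p$. Your additive $w_n$ bookkeeping (in place of the paper's $(\theta_n,\sigma_n)$ parametrization) and your explicit verification of (A1)--(A3), which the paper dismisses as routine, are only presentational differences.
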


\begin{proof}
	Only~\ref{vi} deserves an explanation.
	 Assume first that $i^\star+j^\star< i_\star+j_\star$. In that case the result follows from the fact that the function $\max$ is a log-log function of its arguments, and composition rules. Specifically,  
	 we have that $\theta_n\in[0,1]$ ($n\in\{0,\dots,\ell\}$) is such that $j_n=\theta_n j^\star+(1-\theta_n)j_\star$ and $i_n+j_n=\theta_n(i^\star+j^\star)+(1-\theta_n)(i_\star+j_\star)$, and one has $\sum_{n=0}^\ell\theta_n=1$.
	 It follows that 
	 \[	\forall n\in\{0,\dots,\ell\},\ 	\forall p\in\{0,\dots,P\} ,\qquad \alpha_{j_n,i_n}^{(p)}=(\alpha_{j^\star,i^\star}^{(p)})^{\theta_n}(\alpha_{j_\star,i_\star}^{(p)})^{1-\theta_n}.\]
	 For any $n\in\{0,\dots,\ell\}$, there exists $p_n\in \{1,\dots,P\}$ such that
	 \[\alpha_{j_n,i_n}=\alpha_{j_n,i_n}^{(p_n)}=(\alpha_{j^\star,i^\star}^{(p_n)})^{\theta_n}(\alpha_{j_\star,i_\star}^{(p_n)})^{1-\theta_n}\leq (\alpha_{j^\star,i^\star})^{\theta_n}(\alpha_{j_\star,i_\star})^{1-\theta_n}.\]
	The desired inequality~\eqref{eq.ineq-alpha} follows immediately.
	
	Let us now assume that $i^\star+j^\star\geq  i_\star+j_\star$. Then 
	there exists $(\theta_n,\sigma_n)\in[0,1]^2$ ($n\in\{0,\dots,\ell\}$) such that $j_n=\theta_n j^\star+(1-\theta_n)j_\star$ and $i_n+j_n=\sigma_n(i^\star+j^\star)+(1-\sigma_n)(i_\star+j_\star)$, and one has $\sum_{n=0}^\ell\theta_n=\sum_{n=0}^\ell\sigma_n=1$.
	Denote for $n\in\{0,\dots,\ell\}$
	\[p_n=\arg\max\big(\big\{\alpha_{j_n,i_n}^{(p)} \ : \ p\in\{1, \dots, P\}\big\}\big).\]
	Up to permuting indices, one can assume without loss of generality that
	\[\forall n\in\{0,\dots,\ell-1\}, \quad p_n\leq p_\ell.\] 
	Then we write
	\begin{align*}		
		\prod_{n=0}^\ell \alpha_{j_n,i_n} 
		&=\prod_{n=0}^\ell \alpha^{(p_n)}_{j_n,i_n}=\prod_{n=0}^\ell  \beta_{p_n}^{j_n-j_0}\gamma_{p_n}^{i_n+j_n-i_0-j_0}\\
		&=\prod_{n=0}^\ell  \beta_{p_n}^{\theta_n(j^\star-j_0)+(1-\theta_n)(j_\star-j_0)}\gamma_{p_n}^{\sigma_n(i^\star+j^\star-i_0-j_0)+(1-\sigma_n)(i_\star+j_\star-i_0-j_0)}\\
		&=
		\Big(\prod_{n=0}^{\ell-1}\beta_{p_n}^{j_\star-j_0}\gamma_{p_n}^{i_\star+j_\star-i_0-j_0}\Big)\Big(\beta_{p_\ell}^{j^\star-j_0}\gamma_{p_\ell}^{i^\star+j^\star-i_0-j_0} \Big)\\
		&\quad \times 
		\Big(\prod_{n=0}^{\ell-1} \beta_{p_n}^{\theta_n (j^\star-j_\star)}\gamma_{p_n}^{\sigma_n(i^\star+j^\star-i_\star-j_\star)}\Big)
		\Big(\beta_{p_\ell}^{(\theta_\ell-1) (j^\star-j_\star)}\gamma_{p_\ell}^{(\sigma_\ell-1)(i^\star+j^\star-i_\star-j_\star)}\Big)\\
		&\leq  \Big(\prod_{n=0}^{\ell-1}\alpha_{j_\star,i_\star}^{(p_n)} \Big) \times \alpha^{(p_\ell)}_{j^\star,i^\star}
	\end{align*}
	where we used in the last inequality that $j^\star-j_\star,i^\star+j^\star- i_\star-j_\star\geq 0$, that for any $n\in\{0,\dots,\ell-1\}$, $\theta_n,\sigma_n\geq0$ and $p_n\leq p_\ell$ and that $p\in\{1,\dots,P\}\mapsto\beta_p,\gamma_p\geq1$ are non-decreasing (so that $\beta_{p_n}\leq \beta_{p_\ell}$ and $ \gamma_{p_n}\leq \gamma_{p_\ell}$) and recall that $\sum_{n=0}^\ell\theta_n=\sum_{n=0}^\ell\sigma_n=1$. The desired inequality~\eqref{eq.ineq-alpha} follows immediately.
\end{proof}
\begin{Corollary}\label{C.weights}
	Let $j_0\in\NN^\star$ and $i_0\geq 0$ such that $s_0\eqdef i_0+j_0-1>d/2$. Set for $0<\eps\leq\delta\leq1$ and $1\leq \lambda\leq \max(\{\delta/\eps,1/\delta\})$
\[\alpha_{j,i}\eqdef\max(\{1,\lambda^{j-j_0}, \lambda^{j-j_0}\delta^{i_0+j_0-i-j},\eps^{j_0-j}\delta^{i_0-i}\}).\]
Then $\bm\alpha^\lambda_{j_0,i_0}=\big(\alpha_{j,i} \ : \  (i,j)\in\RR_+\times\NN,\ i+j\leq k\big)$ is $\lambda$-admissible in the sense of \Cref{D.alpha-admissible}.
\end{Corollary}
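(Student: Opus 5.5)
The plan is to reduce the statement to \Cref{L.weights} and then check the two conditions that lemma does not cover, \ref{iv} and \ref{l}, by hand. The first step is to write each of the four competitors in the form $\alpha^{(p)}_{j,i}=\beta_p^{j-j_0}\gamma_p^{i+j-i_0-j_0}$:
\[
1:\ (\beta,\gamma)=(1,1),\qquad \lambda^{j-j_0}:\ (\lambda,1),\qquad \lambda^{j-j_0}\delta^{i_0+j_0-i-j}:\ (\lambda,\delta^{-1}),
\]
and
\[
\eps^{j_0-j}\delta^{i_0-i}=(\delta/\eps)^{j-j_0}\,\delta^{-(i+j-i_0-j_0)}:\ (\delta/\eps,\delta^{-1}).
\]
The hypothesis $\lambda\le\max(\{\delta/\eps,1/\delta\})$ suggests splitting into the two cases $\lambda\le\delta/\eps$ and $\delta/\eps<\lambda\le 1/\delta$.

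\textbf{Case $\lambda\le\delta/\eps$.} The four pairs listed above in that order are nondecreasing in both components, with $\beta_4=\delta/\eps$ and $\gamma_4=1/\delta$. \Cref{L.weights} then applies verbatim with $P=4$ and gives \ref{ii}--\ref{v} and \ref{vi}. For \ref{iv}, take $j<j_0$ and pass from $(j,i)$ to $(j+1,i-1)$. This multiplies the second and third competitors by exactly $\lambda$, leaves the first unchanged, and multiplies the fourth by $\delta/\eps\cdot\delta$, since $i+j$ is unchanged while $\eps^{j_0-j}$ loses one factor of $\eps$ and $\delta^{i_0-i}$ gains one factor of $\delta$. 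This factor $\delta^2/\eps$ is not automatically $\le\lambda$, which is the point to handle. I would try to show that for $j<j_0$ the term $\eps^{j_0-j}\delta^{i_0-i}$ is only active where it is already $\le 1$ (hence dominated by the first competitor) or where the comparison is absorbed by monotonicity \ref{iii}. The $i=0$ clause of \ref{iv} is handled the same way. For \ref{l}, with $j\ge j_0$, each competitor grows by at least $\lambda$ under $(j,i)\mapsto(j+1,i-1)$: the growth factors are $1,\lambda,\lambda,\delta^2/\eps$, and the first is dominated by the second once $j\ge j_0$. Under $(j,i-1)\mapsto(j,i)$ with $i>s_0+1$, the competitors containing $\delta^{-i}$ gain a factor $1/\delta\ge\lambda$ in the case where that bound holds. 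Otherwise I would use that $i+j>i_0+j_0$ makes the third competitor dominant and track its factor $\delta^{-1}$.

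\textbf{Case $\delta/\eps<\lambda\le 1/\delta$.} Here the pairs $(\lambda,1)$ and $(\delta/\eps,1/\delta)$ are incomparable, so \Cref{L.weights} cannot be quoted directly. I would instead rerun its proof. For \ref{vi} in the log-linear configuration ($i_\star+j_\star>i^\star+j^\star$), the first half of that proof uses no ordering and goes through unchanged. In the box configuration, I would order the maximizing indices $p_n$ by the partial order. When an incomparable pair appears, I would replace the offending factor by the envelope $(\lambda,1/\delta)$, which is precisely the third competitor, so the final bound still lands on $\alpha_{j_\star,i_\star}^\ell\alpha_{j^\star,i^\star}$. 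Condition \ref{v} must then be checked directly, because $\beta=\lambda$ now exceeds $\delta/\eps$. For $j+1\le j_0$ the $\lambda$-competitors carry nonpositive powers, so they are harmless. For $j\ge j_0$ I would compare with the fourth competitor, noting that $\lambda\le1/\delta$ gives $\lambda\delta\le 1$.

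I expect the main obstacle to be the bookkeeping of which competitor is active in each region of the $(i,j)$ diagram for \ref{v}, \ref{iv} and \ref{l}. This is especially true in the second case, and the envelope replacement in \ref{vi} must be checked to keep the inequality in the right direction.
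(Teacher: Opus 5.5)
Your Case~2 cannot succeed, because the claim is false there. If $\delta/\eps<\lambda$, then for $j\geq j_0$ one has $\lambda^{j-j_0}\geq(\delta/\eps)^{j-j_0}$. So the third competitor dominates the fourth, $\alpha_{j,i}=\lambda^{j-j_0}\max(\{1,\delta^{i_0+j_0-i-j}\})$, and $\alpha_{j+1,i-1}=\lambda\,\alpha_{j,i}>\frac\delta\eps\,\alpha_{j,i}$, which violates \ref{v}. For instance, take $j_0=i_0=d=1$, $k\geq 3$, $\delta=1/10$, $\eps=1/20$ and $\lambda=5\leq 1/\delta$. Then $\alpha_{1,2}=10$ and $\alpha_{2,1}=50>20=\frac\delta\eps\alpha_{1,2}$.

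Your ``otherwise'' branch for the second clause of \ref{l} in Case~1 fails for the same kind of reason. For $j\geq j_0$ and $i>s_0+1$, both $(j,i)$ and $(j,i-1)$ satisfy $i+j\geq i_0+j_0$. There (with $\lambda\leq\delta/\eps$) $\alpha_{j,i}=\eps^{j_0-j}\delta^{i_0-i}$, so $\alpha_{j,i}/\alpha_{j,i-1}=1/\delta$ exactly, and the clause holds if and only if $\lambda\leq 1/\delta$.

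So the hypothesis has to be read as $1\leq\lambda\leq\min(\{\delta/\eps,1/\delta\})$; the $\max$ in the statement is a slip. This is what the paper's proof actually uses. It invokes $\lambda\leq\delta/\eps$ explicitly, which is also needed for the ordering $\beta_3=\lambda\leq\beta_4=\delta/\eps$ in \Cref{L.weights}. It uses $\lambda\leq 1/\delta$ implicitly in \ref{l}. Both match the assumptions $\eps\lambda\leq\delta$ and $\delta\leq 1/\lambda$ of \Cref{T.Well-posedness} and \Cref{P.preparation}.

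Under that reading your Case~1 is exactly the paper's argument, but one computation needs fixing. Under $(j,i)\mapsto(j+1,i-1)$ the sum $i+j$ is unchanged, so $\beta_p^{j-j_0}\gamma_p^{i+j-i_0-j_0}$ is multiplied by $\beta_p$ only. The fourth competitor is therefore multiplied by $\delta/\eps$, not $\delta^2/\eps$; you counted the gained $\delta$ twice. With the correct factor, the first clause of \ref{l} is immediate: $\delta/\eps\geq\lambda$, and $1\leq\lambda^{j-j_0}$ once $j\geq j_0$. With $\delta^2/\eps$ it would not close, since $\delta^2/\eps\geq\lambda$ is not implied by the hypotheses.

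For \ref{iv}, the mechanism is not that the fourth term is $\leq 1$ wherever it is active, which need not hold. Rather, for $j\leq j_0$ it is dominated by the third term, because $(\delta/\eps)^{j-j_0}\leq\lambda^{j-j_0}$. This gives the two closed forms written in the paper:
\[
\alpha_{j,i}=\max(\{1,\lambda^{j-j_0}\delta^{i_0+j_0-i-j}\}) \text{ for } j\leq j_0, \qquad \alpha_{j,i}=\max(\{\lambda^{j-j_0},\eps^{j_0-j}\delta^{i_0-i}\}) \text{ for } j\geq j_0.
\]
From these, \ref{iv} and \ref{l} follow in one line each.
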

\begin{proof}
	Notice $\bm\alpha^\lambda_{j_0,i_0}$ is of the form~\eqref{eq.def-alpha-general} with $P=4$ and $(\beta_1,\gamma_1)=(1,1)$, $(\beta_2,\gamma_2)=(\lambda,1)$,  $(\beta_3,\gamma_3)=(\lambda,1/\delta)$, and $(\beta_4,\gamma_4)=(\delta/\eps,1/\delta)$, so  \Cref{L.weights} applies. 
		Notice then that since $\lambda\leq\delta/\eps$ one has
		\[ \alpha_{j,i}=\begin{cases}\max(\{1,\lambda^{j-j_0}/\delta^{i+j-i_0-j_0}\}) & \text{ if $j\leq j_0$,}\\
			\max(\{\lambda^{j-j_0},1/(\eps^{j-j_0}\delta^{i-i_0})\}) & \text{ if $j\geq j_0$.}
		\end{cases}
			\] 
		In particular $\alpha_{j,i}=1$ if $j\leq j_0$ and $i+j\leq i_0+j_0=s_0+1$, and $\alpha_{j,i}=\eps^{j_0-j}\delta^{i_0-i}$ if $j\geq j_0$ and $i+j\geq i_0+j_0=s_0+1$.	From this we easily infer~\ref{iv} and~\ref{l}.
\end{proof}

\section{Stability}\label{S.Stability}

This section is dedicated to the proof of a stability estimate concerning the system
\begin{equation}\label{eq.linearized}
	\S_0(V)\partial_t U + \sum_{l=1}^d\S_l(V)\partial_{x_l}U + \G(V) U + R = \frac1\eps\L_\delta U
\end{equation}
where $U:(t,\bx)\in \RR\times\RR^d\to\RR^n$, $V:(t,\bx)\in \RR\times\RR^d\to\RR^{n'}$ and we assume Hypotheses~\ref{H1}--\ref{H4} to hold. System~\eqref{eq.linearized} is a linearized version of~\eqref{eq.general} and the vector $R$ accounts for harmless source terms.\footnote{While such source terms are unessential for the present study, we believe they would arise for instance when seeking asymptotic expression for solutions to~\eqref{eq.linearized} ---$V$ being then a reference solution and $U$ the difference between the exact and reference solutions---   and include it for the sake of completeness. } We seek to propagate in time the control of the energy functional 
\begin{equation}\label{eq.def-F-stability}
	\cF_{k,{\bm\alpha}}(U)\eqdef\sup\big(\big\{ \alpha_{j,i}^{-1} \norm{\partial_t^j U}_{H^{i}} \ : \ (i,j)\in\RR_+\times\NN,\ i+j\leq k\big\}\big)
\end{equation}
 under the assumption that $\cF_{k,{\bm\alpha}}(V)$ and $\cF_{k,{\bm\alpha}}(R)$ are controlled. 
 
 We also introduce the following energy functional\footnote{Of course we abuse notation in that $\cF_{k,{\bm\alpha}}^{(0)}$ depends on the prescribed time-and-space depending function $V$.}
 \begin{align}	
 	\cF_{k,{\bm\alpha}}^{(0)}(U)&\eqdef\sup\big( \big\{\alpha_{j,i}^{-1} \bra{\S_0(V)\partial_t^j \Lambda^i U,\partial_t^j \Lambda^i U}^{1/2}\ : \ (i,j)\in \omega_{k,{\bm\alpha}}^{(0)}\big\}\big) ,
 	 \label{eq.def-F0-stability} 
 \end{align} 
where 
\begin{align*}
	\omega_{k,{\bm\alpha}}^{(0)}
	\eqdef \big\{(i,j) \ : \ j\in\{0,\dots,j_0-1\},\ i\in [0,k-j]\big\} \bigcup  \big\{(0,j) \ : \ j\in \{0,1,\dots,k\}\big\}.
\end{align*}

 Following the strategy of~\cite{Schochet86a}, we shall provide an energy inequality for the functional $\cF_{k,{\bm\alpha}}^{(0)}(U)$ by means of the energy method and use Grönwall's inequality to obtain the desired estimate, while the control of $\cF_{k,{\bm\alpha}}(U)$ is inferred from the control of $\cF_{k,{\bm\alpha}}^{(0)}(U)$ using directly the system of equations~\eqref{eq.linearized}. The main difficulty is to obtain estimates which are uniform with respect to $(\eps,\delta)$ satisfying $0<\eps\leq \delta\leq 1$. Our main result reads as follows.

\begin{Proposition}[Stability] \label{P.Stability}
	Let $k\in\NN,\ k>d/2+1$ and $M^{(0)},M>0$. Let $T>0$ and consider $V,R\in \cap_{j=0}^k\cC^j([0,T];H^{k-j}(\RR^d))$ and $U\in \cap_{j=0}^{k+1}\cC^j([0,T];H^{k+1-j}(\RR^d))$ be solutions to system~\eqref{eq.linearized} under hypotheses~\ref{H1}--\ref{H4} (defined in the introduction), and such that 
	\[\forall (t,\bx)\in[0,T]\times\RR^d, \quad V(t,\bx)\in K\subset\Omega\]
	with $K$ a compact subset of the hyperbolicity domain defined in Hypothesis~\ref{H2}.
	There exists  $C^{(0)}$ depending uniquely on $k,c_K,C_{\L},c_\L$ and $M^{(0)}$ and $C$, $\lambda_0$, depending additionally on $M$, such that the following holds. 
	
	For any $(\eps,\delta)$ satisfying $0<\eps\leq \delta\leq 1$ and $C\times(\eps/\delta)\leq 1$, and for any set of weights $\bm\alpha$, $\lambda$-admissible in the sense of \Cref{D.alpha-admissible} with $\lambda\geq \lambda_0$ and such that
	\[\sup_{t\in[0,T]}\cF_{k,{\bm\alpha}}^{(0)}(V)\leq M^{(0)} \quad  \text{ and } \quad  \sup_{t\in[0,T]}\cF_{k,{\bm\alpha}}(V)\leq M,\]
	one has
	\begin{align*} 
		\cF_{k,{\bm\alpha}}^{(0)}(U)\big\vert_{t=T} &\leq  \Big(\cF_{k,{\bm\alpha}}^{(0)}(U) \big\vert_{t=0} + C \int_0^T \cF_{k,{\bm\alpha}}(R)\dd t\Big)\times   \exp\big(C\, \, (1+\lambda)\, M\, T\big) ,\\
		\cF_{k,{\bm\alpha}}(U)\big\vert_{t=T} &\leq  C^{(0)}\, \cF_{k,{\bm\alpha}}^{(0)}(U)\big\vert_{t=T} +  C^{(0)}\, \cF_{k,{\bm\alpha}}(R)\big\vert_{t=T}.
	\end{align*}
\end{Proposition}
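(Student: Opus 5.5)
The proof has two parts, following~\cite{Schochet86a}. First comes an energy estimate for $\cF^{(0)}_{k,\bm\alpha}(U)$, closed with Grönwall. Second, a purely static (time-frozen) argument recovers $\cF_{k,\bm\alpha}(U)$ from $\cF^{(0)}_{k,\bm\alpha}(U)$ and $\cF_{k,\bm\alpha}(R)$ by using the equation~\eqref{eq.linearized} itself.

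\textbf{Energy estimate.} For each $(i,j)\in\omega^{(0)}_{k,\bm\alpha}$, apply $\partial_t^j\Lambda^i$ to~\eqref{eq.linearized}. Then take the $L^2$ inner product with $\partial_t^j\Lambda^i U$.

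Since $\L_\delta$ has constant coefficients and is skew-adjoint by~\ref{H3}, the stiff term $\frac1\eps\L_\delta$ commutes with $\partial_t^j\Lambda^i$ and cancels exactly. The symmetric structure~\ref{H1} handles the transport terms in the usual way. The term $\partial_t\S_0(V)$ is bounded by $\norm{\partial_t V}_{L^\infty}\lesssim \alpha_{1,s_0}M\le M$ by~\ref{ii}, and $\G(V)$ is handled using~\ref{H1}.

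What remains are the commutators $[\partial_t^j\Lambda^i,\S_0(V)]\partial_tU$ and $[\partial_t^j\Lambda^i,\S_l(V)]\partial_{x_l}U$. I expand them with the product, commutator and composition lemmas of Annex~\ref{S.technical}, including the anisotropic product estimate \Cref{L.Product}. Each resulting term is a product $\prod_n \partial_t^{j_n}\Lambda^{i_n}(\cdot)$ whose index pairs lie in the ranges allowed by~\ref{vi}. The weight inequality~\eqref{eq.ineq-alpha}, with $(j^\star,i^\star)=(j,i)$ and $(j_\star,i_\star)\in\{(0,s_0+1),(1,s_0)\}$, then bounds such a term by $\alpha_{j,i}\,\cF_{k,\bm\alpha}(V)^\ell\cF_{k,\bm\alpha}(U)$ up to constants.

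There is a loss: $\partial_t U$ appears with one more time derivative than $U$. I absorb it through~\ref{iv}, namely $\alpha_{j+1,i-1}\le\lambda\alpha_{j,i}$ for $j<j_0$; this is the source of the factor $(1+\lambda)$ in the exponent. For the pure time derivatives $(0,j)$ with $j\ge j_0$, no space derivative is exchanged, so the weights of~\ref{l} fit directly. The estimate therefore takes the form
\[\frac{d}{dt}\cF^{(0)}(U)\lesssim (1+\lambda)M\,\cF_{k,\bm\alpha}(U)+\cF_{k,\bm\alpha}(R).\]
Once the second inequality (below) is available, it converts $\cF_{k,\bm\alpha}(U)$ into $\cF^{(0)}(U)+\cF(R)$, and Grönwall gives the first inequality.

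\textbf{Recovery of $\cF_{k,\bm\alpha}$ (main obstacle).} Fix $t$. I must bound $\alpha_{j,i}^{-1}\norm{\partial_t^jU}_{H^i}$ for $j\ge j_0$ and $i>0$, a range not contained in $\omega^{(0)}$. I rewrite the equation as
\[\L_\delta\partial_t^jU=\eps\,\partial_t^j\big(\S_0\partial_tU+\S_l\partial_lU+\G U+R\big).\]
I split $U$ with $\P_\delta$, the operator associated with $\sPi_\delta$, which commutes with $\L_\delta$. By~\ref{H4}, the $(\Id-\P_\delta)$ part satisfies
\[\norm{(\Id-\P_\delta)\partial_t^jU}_{H^i}\lesssim \eps\,\norm{\text{RHS}}_{H^{i-1}}\delta^{-1}+\eps\norm{\text{RHS}}_{H^{i}}\]
(a gain of one derivative at cost $\delta^{-1}$, or none). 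The right-hand side contains $\partial_t^{j+1}U$ in $H^{i-1}$ or $H^i$ and $\partial_t^jU$ in $H^i$, and is weighted using~\ref{v}: $\alpha_{j+1,i-1}\le\frac\delta\eps\alpha_{j,i}$. The $\P_\delta$ part is estimated by projecting the equation onto it: there the stiff term is $\cO(1)$ by~\ref{H4}, so in Fourier $\norm{\P_\delta\partial_t^j U}_{H^i}$ is related to $\norm{\P_\delta\partial_t^{j}U}_{H^{i-1}}$ plus lower-order terms, using~\ref{l}.

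I then run a double induction: downward on $i$ starting from $i=0$, and upward along the anti-diagonals $i+j=\text{const}$. Nonlinear products are again handled with~\ref{vi}. The assumptions $C\eps/\delta\le1$ and $\lambda\ge\lambda_0$ make the coupling constants smaller than $1/2$, so the terms of the form $\cF_{k,\bm\alpha}(U)$ appearing on the right can be absorbed into the left-hand side. I expect this absorption, and in particular checking that every weight ratio produced lies within~\ref{iii}--\ref{l} uniformly in $0<\eps\le\delta\le1$, to be the delicate step.
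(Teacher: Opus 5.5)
Your architecture matches the paper's. You run an energy estimate on $\cF^{(0)}_{k,\bm\alpha}(U)$: the stiff term cancels, and the loss $\partial_t^{j+1}U\in H^{i-1}$ coming from $[\Lambda^i,\S_0(V)]$ is absorbed for $j<j_0$ by \ref{iv}, which gives the factor $1+\lambda$. You close with Grönwall and then recover $\cF_{k,\bm\alpha}(U)$ at frozen time, treating the singular component through \ref{H4} and \ref{v} exactly as the paper does. One minor correction: at $i=0$ there is no loss because the $\Lambda^i$-commutators vanish, so $\partial_t^{j+1}U$ never appears. That case needs only \ref{ii}, \ref{iii}, \ref{vi}, not \ref{l}.

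The genuine gap is the regular component, which is the core of the recovery step.

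\emph{Your stated relation is not available.} You relate $\norm{\P_\delta\partial_t^jU}_{H^i}$ to $\norm{\P_\delta\partial_t^jU}_{H^{i-1}}$ at fixed time order. This does not follow from \eqref{eq.linearized}: on $\Ran\P_\delta$ there is no ellipticity.

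\emph{What the projected equation actually gives.} It trades one time derivative for one space derivative. Apply $\P_\delta\partial_t^{j-1}\Lambda^i$ to \eqref{eq.linearized}; note that $\frac1\eps\P_\delta\L_\delta\P_\delta$ is first order with constant $C_\L$. Because $\S_0(V)$ does not commute with $\P_\delta$, you cannot simply ``project'' to isolate $\P_\delta\partial_tU$. Instead, test against $\P_\delta\partial_t^j\Lambda^iU$ and use the coercivity \ref{H2}. This yields
\[\norm{\P_\delta\partial_t^jU}_{H^i}\lesssim\norm{(\Id-\P_\delta)\partial_t^jU}_{H^i}+\norm{\partial_t^{j-1}U}_{H^{i+1}}+\norm{\partial_t^jU}_{H^{i-1}}+(\text{nonlinear and $R$ terms}).\]

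\emph{How the term $\norm{\partial_t^{j-1}U}_{H^{i+1}}$ is controlled.} It has an $\cO(1)$ coefficient and \emph{more} space derivatives, so no induction on $i$ can handle it. It is controlled only through the weights:
\begin{itemize}
\item for $j\ge j_0+1$, the first part of \ref{l} gives $\alpha_{j-1,i+1}\le\lambda^{-1}\alpha_{j,i}$, so the term is absorbed once $\lambda\ge\lambda_0$;
\item for $j=j_0$, the index $(i+1,j_0-1)$ lies in $\omega^{(0)}_{k,\bm\alpha}$, so the term is bounded directly by $\cF^{(0)}_{k,\bm\alpha}(U)$.
\end{itemize}
This is precisely where $\lambda$ enters, and your sketch does not identify it.

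\emph{How the induction then closes.} Combining the forward bound with your backward estimate gives, with $m_{j,i}=\alpha_{j,i}^{-1}\norm{\partial_t^jU}_{H^i}$,
\[m_{j,i}\le C^{(0)}\Big(m_{j+1,i-1}+\big(\tfrac\eps\delta+\tfrac1\lambda\big)C\,\cF_{k,\bm\alpha}(U)+\cF^{(0)}_{k,\bm\alpha}(U)+\cF_{k,\bm\alpha}(R)\Big),\]
where the term $m_{j+1,i-1}$ is present only for $i>1$. The induction runs on decreasing $j\in\{k-1,\dots,j_0\}$, sliding along anti-diagonals toward $(j+1,i-1)$ until $i\in[0,1]$. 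There the pure time derivatives in $\cF^{(0)}_{k,\bm\alpha}$ close the argument, using the interpolated backward bound with prefactor $\eps/\delta^{i}$ and the second part of \ref{v}. Only after this is $\cF_{k,\bm\alpha}(U)$ absorbed, using $C(\eps/\delta+1/\lambda)\le\frac12$.

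Your scheme, ``downward on $i$ from $i=0$, upward along anti-diagonals'', is not well defined. Built on the forward relation you stated, it would not produce this estimate.
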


We provide in \Cref{S.general-estimates} some general estimates on system~\eqref{eq.linearized}. We first apply energy estimates on space and time derivatives of solutions, yielding the differential inequality~\eqref{eq.est-energy}. We then notice that unsuitable contributions vanish when energy estimates are applied to time derivatives only, which yields the differential inequality~\eqref{eq.est-energy-0}. These two differential inequalities and Grönwall's inequality eventually yield the first estimate in \Cref{P.Stability} for suitable choices of weights. The crucial remark is that we cannot readily infer a self-contained control of the solutions, due to the lack of control of mixed space-and-time derivatives. Specifically the energy estimates are useful either when dealing with time derivatives of order less than $j_0-1$ (for admissible weights to provide suitable uniform control of higher time derivatives) or without space derivatives (due to the aforementioned cancellation). In order to bootstrap the control of the remaining space-and-time derivatives, we need to use in a stronger way the structure of our system, and in particular Hypotheses~\ref{H4}. We first use the system~\eqref{eq.linearized} on the non-singular domain of the operator $\L_\delta$ as defined through Hypothesis~\ref{H4} to infer some control on the regular component of space-and-time derivatives involving essentially lower-order time derivatives: see~\eqref{eq.est-forward}. Finally we use the system~\eqref{eq.linearized} on the singular domain of the operator $\L_\delta$ to infer some control on the singular component of space-and-time derivatives involving higher-order time derivatives: see~\eqref{eq.est-backward} and~\eqref{eq.est-backward-0}. We show in \Cref{S.bootstrap} how these estimates can be bootstrapped (using a finite induction process helped by the parameter $\lambda$ and using fully the assumptions of admissible weights) to obtain the second estimate in \Cref{P.Stability}. The key estimates in \Cref{S.bootstrap} are~\eqref{eq.control-F0} and~\eqref{eq.control-F}, which quickly yield \Cref{P.Stability}.

\subsection{General estimates}\label{S.general-estimates}

In the following we denote $C(\cdot)$ a non-negative quantity depending non-decreasingly on its argument, which may change from line to line. Recall we note $a\lesssim b$ when $a\leq C\, b$ with $C$ an unessential constant.
Dependency with respect to the dimension and regularity indices are omitted. We recall that $\S_0$, $\S_l$ and $\G$ may depend on the parameters $\eps$ and $\delta$, but that this dependency is non-singular with respect to $(\eps,\delta)\in\cS$, and will never be spelled out. In other terms, all estimates in this section are uniform with respect to $(\eps,\delta)\in\cS$ unless these parameters explicitly appear. We set $s_0>d/2$ and we shall use without notice the continuous Sobolev embedding $H^{s_0}(\RR^d)\hookrightarrow L^\infty(\RR^d)$. Recall we define $\Lambda^i\eqdef (\Id-\Delta_{\bx})^{i/2}$ and that for all $s\in\RR $, $\norm{V}_{H^s}=\norm{\Lambda^s V}_{L^2}$.

We first prove {\bf energy estimates}, using the symmetric structure of the system of equations~\eqref{eq.linearized} to derive a differential inequality on a suitable energy functional. We then prove {\bf forward estimates} using the system of equations on the regular space defined by the projection matrix $\sPi_\delta$ introduced in  Hypothesis~\ref{H4}. We finally prove {\bf backward estimates} using the system of equations and the invertibility of the operator $\L_\delta$ on the singular space.

In order to better visualize the different contributions in these estimates, we introduce the notation
\begin{multline} \label{eq.def-Nell}
	N_\ell(\underbrace{V,\dots,V}_{\text{$\ell$ times}};U)\big\vert_{j_\star,k_\star}^{j^\star,k^\star}\\
	\eqdef \sup\Big( \Big\{ \big(\prod_{n=1}^\ell \norm{ \partial_t^{j_n} V}_{H^{i_n}} \big)\norm{ \partial_t^{j_0} U}_{H^{i_0}} \ : \   ((i_n,j_n))_n\in(\RR_+\times\NN)^{1+\ell} \text{ satisfies } \\
	\sum_{n=0}^\ell j_n = j^\star+\ell j_\star, \quad \sum_{n=0}^\ell i_n+j_n = k^\star+\ell k_\star,\\
	\text{ and for all $ n\in\{0,\dots,\ell\}$, }  (j_n,i_n+j_n)\in[(j_\star,k_\star),(j^\star,k^\star)]
	 \Big\}\Big)
\end{multline}
where we denote $	 (j_n,i_n+j_n)\in[(j_\star,k_\star),(j^\star,k^\star)]$ if and only if
\[	\begin{cases}
	(j_n,i_n+j_n)\in[j_\star,j^\star]\times[k_\star,k^\star]  & \text{ if } k_\star \leq k^\star,\\
	(j_n,i_n+j_n)=\theta_n (j_\star,k_\star)+(1-\theta_n)(j^\star,k^\star) ,\ \theta_n=\frac{(i_n+j_n)-k^\star}{k_\star-k^\star} & \text{ if } k_\star > k^\star.
\end{cases}\]
This notation allows to monitor the indices of the highest-regularity norm $(j^\star,k^\star)$ that will eventually arise in tame estimates; see \Cref{R.tame} and \cref{eq.key} below.

\paragraph{Energy estimates.} 
Let $i\in \RR_+$ and $j\in \NN$ such that $i+j\leq k$. Denote $U^{j,i}\eqdef \partial_t^j \Lambda^i  U$. Applying $\partial_t^j \Lambda^i$ to~\eqref{eq.linearized}, taking the $L^2(\RR^d)^n$ inner-product against $U^{j,i}$, and making use of integration by parts, the symmetry of $\S_l(V)$ ($l\in\{0,\dots,d\}$), that $\G(\bm0)$ is positive-semidefinite (Hypothesis~\ref{H1}) and $\L^\delta$ is skew-symmetric (Hypothesis~\ref{H3}) we have
\begin{align*} \frac12\frac{\dd}{\dd t} \bra{\S_0(V)U^{j,i},U^{j,i}} 
	&\leq \frac12  \bra{[\partial_t,\S_0(V)]U^{j,i},U^{j,i}}+ \sum_{l=1}^d\frac12  \bra{[\partial_{x_l},\S_l(V)]U^{j,i},U^{j,i}}\\
	&\quad - \bra{[\partial_t^j\Lambda^i,\S_0(V)]\partial_t U, U^{j,i}}- \sum_{l=1}^d\bra{[\partial_t^j\Lambda^i,\S_l(V)]\partial_{x_l} U,U^{j,i}}\\
	&\quad - \bra{\partial_t^j\Lambda^i(\G(V) U-\G(\bm0) U),U^{j,i}}- \bra{\partial_t^j\Lambda^i R ,U^{j,i}}.
\end{align*}
All the terms on the right-hand side are estimated by Cauchy-Schwarz inequality.
The first two terms are estimated with
\begin{align*} \norm{ [\partial_t,\S_0(V)]U^{j,i}}_{L^2}+ \norm{[\partial_{x_l},\S_l(V)] U^{j,i}}_{L^2}
	&\leq C(\norm{V}_{L^\infty})\,\big(\norm{\partial_tV}_{L^\infty}+\norm{\partial_{x_l}V}_{L^\infty}\big) \norm{U^{j,i}}_{L^2}\\
	&\leq C(\norm{V}_{H^{s_0}})\,\big(\norm{\partial_tV}_{H^{s_0}}+\norm{V}_{H^{s_0+1}}\big)\norm{\partial_t^j U}_{H^i}\\
	&\leq C(\norm{V}_{H^{s_0}})\,  \big( N_1(V;U)\big\vert_{1,s_0+1}^{j,i+j}+N_1(V;U)\big\vert_{0,s_0+1}^{j,i+j}\big).
\end{align*}
For the third term we decompose
\[ [\partial_t^j\Lambda^i,\S_0(V)]\partial_t U = [\Lambda^i,\S_0(V)]\partial_t^{j+1} U+\Lambda^i\big([\partial_t^j,\S_0(V)]\partial_t U\big)\]
and estimate each term separately. By tame commutator and composition estimates in \Cref{L.commutator,L.composition} we estimate the first contribution as
\begin{align*} \norm{[\Lambda^i,\S_0(V)]\partial_t^{j+1} U}_{L^2}&\leq C(\norm{V}_{H^{s_0}})\,\Big(\norm{ V}_{H^{s_0+1}} \norm{ \partial_t^{j+1} U}_{H^{i-1}}+\big\langle \norm{ V}_{H^{i}} \norm{\partial_t^{j+1} U}_{H^{s_0}}\big\rangle_{i>s_0+1}\Big)\\
		&\leq C(\norm{V}_{H^{s_0}})\, N_1(V;U)\big\vert_{0,s_0+1}^{j+1,i+j}
	\end{align*}
(we used the condition $ i>s_0+1$ to infer $s_0+1\leq \min(\{i,s_0+j+1\})\leq \max(\{i,s_0+j+1\})\leq i+j$ for the second term, and assume for the time-being that $i\geq1$  to ensure that all indices are positive as required in notation \eqref{eq.def-Nell}).
For the second contribution, we use Faa-di Bruno's identity, the commutator and composition estimates \Cref{L.commutator,L.composition} and the trivial continuous embedding $H^{i}(\RR^d)\hookrightarrow H^{i-1}(\RR^d)$:
\begin{align*}
	\norm{ [\partial_t^j,\S_0(V)]\partial_t U }_{H^i}
	&\lesssim \sum_{m=1}^j  \norm{\partial_t^m\big(\S_0(V)\big)(\partial_t^{j+1-m} U)}_{H^i}\\
	&\lesssim  \sum_{m=1}^j \sum_{\ell=1}^m  \sum_{(j_1,\dots,j_\ell)} \norm{\S_0^{(\ell)}(V) \big(\prod_{n=1}^\ell \partial_t^{j_n} V \big)(\partial_t^{j+1-m} U)}_{H^i}\\
	&\lesssim  \sum_{m=1}^j \sum_{\ell=1}^m  \sum_{(j_1,\dots,j_\ell)}\norm{\S_0^{(\ell)}(V)}_{L^\infty} \norm{ \big(\prod_{n=1}^\ell \partial_t^{j_n} V \big)(\partial_t^{j+1-m} U)}_{H^i}\\
	& \qquad +\sum_{m=1}^j \sum_{\ell=1}^m  \sum_{(j_1,\dots,j_\ell)}\norm{[\Lambda^i,\S_0^{(\ell)}(V)] \big(\prod_{n=1}^\ell \partial_t^{j_n} V \big)(\partial_t^{j+1-m} U)}_{L^2}\\
	&\leq  C(\norm{V}_{H^{s_0}})(1+\norm{V}_{H^{s_0+1}}) \sum_{m=1}^j \sum_{\ell=1}^m  \sum_{(j_1,\dots,j_\ell)} \norm{ \big(\prod_{n=1}^\ell \partial_t^{j_n} V \big)(\partial_t^{j+1-m} U)}_{H^i}\\
	&\quad  + C(\norm{V}_{H^{s_0}}) \sum_{m=1}^j \sum_{\ell=1}^m  \sum_{(j_1,\dots,j_\ell)} \Big\langle \norm{V}_{H^{i}}\norm{ \big(\prod_{n=1}^\ell \partial_t^{j_n} V \big)(\partial_t^{j+1-m} U)}_{H^{s_0}}\Big\rangle_{i>s_0+1}
\end{align*}
where the sums are over $(j_1,\dots,j_\ell)$ such that $1\leq j_n\leq j$ and $\sum_{n=1}^\ell j_n=m$. Applying \Cref{L.Product} with $m_j=m_k=1$ for the first contribution and that $H^{s_0}(\RR^d)$ is a Banach algebra for the second yields
\begin{multline*}
		\norm{ [\partial_t^j,\S_0(V)]\partial_t U }_{H^i}
	\leq C(\norm{V}_{H^{s_0}}) (1+\norm{V}_{H^{s_0+1}}) \times
	\sum_{\ell=1}^{j}  N_\ell(V,\dots,V;U) \big\vert_{1,s_0+1}^{j+1-\ell,i+j+1-\ell}\\ +C(\norm{V}_{H^{s_0}})\times	\sum_{\ell=1}^{j+1} \Big\langle  N_\ell(V,\dots,V;U) \big\vert_{0,s_0+1}^{j+1,i+j+1-\ell} \Big\rangle_{i>s_0+1}.
\end{multline*}

The fourth term is estimated similarly: decompose
\[[\partial_t^j\Lambda^i,\S_l(V)]\partial_{x_l} U=[\Lambda^i,\S_l(V)]\partial_t^j\partial_{x_l} U+\Lambda^i\big([\partial_t^j,\S_l(V)]\partial_{x_l} U\big)\]
and observe
\begin{align*}
\norm{[\Lambda^i,\S_l(V)]\partial_t^j\partial_{x_l} U}_{L^2}
	&\leq C(\norm{V}_{H^{s_0}})\,\Big(\norm{ V}_{H^{s_0+1}} \norm{ \partial_t^j U}_{H^{i}}+\big\langle\norm{ V}_{H^{i}} \norm{\partial_t^j U}_{H^{s_0+1}}\big\rangle_{i>s_0+1}\Big)\\
	&\leq C(\norm{V}_{H^{s_0}})\, N_1(V;U)\big\vert_{0,s_0+1}^{j,i+j}.
\end{align*}
Using Faa-di Bruno's identity we obtain
\begin{align*}
\norm{ [\partial_t^j,\S_l(V)]\partial_{x_l} U }_{H^i}
	&\lesssim  C(\norm{V}_{H^{s_0}}) (1+\norm{V}_{H^{s_0+1}}) \sum_{m=1}^j \sum_{\ell=1}^m  \sum_{(j_1,\dots,j_\ell)} \norm{ \big(\prod_{n=1}^\ell \partial_t^{j_n} V \big)(\partial_t^{j-m} \partial_{x_l}U)}_{H^i}\\
&\quad  + C(\norm{V}_{H^{s_0}})  \sum_{m=1}^j \sum_{\ell=1}^m  \sum_{(j_1,\dots,j_\ell)} \Big\langle \norm{V}_{H^{i}}\norm{ \big(\prod_{n=1}^\ell \partial_t^{j_n} V \big)(\partial_t^{j-m}\partial_{x_l} U)}_{H^{s_0}}\Big\rangle_{i>s_0+1}
\end{align*}
where the sums are over $(j_1,\dots,j_\ell)$ such that $1\leq j_n\leq j$ and $\sum_{n=1}^\ell j_n=m$, and hence by \Cref{L.Product} with $m_j=0$ and $m_k=1$ and the fact that $H^{s_0}(\RR^d)$ is a Banach algebra we find
\begin{equation*}
	\norm{ [\partial_t^j,\S_l(V)]\partial_{x_l} U }_{H^i}
	\leq C(\norm{V}_{H^{s_0}}) (1+\norm{V}_{H^{s_0+1}}) \times
	\sum_{\ell=1}^{j+1}  N_\ell(V,\dots,V;U) \big\vert_{0,s_0+1}^{j,i+j+1-\ell}.
\end{equation*}

For the fifth term we write 
\[ \partial_t^j\Lambda^i(\G(V) U-\G(\bm0) U) = [\Lambda^i,\G(V)-\G(\bm0)] \partial_t^j U+\Lambda^i \big( [\partial_t^j,\G(V)] U\big) +(\G(V)-\G(\bm0)) \partial_t^j\Lambda^i  U.\]
Using the trivial continuous embedding $H^{i}(\RR^d)\hookrightarrow H^{i-1}(\RR^d)$ we can estimate the first two contributions exactly as above, and the third contribution brings no difficulty (and is a special case, $\ell=1$) using the composition estimates in \Cref{L.composition} (here we use that the positive-semidefinite constant-coefficient matrix $G(\bm0)$ was subtracted to avoid a contribution that does not vanish when $V=\bm0$ corresponding to $\ell=0$).

Collecting all the above estimates yields by Cauchy-Schwarz inequality
\begin{equation}\label{eq.est-energy} 
	\frac12\frac{\dd}{\dd t} \bra{\S_0(V)\partial_t^j\Lambda^i U,\partial_t^j\Lambda^i U} 
\leq  \norm{\partial_t^j U}_{H^i} \big(C(\norm{V}_{H^{s_0}})(1+\norm{V}_{H^{s_0+1}}) A_{j,i}+\norm{\partial_t^jR}_{H^i}\big) 
\end{equation}
where
\begin{align*}
A_{j,i}&\eqdef   \sum_{\ell=1}^{j+1}  
N_\ell(V,\dots,V;U) \big\vert_{1,s_0+1}^{j+1-\ell,i+j+1-\ell}
+ N_\ell(V,\dots,V;U) \big\vert_{0,s_0+1}^{j,i+j+1-\ell}
+  N_\ell(V,\dots,V;U) \big\vert_{0,s_0+1}^{j+1,i+j+1-\ell} .
\end{align*}
Recall we assumed $i\geq 1$ when estimating $\norm{V}_{H^{s_0+1}}\norm{ \partial_t^{j+1} U}_{H^{i-1}}$. When $i\in[0,1)$ we simply use the continuous embedding $L^2(\RR^d)\hookrightarrow H^{i-1}(\RR^d)$ and add $\norm{V}_{H^{s_0+1}}\norm{ \partial_t^{j+1} U}_{L^2}$ to $A_{j,i}$ in that case. Notice however that terms that led to this contribution and $ N_\ell(V,\dots,V;U) \big\vert_{0,s_0+1}^{j+1,i+j+1-\ell}$ stem respectively from commutators $[\Lambda^i,\S_0(V)] $ or $[\Lambda^i,\S_0^{(\ell)}(V)]$ ($\ell\in \{1,\dots,j\}$) and hence both contributions vanish when $i=0$. Thus we have in that case
\begin{equation}\label{eq.est-energy-0} 
	A_{j,0}\eqdef \sum_{\ell=1}^{j+1} N_\ell(V,\dots,V;U) \big\vert_{1,s_0+1}^{j+1-\ell,j+1-\ell}
	+ N_\ell(V,\dots,V;U) \big\vert_{0,s_0+1}^{j,j+1-\ell}.
\end{equation}

\paragraph{Forward estimates.} We use the equation~\eqref{eq.linearized} on the regular domain of the operator $\L_\delta$. To this aim, and since $\L_\delta$ is skew-Hermitian, we introduce $\P_\delta$ the pseudo-differential projection operator associated with the symbol $\sPi_\delta$ defined in Hypothesis~\ref{H4}. Recall  $\P_\delta^2=\P_\delta$, $\P_\delta$ is self-adjoint and commutes with $\L_\delta$. Let $j\in\NN^\star$, $i\geq 0$ and recall $\Lambda^i=(\Id-\Delta_{\bx})^{i/2}$ commutes with $\P_\delta$ and $\L_\delta$. Applying $\P_\delta\partial_t^{j-1}\Lambda^i$ to~\eqref{eq.linearized} we find after some algebra
\begin{multline*} 	\P_\delta \S_0(V)\P_\delta\partial_t^j\Lambda^i U = -\P_\delta \S_0(V)(\Id-\P_\delta)\partial_t^j\Lambda^i U  - \P_\delta [\partial_t^{j-1}\Lambda^i, \S_0(V)]\partial_t U \\
	- \P_\delta\partial_t^{j-1}\Lambda^i\big( \sum_{l=1}^d\S_l(V)\partial_{x_l}U +\G(V)U+R\big) +\frac1\eps\P_\delta\L_\delta\P_\delta \partial_t^{j-1}\Lambda^i U.
\end{multline*}
Testing against $\P_\delta\partial_t^j\Lambda^i U$ for the $L^2(\RR^d)^n$ inner-product and using Hypothesis~\ref{H2}, we infer that if 
\[\forall(t,\bx)\in [0,T]\times\RR^d, \quad V(t,\bx)\in K\subset \Omega\]
there exists $c_K>0$ such that
\begin{multline*} 
	c_K\norm{\P_\delta\partial_t^j U }_{H^i}\leq 
	\norm{ \S_0(V)(\Id-\P_\delta)\partial_t^j\Lambda^i U}_{L^2}  
	+\norm{ [\partial_t^{j-1}\Lambda^i, \S_0(V)]\partial_t U}_{L^2} \\
	+\sum_{l=1}^d\norm{ \partial_t^{j-1}\big( \S_l(V)\partial_{x_l}U\big) }_{H^i}+\norm{ \partial_t^{j-1}\big( \G(V)U\big) }_{H^i}+\norm{ \partial_t^{j-1}R}_{H^i}+\frac1\eps\norm{\P_\delta\L_\delta\P_\delta \partial_t^{j-1} U}_{H^i} .
\end{multline*}
We now estimate each term on the right-hand side. 
	
	For the first component we simply write
	\[ \norm{ \S_0(V)(\Id-\P_\delta)\partial_t^j\Lambda^i U}_{L^2} \leq C(\norm{V}_{L^\infty}) \norm{(\Id-\P_\delta)\partial_t^j\Lambda^i U}_{L^2}\leq C(\norm{V}_{H^{s_0}})\norm{(\Id-\P_\delta)\partial_t^j U}_{H^i}.\]

	The second term is estimated using Faa-di Bruno's Lemma, the triangle inequality, commutator and composition estimate in \Cref{L.product-2,L.composition}: 
	\begin{align*} \norm{ [\partial_t^{j-1}\Lambda^i, \S_0(V)]\partial_t U}_{L^2}
		&\leq \norm{ [\Lambda^i, \S_0(V)]\partial_t^j U}_{L^2}+\norm{ [\partial_t^{j-1}, \S_0(V)]\partial_t U}_{H^i}\\
		&\leq   C(\norm{V}_{H^{s_0}}) \Big( \norm{V}_{H^{s_0+1}}\norm{\partial_t^j U}_{H^{i-1}}+\big\langle\norm{V}_{H^{i}}\norm{\partial_t^j U}_{H^{s_0}}\big\rangle_{i>s_0+1}\Big)\\
		&\qquad + C(\norm{V}_{H^{s_0}}) (1+\norm{V}_{H^{s_0+1}}) \sum_{\ell=1}^{j-1}    \sum_{(j_0,\dots,j_\ell)} \norm{\big(\prod_{n=1}^\ell \partial_t^{j_n} V \big)(\partial_t^{j_0} U)}_{H^i}\\
		&\qquad + C(\norm{V}_{H^{s_0}})\big\langle \norm{V}_{H^i} \sum_{\ell=1}^{j-1} \sum_{(j_0,\dots,j_\ell)} \norm{\big(\prod_{n=1}^\ell \partial_t^{j_n} V \big)(\partial_t^{j_0} U)}_{H^{s_0}}\big\rangle_{i>s_0+1}
	\end{align*}
	where the sums are over $(j_0,\dots,j_\ell)$ such that $1\leq j_n\leq j-\ell$ and $\sum_{n=0}^\ell j_n = j$.
	Hence by \Cref{L.Product} with $m_j=m_k=1$ and using that $H^{s_0}(\RR^d)$ is a Banach algebra for contributions arising when $i>s_0+1$ we infer
	\begin{multline*} \norm{ [\partial_t^{j-1}\Lambda^i, \S_0(V)]\partial_t U}_{L^2} \leq  C(\norm{V}_{H^{s_0}}) (1+\norm{V}_{H^{s_0+1}}) \times
		 \Big(  \norm{\partial_t^j U}_{H^{i-1}} + \sum_{\ell=1}^{j-1} N_\ell(V,\dots,V;U)\big\vert_{1,s_0+1}^{j-\ell,i+j-\ell} \Big)\\
		 + C(\norm{V}_{H^{s_0}})\times \sum_{\ell=1}^{j}\Big\langle N_\ell(V,\dots,V;U)\big\vert_{0,s_0+1}^{j,i+j-\ell}\Big\rangle_{i>s_0+1}.
		\end{multline*}
	
	Proceeding as above we have
	\begin{align*}  \norm{\partial_t^{j-1}\big(\S_l(V)\partial_{x_l}U\big) }_{H^i}
		&\leq C(\norm{V}_{H^{s_0}}) (1+\norm{V}_{H^{s_0+1}}) \sum_{\ell=0}^{j-1}    \sum_{(j_0,\dots,j_\ell)} \norm{\big(\prod_{n=1}^\ell \partial_t^{j_n} V \big)(\partial_t^{j_0} \partial_{x_l} U)}_{H^i}\\
		&\qquad + C(\norm{V}_{H^{s_0}})\big\langle \norm{V}_{H^i} \sum_{\ell=0}^{j-1} \sum_{(j_0,\dots,j_\ell)} \norm{\big(\prod_{n=1}^\ell \partial_t^{j_n} V \big)(\partial_t^{j_0} \partial_{x_l} U)}_{H^{s_0}}\big\rangle_{i>s_0+1}
	\end{align*}
	where the sums are over $(j_0,\dots,j_\ell)$ such that $0\leq j_0\leq j-\ell-1$, $1\leq j_n\leq j-\ell$ and $\sum_{n=0}^\ell j_n = j-1$, and hence by \Cref{L.Product} with $m_j=0$ and $m_k=1$  we have
		\begin{multline*} \norm{\partial_t^{j-1}\big(\S_l(V)\partial_{x_l}U\big) }_{H^i} \leq  C(\norm{V}_{H^{s_0}}) (1+\norm{V}_{H^{s_0+1}}) \times \\
			 \Big(\norm{\partial_t^{j-1} U}_{H^{i+1}}+\sum_{\ell=1}^{j}  N_\ell(V,\dots,V;U)\big\vert_{0,s_0+1}^{j-1,i+j-\ell}\Big).
	\end{multline*}
	
	The contribution stemming from $\G(V)$ can be estimated exactly as above using the trivial continuous embedding $H^{i_0}(\RR^d)\hookrightarrow H^{i_0-1}(\RR^d)$ for any $i_0\in\RR$, and the contribution stemming from $R$ is straightforward, as is the one from $\P_\delta\L_\delta\P_\delta \partial_t^{j-1} U$, using Hypothesis~\ref{H4} (see \Cref{R.H3H4}).

	Collecting the above we find
	\begin{multline}\label{eq.est-forward}
		\norm{\P_\delta\partial_t^j U }_{H^i} \leq c_K^{-1} C(\norm{V}_{H^{s_0}})(1+\norm{V}_{H^{s_0+1}})\times \Big(\norm{(\Id-\P_\delta)\partial_t^j U}_{H^i}+\norm{\partial_t^{j-1} R}_{H^i}	+(1+C_\L)\norm{\partial_t^{j-1} U}_{H^{i+1}}\\
	+\norm{\partial_t^{j} U}_{H^{i-1}}+ \sum_{\ell=1}^{j} N_\ell(V,\dots,V;U)\big\vert_{1,s_0+1}^{j-\ell,i+j-\ell} + N_\ell(V,\dots,V;U)\big\vert_{0,s_0+1}^{j-1,i+j-\ell}+ \Big\langle N_\ell(V,\dots,V;U)\big\vert_{0,s_0+1}^{j,i+j-\ell}\Big\rangle_{i>s_0+1}\Big).
	\end{multline}

\paragraph{Backward estimates.}
We now use the equation~\eqref{eq.linearized} on the singular domain of the operator $\L_\delta$.
Let $j\in\NN$, $i\geq 1$ and recall $\Lambda^i=(\Id-\Delta_{\bx})^{i/2}$ and $\P_\delta$ is the pseudo-differential projection operator associated with the symbol $\sPi_\delta$ defined in Hypothesis~\ref{H4}. One has  $(\Id-\P_\delta)^2=\Id-\P_\delta$ commutes with $\L_\delta$, $\Lambda^{i-1}$ and time differentiation. Applying $(\Id-\P_\delta)\Lambda^{i-1}\partial_t^j$ to the equation~\eqref{eq.linearized} we find
\[ (\Id-\P_\delta)\frac1\eps\L_\delta (\Id-\P_\delta)\partial_t^j\Lambda^{i-1}U = (\Id-\P_\delta)\partial_t^j\Lambda^{i-1} \big(  \S_0(V)\partial_t U + \sum_{l=1}^d\S_l(V)\partial_{x_l}U +\G(V)U+R\big).\]
Using the hypothesis~\ref{H4} (see \Cref{R.H3H4})
and the triangle inequality we infer that
\begin{multline*}\frac1\eps\norm{(\Id-\P_\delta)\partial_t^j\Lambda^{i-1} U}_{L^2}+\frac\delta\eps \norm{(\Id-\P_\delta)\partial_t^j\Lambda^{i-1} U}_{H^1}
	\\
	\leq c_\L^{-1}\,\Big(\norm{\partial_t^j\Lambda^{i-1} \big(\S_0(V)\partial_t U\big)}_{L^2} +\sum_{l=1}^d\norm{\partial_t^j\big(\S_l(V)\partial_{x_l}U\big) }_{H^{i-1}}+
	\norm{\partial_t^j \big(\G(V)U\big)}_{H^i}+\norm{\partial_t^j R}_{H^{i-1}}\Big).
\end{multline*}
We now estimate each term on the right-hand side. 

Decomposing 
\[ \partial_t^j \Lambda^{i-1} \big(\S_0(V)\partial_t U\big) = \S_0(V)\partial_t^{j+1} \Lambda^{i-1} U+[\partial_t^j\Lambda^{i-1}, \S_0(V)]\partial_t U\]
and using a preceding estimate for the second term (with the change of indices $j$ to $j-1$ and $i-1$ to $i$) we find that
\begin{multline*}
	\norm{\partial_t^j\Lambda^{i-1} \big(\S_0(V)\partial_t U\big)}_{L^2}
	\leq C(\norm{V}_{H^{s_0}}) \norm{\partial_t^{j+1} U}_{H^{i-1}} +C(\norm{V}_{H^{s_0}}) (1+\norm{V}_{H^{s_0+1}}) \times \\
	\Big( \sum_{\ell=1}^{j+1} N_\ell(V,\dots,V;U)\big\vert_{1,s_0+1}^{j+1-\ell,i+j-\ell} +  \Big\langle N_\ell(V,\dots,V;U)\big\vert_{0,s_0+1}^{j+1,i+j-\ell}\Big\rangle_{i>s_0+1}\Big).
	\end{multline*}
In the same way, we readily infer from a preceding estimate that
	\begin{multline*}  \norm{\partial_t^j\big(\S_l(V)\partial_{x_l}U\big) }_{H^{i-1}}+\norm{\partial_t^j\big(\G(V)U\big) }_{H^{i-1}}	\\
	\leq C(\norm{V}_{H^{s_0}}) (1+\norm{V}_{H^{s_0+1}}) \Big( \norm{\partial_t^{j} U}_{H^{i}}+ \sum_{\ell=1}^{j+1}  N_\ell(V,\dots,V;U)\big\vert_{0,s_0+1}^{j,i+j-\ell}\Big).
	\end{multline*}

Combining the above we find
\begin{multline}\label{eq.est-backward}
	\delta^{-1}\norm{(\Id-\P_\delta)\partial_t^j U}_{H^{i-1}}+\norm{(\Id-\P_\delta)\partial_t^j U}_{H^{i}} \\ 
	\leq \frac\eps\delta c_\L^{-1} C(\norm{V}_{H^{s_0}})(1+\norm{V}_{H^{s_0+1}})\times \Big(\norm{\partial_t^{j+1} U}_{H^{i-1}}+\norm{\partial_t^{j} U}_{H^{i}}+\norm{\partial_t^j R}_{H^{i-1}}\\
	+\sum_{\ell=1}^{j+1}N_\ell(V,\dots,V;U)\big\vert_{1,s_0+1}^{j+1-\ell,i+j-\ell} + N_\ell(V,\dots,V;U)\big\vert_{0,s_0+1}^{j,i+j-\ell}\\+ \Big\langle N_\ell(V,\dots,V;U)\big\vert_{0,s_0+1}^{j+1,i+j-\ell}\Big\rangle_{i>s_0+1}  \Big).
\end{multline}

When $0\leq i \leq 1$ we apply the interpolation inequality
\[ \norm{(\Id-\P_\delta)\partial_t^j U}_{H^{i}} \leq \norm{(\Id-\P_\delta)\partial_t^j U}_{L^2}^{1-i}\norm{(\Id-\P_\delta)\partial_t^j U}_{H^{1}}^{i} \leq \norm{(\Id-\P_\delta)\partial_t^j U}_{L^2}^{1-i}\norm{(\Id-\P_\delta)\partial_t^j U}_{H^{1}}^{i},\]
and infer from the above estimate when $i=1$ that
\begin{multline}\label{eq.est-backward-0}
	\norm{(\Id-\P_\delta)\partial_t^j U}_{H^{i}} \leq  \frac\eps{\delta^i} c_\L^{-1}  C(\norm{V}_{H^{s_0}})(1+\norm{V}_{H^{s_0+1}})\times  \Big(\norm{\partial_t^{j+1} U}_{L^2}+\norm{\partial_t^{j} U}_{H^{1}}+\norm{\partial_t^j R}_{L^2}\\
	+\sum_{\ell=1}^{j+1} N_\ell(V,\dots,V;U)\big\vert_{1,s_0+1}^{j+1-\ell,j+1-\ell} + N_\ell(V,\dots,V;U)\big\vert_{0,s_0+1}^{j,j+1-\ell}\Big).
\end{multline}

\subsection{Application to stability estimates}\label{S.bootstrap}

We now apply the results of the previous section to bootstrap the control of suitable energy functionals for sufficiently regular solutions to~\eqref{eq.linearized}, under the assumption that the set of weights $\bm\alpha=( \alpha_{j,i})$ is $\lambda$-admissible in the sense of \Cref{D.alpha-admissible} {\em i.e.} satisfies relations~\ref{ii}--\ref{vi}.
The parameter $\lambda$ appearing therein will be set (sufficiently large) to close a finite-induction bootstrap argument concluding the proof of \Cref{P.Stability}. 

Notice $\alpha_{j,i}$ in \Cref{D.alpha-admissible} is defined also for negative values of the index $i$ because such values are used in intermediary steps of the proof, although only nonnegative indices arise in the definitions of our functionals $\cF_{k,{\bm\alpha}}$ and $ \cF_{k,{\bm\alpha}}^{(0)}$ defined in~\eqref{eq.def-F-stability} and~\eqref{eq.def-F0-stability}, and that we recall below:
\[
	\cF_{k,{\bm\alpha}}(U)\eqdef\sup\big(\big\{ \alpha_{j,i}^{-1} \norm{\partial_t^j U}_{H^{i}} \ : \ (i,j)\in\RR_+\times\NN,\ i+j\leq k\big\}\big)
\]
and
\[ \cF_{k,{\bm\alpha}}^{(0)}(U)\eqdef\sup\big( \big\{\alpha_{j,i}^{-1} \bra{\S_0(V)\partial_t^j\Lambda^i U,\partial_t^j\Lambda^i U}^{1/2}\ : \ (i,j)\in \omega_{k,{\bm\alpha}}^{(0)}\big\}\big)
\]
where 
\begin{align*}
	\omega_{k,{\bm\alpha}}^{(0)}
	\eqdef \big\{(i,j) \ : \ j\in\{0,\dots,j_0-1\},\ i\in [0,k-j]\big\} \bigcup  \big\{(0,j) \ : \ j\in \{0,1,\dots,k\}\big\}.
\end{align*}

		Let us introduce the following convenient notations:
		\[M_U=\cF_{k,{\bm\alpha}}(U), \ M_U^{(0)}=\cF_{k,{\bm\alpha}}^{(0)}(U), \ M_V=\cF_{k,{\bm\alpha}}(V), \ M_R=\cF_{k,{\bm\alpha}}(R).\]
		We denote by $C^{(0)}$ a positive number depending on $s_0>d/2$, $k\geq s_0+1$ and non-decreasingly on $C_{\L}>0$, $c_\L^{-1}>0$, $c_K^{-1}>0$  and $\norm{V}_{H^{s_0+1}}$,
		and whose value changes from line to line. We denote by $C$ a positive number which additionally depends non-decreasingly on $M_V$. 
		Notice that by assumption~\ref{ii} we have in particular $\alpha_{0,s_0+1}\leq1$ and hence $\norm{V}_{H^{s_0+1}}\leq M_V$, and $C^{(0)}\leq C$.

		We recall the notation
\begin{multline} \label{eq.def-Nell-recall}
		N_\ell(\underbrace{V,\dots,V}_{\text{$\ell$ times}};U)\big\vert_{j_\star,k_\star}^{j^\star,k^\star}\\
		\eqdef \sup\Big( \Big\{ \big(\prod_{n=1}^\ell \norm{ \partial_t^{j_n} V}_{H^{i_n}} \big)\norm{ \partial_t^{j_0} U}_{H^{i_0}} \ : \   ((i_n,j_n))_n\in(\RR_+\times\NN)^{1+\ell} \text{ satisfies } \\
		\sum_{n=0}^\ell j_n = j^\star+\ell j_\star, \quad \sum_{n=0}^\ell i_n+j_n = k^\star+\ell k_\star,\\
		\text{ and for all $ n\in\{0,\dots,\ell\}$, }  (j_n,i_n+j_n)\in[(j_\star,k_\star),(j^\star,k^\star)]
		 \Big\}\Big)
	\end{multline}
	where $	 (j_n,i_n+j_n)\in[(j_\star,k_\star),(j^\star,k^\star)]$ if and only if
	\[	\begin{cases}
		(j_n,i_n+j_n)\in[j_\star,j^\star]\times[k_\star,k^\star]  & \text{ if } k_\star \leq k^\star,\\
		(j_n,i_n+j_n)=\theta_n (j_\star,k_\star)+(1-\theta_n)(j^\star,k^\star) ,\ \theta_n=\frac{(i_n+j_n)-k^\star}{k_\star-k^\star} & \text{ if } k_\star > k^\star.
	\end{cases}\]
		Notice the important consequence of assumption~\ref{vi} that
		\begin{equation}\label{eq.key}
			N_\ell(V,\dots,V;U)\big\vert_{j_\star,k_\star}^{j^\star,k^\star} \leq  \big(\alpha_{j_\star,k_\star-j_\star}  M_V\big)^\ell  \big(\alpha_{j^\star,k^\star-j^\star} M_U\big).
		\end{equation}
		
		\medskip
		
		\paragraph{Control of $ \cF_{k,{\bm\alpha}}^{(0)}(U)$.} By the energy estimate~\eqref{eq.est-energy} we have for any $j\in\NN$ and $i\geq 1$
		\begin{equation}\label{eq.est-energy-2} 
			\frac12\alpha_{j,i}^{-2}\frac{\dd}{\dd t} \bra{\S_0(V)\partial_t^j\Lambda^i U,\partial_t^j\Lambda^i U} 
			\leq \big(\alpha_{j,i}^{-1} \norm{\partial_t^j\Lambda^i U}_{L^2}\big)\times\big(C^{(0)}\alpha_{j,i}^{-1}A_{j,i}+M_R\big) 
		\end{equation}
		where
		\begin{align*}
			A_{j,i}&\eqdef  \sum_{\ell=1}^{j+1}  
			N_\ell(V,\dots,V;U) \big\vert_{1,s_0+1}^{j+1-\ell,i+j+1-\ell}
			+ N_\ell(V,\dots,V;U) \big\vert_{0,s_0+1}^{j,i+j+1-\ell}
			+  N_\ell(V,\dots,V;U) \big\vert_{0,s_0+1}^{j+1,i+j+1-\ell} .
		\end{align*}
		By~\eqref{eq.key} stemming from assumption~\ref{vi} and then~\ref{ii}--\ref{iii} we have 
		\begin{align*}
			A_{j,i}&\leq \sum_{\ell=1}^{j+1} \Big( \alpha_{1,s_0}^\ell\alpha_{j+1-\ell,i}+\alpha_{0,s_0+1}^\ell\alpha_{j,i+1-\ell}+\alpha_{0,s_0+1}^\ell\alpha_{j+1,i-\ell}\Big) M_V^\ell M_U\\
			&\leq C \Big( \alpha_{j,i}+\alpha_{j+1,i-1}\Big)  M_V\,M_U.
		\end{align*}
		Applying this to $j\in\{0,\dots,j_0-1\}$
		we infer from the first assumption in~\ref{iv} that
		\[ 	A_{j,i} \leq C \, (1+\lambda)\alpha_{j,i} \ M_V\, M_U.\]
		When $i\in(0,1)$, one has to add to $A_{j,i}$ the harmless contribution $\norm{V}_{H^{s_0+1}}\norm{ \partial_t^{j+1} U}_{L^2}\leq \alpha_{j+1,0} M_V\,M_U$ and the above estimate still holds by the second assumption in~\ref{iv} and \ref{iii}.
		
		When $i=0$ we have the sharper estimate~\eqref{eq.est-energy-0}, that is~\eqref{eq.est-energy-2} holds with
		\begin{align*} 	A_{j,0} &\eqdef  \sum_{\ell=1}^{j+1} N_\ell(V,\dots,V;U) \big\vert_{1,s_0+1}^{j+1-\ell,j+1-\ell}
		+ N_\ell(V,\dots,V;U) \big\vert_{0,s_0+1}^{j,j+1-\ell}\\
		&\leq \sum_{\ell=1}^{j+1} \Big( \alpha_{1,s_0}^\ell\alpha_{j+1-\ell,0}+\alpha_{0,s_0+1}^\ell\alpha_{j,1-\ell}\Big) M_V^\ell M_U
		\leq C \alpha_{j,0}\, M_V\,  M_U.
	\end{align*}
	where we used assumptions~\ref{vi}, and then~\ref{ii}--\ref{iii}.
	
		Plugging these bounds in~\eqref{eq.est-energy-2} and using the definition of $\cF_{k,{\bm\alpha}}^{(0)}(U)$ and $\omega_{k,{\bm\alpha}}^{(0)}$ we obtain
		\begin{equation}\label{eq.control-F0}
			\frac{\dd}{\dd t}  \cF_{k,{\bm\alpha}}^{(0)}(U)  \leq C\, (1+\lambda)\, \cF_{k,{\bm\alpha}}(V)  \cF_{k,{\bm\alpha}}(U) +C\, \cF_{k,{\bm\alpha}}(R).
		\end{equation}
			
		\medskip

		\paragraph{Control of $ \cF_{k,{\bm\alpha}}(U)$.} There now remains to prove that
		\[\cF_{k,{\bm\alpha}}(U)\leq  C^{(0)}\, \cF_{k,{\bm\alpha}}^{(0)}(U) +  C^{(0)}\, \cF_{k,{\bm\alpha}}(R),\]
		by application of the forward and backward estimates of the previous section. Notice first that
		\begin{equation}\label{eq.trivial}
			\forall (i,j)\in \omega_{k,{\bm\alpha}}^{(0)} , \quad \alpha_{j,i}^{-1} \norm{\partial_t^j U }_{H^i}\leq C^{(0)}  \cF_{k,{\bm\alpha}}^{(0)}(U) 
		\end{equation}
		since $V(\cdot)\in K$ where $K$ is a compact subset of the hyperbolicity domain defined in Hypothesis~\ref{H2}. 
		Hence we focus below on the complement set $ (i,j)\in\RR_+\times\NN$ such that $i+j\leq k$ but $(i,j)\notin\omega_{k,{\bm\alpha}}^{(0)}$ and in particular $1\leq j_0\leq j\leq k-1$.
		By combining the estimates~\eqref{eq.est-forward} and~\eqref{eq.est-backward}\footnote{We also use the interpolation inequality
			\[\norm{\partial_t^j U}_{H^{i-1}}\leq \norm{\partial_t^j U}_{H^{i}}^{\frac{i-1}{i}}\norm{\partial_t^j U}_{L^2}^{\frac1i}\leq  \frac{i-1}{i} \iota \norm{\partial_t^j U}_{H^{i}} + \frac1i \iota^{1-i}\norm{\partial_t^j U}_{L^2}. \]
			Setting $\iota=\frac1{2C^{(0)}}$ to absorb the contribution $\norm{\partial_t^j U}_{H^{i}}$ and then augmenting $C^{(0)}$ allows to replace $\norm{\partial_t^j U}_{H^{i-1}}$ with $\norm{\partial_t^j U}_{L^2}$ in the right-hand side.} we infer when $i\geq 1$
		\begin{multline}\label{eq.est-equation}
			\norm{\partial_t^j U }_{H^i} \leq C^{(0)} \times \Big(\frac\eps\delta\norm{\partial_t^{j+1} U}_{H^{i-1}}+\frac\eps\delta\norm{\partial_t^{j} U}_{H^{i}}+\norm{\partial_t^{j-1} U}_{H^{i+1}}+\norm{\partial_t^j U}_{L^2}\\
			+\frac\eps\delta\norm{\partial_t^j R}_{H^{i-1}}+\norm{\partial_t^{j-1} R}_{H^i}+B_{j,i}\Big)
			\end{multline}
			where
			\begin{align*} B_{j,i} &\eqdef 	
			 \sum_{\ell=1}^{j} \Big(N_\ell(V,\dots,V;U)\big\vert_{1,s_0+1}^{j-\ell,i+j-\ell} 
			 +  N_\ell(V,\dots,V;U)\big\vert_{0,s_0+1}^{j-1,i+j-\ell}
			 + \Big\langle N_\ell(V,\dots,V;U)\big\vert_{0,s_0+1}^{j,i+j-\ell}\Big\rangle_{i>s_0+1}\Big)\\
			&\quad+	\frac\eps\delta\sum_{\ell=1}^{j+1}\Big(N_\ell(V,\dots,V;U)\big\vert_{1,s_0+1}^{j+1-\ell,i+j-\ell} 
				+  N_\ell(V,\dots,V;U)\big\vert_{0,s_0+1}^{j,i+j-\ell}
				 + \Big\langle N_\ell(V,\dots,V;U)\big\vert_{0,s_0+1}^{j+1,i+j-\ell}\Big\rangle_{i>s_0+1} \Big).
			\end{align*}
		
		When $0\leq i\leq 1$ we can (and must) use~\eqref{eq.est-backward-0} insead of~\eqref{eq.est-backward} to replace the estimate~\eqref{eq.est-equation} with
				\begin{multline}\label{eq.est-equation-0}
			\norm{\partial_t^j U }_{H^i} \leq C^{(0)} \times \Big(\frac\eps{\delta^i}\norm{\partial_t^{j+1} U}_{L^2}+\frac\eps{\delta^i}\norm{\partial_t^{j} U}_{H^{1}}+\norm{\partial_t^{j-1} U}_{H^{i+1}}+\norm{\partial_t^j U}_{L^2}\\
			+\frac\eps{\delta^i}\norm{\partial_t^j R}_{L^2}+\norm{\partial_t^{j-1} R}_{H^i}+B_{j,i}\Big)
		\end{multline}
		where
		\begin{align*} B_{j,i} &\eqdef 	
			\sum_{\ell=1}^{j} \Big(N_\ell(V,\dots,V;U)\big\vert_{1,s_0+1}^{j-\ell,i+j-\ell} 
			+  N_\ell(V,\dots,V;U)\big\vert_{0,s_0+1}^{j-1,i+j-\ell}\Big)\\
			&\quad+	\frac\eps{\delta^i}\sum_{\ell=1}^{j+1}\Big(N_\ell(V,\dots,V;U)\big\vert_{1,s_0+1}^{j+1-\ell,i+j-\ell} 
			+  N_\ell(V,\dots,V;U)\big\vert_{0,s_0+1}^{j,i+j-\ell} \Big).
		\end{align*}

We shall now use estimates~\eqref{eq.est-equation} and~\eqref{eq.est-equation-0}  to infer from the control of~\eqref{eq.trivial} and finite induction the corresponding control of  $\alpha_{j,i}^{-1}\norm{\partial_t^j U }_{H^i}$,  for $ (i,j)\in\RR_+\times\NN$ such that $i+j\leq k$ but $(i,j)\notin\omega_{k,{\bm\alpha}}^{(0)}$.  Let us consider $i> 0$ and $j\in\{j_0+1,\dots,k-1\}$.
 Denote
\[m_{j,i}\eqdef \alpha_{j,i}^{-1}\norm{\partial_t^j U }_{H^i}.\]
 When $i\geq 1$, we obtain from~\eqref{eq.est-equation}, using~\eqref{eq.key} stemming from assumption~\ref{vi} and then~\ref{ii}--\ref{iii}, that $m_{j,0}\leq C^{(0)} M_U^{(0)} $ by~\eqref{eq.trivial} since $(j,0)\in \omega_{k,{\bm\alpha}}^{(0)}$ and that $0<\eps\leq \delta$:
 \begin{multline*}
 	m_{j,i} \leq C^{(0)} \times \Big(\frac\eps\delta \frac{\alpha_{j+1,i-1}}{\alpha_{j,i}}m_{j+1,i-1}+\frac\eps\delta m_{j,i}+\frac{\alpha_{j-1,i+1}}{\alpha_{j,i}}m_{j-1,i+1}\\
 	+M_U^{(0)}+M_R
 	+C\, \big( \frac{\alpha_{j-1,i}}{\alpha_{j,i}}  +\big\langle\frac{\alpha_{j,i-1}}{\alpha_{j,i}}\big\rangle_{i>s_0+1}+\frac\eps\delta \frac{\alpha_{j,i-1}}{\alpha_{j,i}} +\frac\eps\delta\big\langle\frac{\alpha_{j+1,i-2}}{\alpha_{j,i}} \big\rangle_{i>s_0+1} \big) M_U \Big).
 \end{multline*}
 If $0\leq i\leq 1$ using instead the estimate~\eqref{eq.est-equation-0} and again that  $m_{j+1,0}\leq C^{(0)} M_U^{(0)} $ yields
 \begin{multline*}
 	m_{j,i} \leq C^{(0)} \times \Big(\frac\eps{\delta^i} \frac{\alpha_{j+1,0}}{\alpha_{j,i}}M_U^{(0)}+\frac\eps{\delta^i}\frac{\alpha_{j,1}}{\alpha_{j,i}}m_{j,1}+\frac{\alpha_{j-1,i+1}}{\alpha_{j,i}}m_{j-1,i+1}\\
 	+M_U^{(0)}+M_R
 	+C\, \big(\frac{\alpha_{j-1,i}}{\alpha_{j,i}}+\frac\eps{\delta^i}  \frac{\alpha_{j,i-1}}{\alpha_{j,i}} \big) M_U \Big).
 \end{multline*}

We deduce from assumptions \ref{iii},~\ref{v} and~\ref{l} that for any $j\in\{j_0+1,\dots,k-1\}$ and $i\geq 0$, one has
\begin{equation} \label{eq.induction}
	m_{j,i} \leq  C^{(0)} \times \Big( \big\langle m_{j+1,i-1}\big\rangle_{i> 1}+ (\frac\eps\delta+\frac1\lambda)(1+C)M_U+M_U^{(0)}
+M_R\Big).
\end{equation}
The case $j=j_0$ requires a slightly different analysis because the first assumption in \ref{l} cannot be use to estimate the contributions $\frac{\alpha_{j-1,i+1}}{\alpha_{j,i}}m_{j-1,i+1}$ and $\frac{\alpha_{j-1,i}}{\alpha_{j,i}}M_U$. However since $j=j_0$ we may replace them with $\frac{\alpha_{j-1,i+1}}{\alpha_{j,i}}M_U^{(0)}$, respectively $\frac{\alpha_{j-1,i}}{\alpha_{j,i}}M_U^{(0)}$, and by assumpion~\ref{iii} we immediately see that~\eqref{eq.induction} also holds when $j=j_0$.

Using~\eqref{eq.trivial} for $j\leq j_0-1$ or $i=0$ and~\eqref{eq.induction} and finite induction on decreasing $j\in\{k-1,\dots,j_0\}$, and recalling the definitions of $M_U=\cF_{k,{\bm\alpha}}(U)$, $M_U^{(0)}=\cF_{k,{\bm\alpha}}^{(0)}(U)$ and $ M_R=\cF_{k,{\bm\alpha}}(R)$ we find that
\[
	\cF_{k,{\bm\alpha}}(U) \ \leq\  C^{(0)} \, \cF_{k,{\bm\alpha}}^{(0)}(U)\ +\ C^{(0)}\, \cF_{k,{\bm\alpha}}(R)\ + \  C \,  (\frac\eps\delta+\frac1\lambda)\, \cF_{k,{\bm\alpha}}(U).
\]

Assuming that $\frac\eps\delta+\frac1\lambda$ is sufficiently small such that $ C\, (\frac\eps\delta+\frac1\lambda)\leq \frac12$, we can absorb the corresponding contribution and deduce
\begin{equation}\label{eq.control-F}
	\cF_{k,{\bm\alpha}}(U)\leq C^{(0)} \, \cF_{k,{\bm\alpha}}^{(0)}(U)+C^{(0)}\, \cF_{k,{\bm\alpha}}(R).
\end{equation}

	We are now ready to conclude the proof of our main result, \Cref{P.Stability}. 
		Plugging the estimate~\eqref{eq.control-F} in \cref{eq.control-F0} and using Grönwall's Lemma we find 
		\[\cF_{k,{\bm\alpha}}^{(0)}(U)\big\vert_{t=T}\leq \cF_{k,{\bm\alpha}}^{(0)}(U)\big\vert_{t=0} e^{C \, (1+\lambda)\, V(0,T)}+C \int_0^T \cF_{k,{\bm\alpha}}(R)  e^{C \, (1+\lambda)\, V(t,T)}\dd t\]
		where $V(t_1,t_2)\eqdef\int_{t_1}^{t_2} \cF_{k,{\bm\alpha}}(V)\dd t$ and $C$ depends uniquely on $k,c_K,C_{\L},c_\L$ and $\sup_{t\in[0,T]}\cF_{k,{\bm\alpha}}(V)$.
		This immediately yields the desired estimates 	and the proof of \Cref{P.Stability} is complete.

\section{Large-time well-posedness; proof of \Cref{T.Well-posedness}}\label{S.completion}
	
	We now complete the proof of \Cref{T.Well-posedness}, based on \Cref{P.Stability}. We first provide a large-time well-posedness result based on the propagation in time of the full space-time functional  $\cF_{k,{\bm\alpha}}$ is defined in \cref{eq.def-F}. 
	
	\begin{Proposition}\label{P.Well-posedness}
		Let $k\in\NN,\ k>d/2+1$. For any $(\eps,\delta)\in\cS$ and any $U_0\in H^k(\RR^d)$ satisfying the hyperbolicity condition $U_0(\RR^d)\subset K\subset\Omega$, there exists a unique $U\in \cC(I_{\eps,\delta};H^k(\RR^d))$ maximal-in-time classical solution to system~\eqref{eq.general} under Hypotheses~\ref{H1}--\ref{H3} emerging from the initial data $U\big\vert_{t=0}=U_0$, and one has $U\in \cap_{j=0}^k\cC^j(I_{\eps,\delta};H^{k-j}(\RR^d))$. 
		
		Assume additionally that hypothesis~\ref{H4} holds. For any $C_0>0$ and $s_0>d/2$ such that $s_0\leq 1+k$, there exists $T>0$, $C>0$ and $\lambda_0\geq 1$ depending uniquely on $k,s_0,c_K,C_\L,c_\L$ and $C_0$ such that the following holds. 
		
		Assume that $(\eps,\delta)\in\cS$ and $\frac\delta\eps\geq \lambda_0$ and
		\[ \cF_{k,{\bm\alpha}}(U) \big\vert_{t=0} \leq C_0, \quad \ \cF_{k,{\bm\alpha}} \eqdef\sup\big(\big\{ \alpha_{j,i}^{-1} \norm{\partial_t^j U}_{H^{i}} \ : \ (i,j)\in\RR_+\times\NN,\ i+j\leq k\big\}\big)\]
		with the set of weights $\bm\alpha=\big(\alpha_{j,i} \ : \  (i,j)\in\RR_+\times\NN,\ i+j\leq k\big)$ being $\lambda$-admissible in the sense of \Cref{D.alpha-admissible} with $\lambda\geq \lambda_0$. Then $I_{\eps,\delta}\supset [-T/(C_0(1+\lambda)),T/(C_0(1+\lambda))]$ and for any $t\in [-T/(C_0(1+\lambda)),T/(C_0(1+\lambda))]$, one has $\{U(t,\bx) \ :\  \bx\in\RR^d\}\subset \Omega$ and
		\[ \cF_{k,{\bm\alpha}}(U)(t) \leq C\, \cF_{k,{\bm\alpha}}(U) \big\vert_{t=0}
		.\]
	\end{Proposition}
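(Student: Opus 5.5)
The first part (local well-posedness for fixed $(\eps,\delta)$) is the classical theory of symmetrizable quasilinear hyperbolic systems. Since $\frac1\eps\L_\delta$ is a skew-adjoint constant-coefficient Fourier multiplier (Hypothesis~\ref{H3}), it commutes with $\Lambda^k$ and contributes nothing to $\frac{\dd}{\dd t}\bra{\S_0(U)\Lambda^kU,\Lambda^kU}$. The standard Kato/Majda iteration scheme therefore applies verbatim and gives a unique maximal solution in $\cC(I_{\eps,\delta};H^k)$, with $U(t,\cdot)$ staying in $\Omega$ on $I_{\eps,\delta}$. It also gives the usual blow-up alternative: either $I_{\eps,\delta}$ is unbounded, or $\norm{U}_{W^{1,\infty}}$ blows up or $U$ leaves every compact subset of $\Omega$. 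Time regularity $U\in\cC^j(I_{\eps,\delta};H^{k-j})$ follows by solving the equation for $\partial_tU=\S_0(U)^{-1}(\frac1\eps\L_\delta U-\dots)$ and differentiating inductively in time, using Hypothesis~\ref{H3} to bound $\L_\delta$ as an order-one operator. For the estimates we need one extra derivative, as in \Cref{P.Stability}. We regularize the data $U_0^\eta\to U_0$ in $H^k$, work with $H^{k+1}$ solutions, and pass to the limit using continuous dependence. An alternative is to apply \Cref{P.Stability} to difference quotients.

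For the second part we run a continuity (bootstrap) argument based on \Cref{P.Stability} with $V=U$ and $R=0$. Then $U$ solves~\eqref{eq.linearized} with $\G(V)U$ as the lower-order term. We fix a compact $K'$ with $K\subset \mathrm{int}K'\subset K'\subset\Omega$. Since $\alpha_{0,s_0}\leq 1$ by~\ref{ii}--\ref{iii}, Sobolev embedding gives $\norm{U(t)-U_0}_{L^\infty}\lesssim \int_0^t\norm{\partial_tU}_{H^{s_0}}\leq t\,\alpha_{1,s_0}\cF_{k,\bm\alpha}(U)\leq t\,\cF_{k,\bm\alpha}(U)$, using $\alpha_{1,s_0}\le1$. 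So $U$ stays in $K'$ for times $t\lesssim 1/\sup\cF_{k,\bm\alpha}(U)$. We set $M^{(0)}\eqdef 2C_{K'}C_0$, where $C_{K'}$ bounds $\cF^{(0)}_{k,\bm\alpha}$ by $\cF_{k,\bm\alpha}$ through the upper bound on $\S_0$ on $K'$. We then set $M\eqdef 2C^{(0)}M^{(0)}$, with $C^{(0)}$ from \Cref{P.Stability}; this constant depends only on $M^{(0)}$ and the structural constants, not on $M$, which is what makes the choice consistent. These choices determine the constants $C$ and $\lambda_0$ of \Cref{P.Stability}. We also enlarge $\lambda_0$ so that $C\eps/\delta\le1$ follows from $\delta/\eps\ge\lambda_0$. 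Let $T^\star$ be the supremum of times $t$ such that on $[0,t]$ we have $U\in K'$, $\cF^{(0)}_{k,\bm\alpha}(U)\le M^{(0)}$ and $\cF_{k,\bm\alpha}(U)\le M$. Then $T^\star>0$ by continuity of the $\cC^j(H^{k-j})$ norms. Note that $\cF_{k,\bm\alpha}$ involves a supremum over real $i$, but it is controlled by finitely many integer indices through interpolation and the monotonicity~\ref{iii}.

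On $[0,T^\star)$, \Cref{P.Stability} gives $\cF^{(0)}(U)(t)\le C_{K'}C_0\exp(C(1+\lambda)Mt)$. This is $\le \frac34M^{(0)}$ provided $t\le T/((1+\lambda)C_0)$ for a small $T$; this works because $M$ is proportional to $C_0$. The second estimate in \Cref{P.Stability} then gives $\cF(U)\le C^{(0)}\frac34M^{(0)}<M$, and the $L^\infty$ bound above keeps $U$ strictly inside $K'$ once $T$ is small. The bootstrap conditions are thus strictly improved, so $T^\star\ge T/(C_0(1+\lambda))$. On this interval the $W^{1,\infty}$ norm stays bounded, so the blow-up criterion yields $I_{\eps,\delta}\supset[0,T/(C_0(1+\lambda))]$. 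Negative times follow by the symmetry $t\mapsto-t$: this maps the system to one of the same class, with $\L_\delta\to-\L_\delta$ and $\S_l\to-\S_l$, and Hypotheses~\ref{H1}--\ref{H4} are preserved, up to the sign condition on $\G(\bm0)$, which only affects how the time scales with the data (\Cref{R.uniform}). The final bound is $\cF(U)(t)\le C\,\cF(U)|_{t=0}$, obtained by tracking linear dependence on the initial value in the Grönwall estimate.

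The main obstacle is the circularity of constants: \Cref{P.Stability} requires $\lambda\ge\lambda_0(M)$ and $C\eps/\delta\le1$ with $C=C(M)$, while $M$ must be fixed before $\lambda$. This is why $M^{(0)}$ and $M$ are chosen from $C_0$ alone, through $C^{(0)}$, which is independent of $M$. A second delicate point is justifying the time-continuity of $\cF_{k,\bm\alpha}(U)$ and the use of $H^{k+1}$ regularity, which is handled through the regularization described above.
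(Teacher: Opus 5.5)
Your proposal is correct and follows essentially the same route as the paper. Both arguments regularize the data to work with $H^{k+1}$ solutions, then run a continuity argument with $V=U$, $R=0$ in \Cref{P.Stability}, keep $U$ in a compact $K'\supset K$ through $\int_0^t\norm{\partial_t U}_{L^\infty}$, and close with $\cF^{(0)}_{k,\bm\alpha}(U)\vert_{t=0}\leq C_K\,\cF_{k,\bm\alpha}(U)\vert_{t=0}$. Your proof also makes explicit two points the paper leaves implicit: you put $\cF_{k,\bm\alpha}(U)\leq M$ into the bootstrap, with $M$ fixed from $C_0$ via the $M$-independent constant $C^{(0)}$, and you note that time reversal flips the sign of $\G(\bm0)$.
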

	\begin{proof}
		The existence and uniqueness of a maximal-in-time classical solution emerging from initial data $U_0\in H^k(\RR^d)$ with $k>d/2+1$ is a standard result for  Friedrichs symmetrizable quasilinear systems (see {\em e.g.}~\cite{Benzoni-GavageSerre07}). 
		Assuming first that $U_0\in H^{k+1}(\RR^d)$, the maximal-in-time classical solution to system~\eqref{eq.general} emerging from the initial data $U\big\vert_{t=0}=U_0$ satisfies $U\in \cC(I_{\eps,\delta};H^{k+1}(\RR^d))$ and hence (repeatedly using \cref{eq.general} and product and composition estimates in \Cref{L.product-2,L.composition}) $U\in \cap_{j=0}^{k+1}\cC^j(I_{\eps,\delta};H^{k+1-j}(\RR^d))$. This regularity allows to apply \Cref{P.Stability} with $V=U$ and $R=0$. Anticipating that the desired result will hold assuming $U_0\in H^{k+1}(\RR^d)$, we infer the result without this additional restriction by mollifying an initial data $U_0\in H^{k}(\RR^d)$ with a sequence $U_{0,n}\in H^{k+1}(\RR^d)$ such that $U_{0,n}\to U_0$ in $H^{k}(\RR^d)$ and using the standard result that the maximal existence time (respectively the emerging solution in $\cC(I_{\eps,\delta};H^{k}(\RR^d))$) is a lower semi-continuous (respectively continuous) function of the initial data in $H^k(\RR^d)$.
		
		Let us set a compact  $K'\subset\Omega $ such that $K$ is compactly embedded in $K'$, and denote by $I_1\subset  I_{\eps,\delta}\cap \RR_+$ the interval of all $T\geq0$ such that
		\[\forall (\tau,\bx)\in[0,T]\times\RR^d, \quad U(\tau,\bx)\in  K'\subset\Omega\]
		and $I_2\subset  I_{\eps,\delta}\cap \RR_+$ the interval of all $T\geq0$ such that
		\[\forall \tau\in[0,T] , \quad \cF_{k,{\bm\alpha}}^{(0)}(U)\big\vert_{t=\tau}\leq 2\cF_{k,{\bm\alpha}}^{(0)}(U)\big\vert_{t=0},\]
		where $\cF_{k,{\bm\alpha}}^{(0)}(U)$ is defined through~\eqref{eq.def-F0-stability} with $V=U$.
		Using the continuous embedding $L^\infty(\RR^d)  \hookrightarrow H^{k+1}(\RR^d)$ and since $U\in \cap_{j=0}^{k+1}\cC^j(I_{\eps,\delta};H^{k+1-j}(\RR^d))$, $I_1\cap I_2\neq \emptyset$ and $T^\star\eqdef\sup (I_1\cap I_2)>0$. 
		
		Since the set of weights $\bm{\alpha}$ is $\lambda$-admissible, the stability estimates in \Cref{P.Stability} hold and we have for any $0\leq T<T^\star$
		\[	\sup_{\tau\in[0,T]}\cF_{k,{\bm\alpha}}(U)\big\vert_{t=\tau} \leq  C^{(0)}\, \sup_{\tau\in[0,T]} \cF_{k,{\bm\alpha}}^{(0)}(U)\big\vert_{t=\tau}\leq 2C^{(0)}\cF_{k,{\bm\alpha}}^{(0)}(U)\big\vert_{t=0},\]
		where $C^{(0)}$ depends uniquely on $k,c_{K'},C_{\L}$, $c_\L$ and $\cF_{k,{\bm\alpha}}^{(0)}(U)\leq 2\cF_{k,{\bm\alpha}}^{(0)}(U)\big\vert_{t=0}$, and hence
		\[ 
			\sup_{\tau\in[0,T]}	\cF_{k,{\bm\alpha}}^{(0)}(U)\big\vert_{t=\tau} \leq  \cF_{k,{\bm\alpha}}^{(0)}(U) \big\vert_{t=0}  \exp\big(C^{(1)}\, (1+\lambda)\,T\, \sup_{\tau\in[0,T]}\cF_{k,{\bm\alpha}}(U)\big\vert_{t=\tau}\big) \]
		where $C^{(1)}$ depends uniquely on $k,c_{K'},C_{\L},c_\L$ and $\cF_{k,{\bm\alpha}}(U) \leq 2C^{(0)}\cF_{k,{\bm\alpha}}^{(0)}(U)\big\vert_{t=0}$.
		Now, denoting 
		\[T_0=\big(2(1+\lambda) C^{(0)}C^{(1)} \cF_{k,{\bm\alpha}}^{(0)}(U)\big\vert_{t=0}\big)^{-1}\ln(\frac32),\]
		we find that for any $0\leq T<\min(\{T_0,T^\star\})$
		\[\sup_{\tau\in[0,T]}	\cF_{k,{\bm\alpha}}^{(0)}(U)\big\vert_{t=\tau} \leq \frac32  \cF_{k,{\bm\alpha}}^{(0)}(U) \big\vert_{t=0}.\]
		Moreover, using that for all $(t,\bx)\in[0,T]\times\RR^d$ one has
		\[ \big\vert U(t,\bx)-U(0,\bx) \big\vert\leq \int_0^t \norm{\partial_t U(\tau,\cdot)}_{L^\infty}\dd\tau\lesssim T \sup_{\tau\in[0,T]}\cF_{k,{\bm\alpha}}(U)\big\vert_{t=\tau} \leq  2T_0C^{(0)}\cF_{k,{\bm\alpha}}^{(0)}(U)\big\vert_{t=0}\]
		we have ---restricting $T_0$ if necessary--- for any $0\leq T<\min(\{T_0,T^\star\})$
		\[\forall (\tau,\bx)\in[0,T]\times\RR^d, \quad U(\tau,\bx)\in  \Omega\]
		where $\Omega$ is an open set such that $K\subset \Omega\subset K'$.
		By using again the continuity in time of $U$, this shows by the usual continuity argument that $[0,T_0]\cap I_1\cap I_2$ is an open, closed and non-empty subset of the interval $[0,T_0]$; hence $I_1\cap I_2\cap[0,T_0] =[0,T_0]$ and $T_0\leq T^\star<\sup I_{\eps,\delta}$. Of course, the corresponding result holds for negative times.
		
		This proves \Cref{P.Well-posedness} after noticing that since
		\[\forall \bx\in[0,T]\times\RR^d, \quad U_0(\bx)\in K\subset\Omega\] 
		one has
		\[
			  \cF_{k,{\bm\alpha}}^{(0)}(U)\big\vert_{t=0} \le C_{K}\cF_{k,{\bm\alpha}}(U)\big\vert_{t=0}
		\]
		with $C_K$ depending only on $K$ so that we have
		\[	\sup_{\tau\in[0,T_0]}\cF_{k,{\bm\alpha}}(U)\big\vert_{t=\tau} \leq  2 C^{(0)}\, \cF_{k,{\bm\alpha}}^{(0)}(U)\big\vert_{t=0}\leq 2 C^{(0)}\, C_{K}\cF_{k,{\bm\alpha}}(U)\big\vert_{t=0} .\]
		The proof is complete.
	\end{proof}
	\begin{Remark}	
	It is interesting to observe that the estimate in \Cref{P.Well-posedness} does not depend on the constant $N_\L$ defined in Hypothesis~\ref{H3}. In fact the result  holds with any $\L_\delta$ constant-coefficient pseudo-differential operator that is skew-adjoint for the $L^2(\RR^d)^n$ inner product ({\em i.e.} Fourier multipliers with skew-Hermitian symbols) satisfying Hypothesis~\ref{H4}, possibly of order higher than one. 
	\end{Remark}

\Cref{T.Well-posedness} is now a direct consequence of the following Proposition, providing an upper bound on an admissible functional $\cF_{k,{\bm\alpha}}(U)$ ---for solutions $U$ to system~\eqref{eq.general} and under some restrictions to the parameters $(\eps,\delta)\in\cS$--- using the control of a limited number of time derivatives. Specifically we apply \Cref{P.preparation} to infer that the control of the initial data \eqref{eq.well-prepared} assumed in the statement of \Cref{T.Well-posedness} implies the initial control of the energy functional required in \Cref{P.Well-posedness}. Since the set of weights $\bm\alpha^\lambda_{j_0,i_0}$ is $\lambda$-admissible as stated and proved in \Cref{C.weights}, \Cref{P.Well-posedness} applies and provides a control of the energy functional which is easily seen (using again the explicit formula of the set of weights $\bm\alpha^\lambda_{j_0,i_0}$) to imply the desired estimate, \eqref{eq.estimate}.  

\begin{Proposition}\label{P.preparation}
	Let $k\in\NN,\ k>d/2+1$, $(\eps,\delta)\in\cS$ and $U\in \cC(I;H^k(\RR^d))$ classical solution to system~\eqref{eq.general} under Hypotheses~\ref{H1}--\ref{H3} and satisfying
	\[ \{U(t,\bx) \ :\  (t,\bx)\in I\times \RR^d\}\subset K\subset \Omega\]
	with $I$ a real interval and $K$ compact subset of the hyperbolic domain $\Omega$.
	
	For any $j_0\in\NN^\star$ and $i_0\geq 0$ such that $ i_0+j_0>1+d/2$, any $ j_\sharp\in\NN $ such that $j_\sharp\geq j_0$ and $C_{\star}>0$, there exists $C>0$ depending uniquely on $k,j_0,i_0,j_\sharp,c_K$, $N_\L$ and $C_\star$ such that the following holds. 
	
	For any $(\eps,\delta)\in\cS$ and $\lambda\geq 1$ such that $\eps\lambda\leq \delta$, $\lambda\leq\delta^{-1}$ and $(\eps\lambda)^{j_\sharp-j_0}\leq \delta^{i_0}$ and provided
	\[M_{j_\sharp}\eqdef \sum_{j=0}^{j_\sharp-1} \norm{\partial_t^j U}_{H^{k-j}}+\frac{\eps^{ j_\sharp-j_0}}{\delta^{-i_0}} \norm{\partial_t^{j_\sharp}U}_{H^{k-j_\sharp}} \leq C_\star\]
	then one has
	\[\cF_{k,{\bm\alpha^\lambda_{j_0,i_0}}}(U)\leq C	M_{j_\sharp}  \]
	where  $\cF_{k,{\bm\alpha^\lambda_{j_0,i_0}}}(U)\eqdef\sup\big(\big\{ \alpha_{j,i}^{-1} \norm{\partial_t^j U}_{H^{i}} \ : \ (i,j)\in\RR_+\times\NN,\ i+j\leq k\big\}\big)$ with $\bm\alpha^\lambda_{j_0,i_0}$ the set of weights defined in \eqref{eq.def-alpha}, that is
	\[\alpha_{j,i}\eqdef\max(\{1,\lambda^{j-j_0}, \lambda^{j-j_0}\delta^{i_0+j_0-i-j},\eps^{j_0-j}\delta^{i_0-i}\}).\]
\end{Proposition}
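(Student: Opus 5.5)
The plan is to bound $\alpha_{j,i}^{-1}\norm{\partial_t^jU}_{H^i}$ directly for every $(i,j)$ with $i+j\leq k$. The low time derivatives are read off from $M_{j_\sharp}$. The high time derivatives are reconstructed by differentiating the equation~\eqref{eq.general} in time and using induction on $j$. Throughout I read the factor in the definition of $M_{j_\sharp}$ as $\eps^{j_\sharp-j_0}/\delta^{i_0}$, as in~\eqref{eq.well-prepared}.

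\textbf{Step 1: low time derivatives.} For $j\leq j_\sharp-1$ one has $\norm{\partial_t^jU}_{H^i}\leq\norm{\partial_t^jU}_{H^{k-j}}\leq M_{j_\sharp}$, and $\alpha_{j,i}\geq 1$, so these terms are bounded by $M_{j_\sharp}$.

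For $j=j_\sharp$ one has $\norm{\partial_t^{j_\sharp}U}_{H^i}\leq \eps^{j_0-j_\sharp}\delta^{i_0}M_{j_\sharp}$. Since $0<\delta\leq1$ and $i\geq0$,
\[\eps^{j_0-j_\sharp}\delta^{i_0}\leq\eps^{j_0-j_\sharp}\delta^{i_0-i}\leq\alpha_{j_\sharp,i},\]
which gives the required bound at level $j_\sharp$.

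\textbf{Step 2: induction on $j$.} Now let $j\geq j_\sharp$, and assume that $\norm{\partial_t^{j'}U}_{H^{i'}}\leq C\alpha_{j',i'}M_{j_\sharp}$ for all $j'\leq j$. Write the equation as
\[\partial_tU=\S_0(U)^{-1}\Big(-\sum_l\S_l(U)\partial_{x_l}U-\G(U)U+\tfrac1\eps\L_\delta U\Big)\]
and apply $\partial_t^{j}$. Faa di Bruno's formula expands the coefficient terms, and I control them with the composition and product estimates (\Cref{L.composition}, \Cref{L.Product}).

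The singular term produces $\S_0(U)^{-1}\frac1\eps\L_\delta\partial_t^{j}U$ together with commutator terms. Hypothesis~\ref{H3} gives
\[\norm{\L_\delta f}_{H^i}\leq N_\L\big(\norm{f}_{H^i}+\delta\norm{f}_{H^{i+1}}\big).\]
For the linear contributions I need three weight inequalities:
\[\tfrac1\eps\alpha_{j,i},\qquad \tfrac\delta\eps\alpha_{j,i+1},\qquad \alpha_{j,i+1}\quad\lesssim\quad\alpha_{j+1,i}.\]
Since $j\geq j_0$, $\alpha_{j,i}=\max(\lambda^{j-j_0},\eps^{j_0-j}\delta^{i_0-i})$.

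The $\eps$-branch is immediate:
\[\tfrac1\eps\,\eps^{j_0-j}\delta^{i_0-i}=\eps^{j_0-j-1}\delta^{i_0-i}\leq \alpha_{j+1,i},\qquad \tfrac\delta\eps\,\eps^{j_0-j}\delta^{i_0-i-1}=\eps^{j_0-j-1}\delta^{i_0-i}\leq\alpha_{j+1,i}.\]

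The $\lambda$-branch is where the restriction $(\eps\lambda)^{j_\sharp-j_0}\leq\delta^{i_0}$ enters. I need $\eps^{-1}\lambda^{j-j_0}\leq\eps^{j_0-j-1}\delta^{i_0-i}$, that is $(\eps\lambda)^{j-j_0}\leq\delta^{i_0-i}$. For $i\geq i_0$ the right-hand side is at least $1$ and the claim is trivial. Otherwise it follows from $(\eps\lambda)^{j-j_0}\leq(\eps\lambda)^{j_\sharp-j_0}\leq\delta^{i_0}\leq\delta^{i_0-i}$, which uses $\eps\lambda\leq\delta\leq1$ and $j\geq j_\sharp$. The regular term $\alpha_{j,i+1}$ is bounded by $\alpha_{j,i+1}\leq\frac\delta\eps\alpha_{j,i+1}$ (valid since $\eps\leq\delta$), so it is controlled by the same computation.

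\textbf{Step 3: nonlinear products (expected main obstacle).} Differentiating the coefficients produces products of the form
\[\Big(\prod_n\partial_t^{j_n}U\Big)\,\partial_t^{j'}U,\quad\text{possibly multiplied by }\tfrac1\eps\L_\delta.\]
Here all $j_n,j'\leq j$, and the highest time index reaching $j+1$ occurs only through the singular factor.

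My first approach is to bound these with \Cref{L.Product}. The resulting weight products are controlled by~\eqref{eq.key}, i.e.\ property~\ref{vi}, which holds by \Cref{L.weights} because $\lambda\leq\delta^{-1}$ and $\lambda\leq\delta/\eps$. In addition, $\alpha_{0,s_0+1}=\alpha_{1,s_0}=1$ with $s_0=i_0+j_0-1>d/2$, so low-order factors cost only $C(M_{j_\sharp})\leq C(C_\star)$ and put the highest-regularity factor at weight at most $\alpha_{j,i+1}$ or $\eps^{-1}$ times a lower weight. Each such factor is then handled by the weight inequalities of Step 2.

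The delicate points in this step are twofold. First, one must check that the anisotropic index configurations produced by Faa di Bruno fit the segment constraints of~\ref{vi}. Second, the $\frac1\eps$ factor must fall only on terms whose weight gains as in Step 2. I would organise the terms as in the proof of \Cref{P.Stability}, with the roles of forward and backward estimates replaced by this single direct computation.

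Closing the induction up to $j=k$ then yields $\cF_{k,\bm\alpha^\lambda_{j_0,i_0}}(U)\leq C M_{j_\sharp}$. The constant $C$ depends on $N_\L$, $c_K$ (through $\S_0^{-1}$) and $C_\star$, as stated in \Cref{P.preparation}.
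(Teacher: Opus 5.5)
Your proposal is correct and follows essentially the same route as the paper. You read off the levels $j\le j_\sharp$ directly from $M_{j_\sharp}$, and your reading of the factor as $\eps^{j_\sharp-j_0}/\delta^{i_0}$ is the right one. You then induct on $j\ge j_\sharp$ using the equation solved for $\partial_t U$, Fa\`a di Bruno, \Cref{L.Product} together with property~\ref{vi} and the fact that low-order weights equal $1$, and the weight ratios $\tfrac1\eps\alpha_{j,i}/\alpha_{j+1,i}\le 1$ and $(1+\tfrac\delta\eps)\alpha_{j,i+1}/\alpha_{j+1,i}\le 2$. The only cosmetic difference is that the paper first notes, via $(\eps\lambda)^{j_\sharp-j_0}\le\delta^{i_0}$, that $\alpha_{j,i}=\eps^{j_0-j}\delta^{i_0-i}$ for all $j\ge j_\sharp$. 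You instead dispose of the $\lambda$-branch inside each weight comparison using that same restriction.
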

\begin{proof}	
	It is obvious that for any $j\in \{0,\dots, j_\sharp-1\}$, $\alpha_{j,i}\geq 1$ yields
	\[\sup\big(\big\{ \alpha_{j,i}^{-1} \norm{\partial_t^j U(t,\cdot)}_{H^{i}} \ : \ 0\leq  i\leq k-j\big\}\big) \leq \norm{\partial_t^j U}_{H^{k-j}}\leq 	M_{j_\sharp}  .\]
	Now, for $j\geq  j_\sharp$, by definition and since $\eps^{j_0-j_\sharp}\delta^{i_0}\geq \lambda^{j_\sharp-j_0}$, we have $\alpha_{j,i}=\eps^{j_0-j}\delta^{i_0-i}$. In particular,
	\[\sup\big(\big\{ \alpha_{j_\sharp,i}^{-1} \norm{\partial_t^{j_\sharp} U(t,\cdot)}_{H^{i}} \ : \ 0\leq  i\leq k-j_\sharp\big\}\big) \leq \frac{\eps^{ j_\sharp-j_0}}{\delta^{-i_0}}\norm{\partial_t^{j_\sharp} U}_{H^{k-j_\sharp}}\leq 	M_{j_\sharp}  .\]
	We now prove by induction on  $j\in \{ j_\sharp+1,\dots,k\}$ that
	\begin{equation}\label{eq.goal} M_j\eqdef  \sup\big(\big\{ \alpha_{j,i}^{-1} \norm{\partial_t^j U(t,\cdot)}_{H^{i}} \ : \ 0\leq  i\leq k-j\big\}\big) \leq C	M_{j_\sharp}  .\end{equation}
	Let $j\geq  j_\sharp $. Using \cref{eq.general} and Hypothesis~\ref{H2} we have
	\[ 	\partial_t^{j+1} U = - \partial_t^j\Big( \sum_{l=1}^d\S_0(U)^{-1}\S_l(U)\partial_{x_l}U +  \S_0(U)^{-1}\G(U)U -\frac1\eps\S_0(U)^{-1}\L_\delta U \Big).\]
	By Faa-di Bruno's Lemma and the triangle inequality
	 we find
	\begin{align*}
		\norm{\partial_t^{j} \Big(\S_0(U)^{-1}\S_l(U)\partial_{x_l}U\Big)}_{H^i}
		&\lesssim \sum_{l=0}^j  \norm{\partial_t^l\big(\S_0^{-1}\S_l(U)\big)(\partial_t^{j-l}\partial_{x_l} U)}_{H^i}\\
		&\lesssim  \sum_{l=0}^j \sum_{\ell=0}^l  \sum_{(j_1,\dots,j_\ell)} \norm{(\S_0^{-1}\S_l)^{(\ell)}(U) \big(\prod_{n=1}^\ell \partial_t^{j_n} U \big)(\partial_t^{j-l}\partial_{x_\ell} U)}_{H^i}\\
		&\leq C  \sum_{\ell=0}^j    \sum_{(j_0,\dots,j_\ell)} \norm{\big(\prod_{n=1}^\ell \partial_t^{j_n} U \big)(\partial_t^{j_0} U)}_{H^i}\\
		&\quad + C \norm{U}_{H^i}\big\langle \sum_{\ell=0}^j    \sum_{(j_0,\dots,j_\ell)} \norm{\big(\prod_{n=1}^\ell \partial_t^{j_n} U \big)(\partial_t^{j_0} \partial_{x_\ell}U)}_{H^{s_0}}\rangle_{i>s_0}
	\end{align*}
	where $C$ depends on $k$, $C_K$, $s_0\eqdef i_0+j_0-1>d/2$ and $\norm{U}_{H^{s_0}}$, and the sums are over $(j_1,\dots,j_\ell)$ such that $1\leq j_n\leq j$ and $\sum_{n=1}^\ell j_n=l$ for the second line and $\sum_{n=0}^\ell j_n = j_0+\sum_{n=1}^\ell j_n = j$ for the last line.
	By using \Cref{L.Product} with $m_j=0=m_k=0$ and that $H^{s_0}(\RR^d)$ is a Banach algebra we infer that
	\[ \norm{\partial_t^{j} \Big(\S_0(U)^{-1}\S_l(U)\partial_{x_l}U\Big)}_{H^i}\leq C \sum_{\ell=0}^{j}\sup_{((i_n,j_n))_n } \prod_{n=0}^\ell \norm{\partial_t^{j_n} U_n}_{H^{i_n}}\]  
	where the sup is over $((i_n,j_n))_n\in(\RR_+\times\NN)^{1+\ell}$ such that $\sum_{n=0}^\ell j_n=j$, $\sum_{n=0}^\ell i_n=i+\ell s_0+1$ and for all $\ell\in\{0,\dots,\ell\}$, $(j_n,i_n+j_n)\in[(0,s_0),(j,i+j+1)]$ (recall \Cref{L.Product} for this notation). 
	
	By proceeding similarly with the other terms and using Hypothesis~\ref{H3} we infer that
	\begin{equation}\label{eq.est}
		\alpha_{j+1,i}^{-1}\norm{\partial_t^{j+1} U}_{H^i} \leq  C\sum_{\ell=0}^j\Big((1+\frac\delta\eps)  \frac{ \prod_{n=0}^\ell\alpha_{j_n,i_n}}{\alpha_{j+1,i}}  M_{j}^{1+\ell} +\frac1\eps  \frac{\prod_{n=0}^\ell \alpha_{j_n',i_n'}}{\alpha_{j+1,i}}  M_{j}^{1+\ell}\Big)\end{equation}
	where $C$ depends additionally on $N_\L$ and the products are over $((i_n,j_n))_n\in(\RR_+\times\NN)^{1+\ell}$  as above and
	$((i_n',j_n'))_n\in(\RR_+\times\NN)^{1+\ell}$ such that $\sum_{n=0}^\ell j_n'=j$, $\sum_{n=0}^\ell i_n'=i+\ell s_0$ and for all $\ell\in\{0,\dots,\ell\}$, $(j_n',i_n'+j_n')\in[(0,s_0),(j,i+j)]$. 
	
	Then using the property~\ref{vi} of admissible weights (see \Cref{L.weights}) and the fact that $\alpha_{0,s_0}=1$  and $\alpha_{j,i}=\eps^{j_0-j}\delta^{i_0-i}$ for $j\geq   j_\sharp  $ we infer that
	\[(1+\frac\delta\eps) \frac{\prod_{n=0}^\ell  \alpha_{j_n,i_n}}{\alpha_{j+1,i}}\leq (1+\frac\delta\eps)\alpha_{0,s_0}^\ell\frac{\alpha_{j,i+1}}{\alpha_{j+1,i}}= \frac\eps\delta+1\leq 2, \quad \frac1\eps\frac{\prod_{n=0}^\ell  \alpha_{j_n',i_n'}}{\alpha_{j+1,i}} \leq \frac1\eps \alpha_{0,s_0}^\ell \frac{\alpha_{j,i}}{\alpha_{j+1,i}}=1.\]
	Plugging these estimates in~\eqref{eq.est} we infer immediately ~\eqref{eq.goal} by induction on $j\in \{ j_\sharp +1,\dots,k\}$.
\end{proof}

\section{The case of weakly nonlinear singular contributions} \label{S.H5H6}
	
	In this section we consider the case when Hypothesis~\ref{H5} or Hypothesis~\ref{H6} hold, that is when the entries of the matrix $\S_0$ are asymptotically near-constant in the following sense: for any $K\subset \Omega$, there exists $C_{k,K}'>0$ such that for all $\ell\in\{1,\dots,k\}$ and any $0<\eps\leq \delta\leq 1$
	\[ \Norm{ \S_0^{(\ell)}}_{(\RR^n)^\ell\to M_n(\RR)}\leq \big(\frac\eps\delta\big)^{\ell'} C_{k,K}' \]
	where $\ell'=1$ in Hypothesis~\ref{H5} and $\ell'=\ell$ in  Hypothesis~\ref{H6}. Such hypothesis provides extra smallness to contributions of commutators with time or space differentiation with $\S_0$, which can be tracked to relax some assumptions in admissible weights in the sense of \Cref{D.alpha-admissible}. In other words, when Hypothesis~\ref{H5} or Hypothesis~\ref{H6} hold, our stability, well-posedness and convergence results hold for a larger set of functionals, hence either providing a stronger control on the solution or assuming less stringent assumptions on the initial data.
	
	\subsection{The case of  Hypothesis~\ref{H5}}\label{S.H5}
	
	\begin{Proposition}[Stability] \label{P.Stability-H5}
		If Hypothesis~\ref{H5} holds in addition to~\ref{H1}--\ref{H4}, then the statement of \Cref{P.Stability} also holds for sets of weights $\bm\alpha$ that are $\lambda$-admissible in the sense of \Cref{D.alpha-admissible} and replacing~\ref{iv} with
		\begin{enumerate}[({A}1),series=A]
				\item[\ref{iv}'] For all $j\in\{0,\dots,j_0-1\}$, $\frac\eps\delta\alpha_{j+1,0}\leq \lambda \alpha_{j,0}$.
		\end{enumerate}
	\end{Proposition}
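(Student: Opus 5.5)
We would rerun the proof of \Cref{P.Stability} essentially verbatim and check that assumption~\ref{iv} is used at exactly one place, namely in the control of $\cF_{k,{\bm\alpha}}^{(0)}(U)$. There, for $j\in\{0,\dots,j_0-1\}$, the bound on $A_{j,i}$ produced the term $\alpha_{j+1,i-1}$ (and, when $i\in(0,1)$, the extra term $\alpha_{j+1,0}$). The forward and backward estimates, the induction~\eqref{eq.induction} on $j\geq j_0$, and the absorption argument rely only on~\ref{ii},~\ref{iii},~\ref{v},~\ref{l},~\ref{vi}. The case $j=j_0$ of the induction uses~\eqref{eq.trivial} and~\ref{iii}, not~\ref{iv}. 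These parts are therefore unchanged, and \eqref{eq.control-F} still holds.

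The main step is to revisit the energy estimate~\eqref{eq.est-energy}. We need to identify which contributions give the terms $N_\ell(V,\dots,V;U)\vert_{0,s_0+1}^{j+1,i+j+1-\ell}$ and the extra term $\norm{V}_{H^{s_0+1}}\norm{\partial_t^{j+1}U}_{H^{i-1}}$. As already observed in the paper, these come only from the commutators $[\Lambda^i,\S_0(V)]\partial_t^{j+1}U$ and $[\Lambda^i,\S_0^{(\ell)}(V)](\cdots)$. Every such term involves at least one derivative $\S_0^{(\ell)}$ with $\ell\geq1$. The commutator estimates of \Cref{L.commutator,L.composition} are linear in the bound on $\S_0'$ and its derivatives. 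Hence, under~\ref{H5}, these contributions carry an extra factor $\eps/\delta$.

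We would make this precise by writing $\S_0(V)=\S_0(\bm 0)+(\S_0(V)-\S_0(\bm0))$. The constant part commutes with $\Lambda^i$ and $\partial_t$. Every remaining term has the schematic form $\S_0^{(\ell)}(V)(\partial V)^{\ell}\cdots$, so the tame estimates apply with constant $\frac\eps\delta C(\norm{V}_{H^{s_0}})$. This is the main obstacle. The delicate point is the composition estimate for $\S_0^{(\ell)}(V)$ in $H^i$ and the commutator $[\Lambda^i,\S_0(V)]$: we must check that the constants depend on $\S_0$ only through derivatives of order $\geq1$, all of which are $\cO(\eps/\delta)$ by~\ref{H5}, and that no $\cO(1)$ factor from $\S_0$ itself enters. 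The second-order terms only need $\norm{\S_0''}\lesssim\eps/\delta$, which~\ref{H5} provides. With this, the bound becomes
\[A_{j,i}\leq C\big(\alpha_{j,i}+\tfrac\eps\delta\alpha_{j+1,i-1}\big)M_VM_U .\]

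It then remains to show $\frac\eps\delta\alpha_{j+1,i-1}\lesssim\lambda\alpha_{j,i}$ for $j<j_0$, using only~\ref{iv}'. For $i\geq1$, monotonicity~\ref{iii} along the diagonal $\ell\mapsto\alpha_{j+\ell,i-\ell}$ is not sufficient by itself, so we would use the following chain. Monotonicity in $i$ (from~\ref{iii}) gives $\alpha_{j+1,i-1}\leq\alpha_{j+1,i-1+\ldots}$, and~\ref{v} gives $\alpha_{j+1,i-1}\leq\frac\delta\eps\alpha_{j,i}$, so $\frac\eps\delta\alpha_{j+1,i-1}\leq\alpha_{j,i}$ directly. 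The term with $i\geq1$ is thus already harmless under~\ref{v}. The only genuinely new term is the $i\in[0,1)$ contribution $\frac\eps\delta\norm{V}_{H^{s_0+1}}\norm{\partial_t^{j+1}U}_{L^2}\leq\frac\eps\delta\alpha_{j+1,0}M_VM_U$. By~\ref{iv}' and~\ref{iii} this is bounded by $\lambda\alpha_{j,0}M_VM_U\leq\lambda\alpha_{j,i}M_VM_U$. This recovers $A_{j,i}\leq C(1+\lambda)\alpha_{j,i}M_VM_U$, hence~\eqref{eq.control-F0}. Combining it with~\eqref{eq.control-F} and Grönwall's Lemma concludes exactly as in \Cref{P.Stability}.
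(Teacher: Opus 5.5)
Your proposal is correct and follows the paper's proof. Assumption~\ref{iv} enters only in deriving \eqref{eq.control-F0}; the splitting $\S_0(V)=\S_0(\bm0)+\frac\eps\delta\widetilde\S_0(V)$ puts a factor $\frac\eps\delta$ on the $\S_0$-commutator contributions, giving $A_{j,i}\leq C(\alpha_{j,i}+\frac\eps\delta\alpha_{j+1,i-1})M_VM_U$; and~\ref{v} then disposes of the second term for $i\geq1$. Your explicit use of~\ref{iv}' together with~\ref{iii} for the extra $i\in(0,1)$ contribution $\frac\eps\delta\norm{V}_{H^{s_0+1}}\norm{\partial_t^{j+1}U}_{L^2}$ is exactly where the modified assumption is needed (the paper's text loosely attributes this step to~\ref{v}), while your aside about $\alpha_{j+1,i-1+\ldots}$ is superfluous and can be dropped.
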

	\begin{proof}
		It suffices to notice that in the proof of \Cref{P.Stability}, assumption~\ref{iv} was used only to deduce the estimate~\eqref{eq.control-F0} from the general estimate~\eqref{eq.est-energy}. Notice however that using Hypothesis~\ref{H5} the estimate~\eqref{eq.est-energy} may be improved to
		\[ 
			\frac12\frac{\dd}{\dd t} \bra{\S_0(V)\partial_t^j\Lambda^i U,\partial_t^j\Lambda^i U} 
			\leq  \norm{\partial_t^j U}_{H^i} \big(C(\norm{V}_{H^{s_0}})(1+\norm{V}_{H^{s_0+1}}) A_{j,i}+\norm{\partial_t^jR}_{H^i}\big) 
		\]
		where
		\begin{align*}
			A_{j,i}&\eqdef   \sum_{\ell=1}^{j+1}  
			N_\ell(V,\dots,V;U) \big\vert_{1,s_0+1}^{j+1-\ell,i+j+1-\ell}
			+ N_\ell(V,\dots,V;U) \big\vert_{0,s_0+1}^{j,i+j+1-\ell}
			+ \frac\eps\delta N_\ell(V,\dots,V;U) \big\vert_{0,s_0+1}^{j+1,i+j+1-\ell} .
		\end{align*}
		The prefactor $\frac\eps\delta$ is due to the fact that terms that led to the contribution $ N_\ell(V,\dots,V;U) \big\vert_{0,s_0+1}^{j+1,i+j+1-\ell}$ stem from commutators $[\Lambda^i,\S_0(V)] $ or $[\Lambda^i,\S_0^{(\ell)}(V)]$ ($\ell\in \{1,\dots,j\}$) and hence the contributions are of size $\cO(\frac\eps\delta)$ (notice the identity $[\Lambda^i,\S_0(V)]=\frac\eps\delta [\Lambda^i,\widetilde\S_0(V)]$ where $\widetilde\S_0(V)\eqdef\frac\delta\eps\big(\S_0(V)-\S_0(\bm0)\big)$ and its derivatives are uniformly bounded with respect to $0<\eps\leq \delta\leq 1$).
		By assumption~\ref{vi} and then~\ref{ii}--\ref{iii} we have for any $j\in\NN$ and $i\geq 1$
		\begin{align*}
			A_{j,i}&\leq \sum_{\ell=1}^{j+1} \Big( \alpha_{1,s_0}^\ell\alpha_{j+1-\ell,i}+\alpha_{0,s_0+1}^\ell\alpha_{j,i+1-\ell}+\frac\eps\delta\alpha_{0,s_0+1}^\ell\alpha_{j+1,i-\ell}\Big) M_V^\ell M_U\\
			&\leq C \Big( \alpha_{j,i}+\frac\eps\delta\alpha_{j+1,i-1}\Big)  M_V\,M_U,
		\end{align*}	
		where we use the same notations as in \Cref{S.bootstrap}. When $i\in[0,1]$ we need to add to the right-hand side above the contribution $\frac\eps\delta\norm{V}_{H^{s_0+1}}\norm{ \partial_t^{j+1} U}_{L^2}\leq \alpha_{1,s_0}\alpha_{j+1,0}M_V\,M_U$ where once again the prefactor $\frac\eps\delta$ is due to the commutator with $\S_0$. Thanks to the prefactor $\frac\eps\delta$, we can use assumption~\ref{v} instead of assumption~\ref{iv} to infer the control
		\[ A_{j,i}\leq C\,  \alpha_{j,i}\,  (1+\lambda)\, M_V\,M_U,\]
		from which estimate~\eqref{eq.control-F0} is deduced, and the rest of the proof is identical.
	\end{proof}
	\begin{proof}[Proof of \Cref{T.H5}]
		Using \Cref{P.Stability-H5} in place of \Cref{P.Stability} in the proof of  \Cref{P.Well-posedness}, and \Cref{P.preparation} with 
		the set of weights $\bm\alpha_{j_0,i_0}^{\lambda}$ defined
			for $j_0\geq 1$ and $i_0\geq 0$ satisfying $i_0+j_0>d/2+1$ by
			\[\bm\alpha_{j_0,i_0}^\lambda\eqdef \big( \alpha_{j,i}' \ : \ (i,j)\in\RR\times\NN,\ i+j\leq k\big), \quad \alpha_{j,i}'\eqdef\max(\{1,\lambda^{j-j_0}, \eps^{j_0-j}\delta^{i_0-i}\}),\]
			we obtain \Cref{T.H5}.
		
		The only thing we have to check is that $\bm\alpha_{j_0,i_0}^{\lambda}$ satisfies the assumptions of \Cref{D.alpha-admissible} with the exception of the first assumption in~\ref{iv}. Yet this is a consequence of \Cref{L.weights} and \Cref{C.weights} since $\alpha_{j,i}'$ corresponds to $P=3$ and $(\beta_1,\gamma_1)=(1,1)$, $(\beta_2,\gamma_2)=(\lambda,1)$,   and $(\beta_3,\gamma_3)=(\delta/\eps,1/\delta)$.
		\end{proof}

	\subsection{The case of  Hypothesis~\ref{H6}}\label{S.H6}
	
	\begin{Proposition}[Stability] \label{P.Stability-H6}
		If Hypothesis~\ref{H6} holds in addition to~\ref{H1}--\ref{H4}, then the statement of \Cref{P.Stability} also holds for sets of weights $\bm\alpha$ that are $\lambda$-admissible in the sense of \Cref{D.alpha-admissible}, replacing~\ref{iv} with \ref{iv}' as above, and withdrawing the assumption $ \alpha_{1,s_0}\leq 1$ from~\ref{ii}. Moreover, we can set $j_0=0$, in which case \ref{iv}' should be replaced with
	\begin{enumerate}[({A}1),series=A]
	\item[\ref{iv}''] $\frac\eps\delta\alpha_{1,0}\leq \lambda \alpha_{0,0}$.
	\end{enumerate}
	\end{Proposition}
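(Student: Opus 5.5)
The plan is to retrace the proof of \Cref{P.Stability} and isolate every point where \ref{iv} and the bound $\alpha_{1,s_0}\leq 1$ from \ref{ii} enter. The aim is to show that under Hypothesis~\ref{H6} each such use comes with enough factors $\frac\eps\delta$ to be replaced by \ref{v} together with \ref{iv}' (or \ref{iv}''). As in \Cref{P.Stability-H5}, the key device is to write $\S_0(V)=\S_0(\bm 0)+\big(\S_0(V)-\S_0(\bm0)\big)$. By \ref{H6}, the $\ell$-th derivative $\S_0^{(\ell)}(V)$ carries a prefactor $(\frac\eps\delta)^\ell$. Hence, in the Faa-di Bruno expansions of the commutators $[\partial_t,\S_0(V)]$, $[\partial_t^j,\S_0(V)]\partial_tU$ and $[\Lambda^i,\S_0(V)]$ (in the energy estimate~\eqref{eq.est-energy} and in the forward/backward estimates~\eqref{eq.est-forward}, \eqref{eq.est-backward}), each factor $\partial_t^{j_n}V$ produced by differentiating $\S_0$ is accompanied by one factor $\frac\eps\delta$.

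I would then redo the bound on $A_{j,i}$ using \eqref{eq.key}. Contributions of the type $N_\ell(V,\dots,V;U)\vert_{1,s_0+1}^{\cdots}$, which were the only place $\alpha_{1,s_0}\leq1$ was used, arise from $\S_0$-commutators. Under \ref{H6} they therefore come with $(\frac\eps\delta)^\ell$, and are bounded by $(\frac\eps\delta\alpha_{1,s_0})^\ell\alpha_{\cdot,\cdot}M_V^\ell M_U$. Since \ref{v} (applied at $(j,i)=(0,s_0+1)$) and \ref{ii} give $\frac\eps\delta\alpha_{1,s_0}\leq\alpha_{0,s_0+1}\leq1$, the bound $\alpha_{1,s_0}\leq 1$ is no longer needed. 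Note that the relevant $\partial_tV$ factors in $\S_l$- and $\G$-commutators must be tracked separately: these matrices do not enjoy \ref{H6}. For them I would use the product structure of \Cref{L.Product} with $m_j=0$, so that only $N_\ell\vert_{0,s_0+1}$ terms appear, which require only $\alpha_{0,s_0+1}\leq1$. I expect this to be the main obstacle: one must check carefully that no $\partial_t V$ measured in $H^{s_0}$ without a $\frac\eps\delta$ factor survives, in particular in the $[\partial_t^j,\S_l(V)]\partial_{x_l}U$ and $\G$ terms. If it does, I would instead absorb it using the $\frac\eps\delta$ smallness of the $\S_0$ weights via \ref{v} along the anti-diagonal.

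The energy step then follows exactly as in \Cref{P.Stability-H5}. The term $N_\ell\vert_{0,s_0+1}^{j+1,i+j+1-\ell}$ gains $\frac\eps\delta$, and \ref{iv}' (with \ref{v}) gives $A_{j,i}\leq C(1+\lambda)\alpha_{j,i}M_VM_U$, hence \eqref{eq.control-F0}. The bootstrap for $\cF_{k,\bm\alpha}(U)$ via \eqref{eq.induction} uses only \ref{iii}, \ref{v}, \ref{l}, and is unchanged, again with $\frac\eps\delta$ prefactors replacing any appeal to $\alpha_{1,s_0}$. Finally, for $j_0=0$ the set $\omega^{(0)}_{k,\bm\alpha}$ reduces to $\{(0,j)\}$ plus nothing in the first family. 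The energy estimates are then only needed for $i=0$ (pure time derivatives, via \eqref{eq.est-energy-0}) and for $(i,j)=(i,0)$. In the latter case the only coupling to $\partial_tU$ is through $[\Lambda^i,\S_0(V)]\partial_tU$, which carries $\frac\eps\delta$ and is controlled by \ref{iv}''. The induction then starts at $j=j_0=0$, with \ref{l} applying for all $j\geq0$, and Grönwall concludes as before.
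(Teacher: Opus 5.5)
Your treatment of the case $j_0\in\NN^\star$ is the paper's argument. The only contributions bounded through $\alpha_{1,s_0}$ are the terms $N_\ell(V,\dots,V;U)\big\vert_{1,s_0+1}^{\cdots}$, and they all stem from derivatives of $\S_0$. Hence \ref{H6} attaches a factor $(\eps/\delta)^\ell$ to them, and \ref{v} at $(j,i)=(0,s_0+1)$ gives $\frac\eps\delta\alpha_{1,s_0}\leq\alpha_{0,s_0+1}$. The replacement of \ref{iv} by \ref{iv}' is handled as in \Cref{P.Stability-H5}. The obstacle you anticipate does not occur. The $\S_l$- and $\G$-contributions were already estimated with \Cref{L.Product} and $m_j=0$, so they only produce terms $N_\ell\big\vert_{0,s_0+1}^{\cdots}$, and your fallback is unnecessary.

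The case $j_0=0$, however, does not work as you wrote it. Setting $j_0=0$ in the definition of $\omega^{(0)}_{k,\bm\alpha}$ does leave only $\{(0,j)\}$. But then $\norm{U}_{H^i}$ for $i>0$ is neither part of $\cF^{(0)}_{k,\bm\alpha}(U)$ nor recoverable in the bootstrap. The forward estimate~\eqref{eq.est-forward} requires $j\geq1$, and the backward estimate~\eqref{eq.est-backward} at $j=0$ only controls $(\Id-\P_\delta)U$. So there is no induction step available at $j=0$, and the induction cannot start at $j=j_0=0$ as you claim.

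The missing step, which is precisely the paper's special analysis, is to enlarge $\omega^{(0)}_{k,\bm\alpha}$ by $\{(i,0) : i\in[0,k]\}$. This puts the spatial energies $\alpha_{0,i}^{-1}\bra{\S_0(V)\Lambda^iU,\Lambda^iU}^{1/2}$ into the Gr\"onwall functional. Your sentence about energy estimates for $(i,0)$ presupposes this, but it has to be built into the functional explicitly. After that, \eqref{eq.induction} is only needed for $j\geq1$, with \ref{l} available from $j=0$ on, and the step at $j=0$ is dropped.

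Also, in that $(i,0)$ energy estimate, the coupling term $\frac\eps\delta N_1(V;U)\big\vert_{0,s_0+1}^{1,i}$ is controlled by \ref{v}, namely $\frac\eps\delta\alpha_{1,i-1}\leq\alpha_{0,i}$, when $i\geq1$. The condition \ref{iv}'', together with \ref{iii}, is only needed for $i\in(0,1)$, where the additional term $\frac\eps\delta\norm{V}_{H^{s_0+1}}\norm{\partial_tU}_{L^2}$ appears.
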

	\begin{proof}
		Hypothesis~\ref{H6} is obviously more stringent than  Hypothesis~\ref{H5} and we already discussed in the preceding subsection why \ref{iv}' may replace~\ref{iv} in that case. There remains to track where the assumption $ \alpha_{1,s_0}\leq 1$ is used in the proof of \Cref{P.Stability}. It arises in \Cref{S.bootstrap} when estimating contributions of the form
		\[N_\ell(V,\dots,V;U)\big\vert^{j^\star,i^\star+j^\star}_{1,s_0+1} \leq  \big(\alpha_{1,s_0}  M_V\big)^\ell  \big(\alpha_{j^\star,i^\star} M_U\big).\]
		(where the inequality stems from assumption~\ref{vi}; see~\eqref{eq.key}). An inspection of the origin of such contributions reveals that all these contributions are multiplied with $\Norm{ \S_0^{(\ell)}}_{(\RR^n)^\ell\to M_n(\RR)}$, and hence by Hypothesis~\ref{H6}
		\[\Norm{ \S_0^{(\ell)}}_{(\RR^n)^\ell\to M_n(\RR)}N_\ell(V,\dots,V;U)\big\vert^{j^\star,i^\star+j^\star}_{1,s_0+1} \leq  \big( \tfrac\eps\delta\alpha_{1,s_0} M_V\big)^\ell  \big(\alpha_{j^\star,i^\star} M_U\big).\]
		There now remains to notice that $\tfrac\eps\delta\alpha_{1,s_0}\leq\alpha_{0,s_0+1}$ by assumption~\ref{v}, and the rest of the proof is identical if $j_0\in\NN^\star$.
		 
		If $j_0=0$ then a special analysis is needed. We set
			\begin{align*}
				\omega_{k,{\bm\alpha}}^{(0)}
				\eqdef \big\{(i,0) \ : \ i\in [0,k-j]\big\} \bigcup  \big\{(0,j) \ : \ j\in \{0,1,\dots,k\}\big\}.
			\end{align*}
			to define
			\[ \cF_{k,{\bm\alpha}}^{(0)}(U)\eqdef\sup\big( \big\{\alpha_{j,i}^{-1} \bra{\S_0(V)\partial_t^j\Lambda^i U,\partial_t^j\Lambda^i U}^{1/2}\ : \ (i,j)\in \omega_{k,{\bm\alpha}}^{(0)}\big\}\big)
			\]
			and the differential inequality \eqref{eq.control-F0} which yields the control of $\cF_{k,{\bm\alpha}}^{(0)}(U)$ requires to take into account new contributions: \[\cF_{k,{\bm\alpha}}^{(0),{\rm new}}(U)\eqdef \sup\big(\big\{ \alpha_{0,i}^{-1} \bra{\S_0(V)\Lambda^i U,\Lambda^i U}^{1/2}\ : \ i\in(0,k]\big\}\big).\]
			For this we use the energy estimate~\eqref{eq.est-energy} but take into account that commutators with $\S_0(U)$ yield a multiplicative prefactor $\frac\eps\delta$, by the Hypothesis~\ref{H6}. This yields (using the notations of the proof of \Cref{P.Stability}, \Cref{S.bootstrap})
			\begin{equation}\label{eq.est-energy-3} 
				\frac12\alpha_{0,i}^{-2}\frac{\dd}{\dd t} \bra{\S_0(V)\Lambda^i U,\Lambda^i U} 
				\leq \big(\alpha_{0,i}^{-1} \norm{\Lambda^i U}_{L^2}\big)\times\big(C^{(0)}\alpha_{0,i}^{-1}A_{0,i}+M_R\big) 
			\end{equation}
			where
			\begin{align*}
				A_{0,i}&\eqdef  \ 
				\frac\eps\delta N_1(V;U) \big\vert_{1,s_0+1}^{0,i}
				+ N_1(V;U) \big\vert_{0,s_0+1}^{0,i}
				+ \frac\eps\delta N_1(V;U) \big\vert_{0,s_0+1}^{1,i} +\frac\eps\delta\left\langle \norm{V}_{H^{s_0+1}}\norm{\partial_t U}_{L^2}\right\rangle_{i\in(0,1)}.
			\end{align*}
			This yields
			\begin{align*}
				A_{j,i}&\leq  \frac\eps\delta\alpha_{1,s_0}\alpha_{0,i}+\alpha_{0,s_0+1}\alpha_{0,i}+\frac\eps\delta\alpha_{0,s_0+1}\alpha_{1,\max(\{i-1,0\})}\Big) M_V M_U.
			\end{align*}
			By assumption~\ref{v} we have $\frac\eps\delta\alpha_{1,s_0}\leq \alpha_{0,s_0+1}$ and $\frac\eps\delta\alpha_{1,\max(\{i-1,0\})} \leq \alpha_{0,i}$ when $i\geq 1$, and Hypothesis~\ref{H4}'' with assumption \ref{iii} readily yield $\frac\eps\delta\alpha_{1,\max(\{i-1,0\})} \leq \lambda \alpha_{0,0}\leq \lambda \alpha_{0,i}$ when $i\in(0,1)$.
			We infer as desired
			\[ 	A_{j,i} \leq C \, (1+\lambda)\alpha_{j,i} \ M_V\, M_U,\]
			from which the estimate \eqref{eq.control-F0} follows. The proof then continues unchanged, except for the proof of \eqref{eq.induction} when $j=j_0=0$ which is no longer necessary.	
	\end{proof}
	
	\begin{proof}[Proof of \Cref{T.H6}]
	Using \Cref{P.Stability-H6} in place of \Cref{P.Stability} in the proof of  \Cref{T.Well-posedness}, and \Cref{P.preparation} with 
	the set of weights $\bm\alpha_{0,i_0}^{\lambda}$ defined
	for  $i_0>d/2+1$ by
\[\bm\alpha_{0,i_0}^\lambda\eqdef \big( \alpha_{j,i}'' \ : \ (i,j)\in\RR\times\NN,\ i+j\leq k\big), \quad \alpha_{j,i}''\eqdef\max(\{\lambda^{j}, \eps^{-j}\delta^{i_0-i}\}),\]
 we obtain \Cref{T.H6}.
	The only thing we have to check is that $\bm\alpha_{0,i_0}^{\lambda}$ satisfies the assumptions of \Cref{D.alpha-admissible} with $j_0=0$, withdrawing assumption $\alpha_{1,s_0} \leq 1$ from~\ref{ii} and replacing~\ref{iv} with \ref{iv}''.
	 Yet the restriction $j_0\geq 1$ in \Cref{L.weights} is used only to guarantee the assumption $\alpha_{1,s_0} \leq 1$, so that assumptions \ref{ii}--\ref{v} 
	 	 and \ref{vi} holds for $\alpha_{j,i}''$ which corresponds to $P=3$ and $(\beta_1,\gamma_1)=(1,1)$, $(\beta_2,\gamma_2)=(\lambda,1)$,   and $(\beta_3,\gamma_3)=(\delta/\eps,1/\delta)$. Moreover \ref{iv}'' is easy to check, using $i_0\geq 1$, and \ref{l} was already discussed in \Cref{C.weights}.
	\end{proof}

\section{Applications}\label{S.applications}
	
	In this section we apply our results to several systems arising in geophysics and oceanography, 
			selecting our cases of study so as to emphasize several key features of our results. We start with the shallow-water system with rapid rotation (small Rossby number) in conjunction with possibly small Mach number, in \Cref{S.rotating-SV}. We argue that in the case of flat bottom, our analysis is not necessary as the structural properties of the system allows to easily obtain the desired control of solutions by standard techniques. We then introduce a framework with mild topographical effects where such techniques fail, while our results apply. This may be the signature that spatially varying topography allows for the development of spatial oscillations with small wavelength when both the Rossby and Mach numbers are small, as described in \Cref{S.scale}.   
	
	Then we turn in \Cref{S.hyperbolization} to systems that recently arose as a strategy to approximate dispersive equations through stiff hyperbolic quasilinear systems. There the parameter $\eps$ is related to an artificial constraint-relaxation strategy, while the parameter $\delta$ bears physical significance (in our case measuring the strength of dispersive effects) and is typically assumed to be small in the regime of validity of the underlying dispersive equations. We start in \Cref{S.hBBM} with the ``hyperbolization'' of the Benjamin--Bona--Mahony equation. As previously we observe that standard techniques apply in the standard flat-bottom case but that taking into account topography variations may generate undesirable spurious oscillations when implementing the constraint-relaxation strategy. We then move in \Cref{S.hBP} to the case of the Boussinesq--Peregrine system, which can be interpreted as a bi-directional, multi-dimensional version of the Benjamin--Bona--Mahony equation with bottom topography. We show that the corresponding hyperbolized system still fits into our framework, but discuss additional restrictions associated with higher dimensions. Finally, we consider in \Cref{S.LCT} the case of the Serre--Green--Naghdi system which is a fully nonlinear version of the Boussinesq--Peregrine system, and was the key target of the artificial constraint-relaxation technique. 
	
\subsection{Rapidly rotating shallow-water system}\label{S.rotating-SV}

Let us consider the 2D shallow-water system with vertical Coriolis force:\footnote{See for instance~\cite[Chapter 3]{Vallis17}. This system has been intensively studied, and the singular limit of small Rossby number has been investigated in particular in the celebrated works of Embid and Majda~\cite{EmbidMajda96} and Babin, Mahalov, and Nicolaenko~\cite{BabinMahalovNicolaenko97,BabinMahalovNicolaenko97a}; see also Gallagher~\cite{Gallagher98}. However these authors considered prescribed limits ---see however~\cite{Cheng09} for a multi-scale approach--- and flat bottoms.}
\begin{equation}\label{eq.SV}
	\left\{\begin{array}{l}
		\partial_t \eta+\nabla\cdot(h\bu)=0,\\[1ex]
		\partial_t \bu+(\bu\cdot\nabla)\bu+g\nabla \eta +f\bu^\perp=\bm 0,
	\end{array}\right.
\end{equation}
where $\bu$ is the two-dimensional layer-averaged horizontal velocity, $(u_x,u_y)^\perp=(-u_y,u_x)$, $\eta$ is the surface deformation, $b$ the time-independent bathymetry and $h=H+\eta- b$ the layer depth (with $H\in\RR$ the reference depth). The positive constants $g$ and $f$ represent the (vertical) gravity acceleration  and  Earth rotation contribution in the $f$-plane approximation, respectively. After suitable scaling of the variables we can write the system under the form
\begin{equation}\label{eq.SV-adim}
	\left\{\begin{array}{l}
		\partial_t \eta+(\frac\delta\eps+\eta-\frac{\delta\beta}{\eps} b)\nabla\cdot\bu +\bu\cdot\nabla\eta-\frac{\delta\beta}\eps\bu\cdot\nabla b=0,\\[1ex]
		\partial_t \bu+\frac\delta\eps\nabla \eta +(\bu\cdot\nabla)\bu+\frac1{\eps}\bu^\perp=\bm 0.
	\end{array}\right.
\end{equation}
Here, $\eps$ is the Rossby number defined as the ratio of inertial force to Coriolis force while $\eps/\delta$ is the square of the Mach number defined as the ratio of the flow velocity to the surface wave celerity. Additionally, $\beta$ represents the magnitude of bathymetry, defined as the the ratio of the amplitude of topography variations to the reference depth. Let us distinguish between several cases, depending on the size of the bathymetry parameter.

\paragraph{The flat-bottom case}

Assuming $\beta=0$, we see that system~\eqref{eq.SV-adim} is readily of the form~\eqref{eq.general}, and our results apply (as exhibited below). Yet we would like to point out that a standard strategy quickly leads to stronger results, due to the fact that the symmetrizer $\S_0$ is almost-identity. This analysis is standard and follows the lines of~\cite{KlainermanMajda81,KlainermanMajda82} (see also~\cite{Schochet87} for the analysis specifically dedicated to the rotating shallow-water system in bounded domains), and we quickly sketch it for the sake of completeness.

It is convenient to denote $\zeta$ such that $1+\frac12 \frac\eps\delta \zeta=\sqrt{1+\frac\eps\delta \eta}$. Then~\eqref{eq.SV-adim} has the symmetric form
\begin{equation}\label{eq.SV-adim-b=0}
	\left\{\begin{array}{l}
		\partial_t \zeta+\frac\delta\eps (1+\frac12 \frac\eps\delta \zeta)\nabla\cdot\bu +\bu\cdot\nabla\zeta=0,\\[1ex]
		\partial_t \bu+\frac\delta\eps (1+\frac12 \frac\eps\delta \zeta)\nabla \zeta +(\bu\cdot\nabla)\bu+\frac1{\eps} \bu^\perp=\bm 0.
	\end{array}\right.
\end{equation}
Because the system has symmetric form and singular terms commute with space derivatives, we obtain from standard energy estimates (applying $\nabla^k$ to the system, testing against $(\nabla^k\zeta, \nabla^k\bu)$ and using commutator estimates) that for any $k\in\NN$,
\[ \frac12\frac{\dd}{\dd t} \norm{(\zeta,\bu)}_{H^k}^2 \leq C_k\, \norm{(\nabla\zeta,\nabla\bu)}_{L^\infty} \norm{(\zeta,\bu)}_{H^k}^2.
\]
where $C_k$ is a universal constant depending only on $k$. Hence assuming that $k>2$ we can use the continuous Sobolev embedding $H^{k-1} (\RR^2)\hookrightarrow L^\infty(\RR^2)$ to infer the following result:
\begin{Proposition}\label{T.SV-WP}
	Let $k>2$. For any $\eps>0,\delta>0$ and $(\zeta_0,\bu_0)\in H^k(\RR^2)^{1+2}$ (possibly depending on $\eps$ and $\delta$) 
	there exists a unique $(\zeta,\bu)\in \cC(I_{\eps,\delta};H^k(\RR^2)^{1+2})$ maximal-in-time solution to~\eqref{eq.SV-adim-b=0}. Moreover, for any $C_0>0$ there exists $C>0$ and $T>0$ depending uniquely on  $k$ and $C_0$ such that for all $(\zeta_0,\bu_0)$ satisfying
	\[ M_0\eqdef \norm{(\zeta_0,\bu_0)}_{H^k}\leq C_0,\] then the maximal time interval $I_{\eps,\delta}\supset [-T/M_0,T/M_0]$ and for all $t\in  [-T/M_0,T/M_0]$ one has
	\[  \norm{(\zeta,\bu)(t,\cdot)}_{H^k} \leq C M_0.\]
\end{Proposition}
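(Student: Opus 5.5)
Since the statement is the flat-bottom case, I will not use the weighted machinery of \Cref{P.Stability}. Instead I run the classical Klainerman--Majda energy argument on the symmetric form~\eqref{eq.SV-adim-b=0}. The plan has three steps: local well-posedness, a uniform $H^k$ energy inequality, and a continuity argument on the timescale $T/M_0$.

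\emph{Step 1 (local theory).} For fixed $\eps,\delta$ the transformation $\eta\mapsto\zeta$ is smooth and invertible as long as $1+\frac\eps\delta\eta>0$. The system~\eqref{eq.SV-adim-b=0} is then symmetric hyperbolic with identity symmetrizer, and it has no constraint on $\zeta$ beyond $1+\frac12\frac\eps\delta\zeta>0$ (which I take as part of the setting, or which holds on the time interval under consideration). Existence and uniqueness of a maximal solution in $\cC(I_{\eps,\delta};H^k)$ for $k>2$ is therefore standard, as in the first part of \Cref{P.Well-posedness}. I also record the blow-up criterion: the solution persists as long as $\norm{(\nabla\zeta,\nabla\bu)}_{L^\infty}$ stays integrable in time. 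To justify the computations below, I would work first with smoother data and conclude by density together with continuity of the flow, exactly as in the proof of \Cref{P.Well-posedness}.

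\emph{Step 2 (uniform energy estimate).} This is the key step. Apply $\Lambda^k$ to the system and take the $L^2$ inner product with $(\Lambda^k\zeta,\Lambda^k\bu)$. The singular terms split into two groups.
\begin{itemize}
\item The constant-coefficient pieces $\frac\delta\eps\nabla\cdot\bu$, $\frac\delta\eps\nabla\zeta$ and $\frac1\eps\bu^\perp$ commute with $\Lambda^k$ and form a skew-adjoint operator, so their contribution cancels exactly.
\item The remaining quasilinear part is $\frac12\zeta\nabla\cdot\bu$, $\frac12\zeta\nabla\zeta$ (the $\frac\delta\eps$ prefactor cancels against $\frac\eps\delta$), together with the transport terms $\bu\cdot\nabla$. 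All of these have $\cO(1)$ coefficients.
\end{itemize}
The top-order quasilinear terms cancel by symmetry after integration by parts, up to terms bounded by $\norm{\nabla(\zeta,\bu)}_{L^\infty}\norm{(\zeta,\bu)}_{H^k}^2$. The commutators are handled by the Kato--Ponce estimate (\Cref{L.commutator}). The result is
\[\frac12\frac{\dd}{\dd t}\norm{(\zeta,\bu)}_{H^k}^2\le C_k\norm{(\nabla\zeta,\nabla\bu)}_{L^\infty}\norm{(\zeta,\bu)}_{H^k}^2,\]
with $C_k$ independent of $\eps,\delta$. Using the embedding $H^{k-1}(\RR^2)\hookrightarrow L^\infty$ (valid since $k>2$), this gives $\frac{\dd}{\dd t}E\le C_k E^2$ for $E=\norm{(\zeta,\bu)}_{H^k}$.

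\emph{Step 3 (conclusion).} Integrating the Riccati inequality gives $E(t)\le M_0/(1-C_kM_0|t|)$. Hence $E(t)\le 2M_0$ for $|t|\le 1/(2C_kM_0)$. By the blow-up criterion, $I_{\eps,\delta}$ contains this interval, so I take $T=1/(2C_k)$ and $C=2$. Negative times follow by time reversal: $(t,\bu)\mapsto(-t,-\bu)$ preserves the structure up to the sign of the skew terms, which still cancel.

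The only genuinely delicate point is checking that every $\eps$-dependent coefficient is either constant, and so cancels, or enters with the compensating factor $\frac\eps\delta$. This is exactly what the choice of $\zeta$ achieves. The dependence on $C_0$ enters only through the requirement that $1+\frac12\frac\eps\delta\zeta$ stay away from $0$, which is ensured for $\frac\eps\delta\le1$ with $T$ small enough depending on $C_0$.
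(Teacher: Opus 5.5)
Your argument is correct and is essentially the paper's own. The paper likewise uses the $(\zeta,\bu)$ form, in which the $\frac\delta\eps$ and $\frac1\eps$ terms are constant-coefficient and skew-adjoint and the remaining coefficients are $\cO(1)$; it obtains $\frac12\frac{\dd}{\dd t}\norm{(\zeta,\bu)}_{H^k}^2\le C_k\norm{(\nabla\zeta,\nabla\bu)}_{L^\infty}\norm{(\zeta,\bu)}_{H^k}^2$ by commutator estimates and closes with $H^{k-1}(\RR^2)\hookrightarrow L^\infty(\RR^2)$.

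One correction concerns the positivity of $1+\frac12\frac\eps\delta\zeta$. System~\eqref{eq.SV-adim-b=0} is symmetric hyperbolic with identity symmetrizer for every value of $\zeta$, so this positivity plays no role and is needed only to return to $\eta$; your constants then depend on $k$ alone. Your claim that this positivity is secured for $\frac\eps\delta\le1$ by shrinking $T$ is in fact false when $C_0$ is large, since it can already fail at $t=0$.
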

Notice that this proposition is stronger than \Cref{T.Well-posedness} in the sense that it does not require that the initial data is well-prepared. However, similar energy estimates on time derivatives of the solution yields the following result for well-prepared initial-data.
\begin{Proposition}\label{T.SV-WP2}
	Under the assumptions and notations of \Cref{T.SV-WP}, if one assumes additionally that 
	\[ M_1\eqdef \frac1{\eps}  \norm{(\delta\nabla\cdot\bu_0,\delta\nabla \zeta_0+\bu_0^\perp)}_{H^{k-1}}\leq C_0,\]
	then one has  for all $t\in  [-T/M_0,T/M_0]$ (and augmenting $C$ if necessary)
	\[ \norm{(\partial_t\zeta,\partial_t\bu)(t,\cdot)}_{H^{k-1}} \leq C M_1.\]
\end{Proposition}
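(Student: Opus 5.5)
The plan is to differentiate the symmetric system~\eqref{eq.SV-adim-b=0} once in time. We then run on $V\eqdef(\partial_t\zeta,\partial_t\bu)$ the same $H^{k-1}$ energy estimate that gave \Cref{T.SV-WP}, and close with Grönwall's lemma on the time interval already provided by \Cref{T.SV-WP}. Writing $U=(\zeta,\bu)$, the system has the form
\[
\partial_t U+\sum_{l=1}^2 A_l(U)\partial_{x_l}U=\tfrac1\eps\L_\delta U ,
\]
with symmetric matrices $A_l(U)$ that are affine in $U$. Here $\L_\delta U=-(\delta\nabla\cdot\bu,\delta\nabla\zeta+\bu^\perp)$ has constant coefficients and is skew-adjoint. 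The key structural fact is that the symmetrizer is the identity. Applying $\partial_t$ therefore gives a \emph{linear} symmetric hyperbolic system for $V$:
\[
\partial_t V+\sum_l A_l(U)\partial_{x_l}V+\sum_l \big(A_l'(U)V\big)\partial_{x_l}U=\tfrac1\eps\L_\delta V .
\]
Its stiff term is still skew-adjoint and commutes with $\nabla^{k-1}$.

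\textbf{Step 1 (energy estimate).} Apply $\nabla^{\alpha}$ with $|\alpha|\le k-1$ and test against $\nabla^\alpha V$. The singular term drops out by skew-adjointness. The transport term is handled by integration by parts and the standard commutator estimate, with a bound in terms of $\norm{\nabla U}_{L^\infty}+\norm{\nabla U}_{H^{k-1}}$. The zeroth-order term $(A_l'(U)V)\partial_{x_l}U$ is estimated by tame product estimates, since $k-1>1=d/2$. This yields
\[
\frac{\dd}{\dd t}\norm{V}_{H^{k-1}}\le C_k\norm{U}_{H^k}\norm{V}_{H^{k-1}} .
\]
The needed regularity $V\in\cC(I;H^{k-1})$ is obtained by mollifying the data, exactly as in the proof of \Cref{P.Well-posedness}.

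\textbf{Step 2 (Grönwall).} By \Cref{T.SV-WP}, $\norm{U(t)}_{H^k}\le CM_0$ on $[-T/M_0,T/M_0]$. Hence $\norm{V(t)}_{H^{k-1}}\le e^{C_kC}\norm{V(0)}_{H^{k-1}}$ on that interval, with a constant depending only on $k$ and $C_0$. It therefore only remains to control $\norm{V(0)}_{H^{k-1}}$ by $CM_1$.

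\textbf{Step 3 (initial time derivative; main obstacle).} Evaluating the equation at $t=0$ gives
\[
V(0)=-\tfrac1\eps\big(\delta\nabla\cdot\bu_0,\delta\nabla\zeta_0+\bu_0^\perp\big)-Q(U_0,\nabla U_0),
\]
where $Q$ collects the quadratic terms $\tfrac12\zeta_0\nabla\cdot\bu_0+\bu_0\cdot\nabla\zeta_0$ and $(\bu_0\cdot\nabla)\bu_0+\tfrac12\zeta_0\nabla\zeta_0$. The first contribution is bounded by $M_1$ by definition. The quadratic part is bounded by $C\norm{U_0}_{H^k}^2\le CC_0M_0$.

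This is the step I expect to be the real obstacle. As written, the statement requires absorbing the $\cO(M_0^2)$ term into $CM_1$. That is not automatic: geostrophically balanced data $\bu_0=\delta\nabla^\perp\zeta_0$ make $M_1=0$, while $Q$ need not vanish. My first attempt would be to keep the precise form of $V(0)$ in the Grönwall step. Failing that, the argument delivers $\norm{V(t)}_{H^{k-1}}\le C(M_1+M_0^2)$. That bound coincides with the stated $CM_1$ exactly when $M_1$ is understood as the size of $\partial_tU\vert_{t=0}$, that is, $M_1\gtrsim M_0^2$, which is the well-preparedness regime the proposition is intended to describe. Under that reading, Steps 1--3 prove the statement with $C$ depending only on $k$ and $C_0$.
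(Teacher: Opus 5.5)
Your argument is the one the paper has in mind, since its only justification is that ``similar energy estimates on time derivatives'' give the result: differentiate \eqref{eq.SV-adim-b=0} once in time, use that the symmetrizer is the identity and that the constant-coefficient skew-adjoint stiff term drops out of the $H^{k-1}$ estimate, and close with Grönwall on the interval given by \Cref{T.SV-WP}. Your Step~3 diagnosis is also correct, and no finer Grönwall argument can repair it because the bound already fails at $t=0$: for $\zeta_0=e^{-|\bx|^2}$ and $\bu_0=\delta\nabla^\perp\zeta_0$ one has $M_1=0$, while $\partial_t\zeta\vert_{t=0}=0$ and $\partial_t\bu\vert_{t=0}=(1+4\delta^2)\,\bx\, e^{-2|\bx|^2}\neq 0$; so the printed bound $CM_1$ is false, and what the energy method (yours and the paper's) actually gives is $C(M_1+M_0^2)\le C(M_0+M_1)$, which is the uniform-in-$(\eps,\delta)$ control of $\partial_t(\zeta,\bu)$ that the subsequent convergence corollary uses.
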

This result shows that in the situation of well-prepared initial data, the uniform control of time derivatives propagates for positive times. In this situation it is easy to conclude as in \Cref{C.convergence} the following result.
\begin{Corollary}
	Consider $(\eps_n)_{n\in\NN}$ and $(\delta_n)_{n\in\NN}$ two sequences of positive numbers such that $\eps_n\to 0$ and $\delta_n\to \delta\in [0,\infty]$, and a sequence of initial data such that for all $n\in\NN$, $(\zeta_{0,n},\bu_{0,n})\in H^k(\RR^2)^{1+2}$ satisfies the hypotheses of \Cref{T.SV-WP} and \Cref{T.SV-WP2} uniformly with respect to $n$, and converges towards $(\zeta_{0,\infty},\bu_{0,\infty})$ in $H^k(\RR^2)^{1+2}$ as $n\to\infty$. Then denoting $I\coloneqq[-T/M_0,T/M_0]$ and $(\zeta_n,\bu_n)$ the solutions to~\eqref{eq.SV-adim-b=0} emerging from the initial data $(\zeta_{0,n},\bu_{0,n})$, one has $(\zeta_n,\bu_n) \to (\zeta_\infty,\bu_\infty)$ pointwisely in $I\times\RR^2$ where $ (\zeta_\infty,\bu_\infty)\in \cC^1(I\times\RR^2)$ is a solution to the limit system
	\begin{equation}\label{eq.SV-infty}
		\left\{\begin{array}{l}
			\partial_t \zeta_\infty+\bu_\infty\cdot\nabla\zeta_\infty +\delta\nabla^\perp\cdot\bm v_\infty=0,\\[1ex]
			\partial_t \bu_\infty+(\bu_\infty\cdot\nabla)\bu_\infty+\bm v_\infty=\bm 0,\\[1ex]
			\bu_\infty=\delta\nabla^\perp \zeta_\infty,
		\end{array}\right.
	\end{equation}
	where $\nabla^\perp=(-\partial_y,\partial_x)$ and $\bm v_\infty\in \cC(I\times\RR^2)$, and $(\zeta_\infty,\bu_\infty)\big\vert_{t=0}=(\zeta_{0,\infty},\bu_{0,\infty})$.
\end{Corollary}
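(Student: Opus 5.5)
The plan is a compactness argument: extract a convergent subsequence from the uniform bounds, identify the limit system by passing to the limit in the equations, and recover time continuity of the limit from an elliptic equation. Throughout I assume $\delta\in[0,\infty)$. The case $\delta=\infty$ would first require dividing the constraint by $\delta_n$, and I do not treat it here.

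\textbf{Step 1: uniform bounds and compactness.} By \Cref{T.SV-WP} and \Cref{T.SV-WP2}, applied uniformly in $n$, the sequence $(\zeta_n,\bu_n)$ is bounded in $L^\infty(I;H^k)$ and $(\partial_t\zeta_n,\partial_t\bu_n)$ is bounded in $L^\infty(I;H^{k-1})$. Hence the sequence is bounded in $L^\infty(I;H^k)\cap {\rm Lip}(I;H^{k-1})$. Fix $k'\in(2,k)$. Interpolation, together with Arzelà--Ascoli and the compact local embedding $H^k\hookrightarrow H^{k'}_{\rm loc}$ (Aubin--Lions), gives a subsequence with two properties:
\begin{itemize}
\item it converges in $\cC(I;H^{k'}_{\rm loc})\subset \cC(I;\cC^1_{\rm loc})$, using $k'>2=d/2+1$;
\item it converges weakly-$*$ in $L^\infty(I;H^k)$, with time derivatives converging weakly-$*$ in $L^\infty(I;H^{k-1})$.
\end{itemize}
The limit $(\zeta_\infty,\bu_\infty)$ takes the initial value $(\zeta_{0,\infty},\bu_{0,\infty})$. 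Pointwise convergence on $I\times\RR^2$ follows along the subsequence.

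\textbf{Step 2: identification of the limit equations.} Introduce
\[
\bm v_n\eqdef \frac1{\eps_n}\big(\delta_n\nabla\zeta_n+\bu_n^\perp\big)+\tfrac12\zeta_n\nabla\zeta_n .
\]
By the second equation of~\eqref{eq.SV-adim-b=0}, $\bm v_n=-\partial_t\bu_n-(\bu_n\cdot\nabla)\bu_n$, so $\bm v_n$ is bounded in $L^\infty(I;H^{k-1})$. Up to a further subsequence, $\bm v_n\rightharpoonup\bm v_\infty$ weakly-$*$.

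\emph{Constraint.} We have $\delta_n\nabla\zeta_n+\bu_n^\perp=\eps_n(\bm v_n-\tfrac12\zeta_n\nabla\zeta_n)\to0$. This gives $\bu_\infty^\perp=-\delta\nabla\zeta_\infty$, that is $\bu_\infty=\delta\nabla^\perp\zeta_\infty$.

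\emph{Key identity.} Since $\nabla\cdot\bu=\nabla^\perp\cdot\bu^\perp$, $\nabla^\perp\cdot\nabla\zeta=0$ and $\nabla^\perp\cdot(\zeta\nabla\zeta)=0$, we obtain
\[
\nabla\cdot\bu_n=\eps_n\nabla^\perp\cdot\bm v_n .
\]
Hence $\frac{\delta_n}{\eps_n}\big(1+\tfrac12\tfrac{\eps_n}{\delta_n}\zeta_n\big)\nabla\cdot\bu_n=\delta_n\nabla^\perp\cdot\bm v_n+\tfrac{\eps_n}2\zeta_n\nabla^\perp\cdot\bm v_n$.

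\emph{First equation.} It therefore reads
\[
\partial_t\zeta_n+\bu_n\cdot\nabla\zeta_n+\delta_n\nabla^\perp\cdot\bm v_n=\cO(\eps_n)\ \text{in } L^\infty(I;H^{k-2}).
\]

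\emph{Passage to the limit.} The nonlinear terms converge strongly by Step 1 and the linear terms converge weakly. This yields~\eqref{eq.SV-infty} in the distributional sense, with $\bm v_\infty\in L^\infty(I;H^{k-1})$.

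\textbf{Step 3 (main obstacle): time continuity of $\bm v_\infty$ and $\cC^1$ regularity.} The sequence $\bm v_n$ converges only weakly, so continuity of $\bm v_\infty$, and hence of $\partial_t\bu_\infty$, is not automatic. I would eliminate $\partial_t$ using the limit system itself. From $\bu_\infty=\delta\nabla^\perp\zeta_\infty$ and the first equation,
\[
\partial_t\bu_\infty=-\delta\nabla^\perp\big(\bu_\infty\cdot\nabla\zeta_\infty+\delta\nabla^\perp\cdot\bm v_\infty\big).
\]
Inserting this into the second equation gives the elliptic equation
\[
\big(\Id-\delta^2\nabla^\perp\nabla^\perp\cdot\big)\bm v_\infty=\delta\nabla^\perp(\bu_\infty\cdot\nabla\zeta_\infty)-(\bu_\infty\cdot\nabla)\bu_\infty .
\]
The operator on the left has Fourier symbol $\Id+\delta^2\bk^\perp(\bk^\perp)^{\top}$, which is invertible with bounded inverse. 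The right-hand side lies in $\cC(I;H^{k'-1})$ by Step 1, so $\bm v_\infty\in\cC(I;H^{k'-1})\subset\cC(I\times\RR^2)$. The equations then express $\partial_t\zeta_\infty$ and $\partial_t\bu_\infty$ as continuous functions. Combined with the spatial $\cC^1$ regularity from $k'>2$, this gives $(\zeta_\infty,\bu_\infty)\in\cC^1(I\times\RR^2)$.

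\textbf{Step 4: the whole sequence.} If the limit system~\eqref{eq.SV-infty} has a unique solution in $L^\infty(I;H^k)\cap{\rm Lip}(I;H^{k-1})$, then every subsequence has the same limit and the whole sequence converges pointwise. This holds because the system reduces to a transport equation for a quasi-geostrophic potential vorticity with a smoothing inversion. Otherwise the statement is understood up to extraction, as in \Cref{C.convergence}.
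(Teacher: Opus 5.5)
Your route is the one the paper intends. The paper gives no separate proof: it says the result follows ``as in \Cref{C.convergence}'', i.e.\ by the Cheng--Ju--Schochet compactness argument. That argument uses uniform bounds in $L^\infty(I;H^k)\cap{\rm Lip}(I;H^{k-1})$, extraction, identification of the limit, and uniqueness of the limit to recover the whole sequence. Your explicit multiplier $\bm v_n$ and the identity $\nabla\cdot\bu_n=\eps_n\nabla^\perp\cdot\bm v_n$ are a correct, concrete substitute for applying $\P$, and Steps 1--2 are sound for $\delta<\infty$. The proposal nevertheless does not prove the full statement.

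\textbf{The case $\delta=\infty$ is missing.} The statement allows $\delta_n\to\infty$, and the paper then reads \eqref{eq.SV-infty} as the incompressible Euler system \eqref{eq.delta-infty}. Dividing the constraint by $\delta_n$ only gives $\nabla\zeta_n\to0$, hence $\zeta_\infty=0$. You must also show the multiplier becomes a pressure gradient, which is a separate argument:
\begin{itemize}
\item Since $\bm v_n$ is bounded in $L^\infty(I;H^{k-1})$ independently of $\delta_n$, your identity gives $\nabla\cdot\bu_\infty=0$.
\item Writing the first equation as $\delta_n\nabla^\perp\cdot\bm v_n=-\partial_t\zeta_n-\bu_n\cdot\nabla\zeta_n-\tfrac{\eps_n}{2}\zeta_n\nabla^\perp\cdot\bm v_n$ shows that $\delta_n\nabla^\perp\cdot\bm v_n$ is bounded in $L^\infty(I;H^{k-2})$. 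Hence $\nabla^\perp\cdot\bm v_\infty=0$ and $\bm v_\infty=\nabla p_\infty$.
\item Continuity of $\bm v_\infty$ follows because $\nabla p_\infty$ is the gradient part of $-(\bu_\infty\cdot\nabla)\bu_\infty$ in the Leray decomposition. Uniqueness follows from that of Euler in $L^\infty(I;H^k)$, $k>2$.
\end{itemize}

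\textbf{Step 3 and Step 4.} Two regularity claims in Step 3 are not justified as written.
\begin{itemize}
\item The term $\delta\nabla^\perp(\bu_\infty\cdot\nabla\zeta_\infty)$ lies only in $\cC(I;H^{k'-2})$, not in $\cC(I;H^{k'-1})$. It is harmless only because it vanishes identically: $\bu_\infty=\delta\nabla^\perp\zeta_\infty$ is orthogonal to $\nabla\zeta_\infty$.
\item $\bm v_\infty\in\cC(I;H^{k'-1})$ does not by itself make $\partial_t\zeta_\infty=-\delta\nabla^\perp\cdot\bm v_\infty$ continuous when $k\leq 3$. Apply $\nabla^\perp\cdot$ to your elliptic equation to get $(1-\delta^2\Delta)\nabla^\perp\cdot\bm v_\infty=-\nabla^\perp\cdot\big((\bu_\infty\cdot\nabla)\bu_\infty\big)$. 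This gives $\delta\nabla^\perp\cdot\bm v_\infty\in\cC(I;H^{k'})$ and closes the $\cC^1$ claim.
\end{itemize}
Finally, the statement asserts convergence of the whole sequence, so the fallback ``up to extraction'' in Step 4 is not acceptable. Uniqueness is easy for $\delta\in(0,\infty)$. For two limit solutions, test the difference of the potential-vorticity equations against $w=\zeta_1-\zeta_2$; this gives a Gr\"onwall estimate on $\|w\|_{L^2}^2+\delta^2\|\nabla w\|_{L^2}^2$. The cross term $\langle q_2,(\bu_1-\bu_2)\cdot\nabla w\rangle$ vanishes because $\bu_1-\bu_2=\delta\nabla^\perp w$.
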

Notice that in~\eqref{eq.SV-infty}, $\bm v_\infty$ is the Lagrange multiplier associated with the constraint $\bu_\infty=\delta\nabla^\perp \zeta_\infty$. It is customary to consider the variable $q_\infty\eqdef \zeta_\infty-\delta\nabla^\perp\cdot\bu_\infty=(\Id-\delta^2\Delta)\zeta_\infty$ which satisfies the closed equation
\begin{equation}\label{eq.SQG}
	\partial_t q_\infty+\bu_\infty\cdot\nabla q_\infty =0, \qquad \bu_\infty=\delta\nabla^\perp (\Id-\delta^2\Delta)^{-1} q_\infty.
\end{equation}
Equation~\eqref{eq.SQG} is the shallow-water quasi-geostrophic potential vorticity equation \cite[Chapter 5.3]{Vallis17}.

In the situation when $\delta=0$,~\eqref{eq.SV-infty} reduces to the trivial equations (on the considered timescale)
\begin{equation}\label{eq.delta-0}
	\partial_t \zeta_\infty =0, \qquad \bu_\infty=\bm 0, 
\end{equation}
When $\delta=\infty$,~\eqref{eq.SV-infty} reduces to the incompressible Euler equation: $\zeta_\infty=0$, $\bm v_\infty=\nabla p_\infty$ and 
\begin{equation}\label{eq.delta-infty}
	\left\{\begin{array}{l}
		\partial_t \bu_\infty+(\bu_\infty\cdot\nabla)\bu_\infty+\nabla p_\infty=\bm 0,\\[1ex]
		 \nabla\cdot \bu_\infty =0.
	\end{array}\right.
\end{equation}

\paragraph{The small-slope case} Let us now consider a partial relaxation of the flat-bottom framework, by considering the system obtained when withdrawing the term $\frac{\delta\beta}\eps\bu\cdot\nabla b$ from the first equation in~\eqref{eq.SV-adim}:
\begin{equation}\label{eq.SV-small-slope}
	\left\{\begin{array}{l}
		\partial_t \eta+(\frac\delta\eps+\eta-\frac{\delta\beta}{\eps} b)\nabla\cdot\bu +\bu\cdot\nabla\eta=0,\\[1ex]
		\partial_t \bu+\frac\delta\eps\nabla \eta +(\bu\cdot\nabla)\bu+\frac1{\eps}\bu^\perp=\bm 0.
	\end{array}\right.
\end{equation}
Alternatively, we could consider the full system~\eqref{eq.SV-adim} and consider bottom variations such that $\frac{\delta\beta}{\eps} \nabla b=\cO(1)$ while not assuming $\beta b=\cO(1)$  (assuming  $\frac{\delta\beta}{\eps} b=\cO(1)$, the situation would be similar to the flat-bottom situation described previously, up to harmless source terms). Hence the framework we consider is that of small-slope bathymetry.

We remark that~\eqref{eq.SV-small-slope} augmented with the identity $\partial_t b=0$ is of the form~\eqref{eq.general}:
\[
	\S_0(U)\partial_t U + \S_x(U)\partial_{x}U+ \S_y(U)\partial_{y}U= \frac1{\eps}\L_\delta U,
\]
where $U\eqdef (\eta,u_x,u_y,b)^\top$ and
\[ \S_0(U)\eqdef\begin{pmatrix}
	\frac1{h}&0&0&0\\
	0&1&0&0\\
	0&0&1&0\\
	0&0&0&1
\end{pmatrix}, \ S_x(U)\eqdef\begin{pmatrix}
\frac{ u_x}{h}&0&0&0\\
	0&u_x&0&0\\
0&0&u_x&0\\
0&0&0&0
\end{pmatrix}, \ S_y(U)\eqdef\begin{pmatrix}
\frac{ u_y}{h}&0&0&0\\
0&u_y&0&0\\
0&0&u_y&0\\
0&0&0&0
\end{pmatrix},\]
with $h\eqdef 1+\frac\eps\delta\eta-\beta b$; and
\[ \quad \L_\delta\eqdef \begin{pmatrix}
	0&-\delta\partial_x&-\delta\partial_y&0\\
	-\delta\partial_x&0&1&0\\
	-\delta\partial_y&-1&0&0\\
	0&0&0&0
\end{pmatrix}\]
We note that
\[\Ker(\L_\delta)=\{(\eta,\bu,b), \quad \bu=\delta\nabla^\perp \eta=(-\delta\partial_y\eta,\delta\partial_x\eta)\}\]
whose orthogonal projection is
\[ \P_\delta\eqdef \frac1{1-\delta^2\Delta} 
\begin{pmatrix}
	1&\delta\partial_y&-\delta\partial_x&0\\
	-\delta\partial_y&-\delta^2\partial_y^2&\delta^2\partial_x\partial_y&0\\
	\delta\partial_x&\delta^2\partial_x\partial_y&-\delta^2\partial_x^2&0\\
	0&0&0&1-\delta^2\Delta
\end{pmatrix}.
\]
For any $Y\eqdef(\zeta,\bm v,c)\in \Ran(\Id-\P_\delta)\cap L^2(\RR^2)^4$ (i.e. $\zeta=\delta\nabla^\perp\cdot\bm v,c=0$), there exists a unique solution $X\eqdef(\eta,\bu,b)\in \Ran(\Id-\P_\delta)\cap H^1(\RR^2)^4$ to the equation $\L_\delta X=Y$, and one has the formula
\[\eta=(1-\delta^2\Delta)^{-1}\delta\nabla\cdot\bm v, \quad 
\bu^\perp =-(1-\delta^2\Delta)^{-1}(\bm v-\delta\nabla^\perp\zeta).\]
This yields immediately the bounds
\[ \big\vert \eta\big\vert_{L^2} +\delta\big\vert \eta\big\vert_{H^1}\lesssim \big\vert \bm v\big\vert_{L^2}, \quad \big\vert \bu\big\vert_{L^2}+\delta\big\vert \bu\big\vert_{H^1}\lesssim
\big\vert (\zeta,\bm v)\big\vert_{L^2}.\]
Hence Hypothesis~\ref{H4} holds with $C_\L=0$ (see also \Cref{R.H3H4}). Hypotheses~\ref{H1}--\ref{H3} are obvious, and the hyperbolicity criterion associated with the matrix $\S_0$ being definite positive is the standard non-cavitation assumption:
\[ \inf_{\RR^2} 1+\frac\eps\delta\eta-\beta b >0.\]

We can therefore apply \Cref{T.Well-posedness} as well as \Cref{C.convergence}. In order to clarify sufficient conditions on the initial data and restrictions on the parameters $(\eps,\delta)$ for which these results hold, we set $j_0=1$, $j_\sharp=2$ and $i_0\in(1,2]$. Hence we need to find sufficient conditions on the initial data such that  $(\eta,\bu)\in \cap_{j=0}^k\cC^j(I_{\eps,\delta};H^{k-j}(\RR^2)^{1+2})$   (with $k\geq 3$) the emerging solution to~\eqref{eq.SV-small-slope} satisfies 
\begin{equation}\label{eq.WP-SV}
\norm{(\eta,\bu)}_{H^k} +\norm{ (\partial_t\eta,\partial_t\bu)}_{H^{k-1}}+\norm{\tfrac{\eps}{\delta^{i_0}} (\partial_t^2\eta,\partial_t^2\bu)}_{H^{k-2}} =\cO(1).
\end{equation}
By product estimates in \Cref{L.Product} we have immediately

\[\norm{\partial_t \bu}_{H^{k-1}}\lesssim \norm{\bu}_{H^{k-1}}\norm{\bu}_{H^{k}}+\frac1\eps\norm{\bu-\delta\nabla^\perp \eta}_{H^{k-1}}\]
and, since $\nabla\cdot\nabla^\perp \eta=0$,
\[	\norm{	\partial_t \eta}_{H^{k-1}}\lesssim \norm{\bu}_{H^{k-1}}\norm{\eta}_{H^{k}} +\norm{\eta}_{H^{k-1}} \norm{\bu}_{H^{k}} +\frac\delta\eps(1+\beta\norm{b}_{H^{k-1}})\norm{\bu-\delta\nabla^\perp \eta}_{H^{k}}.\]
Finally, differentiating with time~\eqref{eq.SV-small-slope} yields
\[		\norm{\partial_t^2 \eta}_{H^{k-2}}\lesssim 	 (\norm{\bu}_{H^{k-1}}+\norm{\eta}_{H^{k-1}} ) (\norm{\partial_t\bu}_{H^{k-1}}+\norm{\partial_t \eta}_{H^{k-1}} )+\frac{\delta}{\eps} (1+\beta\norm{b}_{H^{k-2}})\norm{\partial_t\bu}_{H^{k-1}}\]
		and
	\[	\norm{\partial_t^2 \bu}_{H^{k-2}}\leq \frac{\delta}{\eps} \norm{ \partial_t\eta}_{H^{k-1}} +	\norm{\bu}_{H^{k-1}}\norm{\partial_t\bu}_{H^{k-1}}+\frac1{\eps}\norm{\partial_t\bu}_{H^{k-1}}.
\]
From the above we infer that \eqref{eq.WP-SV} holds provided $\norm{\bu-\delta\nabla^\perp \eta}_{H^{k}}=\cO(\delta^{i_0}\eps)$ (which yields in particular $\norm{\bu}_{H^{k}}=\cO(\delta)$ and hence $\norm{\partial_t \bu}_{H^{k-1}}=\cO(\delta^{i_0})$ and $\norm{\partial_t \eta}_{H^{k-1}}=\cO(\delta)$). This yields the following results ensuing from  \Cref{T.Well-posedness} and \Cref{C.convergence} (with $j_0=1$ and $j_\sharp=2$).
\begin{Proposition}[Well-posedness]\label{P.Well-posedness-SV}
	Let $k\in\NN,\ k>2$, and $h_\star>0$. For any $\eps,\delta,\beta>0$ 
	 and any $(\eta_0,\bu_0,b)\in H^k(\RR^2)^{1+2+1}$ satisfying 
	\[\inf_{\RR^2} 1+\frac\eps\delta\eta_0-\beta b \geq h_\star>0, 
	\]
	there exists a unique $(\eta,\bu)\in \cC(I_{\eps,\delta};H^k(\RR^2)^{1+2})$ maximal-in-time classical solution to system~\eqref{eq.SV-small-slope} emerging from the initial data $(\eta,\bu)\big\vert_{t=0}=(\eta_0,\bu_0)$, and one has $(\eta,\bu)\in \cap_{j=0}^k\cC^j(I_{\eps,\delta,\beta};H^{k-j}(\RR^2)^{1+2})$. 
	
	Moreover, for any $C_0>0$ and $i_0\in(1,k-1]$, there exists $T>0$, $C>0$ and $\lambda\geq1$ depending uniquely on $k,h_\star,i_0$ and $C_0$ such that the following holds. 
	
	Assume that  $(\eps,\delta,\beta)$ is such that $0<\eps\leq\delta\leq\beta\leq 1$, $\delta\leq1/\lambda$ and $\eps \leq \delta^{i_0}/\lambda$ and $(\eta_0,\bu_0,b)$ is such that
	\[ M_0\eqdef \norm{\eta_0}_{H^{k}}+\frac1{\delta^{i_0}\eps}\norm{\bu_0-\delta\nabla^\perp \eta_0}_{H^k} +\beta\norm{b}_{H^{k}}\leq C_0 .
	\]
	 Then $I_{\eps,\delta,\beta}\supset [-T/M_0,T/M_0]$ and for any $t\in [-T/M_0,T/M_0]$, one has $\inf_{\RR^2} 1+\frac\eps\delta\eta(t,\cdot)-\beta b \geq h_\star/2$ and for all $i\in[i_0+1,k]$,
	\[\big\vert (\eta,\bu)(t,\cdot)\big\vert_{H^{i}}+\big\vert (\partial_t\eta, \partial_t\bu)(t,\cdot)\big\vert_{H^{i-1}} \\ \leq C\, \frac{M_0}{\delta^{i-(i_0+1)}} .\]
\end{Proposition}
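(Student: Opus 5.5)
The plan is to reduce to \Cref{T.Well-posedness} with $j_0=1$, $j_\sharp=2$ and $i_0\in(1,k-1]$. Since $\partial_t b=0$, system~\eqref{eq.SV-small-slope} augmented with $b$ is of the form~\eqref{eq.general} with $U=(\eta,u_x,u_y,b)$. We have already checked \ref{H1}--\ref{H3}, with $\G=0$ and $N_\L$ universal. \ref{H4} holds with $C_\L=0$ and $c_\L$ universal, by the explicit inversion formula on $\Ran(\Id-\P_\delta)$.

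\textbf{Hyperbolicity domain and compact set.} The hyperbolicity domain is $\Omega=\{1+\frac\eps\delta\eta-\beta b>0\}$. We choose $K=\{h\geq h_\star\}\cap\{|U|\leq R\}$ with $R$ controlled by $M_0$ through the embedding $H^k\hookrightarrow L^\infty$, so that $c_K$ depends only on $h_\star$ and $C_0$. For this we need $\norm{U_0}_{H^k}\lesssim M_0$.
\begin{itemize}
\item $\norm{\eta_0}_{H^k}\leq M_0$ holds by definition.
\item $\norm{\bu_0}_{H^k}\leq \delta\norm{\eta_0}_{H^{k+1}}+\ldots$ is not directly available. Instead I would write $\norm{\bu_0}_{H^k}\leq \norm{\bu_0-\delta\nabla^\perp\eta_0}_{H^k}+\delta\norm{\eta_0}_{H^{k+1}}$. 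This forces either a reading of $M_0$ at one more derivative for $\eta$, or working with regularity index $k-1$ for $\bu$.
\item $b$ enters only through $\beta b$. Rescaling the fourth unknown as $\tilde b=\beta b$ (again with $\partial_t\tilde b=0$) makes $\norm{\tilde b}_{H^k}\leq M_0$.
\end{itemize}
The first bullet on $\bu_0$ is the main obstacle I expect. I would try first to apply the theorem at index $k-1$ (still $>2$ requires care), or absorb the loss into the weights, since the conclusion only bounds $H^i$ norms for $i\leq k$ with a $\delta^{-(i-i_0-1)}$ loss.

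\textbf{Preparation condition~\eqref{eq.well-prepared}.} Next we verify~\eqref{eq.well-prepared}, that is the bound~\eqref{eq.WP-SV}, at $t=0$ using the three displayed estimates. Write $w_0\eqdef\bu_0-\delta\nabla^\perp\eta_0$, with $\norm{w_0}_{H^k}\leq\delta^{i_0}\eps M_0$. Then:
\begin{itemize}
\item $\frac1\eps\norm{w_0}_{H^{k-1}}\leq\delta^{i_0}M_0$, and $\norm{\bu_0}\lesssim\delta M_0$, which gives $\norm{\partial_t\bu}_{H^{k-1}}\lesssim\delta^{i_0} M_0+M_0^2$.
\item Using $\nabla\cdot\nabla^\perp=0$ and $\delta\leq\beta$, $\frac\delta\eps(1+\beta\norm{b}_{H^{k-1}})\norm{w_0}_{H^k}\lesssim\delta^{1+i_0}M_0(1+M_0)$, so $\norm{\partial_t\eta}_{H^{k-1}}=\cO(M_0)$.
\item For the second time derivatives, the dominant terms are $\frac\delta\eps\norm{\partial_t\eta}$ and $\frac1\eps\norm{\partial_t\bu}$. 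Multiplied by $\eps/\delta^{i_0}$, these give $\delta^{1-i_0}\norm{\partial_t\eta}+\delta^{-i_0}\norm{\partial_t \bu}$. The second is fine by the bound $\norm{\partial_t \bu}=\cO(\delta^{i_0})$. The first requires $\norm{\partial_t\eta}_{H^{k-1}}=\cO(\delta^{i_0-1})$, which follows since $\partial_t\eta=\cO(\delta)\cdot M_0$ from $\bu_0=\cO(\delta)$. Here $i_0\leq 2$ is used; for larger $i_0$ we need the refined bound.
\end{itemize}
All bounds are linear in $M_0$ for $M_0\leq C_0$, so $M_{2,0}\leq C(C_0)M_0$.

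\textbf{Parameter restrictions and conclusion.} The parameter conditions $\delta\leq 1/\lambda$, $\eps\lambda\leq\delta$ and $(\eps\lambda)^{1}\leq\delta^{i_0}$ follow from the hypothesis $\eps\leq\delta^{i_0}/\lambda$ because $\delta\leq1$. \Cref{T.Well-posedness} then yields the existence time $T/M_0$ and the estimate~\eqref{eq.estimate} with $j\in\{0,1\}$: the bound $\max(1,\delta^{i_0+1-i-j})$ equals $\delta^{-(i-(i_0+1))}$ for $j=0$, $i\geq i_0+1$. For $j=1$ and $H^{i-1}$ the exponent is identical. The non-cavitation bound $h\geq h_\star/2$ follows, as in the proof of \Cref{P.Well-posedness}, from $|U(t)-U_0|_{L^\infty}\lesssim t\norm{\partial_tU}_{H^{s_0}}$ with $\frac\eps\delta\leq1$, after shrinking $T$.
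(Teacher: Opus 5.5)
You follow the paper's route: \Cref{T.Well-posedness} with $j_0=1$, $j_\sharp=2$, checking~\eqref{eq.well-prepared} via the displayed bounds on $\partial_t(\eta,\bu)$ and $\partial_t^2(\eta,\bu)$. However, the proposal does not close. The first point where it stalls is a defect of the statement itself, which the paper's derivation glosses over.

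\textbf{The control of $\norm{\bu_0}_{H^k}$.} Neither working at index $k-1$ nor ``absorbing the loss into the weights'' can rescue the statement. The conclusion with $i=k$ at $t=0$ already asserts $\norm{\bu_0}_{H^k}\leq C M_0\,\delta^{-(k-i_0-1)}$. Take $b=0$ and $\bu_0=\delta\nabla^\perp\eta_0$ with $\norm{\eta_0}_{H^k}\leq 1$ and $\norm{\eta_0}_{H^{k+1}}=N$, with $\eps/\delta$ small so that non-cavitation holds. Then $M_0\leq 1$, yet $\norm{\bu_0}_{H^k}\geq\delta(N^2-1)^{1/2}$ is unbounded as $N\to\infty$ for fixed $(\eps,\delta)$. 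So the proposition is false as written. The paper's assertion that $\norm{\bu-\delta\nabla^\perp\eta}_{H^k}=\cO(\delta^{i_0}\eps)$ ``yields'' $\norm{\bu}_{H^k}=\cO(\delta)$ tacitly uses $\norm{\eta_0}_{H^{k+1}}=\cO(1)$.

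Only your first option works, and it has two variants:
\begin{enumerate}
\item Add $\norm{\eta_0}_{H^{k+1}}$ to $M_0$. Then the paper's displayed chain of estimates goes through verbatim.
\item Add the weaker $\delta\norm{\eta_0}_{H^{k+1}}$, as in \Cref{P.WP-BBM-b} and \Cref{P.WP-BP}. This requires that, where $\frac1\eps\partial_t\bu^\perp$ enters $\partial_t^2\bu$, you bound $\partial_t\bu\vert_{t=0}$ in $H^{k-2}$ by $\frac1\eps\norm{\bu_0-\delta\nabla^\perp\eta_0}_{H^{k-1}}+\norm{\bu_0}_{H^{k-1}}^2$.
\end{enumerate}
Working at index $k-1$ only gives a weaker true statement: it loses $i=k$ and requires $k\geq 4$.

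\textbf{The preparation bullets.} These do not fit together. Bullet~1 gives $\norm{\partial_t\bu}_{H^{k-1}}\lesssim\delta^{i_0}M_0+M_0^2$, and bullet~2 gives $\norm{\partial_t\eta}_{H^{k-1}}=\cO(M_0)$. Bullet~3, however, uses $\cO(\delta^{i_0})$ and $\cO(\delta^{i_0-1})$ respectively. The missing input is twofold:
\begin{enumerate}
\item use $\bu_0=\cO(\delta)$ in every quadratic term;
\item use $\nabla\cdot\bu_0=\nabla\cdot(\bu_0-\delta\nabla^\perp\eta_0)$ in the whole first equation, not only in the $\frac\delta\eps$ term.
\end{enumerate}
Then $\partial_t\eta\vert_{t=0}=\cO(\delta)$ in $H^{k-1}$, and $(\bu_0\cdot\nabla)\bu_0=\cO(\delta^2)$ in $H^{k-2}$ (also in $H^{k-1}$ under the first repair).

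\textbf{Why $i_0$ is capped at $2$.} The $\delta^2$ above is sharp. For exactly balanced data $\bu_0=\delta\nabla^\perp\eta_0$ one has $\partial_t\eta\vert_{t=0}=0$ and $\partial_t\bu\vert_{t=0}=-\delta^2(\nabla^\perp\eta_0\cdot\nabla)\nabla^\perp\eta_0$, which is generically nonzero. Hence $\frac{\eps}{\delta^{i_0}}\norm{\partial_t^2\bu\vert_{t=0}}_{H^{k-2}}$ is comparable to $\delta^{2-i_0}$ for fixed $\eta_0$. There is therefore no ``refined bound'' for $i_0>2$ along this route without a smallness assumption on $\eta_0$. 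The paper's own derivation fixes $i_0\in(1,2]$; the range $(1,k-1]$ in the statement is not supported by it.

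With $i_0\in(1,2]$ and the strengthened $M_0$, you get $M_{2,0}\leq C(C_0)\,M_0$. The rest of your argument is then fine: the rescaling $\tilde b=\beta b$, the parameter conditions, reading off~\eqref{eq.estimate} for $j\in\{0,1\}$, and the non-cavitation bound.
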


\begin{Corollary}[Convergence]\label{C.convergence-SV}
	Let $(\delta_n)_{n\in\NN}$ and $(\eps_n)_{n\in\NN}$ be sequences of positive numbers such that $\eps_n\to 0$ and $\delta_n\to \delta_\infty\in[0,1]$  as $n\to\infty$, and $\beta>0$, $b\in H^k(\RR^2)$ and a sequence of initial data such that for all $n\in\NN$, $(\eta_{0,n},\bu_{0,n})_{n\in\NN}\in H^k(\RR^2)^{1+2}$ and the assumptions of \Cref{P.Well-posedness-SV} hold uniformly with respect to $n\in\NN$, and $(\eta_{0,n},\bu_{0,n})_{n\in\NN}\to (\eta_{0,\infty},\bu_{0,\infty})$ in $H^k(\RR^2)^{1+2}$. Denote the time interval $I\eqdef [-T/M_0,T/M_0]$ and ${(\eta_n,\bu_n)\in \cC(I;H^k(\RR^2))}$ the solution to ~\eqref{eq.SV-small-slope} emerging from the initial data $(\eta_n,\bu_n)\big\vert_{t=0}=(\eta_{0,n},\bu_{0,n})$.
	
	Then there exists $(\eta_\infty,\bu_\infty)\in \cC^1(I\times \RR^2)$ and $\bm v_\infty \in \cC(I\times\RR^2)$ such that $(\eta_n,\bu_n)\to (\eta_\infty,\bu_\infty)$ as $n\to\infty$ pointewisely in $I\times\RR^2$, and $(\eta_\infty,\bu_\infty,\bm v_\infty)$ satisfies the limit system
	\begin{equation}\label{eq.SV-infty-small-slope}
		\left\{\begin{array}{l}
			\partial_t \eta_\infty+\bu_\infty\cdot\nabla\eta_\infty +(1-\beta b)\delta_\infty\nabla^\perp\cdot\bm v_\infty=0,\\[1ex]
			\partial_t \bu_\infty+(\bu_\infty\cdot\nabla)\bu_\infty+\bm v_\infty=\bm 0,\\[1ex]
			\bu_\infty=\delta_\infty\nabla^\perp \eta_\infty,
		\end{array}\right.
	\end{equation}
	where $\nabla^\perp=(-\partial_y,\partial_x)$, and $(\eta_\infty,\bu_\infty)\big\vert_{t=0}=(\eta_{0,\infty},\bu_{0,\infty})$.
\end{Corollary}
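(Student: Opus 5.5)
The plan is to deduce the statement from \Cref{C.convergence}, applied to system~\eqref{eq.SV-small-slope} augmented with $\partial_t b=0$, with $U=(\eta,u_x,u_y,b)^\top$. The uniform bounds needed there are supplied by \Cref{P.Well-posedness-SV}, which is itself \Cref{T.Well-posedness} with $j_0=1$, $j_\sharp=2$. The remaining work is to check the structural assumptions of \Cref{C.convergence} and to rewrite the abstract limit system~\eqref{eq.limit} in the form~\eqref{eq.SV-infty-small-slope}.

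\emph{Structural assumptions.} Since $0<\eps_n\leq\delta_n^{i_0}/\lambda$ with $i_0>1$, we have $\eps_n/\delta_n\leq \delta_n^{i_0-1}/\lambda$. Hence $\eps_n/\delta_n\to0$, both when $\delta_\infty=0$ and when $\delta_\infty>0$ (in the latter case because $\eps_n\to0$). Consequently $\S_0(U)$, which depends on the parameters only through $h=1+\frac{\eps}{\delta}\eta-\beta b$, converges continuously to the matrix obtained with $h=1-\beta b$; the matrices $\S_x,\S_y$ converge in the same way. The symbol of $\P_\delta$ is explicit and depends continuously on $\delta$, including at $\delta=0$, where it reduces to the projection onto $\{\bu=0\}$. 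Thus $\sPi_{\delta_n}(\bk)\to\sPi_{\delta_\infty}(\bk)$. Moreover $\sPi_\delta\sL_\delta=\sL_\delta\sPi_\delta=0$, because $\Ran\P_\delta=\Ker\L_\delta$; this is the statement $C_\L=0$. Hence $\sT=0$ and no $\T$ term appears in the limit.

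\emph{Compactness and passage to the limit.} \Cref{P.Well-posedness-SV} gives bounds on $\norm{(\eta_n,\bu_n)}_{H^{i_0+1}}$ and $\norm{(\partial_t\eta_n,\partial_t\bu_n)}_{H^{i_0}}$ on $I$, uniform in $n$, with $i_0+1>2$ and $i_0>1$. The argument of \Cref{C.convergence} (Aubin--Lions, or Arzelà--Ascoli together with interpolation) then yields strong convergence in $\cC(I;H^{s})$ for $s<i_0+1$. Choosing $s>2$, this gives pointwise convergence and $\cC^1$ regularity of the limit on $I\times\RR^2$, by the Sobolev embedding $H^{s}(\RR^2)\hookrightarrow \cC^1(\RR^2)$ combined with the bound on $\partial_t$ in $H^{i_0}\hookrightarrow\cC^0$. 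The limit therefore satisfies~\eqref{eq.limit}: $(\Id-\P_{\delta_\infty})U_\infty=0$, that is $\bu_\infty=\delta_\infty\nabla^\perp\eta_\infty$, and $\P_{\delta_\infty}W=0$ with $W\eqdef\S_0(U_\infty)\partial_tU_\infty+\S_x(U_\infty)\partial_xU_\infty+\S_y(U_\infty)\partial_yU_\infty$.

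\emph{Identifying the Lagrange multiplier.} The condition $\P_{\delta_\infty}W=0$ means that $W$ lies in $\Ran(\Id-\P_{\delta_\infty})=\{(\zeta,\bm w,0):\zeta=\delta_\infty\nabla^\perp\cdot\bm w\}$. Setting $\bm v_\infty\eqdef-\bm w$ gives two identities:
\begin{equation*}
\partial_t\bu_\infty+(\bu_\infty\cdot\nabla)\bu_\infty=-\bm v_\infty,
\qquad
\tfrac1{1-\beta b}\big(\partial_t\eta_\infty+\bu_\infty\cdot\nabla\eta_\infty\big)=-\delta_\infty\nabla^\perp\cdot\bm v_\infty .
\end{equation*}
These are exactly~\eqref{eq.SV-infty-small-slope}. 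Continuity of $\bm v_\infty$ follows from its defining identity, since $\bu_\infty\in\cC^1$; the initial condition passes to the limit directly.

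The main obstacle is this identification step. The abstract statement is formulated in terms of $\P$ applied to the whole equation, and converting it requires the explicit description of $\Ran(\Id-\P_{\delta_\infty})$, including the degenerate case $\delta_\infty=0$, where the multiplier constraint becomes vacuous on $\eta$. It also requires checking that the projected equation holds pointwise rather than only in distributions. This last point holds because $W$ is continuous and the convergence is strong in $H^{s}$ with $s>2$.
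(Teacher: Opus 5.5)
Your proposal is correct and takes the same route as the paper. The paper obtains this corollary directly from \Cref{T.Well-posedness} (via \Cref{P.Well-posedness-SV} with $j_0=1$, $j_\sharp=2$) and \Cref{C.convergence}, using the explicit projection $\P_\delta$, the fact that $C_\L=0$ (so $\sT=0$), and the description of $\Ran(\Id-\P_\delta)$ to produce the multiplier $\bm v_\infty$. Relying only on the uniform $H^{i_0+1}$ bounds is the right way to make this precise, since the $H^k$ bounds of \Cref{P.Well-posedness-SV} degenerate as $\delta_n\to0$; note also that on $\RR^2$ compactness only yields convergence in $\cC(I;H^s_{\rm loc})$, but that is all the pointwise statement needs.

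One step needs adjusting: the time continuity of $\partial_tU_\infty$, which you need both for $\cC^1$ and for $\bm v_\infty\in\cC$. It does not follow from an $L^\infty(I;H^{i_0})$ bound. It follows instead from the limit system itself: eliminating $\bm v_\infty$ and using $\bu_\infty\cdot\nabla\eta_\infty=0$ gives $\big(\tfrac{1}{1-\beta b}-\delta_\infty^2\Delta\big)\partial_t\eta_\infty=\delta_\infty\nabla^\perp\cdot\big((\bu_\infty\cdot\nabla)\bu_\infty\big)\in\cC(I;H^{s-2})$. Hence $\partial_t\eta_\infty\in\cC(I;H^{s})$ and $\partial_t\bu_\infty=\delta_\infty\nabla^\perp\partial_t\eta_\infty\in\cC(I;H^{s-1})\subset\cC(I\times\RR^2)$ when $\delta_\infty>0$, the case $\delta_\infty=0$ being trivial.
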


\paragraph{The large bottom variations case}

The general setting~\eqref{eq.SV-adim} without smallness assumption on the bathymetry does not fit into our framework as the system does not take the form \eqref{eq.general} with linear stiff terms, and the uniform control of solutions as $\eps\searrow0$ is open as far as we know. We would like to comment that the situation without Coriolis force was treated by Bresch and Métivier in~\cite{BreschMetivier10}. By exploiting a delicate structure of the system, they managed to obtain uniform bounds in the limit of vanishing Mach number (that is $\frac\eps\delta\searrow 0$ with our notations) and convergence results even for ill-prepared initial data. We expect that their analysis may be extended to incorporate Coriolis force when $\delta\gtrsim 1$, but another analysis is required to study the rotation-dominant case, $\delta\ll 1$.

\subsection{Augmented hyperbolized systems}\label{S.hyperbolization}

As previously mentioned, the ``augmented hyperbolized systems'' we consider in this section stem from a strategy of constraint-relaxation applied to dispersive equations. As a consequence, the parameter $\eps$ which is related to the constraint relaxation can be freely chosen and is expected to drive the accuracy of the approximation, while the parameter $\delta$ is characterizing by the physical situation.
In the same spirit, some unknowns of the relaxed systems are ``augmented unknowns'' that may be directly related to the physical quantities of the underlying dispersive equation only in the asymptotic limit $\eps\searrow 0$. Consequently a crucial discussion is the way these augmented unknowns are initially set, since it is expected to have a consequence on the accuracy of the approximation. From a technical viewpoint, our assumptions of well-prepared initial data may be secured only if these augmented variables are set  ``sufficiently well''. Since preparing the initial data in practice is typically cumbersome and may be numerically costly, it was the main incentive of this work to relax as much as possible the requirements on initial data to allow only {\em mildly well-prepared} initial data. We refer to~\cite{Duchene19} for further discussion on that matter, and to \cite[Chapter~9]{MM4WW} and~\cite{KetchesonBiswas25,GiesselmannRanocha25} for further results and references on augmented hyperbolized systems.

\subsubsection{The Benjamin--Bona--Mahony equation with variable bottom}\label{S.hBBM}

Consider the Benjamin--Bona--Mahony equation with variable bottom:
\footnote{See \cite[Chapter~7]{Lannes} and~\cite{Israwi10} for its rigorous justification.}
\begin{equation}\label{eq.BBM-b}
	(1-\delta^2 B^4\partial_x^2)\partial_t u+(B+\tfrac32 u)\partial_x u +\tfrac32 (\partial_x B)u=0,
\end{equation}
which is an extension of the celebrated Benjamin--Bona--Mahony equation for the propagation of unidirectional free-surface gravity water waves~\cite{BenjaminBonaMahony72} that incorporates bottom topography contributions.
In~\eqref{eq.BBM-b}, the parameter $\delta>0$ measures the strength of dispersion, and $B(x)=h_b(x)^{1/2}$ where $h_b(x)$ is given as the depth of the layer at rest (after nondimensionalization). Notice that~\eqref{eq.BBM-b} is derived using a small-amplitude, small-slope assumption $h_b(x)=1-\beta b(\alpha x)$ with $\alpha,\beta\ll 1$ but that we will not make use of such hypothesis in our analysis.  
Following~\cite{GavrilyukShyue22} that introduced an augmented ``hyperbolized'' system for approximating the standard Benjamin--Bona--Mahony equation, we shall consider the following augmented system:
\begin{equation}\label{eq.hBBM-b}
	\left\{\begin{array}{l}
		\partial_t u+(B+\frac32 u)\partial_x u +\delta^2B^4\partial_xv+\frac32 (\partial_x B)u=0,\\[1ex]
		\eps^2\partial_t v+\partial_xu+w=0,\\[1ex]
		\partial_t w-v=0.
	\end{array}\right.
\end{equation}
Formal asymptotic expansion from the second equation of~\eqref{eq.hBBM-b} yields
\[ w = -\partial_xu+\cO(\eps^2)\quad \text{and hence}\quad  v = \partial_t w=-\partial_x\partial_tu+\cO(\eps^2).\]
Plugging the latter expansion in the first equation of~\eqref{eq.hBBM-b}, we expect that the system~\eqref{eq.hBBM-b} may provide approximate solutions to the dispersive equation~\eqref{eq.BBM-b} with precision (at most) $\cO(\delta^2\eps^2)$.

Based on these formal asymptotic expansions we complement the system~\eqref{eq.hBBM-b} with the following initial data:
\begin{equation}\label{eq.hBBM-b-id}
	u\vert_{t=0}=u_0, \quad v\vert_{t=0}=0, \quad w\vert_{t=0}=-(u_0)'.
\end{equation}
Notice that considering $ w\vert_{t=0}=-(u_0)'$ allows to secure the uniform boundedness of time derivatives of solutions at initial time: we are in the framework of well-prepared initial data.  
A stronger notion of well-prepared initial data would consist in setting $v\vert_{t=0}=v_0$ where $v_0$ is determined by the elliptic problem $v_0-\delta^2(B^4(v_0)')'=[(B+\frac32 u_0) (u_0)' +\frac32 B'u_0] '$. While we expect that such refined initial data shall improve the behavior of emerging solutions to~\eqref{eq.hBBM-b} as approximate solutions to~\eqref{eq.BBM-b} in the limit $\eps\searrow 0$, we do not consider this case as the primary aim of this work is precisely to demonstrate that convergence still holds with mildly well-prepared initial data. 

In order to apply our results, let us notice that~\eqref{eq.hBBM-b} can be written in matrix form as
\begin{equation}\label{eq.hBBM-b-matrix}
 \S_0(U)	\partial_t U + \S(U)\partial_{x}U+\G(U)U= \frac1\eps\L_\delta U,
\end{equation}
where $U\eqdef (u,\eps\delta v,\delta w,B-1,\partial_x B)^\top$ and
\[ \S_0(U)\eqdef\begin{pmatrix}
	\frac1{B^4}&0&0&0\\
	0&1&0&0&0\\
	0&0&1&0&0\\
	0&0&0&1&0\\
	0&0&0&0&1
\end{pmatrix}\!,\ \S(U)\eqdef\begin{pmatrix}
	\frac{B+\frac32u}{B^4}&0&0&0&0\\
	0&0&0&0&0\\
	0&0&0&0&0\\
	0&0&0&0&0\\
	0&0&0&0&0
\end{pmatrix}\!, \ \G(U)\eqdef\begin{pmatrix}
\frac{\frac32(\partial_x B)}{B^4}&0&0&0&0\\
0&0&0&0&0\\
0&0&0&0&0\\
0&0&0&0&0\\
0&0&0&0&0
\end{pmatrix}, \]
and
\[ \L_\delta\eqdef \begin{pmatrix}
	0&-\delta\partial_x&0&0&0\\
	-\delta\partial_x&0&-1&0&0\\
	0&1&0&0&0\\
	0&0&0&0&0\\
	0&0&0&0&0
\end{pmatrix}.\]
Hence we see that~\eqref{eq.hBBM-b-matrix} takes the form~\eqref{eq.general} and satisfies Hypotheses~\ref{H1},~\ref{H2} with $\Omega=\RR$,~\ref{H3}, and~\ref{H4} with $C_\L=0$. Indeed, we have
\[\Ker(\L_\delta)=\{(u,v,w,a,b)\in L^2(\RR)^5, \quad v=0,\ w=-\delta\partial_x u\}\]
whose orthogonal projection is
\[ \P_\delta\eqdef 
\begin{pmatrix}
	\frac1{1-\delta^2\partial_x^2} &0&\frac{\delta\partial_x}{1-\delta^2\partial_x^2} &0&0\\
	0&0&0&0&0\\
	\frac{-\delta\partial_x}{1-\delta^2\partial_x^2} &0&\frac{-\delta^2\partial_x^2}{1-\delta^2\partial_x^2} &0&0\\
	0&0&0&1&0\\
	0&0&0&0&1
\end{pmatrix}.
\]
For any $Y\eqdef(\tilde u,\tilde v,\tilde w,\tilde a,\tilde b)\in \Ran(\Id-\P_\delta)\cap L^2(\RR)^5$ (i.e. $\tilde u=-\delta\partial_x\tilde w,\tilde a=\tilde b=0$), there exists a unique solution $X\eqdef(u,v,w,a,b)\in \Ran(\Id-\P_\delta)\cap H^1(\RR)^5$ to the equation $\L_\delta X=Y$, and one has the formula
\[w=-(1-\delta^2\partial_x^2)^{-1}\tilde v, \quad u = \delta\partial_x (1-\delta^2\partial_x^2)^{-1}\tilde v, \quad v=\tilde w, \quad a=b=0.\]
This yields immediately the second estimate in~\ref{H4} (the first one with $C_\L=0$ being an obvious consequence of the definition of $\P_\delta$): for all $\delta\in(0,1]$,
\[ \big\vert u \big\vert_{L^2}+\delta\big\vert  u\big\vert_{H^1}+ \big\vert w\big\vert_{L^2} +\delta\big\vert w\big\vert_{H^1}\lesssim \big\vert \tilde v\big\vert_{L^2}, \quad \big\vert v \big\vert_{L^2}+\delta\big\vert  v\big\vert_{H^1}\lesssim \big\vert \tilde w\big\vert_{L^2}+\delta \big\vert \partial_x \tilde w\big\vert_{L^2} \leq \big\vert (\tilde u,\tilde w)\big\vert_{L^2}.\]
\medskip

We wish to apply \Cref{T.Well-posedness}. We set $j_0=1$, $j_\sharp=2$ and $i_0=1$, and hence we need to verify that
$U\in \cap_{j=0}^k\cC^j(I_{\eps,\delta};H^{k-j}(\RR))$   (with $k\geq2$) the solution to~\eqref{eq.hBBM-b} emerging from the initial data~\eqref{eq.hBBM-b-id} satisfies 
\[\norm{U}_{H^k} +\norm{\partial_t U}_{H^{k-1}}+\norm{\tfrac{\eps}{\delta}\partial_t^2 U}_{H^{k-2}} =\cO(1).\]
We have immediately
\[ \big\vert U\vert_{t=0}\big\vert_{H^k}\leq \big\vert u_0\big\vert_{H^k}+\delta\big\vert (u_0)'\big\vert_{H^k}+\big\vert B-1\big\vert_{H^{k+1}}.\]
Since $\partial_tB=0$, using the equations~\eqref{eq.hBBM-b} and initial data~\eqref{eq.hBBM-b-id} yields $\partial_t v\vert_{t=0}=\partial_t w\vert_{t=0}=0$ and
\begin{align*} \big\vert \partial_t U\vert_{t=0}\big\vert_{H^{k-1}}&=\big\vert \partial_t u\vert_{t=0}\big\vert_{H^{k-1}}= \big\vert (B+\tfrac32 u_0)\partial_x u_0+\tfrac32 (\partial_x B)u_0\big\vert_{H^{k-1}}\\
	&\lesssim (1+\big\vert u_0\big\vert_{H^{k-1}}+\big\vert B-1\big\vert_{H^{k-1}})\big(\big\vert u_0\big\vert_{H^k}+\big\vert B-1\big\vert_{H^k}\big),
\end{align*}
where we used \Cref{L.product} for the last inequality. Finally, differentiating with time the equations~\eqref{eq.hBBM-b}, we easily check that 
\[\big\vert \partial_t^2 u\vert_{t=0}\big\vert_{H^{k-2}}\lesssim (1+\big\vert u_0\big\vert_{H^{k-1}}+\big\vert B-1\big\vert_{H^{k-1}})\big\vert \partial_t u\vert_{t=0}\big\vert_{H^{k-1}},\]
and from the identities $(\partial_t^2 w)\vert_{t=0}=(\partial_t v)\vert_{t=0}=0 $ and $(\partial_t^2 v)\vert_{t=0}=-\frac1{\eps^2}(\partial_x \partial_t u)\vert_{t=0}$ and recalling $U\eqdef (u,\eps\delta v,\delta w,B-1,\partial_x B)^\top$, we infer that for any $0<\eps\leq \delta\leq 1$ one has
\[ \frac\eps\delta\big\vert \partial_t^2 U\vert_{t=0}\big\vert_{H^{k-2}}\lesssim  (1+\big\vert u_0\big\vert_{H^{k-1}}+\big\vert B-1\big\vert_{H^{k-1}})^2\big(\big\vert u_0\big\vert_{H^k}+\big\vert B-1\big\vert_{H^k}\big).\]

We therefore obtain the following Proposition as a consequence of \Cref{T.Well-posedness} and \Cref{C.convergence} (with $i_0=j_0=1$ and $j_\sharp=2$).

\begin{Proposition}\label{P.WP-BBM-b}
	Let $k\geq 2$. For any $\eps,\delta>0$, any $u_0\in H^{k+1}(\RR)$ and $B$ such that $B-1\in H^{k+1}(\RR)$ and $B-1\geq b_\star>0$, there exists a unique $(u,v,w)^\top\in \cC(I_{\eps,\delta};H^k(\RR)^3)$ maximal-in-time classical solution to system~\eqref{eq.hBBM-b} emerging from the initial data~\eqref{eq.hBBM-b-id} and one has $U\in \cap_{j=0}^k\cC^j(I_{\eps,\delta};H^{k-j}(\RR)^3)$. 

Moreover, for any $C_0>0$, there exists $T>0$, $C>0$ and $\lambda\geq1$ depending uniquely on $k,b_\star$ and $C_0$ such that the following holds. 
Assume that  ${0<\eps\leq \delta\leq 1}$, $\delta\leq1/\lambda$ and $\eps/\delta\leq 1/\lambda$ and
\[ M_0\eqdef\big\vert u_0\big\vert_{H^k}+\delta\big\vert (u_0)'\big\vert_{H^k}+\big\vert B-1\big\vert_{H^{k+1}} \leq C_0. 
\]
Then $I_{\eps,\delta}\supset [-T/M_0,T/M_0]$ and for any $t\in [-T/M_0,T/M_0]$, one has for all $i\in[2,k]$,
\begin{equation}\label{eq.estimate-hBBM} \big\vert (u,\delta\eps v,\delta w)(t,\cdot)\big\vert_{H^{i}}+\big\vert (\partial_tu,\delta\eps \partial_tv,\delta \partial_tw)(t,\cdot)\big\vert_{H^{i-1}} 
	 \leq  \frac{C\,M_0}{\delta^{i-2}}
\end{equation}
Moreover, as $\eps \searrow0$ (with $\delta>0$ fixed), denoting $(u_\eps,v_\eps,w_\eps)$ the emerging solution to~\eqref{eq.hBBM-b}-\eqref{eq.hBBM-b-id}, we have $(u_\eps, w_\eps)\to (u,-\delta\partial_x u)$ pointwisely in $[-T/M_0,T/M_0]\times\RR $ where $u$ satisfies~\eqref{eq.BBM-b} and $u\vert_{t=0}=u_0$.
\end{Proposition}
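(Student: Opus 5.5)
The plan is to derive the proposition from \Cref{T.Well-posedness} and \Cref{C.convergence}, taking $j_0=i_0=1$ and $j_\sharp=2$. For $d=1$ the condition $i_0+j_0=2>d/2+1$ holds, and $k\geq 2$ satisfies $k>d/2+1$. I write the system in the form~\eqref{eq.hBBM-b-matrix} with $U=(u,\eps\delta v,\delta w,B-1,\partial_x B)^\top$. Hypotheses~\ref{H1}--\ref{H4} were checked above. The domain is $\Omega=\{U: B>0\}$, and the compact set $K$ is fixed by the lower bound $b_\star$ and by $\norm{B-1}_{L^\infty}\lesssim \norm{B-1}_{H^{1}}\le C_0$. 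The projection $\P_\delta$ and the inversion of $\L_\delta$ on $\Ran(\Id-\P_\delta)$ give \ref{H4} with $C_\L=0$, with $c_\L$ uniform in $\delta\in(0,1]$. One point needs care: $B$ enters $\S_0$, $\S$ and $\G$ only through the components $B-1$ and $\partial_x B$ of $U$. These components satisfy the trivial equation $\partial_t=0$, which is compatible with the block structure, since the last two rows of $\L_\delta$ vanish and $\S_0$ is the identity there. The well-posedness part, namely existence, uniqueness and $U\in\cap_j\cC^j(I;H^{k-j})$, is then the first half of \Cref{T.Well-posedness}. It requires $U_0\in H^k$, which holds because $u_0\in H^{k+1}$ and $B-1\in H^{k+1}$.

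Next I verify the preparation condition~\eqref{eq.well-prepared}. The computations preceding the proposition give $\norm{U\vert_{t=0}}_{H^k}\lesssim M_0$ and $\norm{\partial_tU\vert_{t=0}}_{H^{k-1}}\lesssim C(C_0)M_0$. The key fact is $\partial_t v\vert_{t=0}=\partial_tw\vert_{t=0}=0$, which comes from $w_0=-u_0'$ and $v_0=0$. It remains to bound $\frac{\eps}{\delta}\norm{\partial_t^2U\vert_{t=0}}_{H^{k-2}}\lesssim C(C_0) M_0$. The only dangerous term is $\eps\delta\,\partial_t^2 v\vert_{t=0}=-\frac{\delta}{\eps}\partial_x\partial_t u\vert_{t=0}$. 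After multiplying by $\eps/\delta$ this becomes $\norm{\partial_t u\vert_{t=0}}_{H^{k-1}}$, which is already controlled.

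The parameter constraints of \Cref{T.Well-posedness} read $\delta\le 1/\lambda$, $\eps\lambda\le\delta$ and $(\eps\lambda)^{1}\le\delta^{1}$, and these coincide with the stated hypotheses. Taking $C_0'=C(C_0)C_0$ as the bound on $M_{2,0}$, the theorem gives $T,C,\lambda$ and the bounds~\eqref{eq.estimate} on $[-T/M_{2,0},T/M_{2,0}]$. Since $M_{2,0}\lesssim M_0$, rescaling $T$ yields the interval $[-T/M_0,T/M_0]$. With $j\in\{0,1\}$ and $i$ replaced by $i-j$, the weight $\max(1,\delta^{2-i})$ gives exactly~\eqref{eq.estimate-hBBM} for $i\ge2$.

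For convergence I apply \Cref{C.convergence} with $\delta$ fixed and $\eps\to0$, so that $\sPi_\delta$ is constant and $\frac1\eps\sPi_\delta\sL_\delta=0=\sT$. The solutions converge strongly in $\cC(I;H^{k_-})$, and hence pointwise for $k_->1/2$, to a solution of~\eqref{eq.limit}. The constraint $(\Id-\P)U=0$ gives $v=0$ and $\delta w=-\delta\partial_x u$. This accounts for the convergence of $w_\eps$. The convergence of $v_\eps$ itself is not claimed, since only $\eps\delta v_\eps\to 0$ follows.

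The main obstacle is identifying the limit equation. I must compute $\P(\S_0\partial_tU+\S\partial_xU+\G U)=0$ with the kernel element $(u,0,-\delta\partial_x u)$ and show that it is equivalent to~\eqref{eq.BBM-b}. The first approach I would try is to test against the kernel basis: take the $L^2$ pairing with $(\phi,0,-\delta\partial_x\phi)$ for arbitrary $\phi$. This gives $\langle B^{-4}(\partial_tu+(B+\frac32u)\partial_xu+\frac32B'u),\phi\rangle+\delta^2\langle\partial_x\partial_tu,\partial_x\phi\rangle=0$. Multiplying by $B^4$ inside the test function is the delicate part, since the symmetrizer weight $B^{-4}$ and the $\delta^2$ term must recombine into $(1-\delta^2B^4\partial_x^2)$ exactly as in~\eqref{eq.BBM-b}. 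If they do not, I will conclude instead by passing to the limit directly in the first equation of~\eqref{eq.hBBM-b}, using $v_\eps=\partial_t w_\eps\to-\partial_x\partial_t u$ in the distributional sense.
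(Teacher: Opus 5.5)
Your proposal is correct and follows the paper's route: check \ref{H1}--\ref{H4} for~\eqref{eq.hBBM-b-matrix}, use $\partial_t v\vert_{t=0}=\partial_t w\vert_{t=0}=0$ to get $M_{2,0}\lesssim C(C_0)\,M_0$, then invoke \Cref{T.Well-posedness} with $i_0=j_0=1$, $j_\sharp=2$, together with \Cref{C.convergence}. The step you flag as delicate is immediate and needs no fallback: your pairing gives the distributional identity $B^{-4}\big(\partial_t u+(B+\tfrac32 u)\partial_x u+\tfrac32 (\partial_x B)u\big)-\delta^2\partial_x^2\partial_t u=0$, and multiplying this identity (not the test function) by $B^4$ yields exactly~\eqref{eq.BBM-b}; also, the constraint actually gives $\delta w_\eps\to-\delta\partial_x u$, i.e.\ $w_\eps\to-\partial_x u$, which is how the limit stated for $w_\eps$ must be read.
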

This result proves that the augmented system~\eqref{eq.hBBM-b} provides approximate solutions to the Benjamin--Bona--Mahony equation with bottom topography~\eqref{eq.BBM-b}. Notice however that the upper-bound~\eqref{eq.estimate-hBBM} underlines the possibility of undesirable spurious oscillations with wavelength $\cO(\delta)$.

\paragraph{The flat-bottom situation}
	Of course all the previous results hold in particular in the flat-bottom situation, that is $B\equiv 1$, and provide a rigorous justification of the augmented hyperbolized system introduced in~\cite{GavrilyukShyue22} as an approximation of the Benjamin--Bona--Mahony equation. Notice however that, as in the flat-bottom situation of \Cref{S.rotating-SV}, owing to the fact that $\S_0=\Id$, standard energy estimates quickly provide a uniform control of solutions to~\eqref{eq.hBBM-b}, even in the case of ill-prepared initial data, for instance $u\vert_{t=0}=u^0$ and  $ v\vert_{t=0}=w\vert_{t=0}=0$. Moreover, quantitative convergence results for well-prepared initial data are easily obtained by means of similar energy estimates applied to the difference between solutions to~\eqref{eq.hBBM-b} and approximations built from the corresponding solutions to~\eqref{eq.BBM-b}. The aforementioned small-scale development does not arise in that case. We refer the reader to~\cite{GiesselmannRanocha25} for a detailed analysis.

	\subsubsection{The Boussinesq--Peregrine system}\label{S.hBP}
	
	Consider the Boussinesq--Peregrine system for the propagation of water waves:\footnote{See \cite[Chapter~6]{Lannes} and~\cite{DucheneIsrawi18} for its rigorous justification in the weakly nonlinear, weakly dispersive regime.}
	\begin{equation}\label{eq.BP}
		\left\{\begin{array}{l}
			\partial_t h+\nabla\cdot(h\bu)=0,\\[1ex]
			\partial_t\bu +\nabla(h+ b)+(\bu\cdot\nabla)\bu+\frac{\delta^2}{\hb}\big( \nabla\big( \hb p \big)+ \pb \nabla b\big)=\bm{0},\\[1ex]
			p= \hb \big(\frac{-\hb\partial_t\nabla\cdot\bu}3 +\frac{\partial_t\bu\cdot\nabla b}2\big), \quad \pb=\hb\big(\frac{-\hb\partial_t\nabla\cdot\bu}2 +\partial_t\bu\cdot\nabla b\big),
		\end{array}\right.
	\end{equation}
	where $\delta>0$ is the shallowness parameter measuring the strength of dispersion.
	There, $h$ represents the depth of the layer of water, $b$ accounts for the bottom topography (so that $h+b-1$ is the surface elevation), $\hb=1-b$ is the depth of the layer of water at rest, $\bu$ the layer-avereged horizontal velocity, and $p$ and $\pb$ are the linear non-hydrostatic pressure contributions. All these quantities have been non-dimensionalized.
	
	Denote $q\eqdef \hb p$ and $ r \eqdef \dot h/h$. Notice $r=-\nabla\cdot\bu$ (using the mass conservation equation $\partial_t h+\nabla\cdot(h\bu)=0$), $q = \frac{\hb^3}{3}\partial_t r +\frac{\hb^2}{2}\partial_t\bu\cdot\nabla b$ and $\pb=\frac{\hb^2}{2}\partial_t r +\hb\partial_t\bu\cdot\nabla b$.
	We infer that~\eqref{eq.BP} may be written equivalently as
	\begin{equation}\label{eq.BP-q}
		\left\{\begin{array}{l}
			\partial_t h+\nabla\cdot(h\bu)=0,\\[1ex]
			\hb\big(\partial_t \bu+(\bu\cdot\nabla)\bu+\nabla(h+b)\big)+\delta^2\nabla q+\frac{\delta^2}{2}(\hb^2\partial_t r )\nabla b+\delta^2 \hb(\partial_t\bu\cdot\nabla b)\nabla b=\bm{0}, \\[1ex]
			 r + \nabla\cdot\bu=0,\\[1ex]
			\frac{\hb^3}{3}\partial_t r +\frac{\hb^2}{2}\partial_t\bu\cdot\nabla b =q,
		\end{array}\right.
	\end{equation}
	The hyperbolization strategy consists in relaxing the third identity in~\eqref{eq.BP-q} to consider
	\begin{equation}\label{eq.hBP}
		\left\{\begin{array}{l}
			\partial_t h+\nabla\cdot(h\bu)=0,\\[1ex]
			\hb\big(\partial_t \bu+(\bu\cdot\nabla)\bu+\nabla(h+b)\big)+\delta^2\nabla q+\frac{\delta^2}{2}(\hb^2\partial_t r )\nabla b+\delta^2 \hb(\partial_t\bu\cdot\nabla b)\nabla b=\bm{0}, \\[1ex]
			\eps^2\partial_t q=-\hb( r + \nabla\cdot\bu),\\[1ex]
			\frac{\hb^3}{3}\partial_t r +\frac{\hb^2}{2}\partial_t\bu\cdot\nabla b =q.
		\end{array}\right.\end{equation}
	We now write~\eqref{eq.hBP} in compact matricial form. 
	Denoting $U=(h-\hb,\bu,\eps \delta q,\delta  r )^\top$, system~\eqref{eq.hBP} reads
	\begin{equation}\label{eq.hBP-matrix}	
		\S_0(U)\partial_t U + (\S(U)\nabla)U+B(U)=\frac1\eps\L_{\delta} U 
	\end{equation}
	with
	\[ \S_0(U)\eqdef
	\begin{pmatrix}
		\frac{\hb}{h} & 0 & 0& 0\\
	\bm{0}	& \hb {\sf I}[\nabla b] & \bm{0}&\frac\delta2 \hb^2\nabla b \\
	0	& \bm{0}^\top& \hb^{-1} &  0\\
	0	& \frac\delta2 \hb^2(\nabla b)^\top& 0 & \frac13\hb^3
	\end{pmatrix}, \quad 
	\L_{\delta}\eqdef
	\begin{pmatrix}
		0 & \bm{0}^\top& 0& 0\\
		\bm{0}& {\sf 0}&-\delta\nabla &\bm{0} \\
		0& -\delta\nabla^\top& 0& -1 \\
		0& \bm{0}^\top& 1 & 0
	\end{pmatrix}, \quad 
	\]
	where ${\sf I}[\nabla b]\circ\eqdef\Id+\delta^2 (\nabla b)(\nabla b)\cdot\circ$ and
	\[ \S(U)\nabla\eqdef
	\begin{pmatrix}
		\frac{\hb}{h}\bu\cdot\nabla& \hb\nabla^\top & 0& 0\\
		\hb\nabla& \Id(\hb \bu\cdot\nabla)   &\bm{0} & \bm{0}\\
		0& \bm{0}^\top& 0& 0\\
		0& \bm{0}^\top& 0& 0
	\end{pmatrix}, \quad
	B(U) \eqdef \begin{pmatrix}
		-\frac{\hb}{h}\bu\cdot\nabla b\\
		\bm{0}\\
		0 \\
		0
	\end{pmatrix}.
	\]
	Proceeding as in the preceding section, we can artificially augment the set of unknowns $U$ to include $b,\nabla b$ so that~\eqref{eq.hBP-matrix} is of the form~\eqref{eq.general}, and satisfies~\ref{H1}--\ref{H4}. Notice in particular the coercivity of $\S_0$ in Hypothesis~\ref{H2} under the hyperbolicity assumption $h,h_b>0$ (physically corresponding to non-cavitation) that follows from the following identity: for all $V=(v_1,\bm{v}_2,v_3,v_4)\in \RR^{3+d}$,
	\begin{equation}\label{eq.hyperbolicity} \bra{\S_0(U) V,V}=  \frac{\hb}{h}v_1^2+\hb|\bm{v}_2|^2+\hb(\tfrac12 \hb v_4+\delta \bm{v}_2\cdot\nabla b)^2+\hb^{-1}v_3^2+\tfrac1{12} \hb^3 v_4^2.\end{equation}
	Hence the results of \Cref{T.Well-posedness} and \Cref{C.convergence} apply for sufficiently well-prepared initial data.
	
	Let us analyze the size of initial data given $\eta_0=(h-\hb)\vert_{t=0}\in H^k(\RR^d)$, $\bu_0=\bu\vert_{t=0}\in H^k(\RR^d)^d$ and setting the augmented unknowns as follows:
\begin{equation}\label{eq.hBP-init0}
		q\vert_{t=0}=0, \quad  r \vert_{t=0}=-\nabla\cdot\bu_0. 
\end{equation}
	Recall $U=(h-\hb,\bu,\eps \delta q,\delta  r )^\top$. We have immediately
	\[\norm{U\vert_{t=0}}_{H^k}\lesssim \norm{\eta_0}_{H^k}+\norm{\bu_0}_{H^k}+\norm{\delta\nabla\cdot\bu_0}_{H^k}.\]
	Then we use that $U_0\eqdef U\vert_{t=0}\in \Ker(\L_\delta)$ to infer from~\eqref{eq.hBP-matrix} that
	\[\norm{(\partial_t U)\vert_{t=0}}_{H^{k-1}}=\norm{\S_0(U)^{-1}\big((\S(U_0)\nabla)U_0+B(U_0)\big)}_{H^{k-1}}\leq C\,\norm{U_0}_{H^k}\]
	where $C$ depends non-decreasingly on $\norm{U_0}_{H^{k-1}}$, $\norm{\nabla b}_{H^{k-1}}$ and $\norm{( h,  \hb,h^{-1},\hb^{-1})}_{L^\infty}$. Here we used the pointwise coercivity of the matrix $\S_0(U)$ as well as product, commutator and composition estimates in \Cref{L.product-2,L.commutator,L.composition}.
	
	The estimate on the second derivative follows similarly after differentiating in time~\eqref{eq.hBP-matrix}. The only non-uniform contribution is
	\[(\frac1\eps\L_\delta \partial_tU )\vert_{t=0}= \frac1\eps (0,-\eps \delta^2\nabla\partial_t q,-\delta\partial_t r -\delta\nabla\cdot\partial_t\bu,\eps \delta\partial_t q)^\top\vert_{t=0}.\]
	Now, considering~\eqref{eq.hBP} and the initial condition~\eqref{eq.hBP-init0}, we have
	\[	(\partial_t r )\vert_{t=0}=-\frac{3}{2\hb}(\partial_t\bu)\vert_{t=0}\cdot\nabla b \quad \text{ and } \quad \partial_t q\vert_{t=0}=0 .\]
	We infer
		\[\norm{(\partial_t^2 U)\vert_{t=0}}_{H^{k-2}}\leq \frac\delta\eps C\,\norm{U_0}_{H^k}\]
	with the same convention as above for the multiplicative prefactor $C$.
	
	With these estimates at hand, one concludes as in the preceding section the following results.
	\begin{Proposition}\label{P.WP-BP}
		Let $d=1$ and $k\geq 2$. For any $\eps,\delta>0$, any $\eta_0\in H^k(\RR),\bu_0\in H^{k+1}(\RR)$ and $b\in H^{k+1}(\RR)$ such that $\inf_\RR 1-b\geq h_\star$ and $\inf_\RR 1-b+\eta_0\geq h_\star$ with $h_\star>0$, there exists a unique $U\eqdef(\eta,\bu,q, r)\in \cC(I_{\eps,\delta};H^k(\RR)^4)$ maximal-in-time classical solution to system~\eqref{eq.hBP} with $h=\hb+\eta$ emerging from the initial data 	\begin{equation}\label{eq.hBP-init}
			\eta\vert_{t=0}=\eta_0, \quad \bu\vert_{t=0}=\bu_0, \quad q\vert_{t=0}=0,\quad  r\vert_{t=0}=-\nabla\cdot\bu_0\end{equation}
		and one has $U\in \cap_{j=0}^k\cC^j(I_{\eps,\delta};H^{k-j}(\RR)^4)$. 
		
		Moreover, for any $C_0>0$, there exists $T>0$, $C>0$ and $\lambda\geq1$ depending uniquely on $k,h_\star$ and $C_0$ such that the following holds. 
		Assume that  ${0<\eps\leq \delta\leq 1}$, $\delta\leq1/\lambda$ and $\eps/\delta\leq 1/\lambda$ and
		\[ M_0\eqdef\big\vert \eta_0\big\vert_{H^k}+ \big\vert \bu_0\big\vert_{H^k}+\delta\big\vert \bu_0\big\vert_{H^{k+1}}+\big\vert b\big\vert_{H^{k+1}} \leq C_0. 
		\]
		Then $I_{\eps,\delta}\supset [-T/M_0,T/M_0]$ and for any $t\in [-T/M_0,T/M_0]$, one has for all $i\in[2,k]$,
		\begin{equation}\label{eq.estimate-hBP} \big\vert (\eta,\bu,\eps\delta q,\delta  r )(t,\cdot)\big\vert_{H^{i}}+\big\vert (\partial_t\eta,\partial_t\bu,\eps\delta \partial_tq,\delta \partial_t r )(t,\cdot)\big\vert_{H^{i-1}} 
			\\
			\leq \frac{C\, M_0}{ \delta^{i-2} }.
		\end{equation}
		Moreover, as $\eps \searrow0$ (with $\delta>0$ fixed), denoting $(\eta_\eps,\bu_\eps,q_\eps, r_\eps)$ the solution to~\eqref{eq.hBP} emerging from~\eqref{eq.hBP-init}, we have $(1-b+\eta_\eps, \bu_\eps, r_\eps)\to (h,\bu,-\delta\nabla\cdot \bu)$ pointwisely in $[-T/M_0,T/M_0]\times\RR $ where $(h,\bu)$ satisfies~\eqref{eq.BP} and $(h,\bu)\vert_{t=0}=(1-b+\eta_0,\bu_0)$.
	\end{Proposition}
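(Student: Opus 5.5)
The plan is to deduce \Cref{P.WP-BP} from \Cref{T.Well-posedness} and \Cref{C.convergence} with $d=1$, $j_0=i_0=1$ and $j_\sharp=2$, exactly as for \Cref{P.WP-BBM-b}. The structural hypotheses come first. Augment the unknown to $\widetilde U=(h-\hb,\bu,\eps\delta q,\delta r,b,\partial_x b)^\top$, with $\partial_t b=\partial_t\partial_x b=0$, so that \eqref{eq.hBP-matrix} takes the form~\eqref{eq.general}. Then check the hypotheses in turn.

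\textbf{Hypotheses \ref{H1}--\ref{H3}.} \ref{H1} holds since $\S_0,\S$ are symmetric and smooth, and $\G(\bm 0)=0$ after writing $B(U)=\G(U)U$ (the bathymetry components enter linearly). \ref{H2} follows from identity~\eqref{eq.hyperbolicity} on the domain $h,\hb>0$. \ref{H3} is immediate for the constant-coefficient first-order operator $\L_\delta$.

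\textbf{Hypothesis \ref{H4}.} Here $\Ker\L_\delta=\{\eps\delta q=0,\ \delta r=-\delta\partial_x\bu\}$ on the relevant components. As in \Cref{S.hBBM}, the symbol of $\L_\delta$ restricted to $(\bu,\eps\delta q,\delta r)$ has eigenvalues $0$ and $\pm\i\sqrt{1+\delta^2k^2}$. Hence \ref{H4} holds with $C_\L=0$, $\sPi_\delta$ the projection onto the kernel, and $c_\L=1/\sqrt2$. The explicit inversion on $\Ran(\Id-\P_\delta)$ mirrors the BBM formulas.

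\textbf{Preparation of the initial data.} Next verify~\eqref{eq.well-prepared} with $j_\sharp=2$, $i_0=j_0=1$, that is
\[\norm{U_0}_{H^k}+\norm{\partial_tU\vert_{t=0}}_{H^{k-1}}+\tfrac\eps\delta\norm{\partial_t^2U\vert_{t=0}}_{H^{k-2}}\lesssim M_0.\]
The computations preceding the statement already give this.
\begin{itemize}
\item $\norm{U_0}_{H^k}\lesssim M_0$, because $\delta r_0=-\delta\partial_x\bu_0$ and $\delta\norm{\bu_0}_{H^{k+1}}\leq M_0$.
\item Since $U_0\in\Ker\L_\delta$, one has $\partial_tU\vert_{t=0}=-\S_0^{-1}((\S\nabla)U_0+B(U_0))$. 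Product and composition estimates bound this by $C(M_0)$ in $H^{k-1}$.
\item The second time derivative has singular part $\frac1\eps\L_\delta\partial_tU\vert_{t=0}$. Using $\partial_tq\vert_{t=0}=0$ and $\partial_tr\vert_{t=0}=-\frac3{2\hb}\partial_t\bu\cdot\nabla b$, this is bounded by $\frac\delta\eps C\norm{U_0}_{H^k}$.
\end{itemize}
The parameter constraint $(\eps\lambda)^{j_\sharp-j_0}\leq\delta^{i_0}$ then reduces to $\eps/\delta\leq1/\lambda$, which is assumed.

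\textbf{Conclusion.} \Cref{T.Well-posedness} yields the existence time $T/M_0$ and preservation of the hyperbolicity domain; shrink $K$ using $h_\star$ and the continuity argument. It also yields~\eqref{eq.estimate} with $j\in\{0,1\}$ and weight $\max(1,\delta^{2-i-j})$. At $j=0$ and $j=1$ this is exactly~\eqref{eq.estimate-hBP} after rescaling $i$, since $i_0+j_0=2$. For convergence, apply \Cref{C.convergence} with $\delta$ fixed and $\eps\to0$. Here $\sPi_\delta$ is independent of $\eps$, $\sT=0$ since $C_\L=0$, and $\S_0$ is continuous in $\eps$. The limit system~\eqref{eq.limit} reads $(\Id-\P)U=0$, i.e. $q=0$ and $r=-\partial_x\bu$, together with the projected equation. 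Identify it with~\eqref{eq.BP-q}, where the Lagrange multiplier plays the role of $q$.

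The main obstacle is the \ref{H4} verification. The lower bound must be uniform in $\delta\in(0,1]$ with the factor $(1+\delta|k|)$, and the orthogonality and projection structure must be handled in the weighted variables $(\eps\delta q,\delta r)$. Pointwise positivity of $\S_0$ under $h,\hb\geq h_\star$ via~\eqref{eq.hyperbolicity} is the second point to double-check, since the cross term involves $\delta\nabla b$.
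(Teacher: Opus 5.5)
Your proposal is correct and follows the paper's route. You augment the unknown by $b,\partial_x b$, check \ref{H1}--\ref{H4}, and verify the well-preparedness bounds using $U_0\in\Ker\L_\delta$, $\partial_t q\vert_{t=0}=0$ and the explicit formula for $\partial_t r\vert_{t=0}$; you then apply \Cref{T.Well-posedness} and \Cref{C.convergence} with $d=1$, $i_0=j_0=1$, $j_\sharp=2$. One point needs stating more carefully: the bound on $\partial_t U\vert_{t=0}$ should be linear in $M_0$, i.e.\ $C(C_0)\,M_0$ as in the paper's $C\norm{U_0}_{H^k}$, not merely $C(M_0)$, so that $M_{2,0}\lesssim M_0$ and the time interval and estimates can be expressed in terms of $M_0$.
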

	
	This result proves that the augmented hyperbolized system~\eqref{eq.hBP} provides approximate solutions to the Boussinesq--Peregrine system~\eqref{eq.BP}. Once again we emphasize that the upper-bound~\eqref{eq.estimate-hBP} is consistent with the development of undesirable spurious oscillations with wavelength $\cO(\delta)$.
	
	\begin{Remark}\label{R.dimension}
		\Cref{P.WP-BP} is limited to dimension $d=1$, because the estimate $\norm{(\partial_t^2 U)\vert_{t=0}}_{H^{k-2}}=\cO(\frac\eps\delta)$ is too weak to apply \Cref{T.Well-posedness} which requires (setting $j_0=1$) $\norm{(\partial_t^2 U)\vert_{t=0}}_{H^{k-2}}=\cO(\frac\eps{\delta^{i_0}})$ with $i_0$ such that $i_0+j_0>1+d/2$, and the case $i_0=1$ falls just outside of the admissible indices when $d=2$. In order to obtain a positive result when $d=2$ one could set $i_0>1$ and either (i) require some smallness assumption on the initial data, that is $\big\vert \eta_0\big\vert_{H^k}+ \big\vert \bu_0\big\vert_{H^k}+\delta\big\vert \bu_0\big\vert_{H^{k+1}}+\big\vert b\big\vert_{H^{k+1}} =\cO(\delta^{i_0-1})$ or (ii) further prepare the initial data by considering $q\vert_{t=0}$ such that $(\partial_t\bu)\vert_{t=0}=\cO(\delta^{i_0-1})$. We refer to~\cite{Duchene19} for the analysis of strongly well-prepared initial data.
	\end{Remark}

\subsubsection{The Serre--Green--Naghdi system}\label{S.LCT}

We consider the Serre--Green--Naghdi system for the propagation of water waves.\footnote{See \cite[Chapter~6]{Lannes} and \cite[Chapter~8]{MM4WW} for its rigorous justification in the fully nonlinear, weakly dispersive regime.} Using dimensionless variables (but setting the nonlinearity and topography amplitude parameters to $1$ for simplicity), it reads
\begin{equation}\label{eq.GN}
	\left\{\begin{array}{l}
		\partial_t h+\nabla\cdot(h\bu)=0,\\[1ex]
		\partial_t\bu +\nabla(h+ b)+(\bu\cdot\nabla)\bu+\frac{\delta^2}{h}\big( \nabla\big( hp \big)+ \pb \nabla b\big)=\bm{0},\\[1ex]
		p= h \big(\frac{\ddot{h}}3 +\frac{\ddot{b}}2\big), \quad \pb=h\big(\frac{\ddot{h}}2 +\ddot{b}\big),
	\end{array}\right.
\end{equation}
where $\delta>0$ is the shallowness parameter measuring the strength of dispersion and we denote $\dot{h}=\partial_t h+\bu\cdot\nabla h$, $\ddot{h}=\partial_t \dot{h}+\bu\cdot\nabla \dot{h}$, and similarly $\dot{b},\ddot{b}$. 
There, $h$ represents the depth of the layer of water, $b$ accounts for the bottom topography (so that $h+b-1$ is the surface elevation), $\bu$ the layer-avereged horizontal velocity, and $p$ and $\pb$ are non-hydrostatic pressure contributions.

Denote $q\eqdef hp$ and $ r \eqdef \dot h/h$. Notice $r=-\nabla\cdot\bu$ (using the mass conservation equation $\partial_t h+\nabla\cdot(h\bu)=0$), $q = \frac{h^3}{3}\big(\partial_t r +\bu\cdot\nabla r - r \nabla\cdot\bu\big)+\frac{h^2}{2}\ddot{b}$ and $\pb=\frac{h^2}{2}\big(\partial_t r +\bu\cdot\nabla r - r \nabla\cdot\bu\big)+h\ddot{b}$.
We infer that~\eqref{eq.GN} may be written equivalently as
\begin{equation}\label{eq.GN-q}
	\left\{\begin{array}{l}
		\partial_t h+\nabla\cdot(h\bu)=0,\\[1ex]
		h\big(\partial_t \bu+(\bu\cdot\nabla)\bu+\nabla(h+b)\big)+\delta^2\nabla q+\frac{\delta^2}{2}h^2\nabla b\big(\partial_t r +\bu\cdot\nabla r - r \nabla\cdot\bu\big)+\delta^2 h\ddot{b}\nabla b=\bm{0}, \\[1ex]
		 r + \nabla\cdot\bu=0,\\[1ex]
	 \frac13h^3\big(\partial_t r +\bu\cdot\nabla r - r \nabla\cdot\bu\big)+\frac{h^2}{2}\ddot{b} =q.
	\end{array}\right.
\end{equation}
The hyperbolization strategy consists in relaxing the third identity in~\eqref{eq.GN-q} to consider
\begin{equation}\label{eq.LCT}
	\left\{\begin{array}{l}
	\partial_t h+\nabla\cdot(h\bu)=0,\\[1ex]
	h\big(\partial_t \bu+(\bu\cdot\nabla)\bu+\nabla(h+b)\big)+\delta^2\nabla q+\frac{\delta^2}{2}h^2\nabla b\big(\partial_t r +\bu\cdot\nabla r - r \nabla\cdot\bu\big)+\delta^2 h\ddot{b}\nabla b=\bm{0}, \\[1ex]
	\eps^2\big(\partial_t q+\bu\cdot\nabla q+q\nabla\cdot\bu\big)=-h(r + \nabla\cdot\bu),\\[1ex]
	\frac13h^3\big(\partial_t r +\bu\cdot\nabla r -r \nabla\cdot\bu\big)+\frac{h^2}{2}\ddot{b} =q,
\end{array}\right.\end{equation}
which we can check is equivalent to the systems proposed in \cite[Remark 9.1]{MM4WW} and \cite[(105)--(108)]{Richard21} and similar to systems proposed by other authors (see references in the aforementioned works).

We now restrict to time-independent topography, $\partial_t b=0$, and infer $\ddot{b}=(\partial_t\bu)\cdot\nabla b+\bu\cdot\nabla(\bu\cdot\nabla b)$.
and write~\eqref{eq.LCT} in compact matricial form. Denoting $U=(h+b-1,\bu,\eps \delta q,\delta r )^\top$, system~\eqref{eq.LCT} reads
\begin{equation}\label{eq.LCT-matrix}	
	\A_0(U)\big(\partial_t U +(\bu\cdot\nabla)U\big)+ (\A(U)\nabla)U+B(U)=\frac1\eps\L_{\delta} U 
\end{equation}
with
\[ \A_0(U)\eqdef
\begin{pmatrix}
	1 &\bm{0}^\top &0 &0 \\
	\bm{0}& h {\sf I}[\nabla b] & \bm{0}&\frac\delta2 h^2\nabla b \\
	0& \bm{0}^\top& h^{-1} & 0\\
	0& \frac\delta2 h^2(\nabla b)^\top& 0& \frac13h^3
\end{pmatrix}, \quad 
\L_{\delta}\eqdef
\begin{pmatrix}
	0 & \bm{0}^\top& 0&0 \\
	\bm{0}& {\sf 0}&-\delta\nabla &0 \\
	0& -\delta\nabla^\top& 0& -1 \\
	0&\bm{0}^\top & 1 & 0
\end{pmatrix}, \quad 
\]
where ${\sf I}[\nabla b]\circ\eqdef\Id+\delta^2 (\nabla b)(\nabla b)\cdot\circ$ and
\[ \A(U)\nabla\eqdef
\begin{pmatrix}
	0 & h\nabla^\top & 0&0 \\
	h\nabla&  -\frac\delta2h^2( r \nabla b) \nabla^\top  & \bm{0}&\bm{0} \\
	0& \frac{q}{h}\nabla^\top& 0& 0\\
	0& -\frac13r  h^3\nabla^\top&0 & 0
\end{pmatrix}, \quad
B(U) \eqdef \begin{pmatrix}
	-\bu\cdot\nabla b\\
\delta^2h (\bu\cdot\nabla b)(\bu\cdot\nabla)(\nabla b)\\
	0 \\
	\frac\delta2 h^2\bu\cdot(\bu \cdot \nabla )(\nabla b)
\end{pmatrix}.
\]
Notice~\eqref{eq.LCT-matrix} is almost under symmetric form, except for the two entries on the last two lines of $\A(U)$.
For that reason, we introduce a near-identity symmetrizer which allows to balance non-symmetric terms with contributions stemming from the singular operator $\L_\delta$. 

We claim the existence of a symmetrizer of the form $\Id+\eps \M(U)$ such that  $\big(\Id+\eps \M(U)\big)\A_0(U)$ is symmetric and $\big(\Id+\eps \M(U)\big)\big(\A(U)\nabla-\frac1\eps\L_\delta\big)$ is skew-adjoint for the $L^2(\RR^d)^n$ inner-product, up to harmless order-zero terms. Specifically, we set
\[ \M(U)\eqdef
\begin{pmatrix}
	0 & \bm{0}^\top & 0 & 0 \\
	\bm{0} & {\sf 0} & (\frac\delta2 h^3\nabla b)f & \bm{0} \\
	0 & \bm{0}^\top & g & f \\
	0 & \bm{0}^\top & \frac{h^4}3f & 0 
\end{pmatrix}
\quad \text{ with } \quad f\eqdef\frac{r  }{\delta h+\eps q}\text{ and } g\eqdef\frac{-q}{\delta h+\eps q}+\frac\eps3\frac{ h^4 r^2}{(\delta h+\eps q)^2} ,
\]
and observe that
\[\big(\Id+\eps \M(U)\big)\A_0(U)=\A_0(U)+\eps \A_1(U), \quad \A_1(U)\eqdef
\begin{pmatrix}
	0 & \bm{0}^\top & 0 & 0 \\
	\bm{0} & {\sf 0} & (\frac\delta2 h^2\nabla b)f & \bm{0} \\
	0 & (\frac\delta2 h^2\nabla b)^\top  f& h^{-1}g & \frac{h^3}3 f \\
	0 & \bm{0}^\top & \frac{h^3}3f & 0 
\end{pmatrix}\]
and
\[ \big(\Id+\eps \M(U)\big)\big(\A(U)\nabla-\frac1\eps \L_\delta\big)=\S(U)\nabla+\G(U)U-\frac1\eps \L_\delta
\]
where
\[ \S(U)\nabla\eqdef
\begin{pmatrix}
	0& h\nabla^\top & 0& 0\\
	h\nabla&  {\sf 0}  &\bm{0} &\bm{0} \\
	0& \bm{0}^\top& 0& 0\\
	0& \bm{0}^\top& 0& 0
\end{pmatrix}, \quad
\G(U) \eqdef \begin{pmatrix}
	0 & \bm{0}^\top & 0 & 0 \\
	\bm{0} & {\sf 0} & 0&(\frac\delta2 h^3\nabla b)f \\
	0 & \bm{0}^\top & -f & g \\
	0 & \bm{0}^\top & 0 & \frac{h^4}3f  
\end{pmatrix}.
\]
Altogether we find that~\eqref{eq.LCT} reads
\begin{equation}\label{eq.LCT-matrix-sym}	\big(\A_0(U)+\eps \A_1(U)\big)\big(\partial_t U +(\bu\cdot\nabla)U\big)+ \S(U)\nabla U+\G(U)U+(\Id+\eps \M(U))B(U)=\frac1\eps\L_{\delta} U 
\end{equation}
where
$U=(h+b-1,\bu,\eps \delta q,\delta r )^\top$. Proceeding as in the preceding sections, we can artificially augment the set of unknowns $U$ to include $b,\nabla b$ so that~\eqref{eq.LCT-matrix-sym} is of the form~\eqref{eq.general}, and satisfies the symmetric structure of Hypothesis~\ref{H1}. Notice however that it is {\em a priori} not obvious that the symmetrizer $\A_0(U)+\eps \A_1(U)$ and source terms $\G(U)U+(\Id+\eps \M(U))B(U)$ are uniformly bounded with respect to $\eps$ and $\delta$ due to the contributions involving $f=\frac{r  }{\delta h+\eps q}$ and $g=\frac{-q}{\delta h+\eps q}+\frac\eps3\frac{ h^4 r^2}{(\delta h+\eps q)^2}$ (recall $(\eps \delta q,\delta r )$ are the considered unknowns). However one readily sees from the last two equations of~\eqref{eq.LCT} that a uniform control of $\norm{U}_{H^{s_0+1}}$ and $\norm{\partial_t U}_{H^{s_0}}$ readily provides a uniform control of $\norm{q}_{H^{s_0}}$ and $\norm{r }_{H^{s_0}}$. Hence assuming that the ratio $\frac\eps\delta$ is sufficiently small, the contributions stemming from $f$ and $g$ are indeed uniformly bounded (and small). We claim that the analysis in this work could be extended to incorporate such contributions (as was done in~\cite{Duchene19}). Hypothesis~\ref{H2} follows from the coercivity of $\A_0$ under the assumption $0<h<\infty$ (since $\A_0$ satisfies the identity~\eqref{eq.hyperbolicity} replacing $\hb$ with $h$) 
and, as discussed above, the fact that contributions of $\A_1(U)$ are perturbative if $\frac\eps\delta$ is sufficiently small. Finally, that $\L_\delta$ satisfies~\ref{H3} and~\ref{H4} has been discussed in preceding sections. Hence the results of \Cref{T.Well-posedness} and \Cref{C.convergence} apply up to modifications stemming from the above discussion, and the analysis on the size of the first two time derivatives of the unknowns is exactly as in the preceding section. We infer from \Cref{T.Well-posedness} and \Cref{C.convergence} the following result, analogous to \Cref{P.WP-BP}, which proves that the augmented hyperbolized system~\eqref{eq.LCT} provides approximate solutions to the Serre--Green--Naghdi system~\eqref{eq.GN}.

	\begin{Proposition}\label{P.WP-GN}
	Let $d=1$ and $k\geq 2$. For any $\eps,\delta>0$, any $\eta_0\in H^k(\RR),\bu_0\in H^{k+1}(\RR)$ and $b\in H^{k+1}(\RR)$ such that $\inf_\RR 1-b+\eta_0\geq h_\star$ with $h_\star>0$, there exists a unique $U\eqdef(\eta,\bu,q,r)^\top\in \cC(I_{\eps,\delta};H^k(\RR)^4)$ maximal-in-time classical solution to system~\eqref{eq.LCT} with $h=1-b+\eta$ emerging from the initial data 
	\begin{equation}\label{eq.LCT-init}\eta\vert_{t=0}=\eta_0, \quad \bu\vert_{t=0}=\bu_0, \quad q\vert_{t=0}=0,\quad r\vert_{t=0}=-\nabla\cdot\bu_0\end{equation}
	and one has $U\in \cap_{j=0}^k\cC^j(I_{\eps,\delta};H^{k-j}(\RR)^4)$. 
	
	Moreover, for any $C_0>0$, there exists $T>0$, $C>0$ and $\lambda\geq1$ depending uniquely on $k,h_\star$ and $C_0$ such that the following holds. 
	Assume that  ${0<\eps\leq \delta\leq 1}$, $\delta\leq1/\lambda$ and $\eps/\delta\leq 1/\lambda$ and
	\[ M_0\eqdef\big\vert \eta_0\big\vert_{H^k}+ \big\vert \bu_0\big\vert_{H^k}+\delta\big\vert \bu_0\big\vert_{H^{k+1}}+\big\vert b\big\vert_{H^{k+1}} \leq C_0.
	\]
	Then $I_{\eps,\delta}\supset [-T/M_0,T/M_0]$ and for any $t\in [-T/M_0,T/M_0]$, one has for all $i\in[2,k]$,
	\begin{equation}\label{eq.estimate-hGN} \big\vert (\eta,\bu,\eps\delta q,\delta r )(t,\cdot)\big\vert_{H^{i}}+\big\vert (\partial_t\eta,\partial_t\bu,\eps\delta \partial_tq,\delta \partial_tr )(t,\cdot)\big\vert_{H^{i-1}} 
		\
		\leq \frac{C\, M_0}{ \delta^{i-2}}.
	\end{equation}
	Moreover, as $\eps \searrow0$ (with $\delta>0$ fixed), denoting $(\eta_\eps,\bu_\eps,q_\eps,r_\eps)$ the solution to~\eqref{eq.LCT} emerging from~\eqref{eq.LCT-init}, we have $(1-b+\eta_\eps, \bu_\eps,r_\eps)\to (h,\bu,-\delta\nabla\cdot\bu)$ pointwisely in $[-T/M_0,T/M_0]\times\RR $ where $(h,\bu)$ satisfies~\eqref{eq.GN} and $(h,\bu)\vert_{t=0}=(1-b+\eta_0,\bu_0)$.
\end{Proposition}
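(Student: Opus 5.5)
The plan is to reduce \Cref{P.WP-GN} to \Cref{T.Well-posedness} and \Cref{C.convergence}, applied with $j_0=i_0=1$ and $j_\sharp=2$. This choice is admissible in dimension $d=1$, since $i_0+j_0=2>1+d/2$. The parameter constraint $(\eps\lambda)^{j_\sharp-j_0}\leq\delta^{i_0}$ then reads $\eps/\delta\leq1/\lambda$, which is exactly the assumption in the statement.

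First, I would put~\eqref{eq.LCT} into the framework~\eqref{eq.general} through its symmetrized form~\eqref{eq.LCT-matrix-sym}. I augment the unknown with $b-1$ and $\partial_x b$, which are time-independent, and rewrite the transport part $\A_0(\partial_t+\bu\partial_x)$ as $\S_0\partial_t+\S_1\partial_x$ with $\S_1$ symmetric. Hypotheses~\ref{H3}--\ref{H4} for $\L_\delta$ follow verbatim from the computation of $\P_\delta$ done for the Boussinesq--Peregrine case, with $C_\L=0$. Hypothesis~\ref{H2} follows from the identity~\eqref{eq.hyperbolicity} with $\hb$ replaced by $h$, under the non-cavitation condition.

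The main obstacle is that $\eps\A_1(U)$, $\G(U)$ and $\eps\M(U)B(U)$ contain $f=r/(\delta h+\eps q)$ and $g$. These are not smooth bounded functions of the unknown uniformly in $(\eps,\delta)$, because $r=\delta^{-1}U_4$ and $q=(\eps\delta)^{-1}U_3$. My plan is to treat $f$ and $g$ as auxiliary coefficients rather than functions of $U$. From the last two equations of~\eqref{eq.LCT}, a bound on $\cF_{k,\bm\alpha}(U)$ (which controls $\norm{U}_{H^{s_0+1}}$ and $\norm{\partial_tU}_{H^{s_0}}$) yields a uniform $H^{s_0}$ bound on $q$ and $r$. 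Consequently $\eps f$ and $\eps g$ are $\cO(\eps/\delta)$, and $f,g$ and their time derivatives are controlled by the weighted energies. I would then rerun \Cref{P.Stability}, with $\S_0$ replaced by $\A_0+\eps\A_1$ and with extra linear-in-$U$ terms whose coefficients are bounded by $C(\cF_{k,\bm\alpha}(V))$. This requires checking that each such contribution fits a tame bound of the type~\eqref{eq.key}, and that coercivity survives for $\eps/\delta\leq1/\lambda$. This is the step carried out in~\cite{Duchene19}, and I expect it to be the delicate part of the argument. The continuity argument of \Cref{P.Well-posedness} then closes, with the bootstrap set $I_1$ also requiring $\eps\norm{(f,g)}_{L^\infty}$ to be small.

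Second, I would verify the preparation condition~\eqref{eq.well-prepared} for the data~\eqref{eq.LCT-init}, exactly as in the Boussinesq--Peregrine section. The data lie in $\Ker\L_\delta$, so $\norm{U_0}_{H^k}\lesssim M_0$, and $\partial_tU\vert_{t=0}$ is $\cO(M_0)$ in $H^{k-1}$ by product and composition estimates. For $\partial_t^2U$, the only singular contribution is $\frac1\eps\L_\delta\partial_tU$. At $t=0$ we have $\partial_tq=0$, and $\partial_tr$ is given by the last equation in terms of $\partial_t\bu$. Hence $\norm{\partial_t^2U}_{H^{k-2}}\lesssim\frac\delta\eps M_0$, so that $\frac{\eps}{\delta}\norm{\partial_t^2U}_{H^{k-2}}\lesssim M_0$. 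With this, \Cref{P.preparation} and the weighted estimate give~\eqref{eq.estimate}. Taking $j=0,1$ with the weights $\max(1,\delta^{2-i-j})$ yields~\eqref{eq.estimate-hGN}.

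Finally, I would apply \Cref{C.convergence} with $\delta$ fixed and $\eps\to0$. Here $\sPi_\delta$ is independent of $\eps$ and $\frac1\eps\sPi_\delta\L_\delta=0$, so $\sT=0$. The limit constraint $(\Id-\P)U=0$ gives $q$-component zero and $r=-\partial_x\bu$, and the projected equation recovers~\eqref{eq.GN-q}, hence~\eqref{eq.GN}. Pointwise convergence follows from strong convergence in $\cC(I;H^{k_-})$ with $k_->1/2$.
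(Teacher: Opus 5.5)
Your proposal follows the paper's own route. You symmetrize \eqref{eq.LCT} with $\Id+\eps\M(U)$, augment the unknown with the topography, and treat the $f,g$ contributions as perturbative coefficients controlled through the last two equations, deferring, as the paper does, to the extension carried out in \cite{Duchene19}; you then check the preparation condition with $j_0=i_0=1$, $j_\sharp=2$ exactly as in the Boussinesq--Peregrine case and conclude with \Cref{T.Well-posedness} and \Cref{C.convergence}. Two small points: the augmented component should be $b$ rather than $b-1$ (the latter is not in $L^2$), and your limit constraint $r=-\partial_x\bu$ is the consistent one given $U_4=\delta r$, so the statement's $r_\eps\to-\delta\nabla\cdot\bu$ should be read as $\delta r_\eps\to-\delta\nabla\cdot\bu$.
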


\begin{Remark}
	Let us point out once again that the upper-bound~\eqref{eq.estimate-hGN} is non-uniform with respect to small $\delta$, which is consistent with the development of undesirable spurious oscillations with wavelength $\cO(\delta)$. The restriction to horizontal dimension $d=1$ has the same origin as in the preceding section, and is discussed in \Cref{R.dimension}. The rigorous justification of a closely related augmented hyperbolized system was already provided by the first author in~\cite{Duchene19}. We improve this result by considering topography variations and allowing for weaker restrictions on well-prepared initial data. 
\end{Remark}

\bibliographystyle{abbrv}

\begin{appendices}

\section{Product, commutator and composition estimates}\label{S.technical}

In this section we first recall standard product, commutator and composition estimates in Sobolev spaces; see for instance \cite[Appendix B]{Lannes} for relevant references. Then we turn to a key product estimate which we crucially use to control energy functionals involving space and time regularity with different weights. 

\subsection{Standard estimates}

	\begin{Lemma}[Continuous embedding]\label{L.embedding}
		Let $d\in \NN^\star$ and $s\in\RR$ such that $s>d/2$. Then for any $f\in H^{s}(\RR^d)$, one has $f\in L^\infty(\RR^d)\cap \cC(\RR^d)$ and
		\[\norm{f}_{L^\infty}\lesssim  \norm{ f}_{H^{s}}. \]
	\end{Lemma}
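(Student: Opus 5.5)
This is the standard Sobolev embedding, and I would prove it through the Fourier transform. The plan is to show first that $\hat f\in L^1(\RR^d)$ with the quantitative bound $\norm{\hat f}_{L^1}\lesssim\norm{f}_{H^s}$. The Fourier inversion formula then represents $f$ (almost everywhere) as the inverse Fourier transform of an integrable function, which is bounded and continuous.

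For $f\in H^s(\RR^d)$, I would write $\hat f(\xi)=\langle\xi\rangle^{-s}\cdot\langle\xi\rangle^{s}\hat f(\xi)$ with $\langle\xi\rangle\eqdef(1+|\xi|^2)^{1/2}$. Cauchy--Schwarz then gives
\[ \int_{\RR^d}|\hat f(\xi)|\,\dd\xi \leq \Big(\int_{\RR^d}\langle\xi\rangle^{-2s}\dd\xi\Big)^{1/2}\Big(\int_{\RR^d}\langle\xi\rangle^{2s}|\hat f(\xi)|^2\dd\xi\Big)^{1/2}. \]
By Plancherel, the second factor equals (up to the normalization constant of the Fourier transform) $\norm{\Lambda^s f}_{L^2}=\norm{f}_{H^s}$. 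The first factor is finite exactly when $2s>d$: in polar coordinates it is bounded by $C_d\int_0^\infty(1+r^2)^{-s}r^{d-1}\dd r$, and the integrand decays like $r^{d-1-2s}$ at infinity. Verifying this integrability is the one point where the hypothesis $s>d/2$ enters. It is the only step needing any care, and even this step is elementary.

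Next, set $g(x)\eqdef(2\pi)^{-d}\int e^{\i x\cdot\xi}\hat f(\xi)\dd\xi$. By dominated convergence, $g$ is continuous, since the integrand depends continuously on $x$ and is dominated by $|\hat f|\in L^1$. It is also bounded by $\norm{\hat f}_{L^1}$. Since $\hat f\in L^1\cap L^2$, the $L^2$ Fourier inversion theorem gives $f=g$ almost everywhere. We therefore identify $f$ with its continuous representative $g$ and conclude that $\norm{f}_{L^\infty}\le\norm{\hat f}_{L^1}\lesssim\norm{f}_{H^s}$, with an implicit constant depending only on $d$ and $s$.
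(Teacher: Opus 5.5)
Your proof is correct: the Cauchy--Schwarz bound $\int|\hat f|\lesssim \norm{f}_{H^s}$, which uses the integrability of $\langle\xi\rangle^{-2s}$ exactly when $s>d/2$, followed by Fourier inversion for $\hat f\in L^1\cap L^2$, is the standard argument and yields the continuous bounded representative with the stated bound. The paper does not prove this lemma; it recalls it as a classical fact and points to standard references (Appendix B of Lannes' monograph), where this same Fourier-transform proof is the usual one.
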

\begin{Lemma}[Products]\label{L.product}
	Let $d,\ell\in\NN^\star$ and $s,s_0,\dots,s_\ell\in\RR$ be such that $\sum_{n=0}^\ell s_n\geq 0$, $\sum_{n=0}^\ell s_n> s+\ell d/2$ and $s_n\geq s$ for $n\in\{0,\dots,\ell\}$. Then for any $f_n\in H^{s_n}(\RR^d)$, one has $\prod_{n=0}^{\ell} f_n\in H^s(\RR^d)$ and
	\[\norm{\prod_{n=0}^{\ell} f_n}_{H^s}\lesssim  \prod_{n=0}^{\ell} \norm{ f_n}_{H^{s_n}}. \]
	In particular, $H^{s_0}(\RR^d)$ is a Banach algebra as soon as $s_0>d/2$.
\end{Lemma}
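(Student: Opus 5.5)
The plan is to reduce to the two-factor case $\ell=1$ and then induct on $\ell$. Throughout I will use the Fourier characterization $\norm{f}_{H^s}=\norm{\langle\xi\rangle^s\hat f}_{L^2}$, with $\langle\xi\rangle\eqdef(1+|\xi|^2)^{1/2}$, together with the convolution identity $\widehat{fg}=(2\pi)^{-d}\hat f*\hat g$. I will also use the embedding of \Cref{L.embedding} whenever one index exceeds $d/2$.

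For $\ell=1$, write $s_0,s_1\geq s$ with $s_0+s_1\geq 0$ and $s_0+s_1>s+d/2$. I split the claim according to the sign of $s$.

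\emph{Case $s\geq 0$.} Use Peetre's inequality $\langle\xi\rangle^s\lesssim\langle\xi-\eta\rangle^s+\langle\eta\rangle^s$. This bounds $\norm{fg}_{H^s}$ by two terms of the form $\norm{(\langle\cdot\rangle^s|\hat f|)*|\hat g|}_{L^2}$. For such a term, if $s_1>d/2$, apply Young's inequality $L^2*L^1\to L^2$ and note that $\norm{\hat g}_{L^1}\lesssim\norm{g}_{H^{s_1}}$ by Cauchy--Schwarz. Since $s_0\geq s$, this gives a bound by $\norm{f}_{H^{s_0}}\norm{g}_{H^{s_1}}$.

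If instead both $s_0,s_1\leq d/2$ (up to the endpoint), I use the Sobolev embeddings $H^{s_n}\hookrightarrow L^{p_n}$ with $1/p_n=1/2-s_n/d$. By Hölder, $fg\in L^{p}$ with $1/p=1-(s_0+s_1)/d$, and the hypothesis $s_0+s_1>s+d/2$ guarantees $L^p\hookrightarrow H^{s}$ in the appropriate sense. The cleanest way to carry this out is a Littlewood--Paley paraproduct decomposition $fg=T_fg+T_gf+R(f,g)$. Here $T_fg$ and $T_gf$ are controlled using $s_n\geq s$ and the low-frequency factor in $H^{s_m-d/2+}$ type Besov norms. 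The remainder $R$ is controlled using $s_0+s_1>0$ and the gain $s_0+s_1-d/2>s$.

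\emph{Case $s<0$.} Argue by duality: $\norm{fg}_{H^{s}}=\sup\bra{fg,\phi}$ over $\norm{\phi}_{H^{-s}}\leq1$. Then bound $\norm{g\phi}_{H^{-s_0}}$ by the already established nonnegative-index estimate applied to indices $(s_1,-s)$ with target $-s_0$. The required conditions become $s_1\geq -s_0$, i.e.\ $s_0+s_1\geq 0$, and $-s\geq -s_0$, which holds; the strict inequality also matches. This is where the hypothesis $\sum s_n\geq 0$ enters.

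For general $\ell$, I induct on $\ell$. I first order the indices so that $s_0\leq s_1\leq\dots\leq s_\ell$. I then apply the two-factor estimate to $f_0\cdot\big(\prod_{n\geq1}f_n\big)$, placing the partial product in $H^{\sigma}$ with $\sigma\eqdef\min\big(s_1,\sum_{n\geq1}s_n-(\ell-1)d/2-\kappa\big)$ for a small $\kappa>0$. The induction hypothesis gives $\prod_{n\geq1}f_n\in H^\sigma$, provided $\sigma\geq s$ (it is at least $s$ by the strict inequality) and $\sum_{n\geq1}s_n\geq0$ (which holds after ordering, unless all $s_n$ are negative, in which case $\sum s_n\geq0$ fails anyway). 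Finally I check that $s_0+\sigma>s+d/2$ and $s_0+\sigma\geq0$; this follows from the hypotheses with $\kappa$ small.

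The Banach algebra statement is the case $\ell=1$ with $s=s_0=s_1>d/2$.

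I expect the main obstacle to be the sub-critical regime in the two-factor estimate, where both indices are at most $d/2$. There the endpoint bookkeeping in the paraproduct (or Hölder--Sobolev) argument must exploit the strict inequality exactly. The choice of $\sigma$ in the induction step is a minor secondary difficulty.
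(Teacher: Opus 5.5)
\textbf{The induction on $\ell$ cannot close, because the statement as written is false for $\ell\geq 2$.}

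The failing step is your last check. The induction hypothesis applied to $\prod_{n\geq1}f_n$ forces $\sigma\leq s_1$. The two-factor step for $f_0\cdot\prod_{n\geq1}f_n$ then needs $s_0+s_1>s+d/2$ (and $s_0+s_1\geq0$) for the two smallest indices. This does not follow from hypotheses that only constrain $\sum_n s_n$.

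Here is a counterexample. Let $d=1$, $\ell=2$, $s=s_0=s_1=0$ and $s_2=10$. All hypotheses hold, since $\sum_n s_n=10>1=s+\ell d/2$. Fix a nonzero $\psi\in\cC^\infty_c((-1,1))$ and $f_2\in\cC^\infty_c(\RR)$ equal to $1$ on $[-1,1]$, and set $f_0=f_1=\mu^{-1/2}\psi(\cdot/\mu)$ with $\mu\in(0,1]$. Then $\norm{f_0}_{L^2}=\norm{f_1}_{L^2}=\norm{\psi}_{L^2}$, but $\norm{f_0f_1f_2}_{L^2}=\mu^{-1/2}\norm{\psi^2}_{L^2}\to\infty$. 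The smooth third factor cannot compensate for $L^2\cdot L^2\not\subset L^2_{\rm loc}$. The Dirac mass, which lies in $H^{-1}(\RR)$, gives a similar counterexample with $s=s_0=s_1=-1$.

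\textbf{What is missing is a condition on every subfamily.} In the regime $0\leq s\leq s_n$, which is the only one the paper needs, it suffices to assume $\sum_{n\in A}s_n>s+(|A|-1)d/2$ for every $A\subset\{0,\dots,\ell\}$ with $|A|\geq2$.

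Under this hypothesis your induction works, provided you put the partial product in the smallest admissible space rather than the largest: take $\sigma\eqdef\max(s,\,s+d/2-s_0+\kappa)$ with $\kappa>0$ small. The pair conditions on $\{0,n\}$ give $\sigma\leq s_n$. The subfamily conditions for $\{1,\dots,\ell\}$ at level $\sigma$ follow from those for $A$ and $A\cup\{0\}$. The final two-factor step then holds by construction.

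Your choice $\sigma=\min(s_1,\dots)$ can fail even under the corrected hypothesis. For instance, with $d=1$, $s=0$ and $(s_0,\dots,s_3)=(0.4,0.4,0.4,10)$, placing $f_1f_2f_3$ in $H^{0.4}$ would require $0.8>0.9$.

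The paper offers no proof of this lemma, only a pointer to standard references. Its only use with $\ell\geq2$, in the proof of \Cref{L.Product}, is covered by the corrected version. There all indices lie in $[0,\bar s]$, with $\bar s>d/2$ the Sobolev threshold, and sum to $i+\ell\bar s$, so $\sum_{n\in A}s_n\geq i+(|A|-1)\bar s$.

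\textbf{The two-factor case is true, but your sketch of it has holes.}
\begin{itemize}
\item Peetre's inequality without a frequency splitting leaves the term $\lvert\hat f\rvert*(\langle\cdot\rangle^s\lvert\hat g\rvert)$ with no $L^1$ factor when $s_0\leq d/2<s_1\leq s+d/2$. Neither of your sub-cases covers this situation.
\item H\"older plus Sobolev cannot produce $H^s$ with $s\geq0$. The dual embedding $L^p\hookrightarrow H^{-d(1/p-1/2)}$ requires $1/p\geq 1/2$, that is $s_0+s_1\leq d/2$, which is incompatible with $s_0+s_1>s+d/2$.
\item Duality for $s<0$ must be taken against a factor with nonpositive index. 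The case $0<s_0\leq s_1$, $s<0$ needs separate treatment: use the target $L^2$ if $s_0+s_1>d/2$, and precisely that H\"older--Sobolev argument otherwise.
\end{itemize}

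For all $s\geq0$ at once, split the convolution into the region $\{\lvert\eta\rvert\leq\lvert\xi-\eta\rvert\}$ and its complement. On that region $\langle\xi\rangle^s\lesssim\langle\xi-\eta\rangle^{s_0}\langle\eta\rangle^{s-s_0}$. Then apply Young's inequality with $\langle\cdot\rangle^{s-s_0}\hat g\in L^1$, which by Cauchy--Schwarz holds exactly when $s_0+s_1-s>d/2$. The complement is symmetric. No paraproducts are needed.
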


\begin{Lemma}[Product]\label{L.product-2}
	Let $d\in\NN^\star$, $s_0>d/2$ and $s\geq 0$. Then for any $f\in H^s(\RR^d)\cap H^{s_0}(\RR^d)$ and $g\in H^s(\RR^d)$, one has $fg\in H^s(\RR^d)$ and
	\[\norm{fg}_{H^s}\lesssim   \norm{ f}_{H^{s_0}}\norm{ g}_{H^{s}}+\Big\langle \norm{ f}_{H^{s}}\norm{ g}_{H^{s_0}}\Big\rangle_{s>s_0}. \]
\end{Lemma}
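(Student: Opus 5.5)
The plan is a Littlewood--Paley paraproduct decomposition (Bony's decomposition) of the product, handled in the standard Kato--Ponce spirit. The case $s=0$ is immediate from Lemma~\ref{L.embedding}: $\norm{fg}_{L^2}\leq \norm{f}_{L^\infty}\norm{g}_{L^2}\lesssim \norm{f}_{H^{s_0}}\norm{g}_{L^2}$. The case $0< s\leq s_0$ follows directly from Lemma~\ref{L.product} with $\ell=1$, exponents $(s_0,s)$ and target $s$. Indeed, the requirements there are $s_0+s\geq 0$, $s_0+s>s+d/2$ (which holds since $s_0>d/2$), and $s_0,s\geq s$. This yields $\norm{fg}_{H^s}\lesssim\norm{f}_{H^{s_0}}\norm{g}_{H^s}$, and in this range the bracketed term is absent, as it should be.

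For $s>s_0$, write $fg=T_fg+T_gf+R(f,g)$, where $T_fg=\sum_q S_{q-1}f\,\Delta_q g$ and $R(f,g)=\sum_{|q-q'|\leq1}\Delta_q f\,\Delta_{q'}g$. The three pieces are estimated as follows.
\begin{itemize}
\item Since $\norm{S_{q-1}f}_{L^\infty}\lesssim\norm{f}_{L^\infty}\lesssim \norm{f}_{H^{s_0}}$ and each block $S_{q-1}f\,\Delta_q g$ has spectrum in an annulus of size $2^q$, almost orthogonality gives $\norm{T_fg}_{H^s}\lesssim \norm{f}_{H^{s_0}}\norm{g}_{H^s}$.
\item Symmetrically, $\norm{T_gf}_{H^s}\lesssim \norm{g}_{L^\infty}\norm{f}_{H^s}\lesssim\norm{f}_{H^s}\norm{g}_{H^{s_0}}$.
\item For the remainder $R(f,g)$, the spectrum of each block is contained in a ball of radius $\sim2^q$. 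Since $s>0$, the standard summation lemma gives $\norm{R(f,g)}_{H^s}\lesssim \norm{f}_{L^\infty}\norm{g}_{H^s}$, which is again bounded by $\norm{f}_{H^{s_0}}\norm{g}_{H^s}$. (For this I would use the $B^0_{\infty,\infty}$ bound on $f$ and the $H^s$ bound on $g$.)
\end{itemize}
Summing the three bounds gives the claim, and the second term arises only when $s>s_0$. Finally, membership $fg\in H^s$ follows from the estimate by density of Schwartz functions.

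The main obstacle is the remainder term. The point to check is that the ball (rather than annulus) spectral localization still permits summation in $H^s$; this is exactly where $s>0$ is needed, and the borderline $s=0$ is handled separately, as above. If one prefers to avoid Littlewood--Paley theory, an alternative for integer $s$ is to use the Leibniz rule combined with Gagliardo--Nirenberg interpolation. I would nevertheless follow the paraproduct route, since it covers real $s\geq0$ directly.
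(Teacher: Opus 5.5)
Your proposal is correct. The case $0\le s\le s_0$ follows from Lemma~\ref{L.product} exactly as you check, so the separate treatment of $s=0$ is harmless but unnecessary. For $s>s_0$, Bony's decomposition gives the Moser-type bound $\norm{fg}_{H^s}\lesssim\norm{f}_{L^\infty}\norm{g}_{H^s}+\norm{f}_{H^s}\norm{g}_{L^\infty}$, with the remainder being the only place where $s>0$ is needed, and the embedding $H^{s_0}\hookrightarrow L^\infty$ then concludes.

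The paper itself gives no proof of this lemma: it recalls it as a standard estimate and points to Lannes' monograph (Appendix B). Your paraproduct argument is the classical way such tame product estimates are established.
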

\begin{Lemma}[Commutator]\label{L.commutator}
	Let $d\in\NN^\star$, $s_0>d/2$ and $s\geq -1$. Denote $\Lambda^s=(\Id-\Delta_{\bx})^{s/2}$. Then for any $f\in H^s(\RR^d)\cap H^{s_0+1}(\RR^d)$ and $g\in H^{s-1}(\RR^d)$, one has $[\Lambda^s,f]g\in L^2(\RR^d)$ and
	\[\norm{[\Lambda^s,f]g}_{L^2}\lesssim   \norm{ f}_{H^{s_0+1}}\norm{ g}_{H^{s-1}}+\Big\langle \norm{ f}_{H^{s}}\norm{ g}_{H^{s_0}}\Big\rangle_{s>s_0+1}. \]
\end{Lemma}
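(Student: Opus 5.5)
The plan is the standard Kato--Ponce type argument via paraproducts, or more simply via the symbol estimate on the Fourier side. I will write
\[
\widehat{[\Lambda^s,f]g}(\xi)=\int_{\RR^d}\big(\langle\xi\rangle^s-\langle\eta\rangle^s\big)\hat f(\xi-\eta)\hat g(\eta)\dd\eta,\qquad \langle\xi\rangle\eqdef(1+|\xi|^2)^{1/2}.
\]
The first step is to split the integration domain into the region $|\xi-\eta|\leq \frac12|\eta|$, where $g$ carries the high frequency, and its complement, where $f$ does.

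On the first region, the mean value theorem gives
\[
\big|\langle\xi\rangle^s-\langle\eta\rangle^s\big|\lesssim |\xi-\eta|\,\langle\eta\rangle^{s-1},
\]
since $\langle\xi\rangle\approx\langle\eta\rangle$ there; this holds for every real $s$, in particular for $s\geq -1$. The contribution is then bounded pointwise by $(|\cdot|\,|\hat f|)*(\langle\cdot\rangle^{s-1}|\hat g|)$. By Young's inequality in $L^1*L^2\to L^2$, its $L^2$ norm is at most $\norm{\langle\cdot\rangle\hat f}_{L^1}\norm{g}_{H^{s-1}}$. Finally, Cauchy--Schwarz gives $\norm{\langle\cdot\rangle\hat f}_{L^1}\lesssim\norm{f}_{H^{s_0+1}}$, because $s_0>d/2$ makes $\langle\cdot\rangle^{-s_0}\in L^2$. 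This produces the first term of the estimate.

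On the complementary region $|\xi-\eta|>\frac12|\eta|$ one has $\langle\eta\rangle\lesssim\langle\xi-\eta\rangle$ and $\langle\xi\rangle\lesssim\langle\xi-\eta\rangle$. Each piece of the commutator is handled separately, and this is the main obstacle, since the estimate has to distinguish $s\leq s_0+1$ from $s>s_0+1$.

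For $s>s_0+1$ (so $s>0$), I would bound $|\langle\xi\rangle^s-\langle\eta\rangle^s|\lesssim\langle\xi-\eta\rangle^s$. Young's inequality, now with $g$ in $L^1$, gives a bound by $\norm{f}_{H^s}\norm{\langle\cdot\rangle^{0}\hat g}_{L^1}\lesssim\norm{f}_{H^s}\norm{g}_{H^{s_0}}$, which is the bracketed term.

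For $s\leq s_0+1$ I must avoid $\norm{f}_{H^s}$. When $s\geq 0$, I write
\[
\langle\xi-\eta\rangle^{s}\leq \langle\xi-\eta\rangle^{s_0+1}\langle\eta\rangle^{s-1-s_0}\cdot\langle\eta\rangle^{\,s_0+1-s}\langle\xi-\eta\rangle^{s-s_0-1},
\]
and the last factor is $\lesssim1$ on this region. Hence the integrand is bounded by $\langle\xi-\eta\rangle^{s_0+1}|\hat f|\cdot\langle\eta\rangle^{s-1}|\hat g|\cdot\langle\eta\rangle^{-s_0}$. Putting $f$ in $L^2$ and $\langle\eta\rangle^{s-1}\hat g\langle\eta\rangle^{-s_0}$ in $L^1$ via Cauchy--Schwarz, with $\langle\cdot\rangle^{-s_0}\in L^2$, yields $\norm{f}_{H^{s_0+1}}\norm{g}_{H^{s-1}}$.

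When $-1\leq s<0$, the difference of symbols is bounded by $\langle\eta\rangle^{s}+\langle\xi\rangle^s$. The $\langle\eta\rangle^s$ part is treated as above. The $\langle\xi\rangle^s\leq1$ part is bounded by $\norm{fg}_{L^2}$, which I would instead control by duality as $\norm{f}_{H^{s_0+1}}\norm{g}_{H^{s-1}}$, using that $H^{s_0+1}$ multiplies $H^{1-s}\subset H^{-s}$ for $-s\in(0,1]$ (Lemma~\ref{L.product} applied to the adjoint product). I expect this low-regularity case to need the most care. If the direct symbol bound is too lossy there, I would fall back on Bony's decomposition $fg=T_fg+T_gf+R(f,g)$, using the classical commutator estimate $\norm{[\Lambda^s,T_f]g}_{L^2}\lesssim\norm{\nabla f}_{L^\infty}\norm{g}_{H^{s-1}}$, and treat $\Lambda^s(T_gf+R)$ and $T_{\Lambda^sg}f+R(\Lambda^sg,f)$ by the usual paraproduct bounds.
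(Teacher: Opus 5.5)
The paper does not prove this lemma. It quotes it as a standard estimate with a pointer to Lannes' monograph (Appendix~B), so there is no argument to compare with line by line.

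\textbf{The case $s\geq 0$.} Your Fourier-side argument is a correct, self-contained proof here.
On $|\xi-\eta|\leq\frac12|\eta|$, the mean value theorem, Young's inequality $L^1*L^2\to L^2$ and $\int|\zeta|\,|\hat f(\zeta)|\,\mathrm{d}\zeta\lesssim\norm{f}_{H^{s_0+1}}$ give the first term.
On the complement, $\langle\xi\rangle+\langle\eta\rangle\lesssim\langle\xi-\eta\rangle$ lets you put all $s$ derivatives on $f$ when $s>s_0+1$. When $0\leq s\leq s_0+1$ it lets you move $s_0+1-s\geq0$ derivatives from $g$ onto $f$.
This already covers every use of the lemma in the paper, which only applies it with $s=i\geq 0$ or $s=i-1$ with $i\geq1$.

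\textbf{The case $-1\leq s<0$ has a genuine gap.} The piece $\langle\xi\rangle^s\int_{|\xi-\eta|>|\eta|/2}|\hat f(\xi-\eta)|\,|\hat g(\eta)|\,\mathrm{d}\eta$ cannot be controlled through $\norm{fg}_{L^2}$.
\begin{enumerate}
\item For $g\in H^{s-1}$ with $s<1$, the product $fg$ is in general not in $L^2$ at all; take $f\equiv1$ on a ball.
\item The duality argument you sketch would need multiplication by $f$ to map $L^2$ (or $H^{-s}$) into $H^{1-s}$, i.e.\ to gain one derivative, which is false. The inclusion $H^{1-s}\subset H^{-s}$ goes the wrong way for this.
\end{enumerate}

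What this piece actually contains is the high--high$\to$low interaction $|\xi-\eta|\approx|\eta|\approx N$, $|\xi|\lesssim 1$. There $\langle\xi\rangle^s$ gives no gain, and $f\Lambda^s g$, which carries a factor $N^s$, provides no cancellation.
Moving the missing $1-s$ derivatives from $g$ to $f$ leaves the weight $\langle\xi\rangle^{s}\langle\xi-\eta\rangle^{-(s_0+s)}$. This closes by H\"older and Young, with exponents $\frac1{p_a}+\frac1{p_b}=\frac12$, $\langle\cdot\rangle^{s}\in L^{p_a}$ and $\langle\cdot\rangle^{-(s_0+s)}\in L^{p_b}$ (available because $s_0>d/2$), provided $s\geq -s_0$.
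Your Bony fallback meets the same threshold in the resonant term $R(f,g)$, whose standard bound needs $s_0+s>0$.

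\textbf{Below this threshold the statement itself is false,} so no argument can cover the full range $s\geq-1$. Take $d=1$, $s_0\in(\frac12,1)$, $s\in[-1,-s_0)$, and
\[
f=N^{-s_0-1}\phi(x)\cos(Nx),\qquad g=N^{1-s}\phi(x)\cos(Nx),
\]
with $\hat\phi$ compactly supported.
\begin{enumerate}
\item One has $\norm{f}_{H^{s_0+1}}+\norm{g}_{H^{s-1}}\lesssim1$, and the bracket is absent since $s<s_0+1$.
\item $\Lambda^s(fg)$ contains the low-frequency part $\frac12N^{-s_0-s}\Lambda^s(\phi^2)$.
\item Both $f\Lambda^s g$ and the part of $\Lambda^s(fg)$ at frequency $2N$ are $O(N^{-s_0})$ in $L^2$.
\end{enumerate}
Hence $\norm{[\Lambda^s,f]g}_{L^2}\gtrsim N^{-s_0-s}\to\infty$.

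You should therefore restrict the statement rather than try to prove it as written. Either take $s\geq0$, which is all the paper needs, or take $s\geq-s_0$ and handle this piece with the H\"older--Young argument above.
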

\begin{Lemma}[Composition]\label{L.composition}
	Let $d,n\in\NN^\star$, $s_0>d/2$ and $s\geq 0$. Let $F\in \cC^\infty(\RR^n; \RR)$ be such that $F(\bm0)=0$, and $M\geq0$. Then there exists $C_M>0$ such that for any $g\in H^{s}(\RR^d)\cap H^{s_0}(\RR^d)$ such that $\norm{g}_{H^{s_0}}\leq M$, one has $F(g)\in H^s(\RR^d)$ and
	\[\norm{F(g)}_{H^s}\lesssim C_M  \norm{ g}_{H^{s}}. \]
\end{Lemma}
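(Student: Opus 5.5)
This estimate is classical (Moser-type composition estimate), so the plan is to reduce everything to $L^\infty$ control of $g$ and linear dependence on the top norm $\norm{g}_{H^s}$. First, by \Cref{L.embedding} and $\norm{g}_{H^{s_0}}\leq M$, we have $\norm{g}_{L^\infty}\lesssim M$, so $g$ takes values in a fixed ball $B_M\subset\RR^n$ on which $F$ and all its derivatives are bounded by constants depending only on $M$. Since $F(\bm0)=0$, the mean value theorem gives $\norm{F(g)}\leq \sup_{B_M}\norm{F'}\,\norm{g}$ pointwise, hence $\norm{F(g)}_{L^2}\leq C_M\norm{g}_{L^2}$. This settles $s=0$.

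For integer $s\geq1$, I will bound $\norm{\partial^\alpha F(g)}_{L^2}$ for $1\leq|\alpha|\leq s$ by Faà di Bruno:
\[\partial^\alpha F(g)=\sum F^{(\ell)}(g)\big(\partial^{\beta_1}g,\dots,\partial^{\beta_\ell}g\big),\qquad \beta_1+\dots+\beta_\ell=\alpha,\ |\beta_m|\geq1.\]
The factor $F^{(\ell)}(g)$ is bounded in $L^\infty$ by $C_M$. For the product I will use Hölder with exponents $p_m=2|\alpha|/|\beta_m|$ and the Gagliardo--Nirenberg inequality
\[\norm{\partial^{\beta}g}_{L^{2|\alpha|/|\beta|}}\lesssim \norm{g}_{L^\infty}^{1-|\beta|/|\alpha|}\norm{g}_{\dot H^{|\alpha|}}^{|\beta|/|\alpha|}.\]
Multiplying these bounds gives $\norm{g}_{L^\infty}^{\ell-1}\norm{g}_{H^{|\alpha|}}\leq C_M\norm{g}_{H^s}$.

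The main obstacle is non-integer $s$, where Faà di Bruno is unavailable. Here I would use a Littlewood--Paley (Meyer) argument. Write $F(g)=\sum_{j\geq0}\big(F(S_{j+1}g)-F(S_jg)\big)$, with the convention $S_0g=0$. Each term equals $m_j\,\Delta_jg$, where
\[m_j=\int_0^1F'(S_jg+\tau\Delta_jg)\,{\rm d}\tau .\]
Since $\norm{S_jg}_{L^\infty}\lesssim M$ and $\norm{\partial^\gamma S_jg}_{L^\infty}\lesssim 2^{j|\gamma|}M$, one gets $\norm{\partial^\gamma m_j}_{L^\infty}\leq C_{M,\gamma}2^{j|\gamma|}$. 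The standard lemma on series $\sum_j u_j$ with $\norm{\partial^\gamma u_j}_{L^2}\lesssim 2^{j(|\gamma|-s)}c_j$, $(c_j)\in\ell^2$, for $|\gamma|\leq\lceil s\rceil$, then gives $\norm{F(g)}_{H^s}\lesssim C_M\norm{(2^{js}\norm{\Delta_jg}_{L^2})_j}_{\ell^2}\simeq C_M\norm{g}_{H^s}$. Only $L^\infty$ information on $g$ enters the constants, so $s_0>d/2$ is used solely through the embedding. Alternatively one could interpolate nonlinearly between integer levels, but the Littlewood--Paley route avoids any subtlety with nonlinear interpolation.
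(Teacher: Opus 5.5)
Your argument is correct. The $L^2$ bound from $F(\bm 0)=0$, Fa\`a di Bruno with Gagliardo--Nirenberg for integer $s$, and Meyer's telescoping decomposition $F(g)=\sum_j m_j\Delta_j g$ for non-integer $s>0$ together make up the classical Moser-type proof, with $s_0>d/2$ entering only through the $L^\infty$ embedding.

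The paper gives no proof of its own: it quotes this as a standard estimate and refers to Appendix B of Lannes' monograph. So there is nothing further to compare, except that your Littlewood--Paley step already handles every $s>0$, which makes the separate integer-$s$ argument redundant.
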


\subsection{Revisited anisotropic space-time estimate}

We now provide a key product estimate for functions depending on space and time. This estimate, together with our notion of admissible weights (\Cref{D.alpha-admissible}), is a key ingredient for the control of energy functionals involving non-homogeneous weights.

\begin{Lemma}[Space-time products]\label{L.Product}
	Let $d,\ell\in\NN^\star$, $s_0>d/2$, $m_k\geq m_j\geq 0$. For $n\in \{0,\dots,\ell\}$, let $(j_n,\alpha_n)\in \NN^{1+d}$ be such that $j_n\geq m_j$ and $j_n+|\alpha_n|\geq m_k$. Let $i\geq 0$ and denote $I=i+\sum_{n=0}^\ell |\alpha_n|$ and $J=\sum_{n=0}^\ell j_n$. There exists $C>0$ and $i_n\geq |\alpha_n|$ ($n=0,\dots,\ell$) such that for any sufficiently regular functions $U_n$ ($n=0,\dots,\ell$) one has
	\[\norm{\prod_{n=0}^\ell (\partial_t^{j_n}\partial^{\alpha_n} U_n)}_{H^i}  \leq  C\sup_{(i_0,\dots,i_\ell)} \prod_{n=0}^\ell \norm{\partial_t^{j_n} U_n}_{H^{i_n}}\]  
	where the supremum is taken over $(i_0,\dots,i_\ell)\in \RR_+^{1+\ell}$ such that 
	\[\sum_{n=0}^\ell i_n = I+\ell s_0\]
		and for all $n\in\{0,\dots,\ell\}$, $(j_n,i_n+j_n)\in [(m_j,s_0+m_k),(J-\ell m_j,I+J-\ell m_k)]$ where we write
		$	 (j_n,i_n+j_n)\in[(j_\star,k_\star),(j^\star,k^\star)]$ if and only if
		\[	\begin{cases}
			(j_n,i_n+j_n)\in[j_\star,j^\star]\times[k_\star,k^\star]  & \text{ if } k_\star \leq k^\star,\\
			(j_n,i_n+j_n)=\theta_n (j_\star,k_\star)+(1-\theta_n)(j^\star,k^\star) ,\ \theta_n=\frac{(i_n+j_n)-k^\star}{k_\star-k^\star} & \text{ if } k_\star > k^\star.
		\end{cases}\]

	We represent in \Cref{F.intervals} the admissible values for $(j_n,i_n+j_n)$.
\end{Lemma}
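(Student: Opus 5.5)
The plan is to reduce the estimate to the standard product estimates, \Cref{L.product} and \Cref{L.product-2}, applied to the factors $f_n\eqdef\partial_t^{j_n}\partial^{\alpha_n}U_n$. Each factor is bounded through the trivial inequality $\norm{f_n}_{H^{\sigma}}\leq\norm{\partial_t^{j_n}U_n}_{H^{\sigma+|\alpha_n|}}$. With this convention, the Sobolev index on $f_n$ is $\sigma_n=i_n-|\alpha_n|$. The whole task is then to choose the $i_n$ so that three things hold: $\sum_n i_n=I+\ell s_0$, the product lemma applies, and each pair $(j_n,i_n+j_n)$ lies in $[(m_j,s_0+m_k),(J-\ell m_j,I+J-\ell m_k)]$. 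I would split according to the size of $i$ compared with $s_0$.

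\emph{Case $i>s_0$ (tame case).} Iterating \Cref{L.product-2}, using that $H^{s_0}$ is an algebra, gives
\[\norm{\prod_n f_n}_{H^i}\lesssim\sum_{m=0}^\ell\norm{f_m}_{H^i}\prod_{n\neq m}\norm{f_n}_{H^{s_0}}.\]
In the $m$-th term I would take $i_m=i+|\alpha_m|$ and $i_n=s_0+|\alpha_n|$ for $n\neq m$, so that $\sum_n i_n=I+\ell s_0$. The constraints are then checked one at a time.
\begin{itemize}
\item Lower bounds: $j_n\geq m_j$, and $i_n+j_n\geq s_0+m_k$ follows from $j_n+|\alpha_n|\geq m_k$ together with $i>s_0$.
\item Upper bound on $j_n$: $j_n\leq J-\ell m_j$ holds because every other $j_p\geq m_j$.
\item Upper bound on $i_n+j_n$: for $n=m$ one has $i_m+j_m=I+J-\sum_{p\neq m}(|\alpha_p|+j_p)\leq I+J-\ell m_k$. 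For $n\neq m$ the difference $(I+J-\ell m_k)-(i_n+j_n)$ equals $i-s_0+\sum_{p\neq n}(|\alpha_p|+j_p)-\ell m_k\geq0$.
\end{itemize}
Here $k_\star\leq k^\star$ automatically, so we are in the box case of the notation and the sup on the right-hand side dominates each term.

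\emph{Case $0\leq i\leq s_0$.} Here I would use the multilinear \Cref{L.product} with $s=i$ and $s_n=\sigma_n=i_n-|\alpha_n|$. Its conditions read $\sum_n\sigma_n=i+\ell s_0>i+\ell d/2$ and $\sigma_n\geq i$. The $i_n$ would be placed on the segment: define $\theta_n$ by $j_n=\theta_n m_j+(1-\theta_n)(J-\ell m_j)$, so that $\sum_n\theta_n=\ell$ because $\sum_n j_n=J$. Then set $i_n+j_n\eqdef\theta_n(s_0+m_k)+(1-\theta_n)(I+J-\ell m_k)$, which gives $\sum_n i_n=I+\ell s_0$ exactly. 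This choice fits both branches of the definition. In the segment branch ($k_\star>k^\star$) it is forced, since $j_n$ is fixed. In the box branch it lies in the box as soon as $\theta_n\in[0,1]$, which holds since $m_j\leq j_n\leq J-\ell m_j$. (When $m_j=J-\ell m_j$ the $\theta_n$ are not determined by the $j_n$, and one instead distributes the $\theta_n\in[0,1]$ with sum $\ell$ by hand.)

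The main obstacle is the remaining inequality $\sigma_n=i_n-|\alpha_n|\geq i$, and I expect this to be the delicate step. I would first try to deduce it from $j_n+|\alpha_n|\geq m_k$, $m_k\geq m_j$ and $i\leq s_0$ by writing $i_n-|\alpha_n|-i$ as a convex combination of quantities of the form $s_0-i+(m_k-j_n-|\alpha_n|)+\ldots$ and $(I-i-|\alpha_n|)+\ldots$. If it fails for some factor, my fallback is to perturb the $\theta_n$ while keeping $\sum_n\theta_n=\ell$. Another option is to interpolate in space (log-convexity of $\sigma\mapsto\norm{\partial_t^{j_n}U_n}_{H^\sigma}$, with $j_n$ fixed) between the tame allocation of the first case and the segment allocation, so that the final product is still bounded by the supremum over admissible $(i_0,\dots,i_\ell)$.
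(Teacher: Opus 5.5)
\textbf{There is a genuine gap, in the case $0\le i\le s_0$ with $s_0+m_k\le I+J-\ell m_k$ (the box branch).} Your case $i>s_0$ is correct, and more explicit than the paper's one-line appeal to Lemma~\ref{L.product-2}. In the segment branch $s_0+m_k>I+J-\ell m_k$, your $j_n$-keyed allocation is the forced one and coincides with the paper's.

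In the box branch, however, the inequality $\sigma_n\ge i$ that you postponed is false. Set $a_n\eqdef j_n+|\alpha_n|-m_k\ge0$ and $A\eqdef\sum_p a_p$, and note that $I+J-\ell m_k=i+A+m_k$. Then for any allocation of the form $i_n+j_n=\theta_n(s_0+m_k)+(1-\theta_n)(I+J-\ell m_k)$ one has the identity $\sigma_n-i=(1-\theta_n)A+\theta_n(s_0-i)-a_n$. The box branch is exactly $A\ge s_0-i$. In it, your $\theta_n$ equals $1$ whenever $j_n=m_j$, and $a_n$ may exceed $s_0-i$.

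Concretely, take $d=\ell=1$, $s_0=i=1$, $m_j=m_k=0$, $(j_0,|\alpha_0|)=(0,5)$ and $(j_1,|\alpha_1|)=(3,0)$. Your rule gives $\theta_0=1$ and $\theta_1=0$, hence $(i_0,i_1)=(1,6)$ and $\sigma_0=-4$. You would then be claiming $\norm{(\partial^{\alpha_0}U_0)(\partial_t^3U_1)}_{H^1}\lesssim\norm{U_0}_{H^1}\norm{\partial_t^3U_1}_{H^6}$, which is false. Neither fallback is carried out, and neither is the right repair. A small perturbation of the $\theta_n$ cannot fix this, because the weights must be keyed to $a_n$ rather than to $j_n$. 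Interpolation still presupposes valid allocations to interpolate between.

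\textbf{The missing idea (which is the paper's) is to decouple the allocation from $j_n$ in the box branch.} There the only constraint on $j_n$ is $j_n\in[m_j,J-\ell m_j]$, which holds automatically. Set $\sigma_n=\vartheta_n i+(1-\vartheta_n)s_0$ with $\vartheta_n=a_n/A$, or $\vartheta_n=1/(\ell+1)$ if $A=0$. This gives:
\begin{itemize}
\item $\sigma_n\in[i,s_0]$, so the condition $\sigma_n\ge i$ of Lemma~\ref{L.product} holds for free;
\item $\sum_n\sigma_n=i+\ell s_0$;
\item $i_n+j_n=\vartheta_n(I+J-\ell m_k)+(1-\vartheta_n)(s_0+m_k)\in[s_0+m_k,I+J-\ell m_k]$.
\end{itemize}

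In the segment branch you still owe the verification you left open, but the identity above settles it. Since $A<s_0-i$ there, any $\theta_n\in[0,1]$ gives $\sigma_n-i\ge A-a_n\ge0$. This covers your parenthetical degenerate case $J=(\ell+1)m_j$ as well. With these two allocations for $i\le s_0$, plus your argument for $i>s_0$, the proof closes.
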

\begin{figure}
	\begin{center}
		\subcaptionbox{Case $i_\star+j_\star<i^\star+j^\star$\label{F.interval}}{\includegraphics[width=.33\textwidth]{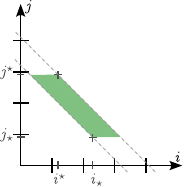}}\hspace{.11\textwidth} 	
		\subcaptionbox{Case $i_\star+j_\star>i^\star+j^\star$\label{F.interval2}}{\includegraphics[width=.33\textwidth]{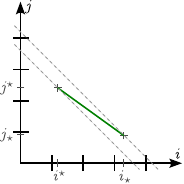}}	
	\end{center}
	\caption{Representation of the admissible set for $(j_n,i_n+j_n)\in[(j_\star,i_\star+j_\star),(j^\star,i^\star+j^\star)] $ in a $(i,j)$-coordinate system as in \Cref{F.diagram,F.diagram-2}. In \Cref{L.Product}, $j_\star=m_j$, $j^\star\eqdef J-\ell m_j$, $i_\star+j_\star= s_0+m_k$, and $i^\star+j^\star=I+J-\ell m_k$.}
	\label{F.intervals}
\end{figure}
\begin{proof}
	Assume first that $i\in[0,s_0]$. We use product estimates of \Cref{L.product} with well-chosen $s_n\in[0,s_0]$ such that ${|\alpha_n|+s_n=i_n}$, distinguishing between two cases.
	\begin{itemize}
		\item If $s_0+m_k \leq I+J-\ell m_k$, we set $s_n = \theta_n i+(1-\theta_n)s_0$ with $\theta_n=\frac{j_n+|\alpha_n|-m_k}{I+J-(\ell +1)m_k-i}$, so that $j_n+|\alpha_n| = \theta_n (I+J-\ell m_k-i)+(1-\theta_n)m_k$ and hence $j_n+i_n = \theta_n (I+J-\ell m_k)+(1-\theta_n)(s_0+m_k)$. In the situation where $I+J-(\ell +1)m_k=i$, one has $j_n+|\alpha_n|=m_k$ for all $n\in\{0,\dots,\ell\}$, and one can set $\theta_n=\frac{1}{\ell+1}$.
		\item If $s_0+m_k > I+J-\ell m_k$, we set $s_n$ through $j_n+|\alpha_n|+s_n=\theta_n (I+J-\ell m_k)+(1-\theta_n)(s_0+m_k)$, with  $\theta_n=\frac{j_n-m_j}{J-(\ell+1)m_j}$ (so that $j_n=\theta_n(J-\ell m_j) +(1-\theta_n)m_j$).
	\end{itemize} 
	In both cases, one easily check that $\theta_n\in[0,1]$, $\sum_{n=0}^\ell\theta_n=1$, and $(i,s_0,\dots,s_\ell)$ satisfy the requirements of \Cref{L.product}. If $i>s_0$, then one can simply use \Cref{L.product-2}. 
\end{proof}
\begin{Remark}\label{R.tame}
	The above result can be interpreted as a product estimate in anisotropic $L^\infty$-based (in time)-$L^2$-based (in space) Sobolev spaces. That we can restrict Sobolev indices $(i_0,\dots,i_\ell)\in \RR_+^{1+\ell}$ as in the statement is crucial in our framework where we measure the space-regularity of functions with non-homogeneously weighted functionals, of the form \eqref{eq.def-F}. 
		
		To be more specific, if one denotes
		\[
		\cF_{k,{\bm\alpha}}(U)\eqdef\sup\big(\big\{ \alpha_{j,i}^{-1} \norm{\partial_t^j U}_{H^{i}} \ : \ (i,j)\in\RR_+\times\NN,\ i+j\leq k\big\}\big)
		\]
		with weights $\alpha_{j,i}>0$ satisfying assumption \ref{vi} in \Cref{D.alpha-admissible},
		then for any $(i_0,\dots,i_\ell)\in \RR_+^{1+\ell}$ as in the statement of \Cref{L.Product} one has
		\[ \prod_{n=0}^\ell \norm{ \partial_t^{j_n} U_n}_{H^{i_n}}  \leq  (\alpha_{m_j,s_0+m_k-m_j})^\ell \alpha_{J-\ell m_j,I-\ell(m_k-m_j)} \prod_{n=0}^\ell \cF_{k,{\bm\alpha}}(U_n), \]
		a statement which does not hold for general non-homogeneous weights.

	Notice in the above inequality how the parameters $m_k\geq m_j\geq  0$ in \Cref{L.Product} allow to make use of some repartition of time and space-time derivatives between the different components to lower the indices of the higher-regularity norm $(J-\ell m_j,I-\ell(m_k-m_j))$, as in commutator estimates (see \Cref{L.commutator}). In this work we use either $m_j=m_k=0$, $m_j=m_k=1$, or $m_j=0$ and $m_k=1$.
\end{Remark}
	
\section{Notations and functional spaces}\label{S.notations}

	\begin{itemize}
		\item The notation $a\lesssim b$ means that 
		$a\leq C_0\ b$, where $C_0$ is a non-negative constant whose exact expression is of no importance, and $a=\cO(b)$ for $|a|\lesssim b$. We denote $a \searrow 0$ for $a\to 0,\ a>0$. 
		We denote $C(\cdot)$ a non-negative quantity depending non-decreasingly on its (real) variable.
		\item As much as possible we use bold typefaces ($\bu$) for $d$-dimensional vectors, capitalized letters ($U$) for $n$-dimensional vectors, and sans serif fonts (${\sf M}$) for matrices. Script typefaces ($\L$) is dedicated to pseudodifferential operators.
		\item $\Id$ is the identity operator.  
		\item We use the notation
		\[\big\langle A\big\rangle_{a>b}=\begin{cases}
			A  & \text{ if } a>b,\\
			0 & \text{ otherwise. }
		\end{cases}
		\]
		\item For $s\in\RR$, $\lceil s\rceil$ denotes the smallest integer larger or equal to $s$
		\item For $\bk\in\RR^d$, $\langle \bk\rangle\eqdef (1+|\bk|^2)^{1/2}$.
		\item We use the multi-index notation for multi-dimensional differentiation: for ${\alpha=(\alpha_1,\dots,\alpha_d)\in \NN^d}$, $\partial^{\alpha} f(x_1,\dots,x_d)\eqdef(\partial_{x_1}^{\alpha_1}\cdots\partial_{x_d}^{\alpha_d}f)(x_1,\dots,x_d)$ and $|\alpha|=\alpha_1+\dots+\alpha_d$. 
		\item For $d\in\NN^\star$, we denote $L^2(\RR^d)$ the Lebesgue space associated with square-integrable functions endowed with the real inner product and corresponding norm
		\[
		\big(f_1,f_2\big)_{L^2}\eqdef\int_{\RR^d}f_1(\bx)f_2(\bx) \dd \bx, \quad 	\vert f \vert_{L^2}\eqdef\left(\int_{\RR^d}\vert f(\bx)\vert^2 \dd \bx\right)^{\frac12}.\] 
		The space $L^\infty(\RR^d)$ is associated with essentially bounded, Lebesgue-measurable functions and endowed with the norm
		\[
		\norm{f}_{L^\infty}\eqdef \esssup_{\bx\in\RR^d} | f(\bx)|.
		\]
		We define similarly $L^\infty_{\rm loc}(\RR^d)$ the locally essentially bounded, Lebesgue-measurable functions.
		\item
		For any real constant $s\in\RR$, $H^s(\RR^d)$ denotes the Sobolev space obtained by completing the Schwartz space of smooth rapidly decreasing functions $\cS(\RR^d)$ for the norm 
		\[\vert f\vert_{H^s}\eqdef\vert \Lambda^s f\vert_{L^2} \] where $\Lambda^s$ is the Fourier multiplier (see below) $\Lambda^s\eqdef(\Id-\Delta_{\bx})^{s/2}=(1+|D|^2)^{s/2}$. 
		\item Given an interval $I\subset\RR$ and any of the previously defined functional spaces, $X$, we denote $L^\infty(0,T;X)$ the space of functions such that $u(t,\cdot)$, taking values in the Banach space $X$, is essentially bounded for $t\in I$, and denote the associated norm
		\[\Norm{u}_{L^\infty(0,T;X)} \ \eqdef \ {\rm ess\,sup}_{t\in(0,T)}\norm{u(t,\cdot)}_{X} \ < \ \infty.\]
		For $k\in\NN$, and $I$ a real interval, $\cC^k(I;X)$ denotes the space of $X$-valued continuous functions on $I$ with continuous derivatives up to order $k$. 
				\item Given two Banach spaces $X$ and $Y$ we write $X\hookrightarrow Y$ if $X\subset Y$ and the natural embedding of $X$ into $Y$ is continuous.
	\end{itemize}
	
	\begin{Definition}[Fourier multipliers]\label{D.Fourier-multipliers}
		Let $L\in L^\infty_{\rm loc}(\RR^d)$ be such that there exists $C>0$ and $m\in\RR$ such that for almost every $\bk \in\RR^d$,
		\[ |L(\bk)|\leq C \langle \bk \rangle^m .\]
		For any $s\in\RR$, we denote $\L=L(D):H^s(\RR^d) \to H^{s-m}(\RR^d)$ the operator defined by $\widehat{\L g}=L \,\widehat g$, {\em i.e.}
		\[ \forall g\in \cS(\RR^d),\quad \forall\bx\in\RR^d, \qquad (\L g)(\bx) = \frac1{(2\pi)^d}\int_{\RR^d} \int_{\RR^d}e^{\i(\bx-\by)\cdot\bk}\ L(\bk) \ g(\by)\dd\by\dd\bk\ .\]
		The operator is continuously extended to $g\in H^s(\RR^d)$ by the density of the Schwartz space of smooth rapidly decreasing functions $ \cS(\RR^d)$ in $H^s(\RR^d)$.
		
		We call such operators {\em Fourier multipliers} (or {\em constant coefficients pseudo-differential operators}) of order $ m$, and the associated function $L$ their {\em symbol}.
	\end{Definition}
	\begin{Remark}
		Fourier multipliers associated with polynomial symbols are standard differential operators: $P(D)=P(\frac1\i\nabla_{\bx})$. 
		
		Scalar Fourier multipliers commute: $F(D)G(D)=G(D)F(D)=(FG)(D)$.
		
		Matrix Fourier multipliers associated with matrix symbols act on vector-values functions through scalar Fourier multiplier entries. The adjoint of such operators in $L^2(\RR^d)^n$ is the Fourier multiplier associated with the Hermitian transpose of their symbol.
	\end{Remark}
	
	\begin{Remark}[Reformulation of Hypotheses~\ref{H3} and~\ref{H4}]\label{R.H3H4}
		Using the above notations, Hypothesis~\ref{H3} and~\ref{H4} read equivalently: 
		the Fourier multiplier $\L_\delta$ is skew-symmetric for $L^2(\RR^d)^n$ inner-product and
		there exists $N_\L>0$ such that for all $s\in\RR$, $(\eps,\delta)\in\cS$ and $U\in H^{s+1}(\RR^d)^n$ one has 
		\[ \forall \bk \in \RR^d, \quad \norm{ \sL_\delta U}_{H^s} \leq N_\L\, \big( \norm{U}_{H^{s}}+\delta\norm{ U}_{H^{s+1}}\big);\] 
		there exists $C_\L\geq0,c_\L>0$ and a Fourier multiplier $\P_\delta$ such that $\P_\delta^2=\P_\delta$, $\P_\delta(\Id-\P_\delta)=0$, $\P_\delta\L_\delta=\L_\delta\P_\delta$ and for all $s\in\RR$, $(\eps,\delta)\in\cS$ and $U\in H^{s+1}(\RR^d)^n$ one has 
		\begin{align*}  
			\norm{ \P_\delta \L_\delta \P_\delta U}_{H^{s}} &\leq \eps\, C_\L\,  \norm{ U}_{H^{s+1}},\\
			\norm{ (\Id-\P_\delta) \L_\delta (\Id-\P_\delta) U}_{H^s} &\geq   \, c_\L/2 \big( \norm{(\Id-\P_\delta) U}_{H^{s}}+\delta\norm{(\Id-\P_\delta) U}_{H^{s+1}}\big).
		\end{align*}
	\end{Remark}
	
	Finally we provide for the reader's convenience some tables of notations used in this document in \Cref{T.1,T.2,T.3,T.4}.


\begin{table}[!b]
\centering
\renewcommand{\arraystretch}{1.6}
\begin{tabular}{p{3cm} p{2.4cm} p{9.6cm}}
\hline
\textbf{Notation} & \textbf{Track} & \textbf{Meaning} \\
\hline

$d$ &  \Cref{S.presentation}
& Space dimension. \\

$n$ &  \Cref{S.presentation}
& Dimension of unknown vector $U \in \mathbb{R}^n$, (i.e number of variables in the system) \\

$\varepsilon$ &  \Cref{S.presentation}
& Fast time scale parameter (e.g.\ Rossby number), it drives the stiffness operator $\tfrac{1}{\varepsilon}\L_\delta$. \\

$\delta$ &  \Cref{S.presentation}
& Intermediate spatial scale parameter (e.g.\ Rossby / Mach number), its smallness triggers spatial oscillations. \\

$\cS$ &   \Cref{T.Well-posedness}
& $(\eps,\delta)$ parameter regime in \Cref{T.Well-posedness}.  \\

$\cS'$ &   \Cref{C.convergence}
& $(\eps,\delta)$ parameter regime in \Cref{C.convergence}.  \\

$k$ & \Cref{T.Well-posedness} 
& Maximal regularity index with condition $k > d/2 + 1$. \\

$j_0$ & \Cref{T.Well-posedness}, \Cref{D.alpha-admissible}
& Threshold time derivative index beyond which weights grow. It marks the boundary beyond which high spatial regularity is no longer uniformly bounded. \\

$j_\sharp$ & \Cref{T.Well-posedness}, \cref{eq.well-prepared} 
& Order of time derivatives controlled in the well-prepared initial data assumption~\eqref{eq.well-prepared}. \\

$i_0$ & \Cref{T.Well-posedness}
& Threshold spatial-regularity index. It ensures ${i_0+j_0 > d/2 + 1}$. \\

$s_0$ & \Cref{D.alpha-admissible}, \Cref{L.weights}, \Cref{C.weights}
& Sobolev embedding threshold, satisfying $s_0 > d/2$ so that $H^{s_0}(\RR^d)\hookrightarrow L^\infty(\RR^d)$ ($s_0 = i_0+j_0-1$). \\

$\lambda$ & \Cref{T.Well-posedness}, \Cref{D.alpha-admissible}, \Cref{C.weights}, \Cref{P.Stability} 
& Large bootstrap parameter used to close the finite induction in the stability estimate (\Cref{P.Stability}). \\

$C_0$ & \Cref{T.Well-posedness}, \Cref{P.Well-posedness} 
& Upper bound on the size of the initial data.\\

$T$ & \Cref{T.Well-posedness}, \Cref{P.Well-posedness} 
& Existence time, uniform with respect to $(\varepsilon,\delta)$. \\

\hline
\end{tabular}
\caption{Basic parameters and scale indices.}
\label{T.1}
\end{table}

\begin{table}[ht]
\centering
\renewcommand{\arraystretch}{1.6}
\begin{tabular}{p{3cm} p{3cm} p{9cm}}
\hline
\textbf{Notation} & \textbf{Track} & \textbf{Meaning} \\
\hline

$\Omega$ & Hypothesis~\ref{H2}
& Domain of hyperbolicity containing an open neighborhood of $\{\bm{0}\}\in\mathbb{R}^n$. Solutions are required to remain 
in a compact $K\subset\Omega$ uniformly in $(\varepsilon,\delta)$. \\

$c_K$ & Hypothesis~\ref{H2}
& Coercivity constant of $\S_0(U)$: $\langle \S_0(U)V,V\rangle \geq c_K \norm{V}^2$. \\

$C_{k,K}$ & \Cref{R.uniform}
& Uniform upper bound on all derivatives up to order $k$ of the coefficient matrices 
$\S_0, \S_l$ and $\G$. \\

$C'_{k,K}$ & Hypothesis~\ref{H5}
& Uniform upper bound on the derivatives up to order $k$ of $\S_0$ under additional Hypothesis~\ref{H5}.  \\

$C''_{k,K}$ & Hypothesis~\ref{H6}
& Uniform upper bound on the derivatives up to order $k$ of $\S_0$ under additional Hypothesis~\ref{H6}. \\

$N_\L$ & Hypothesis~\ref{H2}
& Uniform upper bound on the operator $\L_\delta$. \\

$\sPi_\delta(k)$ & Hypothesis~\ref{H4}
& Orthogonal projection (in Fourier space) onto the regular (asymptotically kernel) subspace of $\sL_\delta(\bk)$, used to split the unknown into regular and singular components. \\

$C_\L$ & Hypothesis~\ref{H4}, forward estimate
& Uniform upper bound on $\Vert \sPi_\delta \sL_\delta \sPi_\delta\Vert $. \\

$c_\L$ & Hypothesis~\ref{H4}, backward estimate
& Coercivity bound of $(\Id-\sPi_\delta)\sL_\delta(\Id-\sPi_\delta)$ on the singular subspace. \\

$\sPi$ & \Cref{C.convergence}, \Cref{R.convergence}
& Limit orthogonal projection:
$\sPi = \lim_{(\varepsilon,\delta)\to(0,\bar\delta)}\sPi_\delta$ 
in $\cS'$.\\

$\sT$ & \Cref{C.convergence}, \Cref{R.convergence}
& Limit operator defined as 
$\sT = \lim_{(\varepsilon,\delta)\to(0,\bar\delta)} 
\tfrac{1}{\varepsilon}\sPi_\delta \sL_\delta$ in $\cS'$.\\

$\P_\delta$ & \Cref{R.H3H4} 
& Pseudo-differential projection operator with symbol $\sPi_\delta$ and used in the forward and backward estimates in \Cref{S.general-estimates}. \\

$\P$, $\T$ & \Cref{C.convergence}, \cref{eq.limit} 
& Pseudo-differential operators with symbol $\sPi$, $\sT$, arising in the limit system \eqref{eq.limit}.  \\

$\lambda_\delta$ & \Cref{R.H4} 
& Eigenvalue of the symbol $\sL_\delta(k)$ lying on the imaginary axis 
by skew-symmetry of $\sL_\delta$. \\

$\lambda^{(m)}$ & \Cref{R.H4}
& Coefficient of order $m$ in the holomorphic expansion of $\lambda_\delta$: $\lambda_\delta=\lambda^{(0)}+\delta\lambda^{(1)}+\delta^2\lambda^{(2)}+\ldots$ 
with $\lambda^{(0)}\in\Spec(L_0)$. \\

\hline
\end{tabular}
\caption{Operators and their bounds.}
\label{T.2}
\end{table}

\begin{table}[ht]
\centering
\renewcommand{\arraystretch}{1.6}
\begin{tabular}{p{3cm} p{3cm} p{9cm}}
\hline
\textbf{Notation} & \textbf{Track} & \textbf{Meaning} \\
\hline

$i,\ j,\ k$ & 
& Respectively spatial, time and space-and-time regularity indices. \\

$m_j,\, m_k$ & \Cref{L.Product}, \Cref{R.tame}, \Cref{S.Stability}
& Parameters in the space-time product estimate \Cref{L.Product}: 
$m_k \geq m_j \geq 0$ controlling the minimal order of time derivative (for $m_j$)
and of space-and-time regularity (for $m_k$) involved in each factor. \\

$i$ & \Cref{L.Product}
& Sobolev regularity index of the product 
$\prod_{n=0}^\ell(\partial_t^{j_n}\partial^{\alpha_n}U_n)$ 
(estimated in $H^i$). 
\\

$j_n, \alpha_n$ & \Cref{L.Product}
& Respectively time and spatial multi-index explicitly present in the $n$-th factor of the 
product, $\partial_t^{j_n}\partial^{\alpha_n}U_n$.
\\

$I,\ J$ & \Cref{L.Product}
& Respectively total spatial regularity $I = i + \sum_{n=0}^\ell|\alpha_n|$ and total number of time derivatives $J = \sum_{n=0}^\ell j_n$. \\

$(i_n)_n$ & \Cref{L.Product} 
& Spatial index family, it
distributes the total budget $I + \ell s_0$ among the $\ell+1$ 
factors, with the $\ell s_0$ overhead from the Sobolev algebra 
embedding in \Cref{L.product}. \\

$(j_\star,\, k_\star)$, $(j^\star,\, k^\star)$ & \Cref{L.Product},
\Cref{D.alpha-admissible}\ref{vi},  \cref{eq.def-Nell,eq.def-Nell-recall} 
& Free variables that represent lower and upper bounds of admissible sets denoted $[(j_\star, k_\star),(j^\star, k^\star)]$ for index pairs representing the number of time derivatives (for the first element) and space-time regularity (for the second element). In \Cref{L.Product}, $j_\star = m_j$ and $k_\star = s_0 + m_k$, $j^\star = J - \ell m_j$ and  $k^\star = I + J - \ell m_k$. \\

$(j_n,\, i_n+j_n)$ &\Cref{L.Product},
\Cref{D.alpha-admissible}\ref{vi},  \cref{eq.def-Nell,eq.def-Nell-recall}
& Number of time derivatives and space-and-time regularity, $n$-th element of a tuple lying
in the admissible set 
$[(j_\star, k_\star),(j^\star, k^\star)]$.   In \Cref{L.Product}, it relates to components in the right-hand side of the product estimate.   \\

$\theta_n$ & \Cref{L.Product},
\Cref{D.alpha-admissible}\ref{vi},  \cref{eq.def-Nell,eq.def-Nell-recall}
& Interpolation coefficient of the $n$-th factor defined by
$\theta_n = \tfrac{(i_n+j_n)-k^\star}{k_\star-k^\star}\in[0,1]$ 
when $k_\star > k^\star$. One has $\sum_n\theta_n=1$. 
\\

$N_\ell(V,\dots,V;U)\big\vert_{j_\star,k_\star}^{j^\star,k^\star}$ & \cref{eq.def-Nell,eq.def-Nell-recall} 
& A bounded supremum of $\ell$ $V$-norms and one $U$-norm 
over admissible index tuples $((i_n,j_n))_n$ that absorbs all nonlinear 
contributions from commutator and composition estimates in \Cref{S.Stability} via \Cref{D.alpha-admissible}\ref{vi} and closes the bootstrap of the uniform stability estimates in \Cref{P.Stability} . \\

\hline
\end{tabular}
\caption{Regularity indices	and multilinear notation.}
\label{T.3}
\end{table}

\begin{table}[ht]
\centering
\renewcommand{\arraystretch}{1.6}
\begin{tabular}{p{3cm} p{3cm} p{9cm}}
\hline
\textbf{Notation} & \textbf{Track} & \textbf{Meaning} \\
\hline

$\beta_p,\,\gamma_p$ & \Cref{L.weights}, \cref{eq.def-alpha-general}
& Parameters defining explicit admissible weight families.
\\

$\alpha_{j,i}$ & \Cref{D.alpha-admissible}, \Cref{C.weights}, \cref{eq.def-F,eq.def-alpha}
& Positive (admissible) weights assigned to $\|\partial_t^j U\|_{H^i}$. The  
explicit form that is used in the proof of \Cref{T.Well-posedness} is
$\alpha_{j,i}=\max\bigl(1,\,\lambda^{j-j_0},\,
\lambda^{j-j_0}\delta^{i_0+j_0-i-j},\,
\varepsilon^{j_0-j}\delta^{i_0-i}\bigr)$.
This 
encodes in particular the emerging spatial scale of order $\delta$. \\

$\cF_{k,\bm\alpha}(U)$ & \Cref{R.weights}, \cref{eq.def-F}, \Cref{P.Stability}, \cref{eq.def-F-stability}
& Weighted energy functional encoding the size of $\|\partial_t^j U\|_{H^i}$:
$\cF_{k,{\bm\alpha}}(U)\eqdef\sup\big(\big\{ \alpha_{j,i}^{-1} \norm{\partial_t^j U}_{H^{i}}  :  (i,j)\in\RR_+\times\NN,\ i+j\leq k\big\}\big)$
\\

$\cF^{(0)}_{k,\bm\alpha}(U)$ & \cref{eq.def-F0-stability}
& Reduced weighted functional 
controlled by the energy 
method and Grönwall's inequality on \eqref{eq.control-F0}. \\

$\omega^{(0)}_{k,\alpha}$ & \cref{eq.def-F0-stability} 
& Index set of low time derivative and pure 
time derivative indices $\{j<j_0,\, i\in[0,k-j]\}\cup\{i=0,\,j\in[0,k]\}$ 
defining~$\cF^{(0)}_{k,\bm\alpha}$.\\

$\cF^{(0),\mathrm{new}}_{k,\bm\alpha}(U)$ & \Cref{P.Stability-H6}, \Cref{S.H6}
& Additional reduced functional needed when $j_0=0$ to 
controls spatial derivatives of $U$. \\

$M_{j_\sharp,0}$ & \cref{eq.well-prepared}
& Measure of the size of the initial data in \Cref{T.Well-posedness}:
$M_{j_\sharp,0}\eqdef\big(\sum_{j=0}^{j_\sharp-1}\|\partial_t^j U\|_{H^{k-j}} 
+ \tfrac{\varepsilon^{j_\sharp-j_0}}{\delta^{i_0}}
\|\partial_t^{j_\sharp}U\|_{H^{k-j_\sharp}}\big)\big\vert_{t=0}$. It controls $\cF_{k,\bm\alpha}(U)\big\vert_{t=0}$.\\

$M_V$, $M_U$, $M_R$,$M^{(0)}_U$ & \Cref{S.bootstrap} 
& Shorthands for $\cF_{k,\bm\alpha}(V)$, $\cF_{k,\bm\alpha}(U)$, $\cF_{k,\bm\alpha}(R)$ and $\cF^{(0)}_{k,\bm\alpha}(U)$. \\

$C^{(0)}$ & \Cref{P.Stability}, \Cref{S.bootstrap}
& Stability constant from  the inner product estimates depending only on $k,c_K,C_\L,c_\L$ and the reduced energy functional $\cF^{(0)}_{k,\bm\alpha}(U)$. It is the multiplicative constant in the control of $\cF_{k,\bm\alpha}(U)$ from $\cF^{(0)}_{k,\bm\alpha(U)}$. \\

$C$ & \Cref{P.Stability}, \Cref{S.bootstrap}
& Full stability constant additionally depending on the full energy functional $ \cF_{k,\bm\alpha}(U)$. It is the constant governing the Grönwall growth rate in the energy inequality for $\cF^{(0)}_{k,\bm\alpha}(U)$. \\

\hline
\end{tabular}
\caption{Energy functionals and norms.}
\label{T.4}
\end{table}

\end{appendices}

	\paragraph{Acknowledgments}
The research of VD was partially funded by ANR project HEAD, ANR-24-CE40-3260. VD thanks the France 2030 program Centre Henri Lebesgue ANR-11-LABX-0020-0, for fostering an attractive mathematical environment. 
AD and KMS acknowledge financial support from the French National Research Agency project NABUCO, ANR-17-CE40-0025 and from the SHOM research contract No21CP05. AD acknowledges financial support from the French National Research Agency project MOTIONS, grant ANR-23-CE56-0006-02.

For the purpose of Open Access, a CC-BY public copyright licence has been applied by the authors to the present document and will be applied to all subsequent versions up to the Author Accepted Manuscript arising from this submission

\end{document}